\newtheorem{theorem}{Theorem}[section]
\newtheorem{theoremx}{Theorem}
\newtheorem{lemma}[theorem]{Lemma}
\newtheorem{corollary}[theorem]{Corollary}
\newtheorem{proposition}[theorem]{Proposition}
\newtheorem{claim}[theorem]{Claim}
\theoremstyle{definition}
\newtheorem{example}[theorem]{Example}
\newtheorem{definition}[theorem]{Definition}
\newtheorem{question}[theorem]{Question}
\newtheorem{notation}[theorem]{Notation}
\theoremstyle{remark}
\newtheorem{remark}[theorem]{Remark}
\newcommand{\kk}{\mathbb{K}}
\DeclareMathOperator{\depth}{depth}
\DeclareMathOperator{\edim}{edim}
\DeclareMathOperator{\height}{ht}
\DeclareMathOperator{\bigheight}{bight}
\DeclareMathOperator{\Max}{Max}
\DeclareMathOperator{\Proj}{Proj}
\DeclareMathOperator{\Supp}{Supp}
\DeclareMathOperator{\Ann}{Ann}
\DeclareMathOperator{\ord}{ord}
\DeclareMathOperator{\Hom}{Hom}
\DeclareMathOperator{\Spec}{Spec}
\DeclareMathOperator{\MaxSpec}{MaxSpec}
\DeclareMathOperator{\Min}{Min}
\DeclareMathOperator{\Ext}{Ext}
\DeclareMathOperator{\loewy}{\ell\ell}
\DeclareMathOperator{\coker}{coker}
\DeclareMathOperator{\e}{e}
\DeclareMathOperator{\fpt}{fpt}
\DeclareMathOperator{\lct}{lct}
\DeclareMathOperator{\dfpt}{dfpt}
\DeclareMathOperator{\mfpt}{\mu fpt}
\DeclareMathOperator{\Ft}{c}
\DeclareMathOperator{\s}{s}
\DeclareMathOperator{\sdim}{sdim}
\DeclareMathOperator{\IN}{in}
\DeclareMathOperator{\gr}{gr}
\def\p{\mathfrak{p}}
\def\q{\mathfrak{q}}
\def\m{\mathfrak{m}}
\def\n{\mathfrak{n}}
\def\a{\mathfrak{a}}
\def\FF{\mathbb{F}}
\def\R{\mathbb{R}}
\def\P{\mathbb{P}}
\def\QQ{\mathbb{Q}}
\def\ZZ{\mathbb{Z}}
\def\NN{\mathbb{N}}
\def\F{\mathbb{F}}
\def\OO{\mathcal{O}}
\def\cB{\mathcal{B}}
\def\cR{\mathcal{R}}
\def\cS{\mathcal{S}}
\def\cF{\mathcal{F}}
\def\cD{\mathcal{D}}
\def\CC{\mathbb{C}}
\newcommand{\ecodim}{\operatorname{ecodim}}
\newcommand{\cotimes}[1]{\mathbin{\widehat{\otimes_{#1}}}}
\newcommand{\FDer}[1]{\stackrel{#1}{\to}}
\renewcommand{\geq}{\geqslant}
\renewcommand{\leq}{\leqslant}
\newcommand{\PP}{\mathbb{P}}
\newcommand{\ps}[1]{\llbracket {#1} \rrbracket}
\newcommand{\soc}{\operatorname{soc}}
\newcommand{\frpe}[1]{{#1}^{[p^e]}}
\newcommand{\ul}{\underline}
\DeclareMathOperator{\cP}{\mathcal{P}}
\begin{document}

\title{The defect of the F-pure threshold}

\author[A. De Stefani]{Alessandro De Stefani}
\address{Dipartimento di Matematica, Universit{\`a} di Genova, Via Dodecaneso 35, 16146 Genova, Italy}
\email{alessandro.destefani@unige.it}

\author[L. N\'u\~nez-Betancourt]{Luis N\'u\~nez-Betancourt}
\address{Centro de Investigaci\'on en Matem\'aticas, Guanajuato, Gto., M\'exico}
\email{luisnub@cimat.mx}

\author[I. Smirnov]{Ilya Smirnov}
\address{BCAM -- Basque Center for Applied Mathematics, Mazarredo 14, 48009 Bilbao, Basque Country -- Spain}
\address{Ikerbasque, Basque Foundation for Science, Plaza Euskadi 5, 48009 Bilbao, Basque Country -- Spain}
\email{ismirnov@bcamath.org}

\begin{abstract}
Introduced by Takagi and Watanabe, F-pure thresholds are invariants defined in terms of the Frobenius homomorphism. While they find applications in various settings, they are primarily used as a \emph{local} invariant. The purpose of this note is to start filling this gap by 
opening the study of its behavior on a scheme. To this end, we define the \emph{defect of the F-pure threshold} of a local ring $(R,\m)$ by setting $\dfpt(R)=\dim (R) - \fpt (\m)$. It turns out that this invariant defines an upper semi-continuous function on a scheme and satisfies Bertini-type theorems.
We also study the behavior of the defect of the F-pure threshold under flat extensions and after blowing up the maximal ideal of a local ring.
\end{abstract}

\maketitle

\hypersetup{linkcolor=black}
\setcounter{tocdepth}{1}
\tableofcontents

\section{Introduction}

The origin of F-pure thresholds is in the theory of singularities of pairs. 
The notion of \emph{F-purity}, introduced by Hochster and Roberts \cite{HRFpurity}, was extended to pairs by Takagi \cite{TakagiAdj}.
Takagi and Watanabe \cite{TakagiWatanabe}  defined the F-pure threshold of an ideal $I$ of an F-finite ring $R$ to be $\fpt (I) = \sup \{t \mid (R, I^t) \text{ is F-pure}\}$. 
Remarkably, one obtains the same invariant by using other definitions of 
F-purity for pairs, for example, Schwede's notion of \emph{sharp F-purity} \cite{Schwede} is often easier to work with.

An appealing feature of the theory is its inherent blend of algebra and geometry. Further importance comes from its close connections with the theory of log-canonical thresholds, an important invariant of birational geometry notable for its connections with the minimal model program  \cite{Shokurov, Birkar} and its role in the theory of the normalized volumes  \cite{BlumLiu, Liu18}.
It is conjectured that the class of F-pure singularities corresponds to the class of log-canonical singularities via reduction mod $p$ \cite{HaraWatanabe, HaraYoshida, FujinoTakagi}, and that the thresholds quantify the correspondence: the F-pure thresholds of the reductions mod $p$ of a singularity approximate the log-canonical threshold, i.e.,
$\lct (R; I)
= \lim_{p \to \infty} \fpt (R/p, IR/p)$ 
\cite{TakagiWatanabe}. Yuchen Liu has been working on a positive characteristic analogue of the normalized volume that utilizes F-pure thresholds instead of log-canonical thresholds \cite{Liu}.

F-pure thresholds can be used to study the singularities of a local ring $(R, \m)$ in two  different ways: first, one can use $\fpt (I;S)$ if $R = S/I$ is a quotient of a regular local ring $S$. Alternatively, one can use $\fpt (R) \coloneqq \fpt (\m)$ as an intrinsic measure of singularity. 
The first approach has enjoyed a lot of attention \cite{HaraYoshida, BMS-MMJ, BMS-Hyp} and is especially popular in the case of hypersurfaces because this is where the theory of log canonical thresholds has its origin under the name the \emph{complex singularity exponent}. In this paper, however, we focus on the second point of view. 
The reason is that the former approach does not seem to detect whether $R$ is regular, while it was shown already in the foundational paper of Takagi and Watanabe \cite{TakagiWatanabe} that $\fpt (R)$ does measure singularities:  in general, $\fpt (R) \leq \dim (R)$ and equality is only possible in a regular local ring.  Moreover, as $\fpt (R)$ approaches $\dim (R)$, the singularity gets milder (for example, Proposition~\ref{prop: TW reduction} and Proposition~\ref{prop: condition F-regular}). Thus $\fpt(R)$ is fitting naturally in the framework of \emph{measures of singularities} that includes various multiplicities or, for example, the F-signature \cite{SmithVDB,HLMCM}. 

Our article provides a further development within the framework by starting to study the F-pure threshold globally, as a function $\Spec (R) \ni \q \mapsto \fpt (R_\q)$. A natural direction is to study the stratification of a scheme by the values of an invariant. Unfortunately, the behavior of  the F-pure threshold after localization is quite erratic, as it can either increase or decrease. For instance, take $R=\kk\ps{x,y,z}/(x^3+y^3+z^3)$, where $\kk$ is an F-finite field of characteristic $p \equiv 1 \bmod 3$, and let $\m = (x,y,z)$. Then for any $\p \in \Spec(R)$ with $(0) \subsetneq \p \subsetneq \m$ one has
\[
\fpt(R_\m) = 0, \ \fpt(R_{\p}) = 1, \text{ and } \fpt(R_{(0)}) = 0.
\]
We overcome this issue by blending the F-pure threshold into a more well-behaved invariant. We define the \emph{defect of the F-pure threshold} as
$$
\dfpt(R) \coloneqq \dim(R)-\fpt(R).
$$
At a single point the defect of the F-pure threshold carries the same information as the F-pure threshold. However, we show that it satisfies many good properties as a global invariant on $\Spec(R)$. Many of the existing results are stated more naturally for this invariant rather than for the F-pure threshold itself. The following theorems summarize some of our findings. 

\begin{theoremx}\label{MainThmGlobal}
Let $X$ be a locally F-finite and F-pure scheme of characteristic $p > 0$. Then
\begin{enumerate}
\item $x \mapsto \dfpt (\OO_{X, x})$ defines an upper semi-continuous function (Theorem \ref{thm semicont}); 
\item in addition, if $X$ is $\QQ$-Gorenstein and Cohen-Macaulay, then the above function defines a locally finite constructible stratification (Theorem \ref{thm constructible});
\item if $X$ is an irreducible affine scheme, then there is a 
well-defined global invariant $\dfpt(X)$ which agrees with $\max\{\dfpt(\OO_{X,x}) \mid x \in X\}$ (Theorem \ref{thm: global (new)}); 
\item if $X \subseteq \PP^n_{\kk}$ is an irreducible quasi-projective variety, with $\kk$ an algebraically closed field, and $\lambda \in \R$ is such that $\dfpt (\OO_{X, x}) < \lambda$ for all $x \in X$, then for a general hyperplane $H \subseteq \P^n_{\kk}$
we still have $\dfpt (\OO_{X \cap H, x}) < \lambda$ for all $x \in X \cap H$ (Corollary\ref{coroll Bertini}). 
Moreover, $\max_{x \in X \cap H} \{\dfpt (\OO_{X \cap H, x})\} \leq \max_{x \in X} \{\dfpt (\OO_{X,x})\}$
if $X$ is Gorenstein and normal (Corollary~\ref{coroll Bertini Gor}) or $H$ is very general. 
\end{enumerate}
\end{theoremx}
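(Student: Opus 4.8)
The plan is to pin down $\dfpt$ on a \emph{generic} hyperplane section via flat base change, and then spread out to a \emph{general} hyperplane over $\kk$.

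For Corollary~\ref{coroll Bertini}, after shrinking $\PP^n_\kk$ to the linear span of $X$ (so $X$ is nondegenerate and every section $X\cap H$ has dimension $\dim X-1$), form the incidence variety $\mathcal X=\{(x,H)\in X\times(\PP^n_\kk)^\vee : x\in H\}$, with projections $p\colon\mathcal X\to X$ (a $\PP^{n-1}$-bundle, so $\mathcal X$ is again F-finite and F-pure) and $\pi\colon\mathcal X\to(\PP^n_\kk)^\vee$, whose fibre over $H$ is $X\cap H$. Let $K$ be the function field of $(\PP^n_\kk)^\vee$ and $\mathcal H\subseteq\PP^n_K$ the generic hyperplane. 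The heart of the argument is that for every $y\in X_K\cap\mathcal H$, with image $x\in X$, the induced local homomorphism $\OO_{X,x}\to\OO_{X_K\cap\mathcal H,y}$ is flat with geometrically regular closed fibre. Indeed $\OO_{X_K\cap\mathcal H,y}=\OO_{X_K,y}/(\ell)$ for $\ell$ the generic linear form; since $\kk$ is algebraically closed, $\OO_{X_K,y}$ and its fibre over $\m_x$ (a localization of the domain $\kappa(x)\otimes_\kk K$) are domains on which $\ell$ acts as a nonzerodivisor, as $\mathcal H$ contains neither, so $\OO_{X_K\cap\mathcal H,y}$ is flat over $\OO_{X,x}$; and the image of $\ell$ lies outside the square of the relevant maximal ideal — it is a generic linear form over $\kappa(x)$, $\mathcal H$ being generic — so the fibre stays geometrically regular over $\kappa(x)$. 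By the behaviour of $\dfpt$ under flat extensions studied in this paper (along a flat local map with geometrically regular closed fibre $\dfpt$ does not increase), we conclude
\[
\dfpt(\OO_{X_K\cap\mathcal H,y})\le\dfpt(\OO_{X,x})<\lambda\qquad\text{for all }y\in X_K\cap\mathcal H.
\]
In particular each $\OO_{X_K\cap\mathcal H,y}$ is F-pure, and (by spreading out, or by the Bertini theorem for F-purity of Schwede--Zhang) $X\cap H$ is F-pure for general $H$, so that $\dfpt$ is defined on it.

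To pass from the generic $\mathcal H$ to a general $H$, set $B=\{(x,H)\in\mathcal X : X\cap H\text{ F-pure and }\dfpt(\OO_{X\cap H,x})\ge\lambda\}$. The previous paragraph says exactly that the fibre of $B$ over the generic point of $(\PP^n_\kk)^\vee$ is empty. If one knows that $B$ is a constructible subset of $\mathcal X$, then $\pi(B)$ is constructible by Chevalley's theorem and misses the generic point of $(\PP^n_\kk)^\vee$, hence is contained in a proper closed subset, and any $H$ outside it has $\dfpt(\OO_{X\cap H,x})<\lambda$ for every $x\in X\cap H$ — which is Corollary~\ref{coroll Bertini}. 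So the whole statement reduces to the constructibility of $B$, i.e.\ to the fact that the \emph{fibrewise} defect is a constructible function on the family $\mathcal X\to(\PP^n_\kk)^\vee$; this is a relative version of the semicontinuity Theorem~\ref{thm semicont}, and should follow by a generic-flatness analysis of the numerator functions $e\mapsto\nu_{e}$ defining $\fpt$ on the fibres, together with Theorem~\ref{thm semicont} applied to the fibres themselves.

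The addendum is then formal. If $H$ is only required to be very general, apply Corollary~\ref{coroll Bertini} with $\lambda_m=\max_{x\in X}\{\dfpt(\OO_{X,x})\}+\tfrac1m$ for each $m\in\NN$; this produces dense opens $U_m\subseteq(\PP^n_\kk)^\vee$ with $\max_{x\in X\cap H}\{\dfpt(\OO_{X\cap H,x})\}<\lambda_m$ on $U_m$, and any very general $H\in\bigcap_m U_m$ satisfies $\max_{x\in X\cap H}\{\dfpt(\OO_{X\cap H,x})\}\le\max_{x\in X}\{\dfpt(\OO_{X,x})\}$. When $X$ is Gorenstein and normal, a general section $X\cap H$ is again Gorenstein and normal (Bertini for normality, and hypersurface sections of Gorenstein schemes are Gorenstein), so Theorem~\ref{thm constructible} applies uniformly in $H$ and, combined with the constructibility of the fibrewise defect, shows that the values $\{\dfpt(\OO_{X\cap H,x}) : H\in U_1,\ x\in X\cap H\}$ lie in a discrete subset of $\R$; hence for $m\gg0$ the gap $1/m$ drops below the smallest element of that set exceeding $\max_{x\in X}\{\dfpt(\OO_{X,x})\}$, and the single open $U_m$ already witnesses the non-strict inequality for a general $H$.

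The main obstacle is precisely the constructibility of $B$ — a families version of the semicontinuity of $\dfpt$. On the locus where $\pi$ is flat this should be tractable, by comparing the $\nu_e$-functions of the fibres with those of a single ambient regular scheme, but controlling the fibrewise defect over the non-flat — in particular non-Cohen--Macaulay — locus of the family is where I expect the real work to lie, presumably through a Noetherian induction on $X$.
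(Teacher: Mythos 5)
Your proposal addresses only part~(4) of the theorem, i.e.\ Corollary~\ref{coroll Bertini} and Corollary~\ref{coroll Bertini Gor}; I will focus on that.

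Your architecture --- passing through the incidence correspondence $\mathcal X\subseteq X\times(\PP^n_\kk)^\vee$, comparing a fibre with the generic fibre $X_K\cap\mathcal H$ by flat base change, and then spreading out --- is, once unwound, essentially the Cumino--Greco--Manaresi machinery that the paper invokes via Theorem~\ref{thm CGM}. Your flat-descent step corresponds exactly to axiom (A1), which the paper verifies in Theorem~\ref{thm A1} via Corollary~\ref{coroll A1} (flat local map with regular closed fibre preserves $\dfpt$); that part of your argument is sound.

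The genuine gap is the step you yourself flag: you reduce the corollary to the constructibility of $B=\{(x,H):\dfpt(\OO_{X\cap H,x})\ge\lambda\}$ but offer no proof, only a speculation about Noetherian induction. That constructibility is precisely axiom (A2) of the CGM framework, and it is where nearly all the technical content of the paper's Bertini section lives: Proposition~\ref{prop A2} identifies $\Theta_e$ with the vanishing threshold of the cokernel of the evaluation map $\phi_n\colon D^{(n,p^e)}_{S|A}\otimes_S(I^{[p^e]}:_S I)\to S$, applies generic freeness to that cokernel, and then Proposition~\ref{prop1 A2} converts this into a statement about $\dfpt$ of the geometric fibres by feeding in the uniform convergence estimate of Corollary~\ref{uniform convergence fibers} together with the biequidimensionality hypotheses needed to make the global defect behave. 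None of this is a soft Noetherian induction; it is an explicit generic-flatness argument on the differential-operator side. Without a proof of this step your argument is incomplete in exactly the place where the paper does real work.

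There is also a smaller gap in your treatment of the Gorenstein addendum. You claim that Theorem~\ref{thm constructible} plus ``constructibility of the fibrewise defect'' forces the set of values $\{\dfpt(\OO_{X\cap H,x})\}$ to be discrete. Constructibility in a single fibre gives finiteness there, but the union over all $H$ need not be discrete a priori. The paper instead appeals to Sato's ACC theorem \cite[Theorem~4.7]{SatoACC} for normal $\QQ$-Gorenstein rings of bounded embedding dimension: this produces a uniform $\varepsilon>0$ such that no Gorenstein normal local ring with $\edim\le n$ has defect in the open interval $(\dfpt(X),\dfpt(X)+\varepsilon)$, and then one applies Corollary~\ref{coroll Bertini} with $\lambda=\dfpt(X)+\varepsilon$. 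Your argument should be replaced by a direct appeal to that ACC result.
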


In birational geometry it is important to understand the behavior of a measure of singularities under blow-ups. 
Little is known about the behavior of  F-invariants in this context
and there are simple examples showing that they behave pathologically after blowing up the closed point of a local ring \cite{MPST}. In contrast, we are able to establish a degree of control for the defect of F-pure threshold.

\begin{theoremx} \label{MainThmBlowup}
Let $X$ be a locally F-finite and F-pure scheme of characteristic $p > 0$ and 
suppose that $\pi \colon Y \to X$ is the blow-up of a closed point $x \in X$.
Assume that the exceptional divisor $E$ of $\pi$ is arithmetically Gorenstein and globally F-split\footnote{We note that that this property is equivalent to the associated graded ring of the maximal ideal being Gorenstein and F-pure}. Then
$\OO_{X, x}$ is F-pure and 
$\dfpt(Y) \leq \dfpt(\OO_{X,x}) = \dfpt (E)$ (Theorem \ref{thm associated graded}, Proposition \ref{prop: blowup}). 

Moreover, a similar result holds for the F-signature:
if $E$ is arithmetically Gorenstein and globally F-regular, then $\OO_{X,x}$ is strongly F-regular and 
 $\s(E) \leq \s(\OO_{X,x})$ (Theorem \ref{thm associated graded}).
\end{theoremx}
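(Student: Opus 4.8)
\emph{Reduction and translation of hypotheses.} The plan is to pass to the local setting, translate the hypotheses on $E$ into hypotheses on $\gr_\m R$, and then combine Theorem~\ref{thm associated graded} with Proposition~\ref{prop: blowup}. All the invariants involved are local and the blow-up of a closed point localizes, so we may assume $X=\Spec R$ with $(R,\m)$ F-finite local and $x$ the closed point; then $Y=\Proj\big(\bigoplus_{n\ge 0}\m^n\big)$ and the exceptional divisor is $E=\Proj(G)$, where $G=\gr_\m R=\bigoplus_{n\ge 0}\m^n/\m^{n+1}$ is graded so as to be generated in degree one over $G_0=R/\m$. When $\dim R\ge 2$ the ring $G$ is Cohen--Macaulay with $\depth G\ge 2$, hence coincides with its saturation $\bigoplus_n H^0(E,\OO_E(n))$; therefore ``$E$ arithmetically Gorenstein'' means ``$G$ is a Gorenstein graded ring'', ``$E$ globally F-split'' means ``$G$ is F-split'' (global F-splitness of a polarized projective scheme is equivalent to F-splitness of its section ring), and ``$E$ globally F-regular'' means ``$G$ is strongly F-regular'' --- the cases $\dim R\le 1$ being handled directly. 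Thus it suffices to show: if $G$ is Gorenstein and F-split, then $R$ is F-pure, $\dfpt(R)=\dfpt(G)$ and $\dfpt(Y)\le\dfpt(R)$; and if in addition $G$ is strongly F-regular, then $R$ is strongly F-regular and $\s(G)\le\s(R)$.

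\emph{Descent from $\gr_\m R$ to $R$.} This is the content of Theorem~\ref{thm associated graded}, and the natural vehicle is the extended Rees algebra $A=\bigoplus_{n\in\Z}\m^n t^n\subseteq R[t,t^{-1}]$, in which $t^{-1}$ is a homogeneous nonzerodivisor of degree $-1$ with $A/(t^{-1})\cong G$ and $A_{t^{-1}}\cong R[t,t^{-1}]$; so $A$ realizes $R[t,t^{-1}]$ as a flat degeneration of $G$. Since $G$ is Gorenstein and $t^{-1}$ is a nonzerodivisor, $A$ is Gorenstein near $V(t^{-1})$, and $R$ is itself Gorenstein because $G$ is. One then uses that the relevant conditions deform along $A$: F-purity deforms for Gorenstein rings (for a Gorenstein local ring F-purity coincides with F-injectivity, and F-injectivity deforms under the Cohen--Macaulay hypothesis), and strong F-regularity deforms for Gorenstein rings (where it coincides with weak F-regularity, whose deformation is classical). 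Hence $G$ F-split (resp. strongly F-regular) forces $A$, then $R[t,t^{-1}]$, then $R$, to be F-pure (resp. strongly F-regular). For the numerical part, since $\dim R=\dim G$ the equality $\dfpt(R)=\dfpt(G)$ is the same as $\fpt(\m;R)=\fpt(G_+;G)$: the inequality $\fpt(\m;R)\ge\fpt(G_+;G)$ holds because the F-pure threshold does not drop under this degeneration, and the F-purity of $G$ forces the reverse inequality --- this is the subtle half of the descent. Finally $\s(G)\le\s(R)$ follows from the semicontinuous behavior of the F-signature under specialization to the closed fibre of $A$.

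\emph{The blow-up bound.} The inequality $\dfpt(Y)\le\dfpt(R)$ asserts that $\dfpt(\OO_{Y,y})\le\dfpt(R)$ for every $y\in Y$, which we verify in two cases. If $y\notin E$, then $\OO_{Y,y}\cong R_\q$ for the prime $\q=\pi(y)\ne\m$, and since $\m$ lies in the closure of $\{\q\}$, upper semicontinuity (Theorem~\ref{thm semicont}) gives $\dfpt(R_\q)\le\dfpt(R_\m)=\dfpt(R)$. If $y\in E$, choose a local equation $a\in\OO_{Y,y}$ of the Cartier divisor $E$; then $a$ is a nonzerodivisor in the maximal ideal, $\OO_{Y,y}/(a)\cong\OO_{E,y}$, and the $(a)$-adic associated graded ring of $\OO_{Y,y}$ is the polynomial ring $\OO_{E,y}[T]$, which is Gorenstein and F-split. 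Applying Theorem~\ref{thm associated graded} to $\OO_{Y,y}$ with the $(a)$-adic filtration shows that $\OO_{Y,y}$ is F-pure and $\dfpt(\OO_{Y,y})\le\dfpt\big(\OO_{E,y}[T]\big)=\dfpt(\OO_{E,y})$, while $\dfpt(\OO_{E,y})\le\dfpt(G)$ by upper semicontinuity on $\Spec G$ (the vertex is a specialization of every point), and $\dfpt(G)=\dfpt(R)$ by the descent step. Combining the two cases gives $\dfpt(Y)\le\dfpt(R)=\dfpt(\OO_{X,x})$, which together with $\dfpt(\OO_{X,x})=\dfpt(G)=\dfpt(E)$ (interpreting $\dfpt(E)$ via the section ring $G$) and the inequality $\s(G)\le\s(R)$ is the asserted statement.

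\emph{Main obstacle.} The crux is the descent step. F-purity is not deformation-stable in general, so the Gorenstein hypothesis --- entering through the identification of F-purity with F-injectivity and the deformation of the latter, or through a direct graded argument on $A$ --- is doing essential work. Moreover, even once $R$ is known to be F-pure, the truly delicate point is to control the F-pure threshold along the $\m$-adic degeneration precisely enough to obtain the \emph{equality} $\dfpt(R)=\dfpt(\gr_\m R)$ rather than only the one-sided bound $\dfpt(R)\le\dfpt(\gr_\m R)$. Finally, the F-signature comparison requires a separate analysis, since the F-signature is known only to behave semicontinuously --- not to be locally constant --- in such families.
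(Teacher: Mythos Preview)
Your overall architecture is sound, and the blow-up part is close to the paper's argument, but there are genuine gaps in the descent step (Theorem~\ref{thm associated graded}) and a citation error in the blow-up part.

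\textbf{The descent equality.} You correctly isolate $\dfpt(R)=\dfpt(\gr_\m R)$ as the crux, but your degeneration sketch does not prove it: ``the F-pure threshold does not drop under this degeneration'' is exactly the nontrivial direction you need and is not a general fact. The paper does \emph{not} argue via the extended Rees algebra here at all. Instead it passes to $\widehat R=S/I$ with $S$ a power series ring, and proves (Proposition~\ref{prop initial form Gorenstein}) that when $\gr_\m R$ is Gorenstein there exists $f\in S$ with $I^{[p^e]}:_S I=(f_e)+I^{[p^e]}$, $f_e=f^{1+p+\cdots+p^{e-1}}$, and $\IN(I)^{[p^e]}:_T\IN(I)=(\IN(f)^{1+p+\cdots+p^{e-1}})+\IN(I)^{[p^e]}$, of degree $(p^e-1)(n+a)$. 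This explicit control of the Fedder colon and its initial form lets one compute $\Theta_e(I)=(p^e-1)(n+a)-1$ directly via divided-power operators, whence $\fpt(R)=-a(\gr_\m R)=\fpt(\gr_\m R)$. Your deformation-of-F-purity argument recovers only that $R$ is F-pure, not the numerical equality.

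\textbf{The F-signature inequality.} ``Semicontinuous behavior under specialization'' is not a proof; lower semicontinuity of F-signature in families of this generality is not available off the shelf. The paper's argument is concrete: using the same $f_e$ and $g_e=\IN(f_e)$ from Proposition~\ref{prop initial form Gorenstein}, one has $\ell(R/I_e(R))=\ell(S/(\n^{[p^e]}:f_e))=\ell(\gr_\m S/\IN(\n^{[p^e]}:f_e))$ and $\ell(G/I_e(G))=\ell(T/(\eta^{[p^e]}:g_e))$; since $\IN(\n^{[p^e]}:f_e)\subseteq \IN(\n^{[p^e]}):\IN(f_e)$, the inequality $\s(G)\le\s(R)$ follows.

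\textbf{The blow-up bound.} Your argument for $y\in E$ invokes Theorem~\ref{thm associated graded} for the $(a)$-adic filtration, but that theorem is stated only for the $\m$-adic filtration. What you actually need (and what the paper uses, at the level of the extended Rees algebra) is Proposition~\ref{prop hyperplane}: since $\OO_{E,y}$ is Gorenstein and F-pure and $a$ is a nonzerodivisor with $\OO_{Y,y}/(a)\cong\OO_{E,y}$, one gets $\OO_{Y,y}$ Gorenstein and F-pure with $\dfpt(\OO_{Y,y})\le\dfpt(\OO_{E,y})$. With that correction your argument is essentially the paper's, modulo the paper's extra step comparing the homogeneous localization $\cR_{(P)}$ with the ordinary localization $\cR_P$.

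\textbf{A minor point.} Your claim that $G=\gr_\m R$ is automatically Cohen--Macaulay with $\depth G\ge 2$ when $\dim R\ge 2$ is false in general; the associated graded ring of a local ring need not be Cohen--Macaulay. The translation ``$E$ arithmetically Gorenstein $\Leftrightarrow$ $G$ Gorenstein'' is simply taken as the meaning of the hypothesis in the paper (Theorem~\ref{thm associated graded} and Proposition~\ref{prop: blowup} are stated directly for $\gr_\m R$).
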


We also obtain new results about the local behavior of $\dfpt(R)$.

\begin{theoremx}\label{MainThmLocal}
Let $(R, \m)$ be an F-finite F-pure local ring of characteristic $p > 0$. Then
\begin{enumerate}
\item if $\dfpt (R) < 2$ and $R$ is normal then $R$ is strongly F-regular (Proposition~\ref{prop: condition F-regular});
\item if $R$ is Gorenstein and $f$ is a parameter such that $R/(f)$ is F-pure, then $\dfpt (R) \leq \dfpt (R/(f))$ (Proposition~\ref{prop hyperplane}); 
\item if $(A, \m_A) \to (R, \m_R)$ is a flat local map of F-finite F-pure rings with $R/\m_A R$ reduced, then $\dfpt (A) \leq \dfpt (R)$ (Proposition~\ref{prop: ineq flat 1});
\item if $(A,\m_A) \to (R,\m_R)$ is a flat local map of F-finite F-pure rings with $R/\m_A R$ Gorenstein, then $\dfpt (R) \leq \dfpt (R) + \dfpt(R/\m_A R)$ (Theorem \ref{gorenstein flat});
\item in addition, if $R/\m_A R$ is regular, then $\dfpt(A) = \dfpt(R)$ (Corollary \ref{coroll A1}).
\end{enumerate}
\end{theoremx}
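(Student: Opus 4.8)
The plan is to prove the five parts separately; three circles of ideas recur. Part (1) rests on the splitting prime of an F-pure ring together with the Takagi--Watanabe bound $\fpt(R) \le \dim R$. Part (2) rests on the rank-one free module of Frobenius traces carried by a Gorenstein ring, and a Fedder-type description of how such a trace behaves under a hypersurface section. Parts (3)--(5) rest on flat base change for the Frobenius, together with the elementary facts that the F-pure threshold of an ideal depends only on its integral closure and that a principal ideal has F-pure threshold at most $1$.

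\emph{Part (1).} I would argue the contrapositive: assuming $R$ normal and F-pure but not strongly F-regular, I will show $\dfpt(R) \ge 2$. Let $\mathcal{P} = \mathcal{P}(R) = \{x \in R : \varphi(F^e_* x) \in \m \text{ for all } e \ge 1 \text{ and all } \varphi \in \Hom_R(F^e_* R, R)\}$ be the splitting prime. It is a prime ideal, it is compatible (every such $\varphi$ satisfies $\varphi(F^e_* \mathcal{P}) \subseteq \mathcal{P}$), and $R$ is strongly F-regular precisely when $\mathcal{P} = (0)$; so here $\mathcal{P} \ne (0)$. Because $R$ is a normal domain and a DVR is strongly F-regular, $\mathcal{P}$ cannot have height one: localizing at a height-one prime would force the maximal ideal of that DVR to be compatible, which it is not. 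Hence $\height \mathcal{P} \ge 2$. Compatibility of $\mathcal{P}$ makes every $\varphi$ witnessing sharp F-purity of $(R, \m^t)$ descend to $R/\mathcal{P}$, so $\fpt(R) \le \fpt(R/\mathcal{P})$; combining this with $\fpt(R/\mathcal{P}) \le \dim(R/\mathcal{P}) \le \dim R - 2$ gives $\fpt(R) \le \dim R - 2$, i.e. $\dfpt(R) \ge 2$. (One could instead first invoke the upper semi-continuity of $\dfpt$ from Theorem~\ref{MainThmGlobal}(1) to reduce to the case that $R$ is strongly F-regular on the punctured spectrum, where $\mathcal{P} = \m$ and no pair $(R, \m^t)$ with $t > 0$ is F-pure, so $\fpt(R) = 0$.)

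\emph{Part (2).} Since $R$ is F-finite and Gorenstein, $\Hom_R(F^e_* R, R) \cong F^e_* R$ is free of rank one on a trace $\Phi_e$, and the corresponding generator of $\Hom_{R/(f)}(F^e_*(R/(f)), R/(f))$ is the descent of $\Phi_e(F^e_*(f^{p^e-1}(-)))$. Writing $\nu^S(e) = \max\{n : \Psi_e(F^e_* \m_S^n) = S\}$ for a Gorenstein local $(S, \m_S)$ with trace $\Psi_e$, one has $\fpt(S) = \lim_{e \to \infty} \nu^S(e)/p^e$. For $S = R/(f)$ the description of the generator gives $\nu^{R/(f)}(e) = \max\{n : \Phi_e(F^e_*(f^{p^e-1}\m^n)) = R\}$; since $f^{p^e-1}\m^n \subseteq \m^{n + p^e - 1}$, if that last ideal maps onto $R$ under $\Phi_e$ then so does $F^e_* \m^{n + p^e - 1}$, whence $n + p^e - 1 \le \nu^R(e)$. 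Therefore $\nu^{R/(f)}(e) + (p^e - 1) \le \nu^R(e)$ for every $e$; dividing by $p^e$ and letting $e \to \infty$ gives $\fpt(R) \ge \fpt(R/(f)) + 1$, which is exactly $\dfpt(R) \le \dfpt(R/(f))$ because $\dim(R/(f)) = \dim R - 1$.

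\emph{Parts (3)--(5).} Flatness gives $\dim R = \dim A + \dim \bar R$ with $\bar R := R/\m_A R$, so (3) and the first inequality of (4) both amount to $\fpt(\m_R) \le \fpt(\m_A) + \dim \bar R$, the second inequality of (4) amounts to $\fpt(\m_R) \ge \fpt(\m_A) + \fpt(\bar R)$, and (5) follows at once: a regular fiber has $\fpt(\bar R) = \dim \bar R$, so $\dfpt(\bar R) = 0$ and the two inequalities of (4) pinch $\dfpt(R)$ to $\dfpt(A)$. For the upper bound I would choose a minimal reduction $J = (z_1, \dots, z_d)$ of $\m_R$ (so $\fpt(\m_R) = \fpt(J)$) with $z_1, \dots, z_{\dim A}$ extending a minimal reduction of $\m_A$ and $z_{\dim A + 1}, \dots, z_d$ lifting a minimal reduction of $\bar\m$; then by subadditivity of the F-pure threshold along sums of ideals, $\fpt(J) \le \fpt((z_1, \dots, z_{\dim A}); R) + \sum_{i > \dim A} \fpt(z_i; R)$, the first term equals $\fpt(\m_A R; R) \le \fpt(\m_A; A)$ by flat base change (using that the fiber is reduced), and each of the remaining $\dim \bar R$ terms is at most $1$. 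The lower bound in (4) is the hard part: one must combine a Frobenius splitting realizing $\fpt(\m_A^s)$ on the base with a lift to $R$ of a splitting realizing $\fpt(\bar\m^u)$ on the fiber to produce a splitting of $(R, \m_R^{s+u})$, and it is precisely the Gorenstein hypothesis on $\bar R$ that makes such a compatible lift exist (so that the trace/dualizing data of $\bar R$ can be carried up to $R$ over $A$) and that lets one control which power of $\m_R$ is split. This gluing step, rather than any of the preceding reductions, is where I expect the genuine difficulty to lie.
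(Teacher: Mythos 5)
Parts (1) and (2) are essentially the paper's own arguments. For (1) you use the splitting prime and the inequality $\fpt(R)\leq\sdim(R)$, which is exactly Lemma~\ref{lem: compatible} and Proposition~\ref{prop: condition F-regular}; the only cosmetic change is arguing the contrapositive. For (2) you use the adjunction statement that $\Phi_e\circ F^e_*(f^{p^e-1}\cdot{-})$ descends to a \emph{generator} of $\Hom_{R/(f)}(F^e_*(R/(f)),R/(f))$; this is precisely the content of Lemma~\ref{LemmaColonIdeal}, which the paper proves via a Peskine--Szpiro/Robbiano comparison of resolutions. You simply quote it. Your downstream inequality $\nu^{R/(f)}(e)+(p^e-1)\leq\nu^R(e)$ and the resulting $\dfpt(R)\leq\dfpt(R/(f))$ match the paper's $\Theta_e$ computation in Proposition~\ref{prop hyperplane}. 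Part (5) you derive by sandwiching between (3) and (4); the paper instead proves (5) directly (Proposition~\ref{prop A1} and Corollary~\ref{coroll A1}, via $I_e(R)=I_e(A)R+J^{[p^e]}$ for $J$ a lift of a regular system of parameters in the fiber) and then \emph{uses} it to prove (3). Your reversal of the logical order is fine, but only if (3) and (4) genuinely stand on their own.

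This brings me to the issues. For (3), the paper's argument is a two-line reduction: pick a minimal prime $P$ of $\m_A R$ with $\dim R_P=\dim A$, note $\dfpt(R_P)\leq\dfpt(R)$ by semicontinuity (Theorem~\ref{thm semicont}), observe $R_P/\m_AR_P$ is an Artinian reduced local ring, hence a field, and apply the regular-fiber equality (Corollary~\ref{coroll A1}). Your route --- pick a minimal reduction of $\m_R$ adapted to the tower, apply subadditivity of the F-pure threshold along sums, drop $\fpt(z_i)\leq 1$ for the fiber part, and conclude with $\fpt_R(\m_AR)\leq\fpt_A(\m_A)$ --- has one unjustified step: that last inequality, which you call ``flat base change, using that the fiber is reduced,'' is not a standard fact and you give no argument. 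If you try to prove it, the natural route is to localize at a minimal prime $P$ of $\m_AR$, use that $\fpt$ of an ideal weakly increases under localization, and then invoke the regular-fiber result for $A\to R_P$ --- i.e.\ exactly the corollary you are supposed to be proving later as (5). So your argument is not wrong, but it is a detour that conceals the fact that the whole weight is carried by the regular-fiber computation, and as written it reads as if an independent ``flat base change for $\fpt$'' were available. I would not accept it without that step being either proved or replaced by the paper's direct reduction. For (4) you offer only a description of the difficulty (lift a Frobenius splitting of the fiber to $R$, compatibly with one on the base, using that a Gorenstein fiber carries tractable dualizing data). That is an accurate description of what must happen, and it is indeed the content of Ma's argument that the paper follows (Theorem~\ref{gorenstein flat}, via approximately Gorenstein sequences and socle generators in injective hulls), but you have not written a proof; the gluing is the entire theorem.
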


These results are quantitative strengthenings of the corresponding results on the deformation \cite[Theorem~3.4]{FedderFputityFsing}, 
flat descent \cite[Proposition~5.13]{HRFpurity}, 
and flat ascent of F-purity \cite[Proposition~5.14]{MaLC}, \cite[Proposition~2.4]{Hashimoto}, \cite[Proposition~4.8]{SchwedeZhang}.

\subsection{Overview of the paper}
We now provide an outline of the paper in order to highlight some of the technical tools we develop. We start Section~\ref{sect: prelim} by giving preliminaries on F-singularities and F-pure thresholds which we then restate in terms of our invariant. Section~\ref{sect: differential} develops the key technical tool -- a formula for the defect of the F-pure threshold of $R$ using its presentation as a quotient of  a regular ring (Section~\ref{sect: main formula}). 
Specifically, if $(S, \m)$ is an F-finite regular local ring, and $R = S/I$ is F-pure, then 
we show in Theorem~\ref{thm: main formula} that the defect of the F-pure threshold can be computed using the sequence 
$e \mapsto \Theta_e \coloneqq \max \{t \mid I^{[p^e]}:_S I \subseteq \m^t + \m^{[p^e]}\}$.  A similar type of formula is well-known for the F-pure threshold, and can be seen as a refinement of Fedder's criterion for F-purity \cite{FedderFputityFsing}. Note that, when localizing, the left-hand side $I^{[p^e]}:_S I$ of our formula localizes as well, but the right-hand side does not. 

In order to control the right-hand side we employ the second key tool:
a reformulation of $\Theta_e$ in terms of differential operators (Definition~\ref{def: theta}). Namely, in (\ref{ssect: differential}) we show that there are submodules $ D^{(n-1,p^e)}_S$ of $D_S$, the ring of $\ZZ$-linear differential operators on a ring $S$, such that for any prime ideal $\p$ of $S$ the $\p$-primary component of $\p^n + \p^{[p^e]}$ coincides with
$\{s \in S \mid \delta(s) \in \p \text{ for all } \delta \in D^{(n-1,p^e)}_S\}$.
From this we deduce that an individual $\Theta_e$ has good global properties 
which we then extend to the global properties of the defect of the F-pure threshold by proving a uniform convergence result. 

The bulk of Section~\ref{sect: global} is devoted to proving Theorem~\ref{MainThmGlobal}: we start by proving semi-continuity utilizing uniform convergence results of Section~\ref{sect: differential}, and then proceed to develop the global version of the defect of the F-pure threshold. It should be noted, at this point, that for a local ring $(R, \m)$ 
the invariant $\mfpt (R) \coloneqq \edim (R) - \fpt (R)$, where $\edim(R) = \dim_{A/\m}(\m/\m^2)$ is the embedding dimension of $R$, also detects singularity, and is upper semi-continuous. However, its global theory is more problematic -- for instance, it lacks a global definition -- and this led us to choose $\dfpt (R)$ as the main invariant. 

A crucial technical result of Section~\ref{sect: global} is a global version of Fedder's criterion of F-purity stated using the machinery of differential operators. 

\begin{theoremx}\label{ThmFedderC}
Let $S$ be an F-finite regular ring, $I\subseteq S$ be an ideal, and $R=S/I$.
Then 
\begin{enumerate}
\item $R$ is F-pure if and only if $\left(D^{(\dim(S)(p^e-1)S, p^e)}_S \left( I^{[p^e]}:I\right)\right) = S$ for some  integer $e>0$ (Proposition \ref{prop: global Fedder});
\item and, if  $S=\kk[x_1,\ldots,x_n]$ with $\kk$  an F-finite field,
then $R$ is geometrically F-pure if and only if $\left(D^{(n(p^e-1), p^e)}_{S|\kk}\left( I^{[p^e]}:I \right)\right)= S$ for some integer $e>0$  (Proposition \ref{prop: geo Fedder}).
\end{enumerate}
\end{theoremx}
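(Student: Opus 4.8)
The plan is to reduce both parts to the pointwise form of Fedder's criterion and then to convert the condition ``$I^{[p^e]}:I\not\subseteq\m^{[p^e]}$'' into a statement about the differential modules $D^{(\bullet,p^e)}_S$, using the identification established in Section~\ref{sect: differential} between the $\p$-primary component of $\p^n+\p^{[p^e]}$ and the operators in $D^{(n-1,p^e)}_S$. Throughout write $J_e = I^{[p^e]}:_S I$.

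For part (1) I would first recall two facts from the preliminaries (Section~\ref{sect: prelim}): (a) since $F_*R$ is a finite $R$-module, $R$ is F-pure if and only if the evaluation map $\Hom_R(F_*R,R)\to R$ is surjective, and surjectivity can be checked after localizing at each maximal ideal, so $R=S/I$ is F-pure if and only if $S_\m/IS_\m$ is F-pure for every maximal ideal $\m$ of $S$; and (b) Fedder's lemma in its iterated form \cite{FedderFputityFsing}: for $\m\supseteq I$, with $S_\m$ F-finite regular local, $S_\m/IS_\m$ is F-pure if and only if $J_eS_\m\not\subseteq(\m S_\m)^{[p^e]}$, and if this holds for one $e\ge 1$ it holds for all $e\ge 1$ (for $\m\not\supseteq I$ the quotient is the unit ring and $J_eS_\m=S_\m$). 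Since colon ideals commute with localization, $J_eS_\m=(IS_\m)^{[p^e]}:IS_\m$, and since $\m^{[p^e]}$ is $\m$-primary, $J_eS_\m\subseteq\m^{[p^e]}S_\m$ is equivalent to the global containment $J_e\subseteq\m^{[p^e]}$.

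The crucial step is the passage to differential operators. Fix $e$ and set $n=\dim(S)(p^e-1)+1$. For every maximal ideal $\m$ one has $\m^{\dim(S_\m)(p^e-1)+1}S_\m\subseteq(\m S_\m)^{[p^e]}$ and $\dim(S_\m)\le\dim(S)$, hence $\m^{n}S_\m\subseteq\m^{[p^e]}S_\m$; being a containment into an $\m$-primary ideal, this upgrades to $\m^{n}\subseteq\m^{[p^e]}$ globally, so $\m^{n}+\m^{[p^e]}=\m^{[p^e]}$, an ideal equal to its own $\m$-primary component. The description from Section~\ref{sect: differential} then gives
\[
\m^{[p^e]}=\{\,s\in S\mid \delta(s)\in\m\ \text{for all}\ \delta\in D^{(\dim(S)(p^e-1),\,p^e)}_S\,\},
\]
so that $J_e\subseteq\m^{[p^e]}$ if and only if $\bigl(D^{(\dim(S)(p^e-1),p^e)}_S(J_e)\bigr)\subseteq\m$. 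Combining this with the previous paragraph, $R$ is F-pure if and only if for every maximal ideal $\m$ the ideal $\bigl(D^{(\dim(S)(p^e-1),p^e)}_S(J_e)\bigr)$ is not contained in $\m$, i.e.\ equals $S$; if $R$ is F-pure this holds for all $e\ge 1$, and conversely it is enough that it hold for some $e$. This is part (1).

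For part (2) I would pass to the perfect closure $\kk'=\kk^{1/p^\infty}$ and set $S'=S\otimes_\kk\kk'=\kk'[x_1,\dots,x_n]$, $I'=IS'$, $R'=S'/I'$; then $S\to S'$ is faithfully flat and $S'$ is F-finite (as $\kk'$ is perfect) and regular of dimension $n$. By the standard reduction for F-finite base fields, $R$ is geometrically F-pure over $\kk$ if and only if $R'$ is F-pure. Applying part (1) to $R'$, and using $(I')^{[p^e]}:I'=J_eS'$, this means $R'$ is F-pure if and only if $\bigl(D^{(n(p^e-1),p^e)}_{S'}(J_eS')\bigr)=S'$ for some $e$. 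Since $\kk'$ is perfect, the absolute differential operators on $S'$ coincide with the $\kk'$-linear ones, and both $D_{S'|\kk'}$ and $D_{S|\kk}$ are free, over $S'$ and $S$ respectively, on the Hasse derivatives in $x_1,\dots,x_n$; the same comparison persists for the submodules $D^{(m,p^e)}$ by their explicit description, so $D^{(m,p^e)}_{S'}=D^{(m,p^e)}_{S|\kk}\otimes_\kk\kk'$. Hence $\bigl(D^{(n(p^e-1),p^e)}_{S'}(J_eS')\bigr)=\bigl(D^{(n(p^e-1),p^e)}_{S|\kk}(J_e)\bigr)S'$, and faithfully flat descent along $S\to S'$ shows the left side equals $S'$ if and only if $\bigl(D^{(n(p^e-1),p^e)}_{S|\kk}(J_e)\bigr)=S$, which is part (2). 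The main obstacle is really the translation step of part (1), but that is exactly what Section~\ref{sect: differential} provides, so the work is in packaging it with the globalization; in part (2) the delicate point is the base-change identity $D^{(m,p^e)}_{S'}=D^{(m,p^e)}_{S|\kk}\otimes_\kk\kk'$, which is why one replaces $\kk$ by its perfect closure rather than, say, by $\kk^{1/p}$.
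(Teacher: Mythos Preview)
Your proof is correct and follows essentially the same strategy as the paper. In part~(1) you work directly in the localization $S_\m$ and invoke Proposition~\ref{prop:Analogue of ZN} to identify $\m^{[p^e]}$ with $\m^{\{\dim(S)(p^e-1)+1,\,p^e\}}$, whereas the paper passes through the completion $\widehat{S_\m}$ and uses Lemma~\ref{lemma: dif pow maximal series}; these are the same argument up to the choice of auxiliary local ring.

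The only point worth noting is your choice in part~(2) to base-change to the perfect closure $\kk' = \kk^{1/p^\infty}$ rather than to $\overline{\kk}$ as the paper does. Both fields are perfect, so the key identity $D^{(m,p^e)}_{S'} = D^{(m,p^e)}_{S'|\kk'} = D^{(m,p^e)}_{S|\kk}\otimes_\kk \kk'$ holds in either case, and the rest of the argument goes through unchanged. The equivalence you invoke, that $R$ is geometrically F-pure if and only if $R\otimes_\kk \kk^{1/p^\infty}$ is F-pure, is true but is one step removed from the definition: one direction is descent along the faithfully flat map $R_{\kk^{1/p^\infty}}\to R_{\overline\kk}$, and the other uses that $\kk^{1/p^\infty}\to\overline\kk$ is ind-\'etale (so Frobenius base-changes and a splitting on $R_{\kk^{1/p^\infty}}$ extends). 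The paper's route via $\overline\kk$ avoids this small detour since $R_{\overline\kk}$ being F-pure is essentially the definition of geometric F-purity.
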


Theorem~\ref{ThmFedderC} should be compared with Fedder's work \cite[Theorem 1.13]{FedderFputityFsing}, where he used differential operators to prove that the F-pure locus of a finitely generated $\kk$-algebra is open in the max-spectrum. Theorem~\ref{ThmFedderC} works for every F-finite ring, and can be used to compute the F-pure locus in $\Spec(R)$. We establish a Bertini-type theorem for the defect of the F-pure threshold using the axiomatic framework of Cumino, Greco, and Manaresi \cite{CGM}. 

In Section~\ref{sect: local} we complete the proofs of the remaining parts of Theorem~\ref{MainThmLocal}, employ our characterization of the defect of F-pure threshold using the differential operators to study blow-up algebras and to produce Theorem~\ref{MainThmBlowup}, and finish the section by deriving several novel results for log-canonical thresholds in equal characteristic zero via reduction to prime characteristic techniques (\ref{ssect: modp}). 

The paper ends with various remarks in Section~\ref{sect: last}. 
Among them, we provide an example showing that one cannot obtain by the same recipe a semi-continuous invariant using the diagonal F-threshold $c^\m(\m)$, and an 
example showing that the $\dfpt$ and $\mfpt$ may attain maxima at different points of a scheme.

\subsection*{Acknowledgments}
We are indebted to Karen E. Smith for suggesting Corollary~\ref{cor: thanks Karen} and discussions in \ref{sub: thanks Karen}, to Shunsuke Takagi for suggestions on reductions mod $p > 0$,  and to Kevin Tucker for bringing Liu's work \cite{Liu} to our attention. We thank Florian Enescu and Linquan Ma for comments. In particular, we thank Ma for pointing out an inaccuracy in the published version of Lemma \ref{LemmaACC}. We also thank the anonymous referee for providing useful suggestions.

The first and second authors thank INdAM-GNSAGA for partially supporting a visit of the second author to the University of Genova. The first author was partially supported by the MIUR Excellence Department Project CUP D33C23001110001, and by INdAM-GNSAGA. The second author thanks SECIHTI, Mexico, for its support with Grants CBF 2023-2024-224 and CF-2023-G-33. The second author also thanks the  Centre de Recerca Matemàtica for the Lluis Santaló Fellowship that allowed him to visit Spain in 2022 and 2023.

The last author was supported by the State Research Agency of Spain  through grants PID2021-125052GA-I00 funded by MICIU/AEI/10.13039/501100011033 and FEDER, UE, RYC2020-028976-I funded by MICIU/AEI/10.13039/501100011033 and EI ESF "ESF Investing in your future”,  and EUR2023-143443 funded by MCIN/AEI/10.13039/501100011033 and the European Union NextGenerationEU/PRTR 
and also by the Basque Government through the BERC 2022-2025 program
and by the Ministry of Science and Innovation: BCAM Severo Ochoa accreditation
CEX2021-001142-S/MICIN/AEI/10.13039/501100011033.

\section{The defect of the F-pure threshold}\label{sect: prelim}
Throughout this manuscript, unless otherwise stated, all rings are commutative Noetherian rings of prime characteristic $p$. We use $\dim (R)$ to denote the Krull dimension of a ring $R$ and, if $(R,\m)$ is local, we use $\edim (R) = \dim_{A/\m}(\m/\m^2)$ to denote its embedding dimension. 

For an integer $e>0$, we let $F^e\colon R \to R$ denote the $e$-th iteration of the Frobenius map, that is, the ring endomorphism defined by $f \mapsto f^{p^e}$. We denote by $F^e_*(R)$ the module structure on $R$ induced by restriction of scalars via $F^e$. Given $f \in R$, we denote by $F^e_*(f)$ an element $f$ viewed through the Frobenius as en element of $F^e_*(R)$. Given $f,g \in R$ we then have $F^e_*(f)+F^e_*(g) = F^e_*(f+g)$ and $f \cdot F^e_*(g) = F^e_*(f^{p^e}g)$. 

When $R$ is reduced, we can identify $F^e_*(R)$ with $R^{1/p^e}$, the ring of $p^e$th roots of elements of $R$ inside an algebraic closure of the total ring of fractions of $R$. In this way, $F^e$ can be identified with the natural inclusion $R \hookrightarrow R^{1/p^e}$, and the $R$-module structure of $F^e_*(R)$ with that on $R^{1/p^e}$ induced by such an inclusion.

\subsection{Preliminaries on F-finite rings}

We recall that a ring of positive characteristic is called \emph{F-finite} if the Frobenius map is a finite morphism. Equivalently, $F^e_*(R)$ (or $R^{1/p^e}$ in the reduced case) is a finitely $R$-module for some (equivalently, for all) $e>0$. It was shown by Kunz \cite{F-finExc} that an F-finite ring is excellent and Gabber \cite{Gabber} showed that an F-finite ring is a quotient of a regular F-finite ring. Proposition \ref{prop: equidimensional Gabber} refines this slightly for a latter use. First, we recall the following notions.
\begin{definition} A ring $R$ of finite Krull dimension is said to be
\begin{itemize}
\item \emph{biequidimensional} if all maximal chains of prime ideals have equal length,
\item \emph{equidimensional} if $\dim(R/\p) = \dim(R)$ for all $\p \in \Min (R)$,
\item \emph{coequidimensional} if $\dim(R_\m) = \dim(R)$ for all maximals ideal $\m$ of $R$.
\end{itemize}
\end{definition}
If $R$ is biequidimensional, then it is both equidimensional and coequidimensional.

\begin{proposition}\label{prop: equidimensional Gabber}
Let $R$ be a biequidimensional F-finite ring. Then there is a presentation $R = S/I$ where $S$ is an F-finite coequidimensional regular ring. 
\end{proposition}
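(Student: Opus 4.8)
The plan is to take a presentation coming from Gabber's theorem and upgrade it, using biequidimensionality to make the defining ideal have pure height in a suitable localization of the ambient regular ring.

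\emph{Reductions.} Since $R$ is Noetherian, $\Spec(R)$ has finitely many connected components and correspondingly $R=\prod_k R_k$; each $R_k$ is a direct factor of $R$, hence F-finite and again biequidimensional of dimension $d:=\dim(R)$, because a maximal chain of primes of $\Spec(R)$ lies in a single $\Spec(R_k)$. It therefore suffices to present each $R_k$ as a quotient of an F-finite regular coequidimensional \emph{domain} $A_k$, say of dimension $n_k$: setting $N:=\max_k n_k$ and replacing $A_k$ by the power series ring $A_k\ps{z_1,\dots,z_{N-n_k}}$ yields an F-finite regular domain (F-finiteness because $A_k=\sum_i A_k^p a_i$ forces $A_k\ps{z}=\sum_i\sum_{0\le j<p}A_k\ps{z}^p\, a_i z^j$) that is coequidimensional of dimension $N$ (its maximal ideals are $\m A_k\ps{\underline z}+(\underline z)$ with $\m$ maximal in $A_k$, of height $\height\m+N-n_k$) and still surjects onto $R_k$; then $S:=\prod_k A_k\ps{\underline z}$ is F-finite, regular, coequidimensional of dimension $N$, and surjects onto $\prod_k R_k=R$.

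\emph{The key point.} Fix $k$ and write $R=T/J$ with $T$ F-finite regular (Gabber) and $\Spec(R)$ connected; decomposing $T$ into its finitely many regular domain factors, exactly one of them meets $\Spec(R)\subseteq\Spec(T)$, so we may assume $T$ is an F-finite regular \emph{domain}. I claim every minimal prime of $J$ has the same height in $T$. Let $\mathfrak{q}_1,\dots,\mathfrak{q}_r$ be these minimal primes, so the $\mathfrak{p}_i:=\mathfrak{q}_i/J$ are the minimal primes of $R$; since $\mathfrak{p}_i$ is minimal and $R$ is biequidimensional, $T/\mathfrak{q}_i=R/\mathfrak{p}_i$ is biequidimensional of dimension $d$ (a maximal chain of primes starting at the minimal prime $\mathfrak{p}_i$ is already a maximal chain of $\Spec(R)$). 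If $V(\mathfrak{q}_i)\cap V(\mathfrak{q}_j)\ne\emptyset$, pick $\mathfrak{r}$ in it; localizing the regular --- hence Cohen-Macaulay --- domain $T$ at $\mathfrak{r}$ gives
\[
\height_T\mathfrak{q}_i+\height_{T/\mathfrak{q}_i}(\mathfrak{r}/\mathfrak{q}_i)=\height_T\mathfrak{r},
\]
while biequidimensionality of $T/\mathfrak{q}_i$ gives $\height_{T/\mathfrak{q}_i}(\mathfrak{r}/\mathfrak{q}_i)=d-\dim(T/\mathfrak{r})$. Hence $\height_T\mathfrak{q}_i=\height_T\mathfrak{r}-d+\dim(T/\mathfrak{r})$ is independent of $i$, so $\height_T\mathfrak{q}_i=\height_T\mathfrak{q}_j$ whenever the irreducible sets $V(\mathfrak{q}_i),V(\mathfrak{q}_j)$ meet; as $\Spec(R)=\bigcup_i V(\mathfrak{q}_i)$ is connected, all the $\mathfrak{q}_i$ have a common height $c$.

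\emph{Conclusion.} Localize $T$ at the multiplicative set $\Sigma:=T\setminus\bigcup_{\mathfrak{q}\supseteq J}\mathfrak{q}$ to get $A_k:=\Sigma^{-1}T$, still an F-finite regular domain, with $A_k/JA_k=\Sigma^{-1}R=R$ because every element of $\Sigma$ is a unit modulo $J$. One checks that every maximal ideal $\mathfrak{n}$ of $A_k$ contains $JA_k$ (if not, $\mathfrak{n}+J$ is still proper, producing a strictly larger prime disjoint from $\Sigma$), so $\mathfrak{n}/JA_k$ is a maximal ideal of $R$, of height $d$ by biequidimensionality, while $JA_k$ has pure height $c$ by the key point; thus in the Cohen-Macaulay local ring $(A_k)_{\mathfrak{n}}$ one gets $\dim(A_k)_{\mathfrak{n}}=\height\big(J(A_k)_{\mathfrak{n}}\big)+\dim R_{\mathfrak{n}/JA_k}=c+d$, independently of $\mathfrak{n}$. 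Hence $A_k$ is coequidimensional of dimension $n_k:=c+d$, and $R=A_k/JA_k$ is the presentation of the connected factor needed in the first step; reassembling the factors finishes the proof.

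I expect the key point to be the main obstacle, as it is exactly where biequidimensionality of $R$ (and not merely equidimensionality) is used: without purity of the height of $J$ the final localization $A_k$ would fail to be coequidimensional, since making a regular domain coequidimensional ``along $V(J)$'' genuinely needs both that the minimal primes of $J$ have equal height and that the maximal ideals of $R$ over $J$ have equal height.
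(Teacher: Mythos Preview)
Your proof is correct and follows the same essential strategy as the paper: present $R$ as a quotient of an F-finite regular domain, exploit biequidimensionality to control the heights of the minimal primes of the defining ideal, then localize along $V(J)$ and pad with power series variables to force coequidimensionality. The main organizational difference is that you decompose $R$ into connected components up front and prove that on each component the defining ideal has pure height, whereas the paper instead decomposes the defining ideal $I=\bigcap_j I_j$ by height and shows the pieces $I_j$ are pairwise comaximal (which is the same fact read the other way: different-height components lie in different connected components of $\Spec(R)$). Your key step---that two minimal primes $\mathfrak q_i,\mathfrak q_j$ whose varieties meet must have equal height in the regular domain $T$---is exactly the contrapositive of the paper's claim that $I_j+L_j=S$. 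One small point: your parenthetical justification that every maximal ideal of $A_k=\Sigma^{-1}T$ contains $JA_k$ could be spelled out a bit more; the reason $\mathfrak n'+J\subsetneq T$ (where $\mathfrak n'=\mathfrak n\cap T$) is that otherwise some $p\in\mathfrak n'$ satisfies $p\equiv 1\bmod J$, forcing $p\in\Sigma$, contradicting $\mathfrak n'\cap\Sigma=\emptyset$.
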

\begin{proof}
We start with any presentation $R\cong S/I$ where $S$ is an F-finite regular ring. Any regular ring is a finite product $S_1 \times \ldots \times S_n$ of regular domains, all of them F-finite in our assumptions. Thus, we have that $I=J_1 \times \ldots \times J_n$ for some ideals $J_i \subseteq S_i$. Note that $S$ is coequidimensional if and only if each factor $S_i$ is coequidimensional of dimension $\dim(S) = \max\{\dim(S_i) \mid i=1,\ldots,n\}$. For each $i \in \{1,\ldots,n\}$ we let $d_i=\dim(S) -\dim(S_i)$, and we consider the F-finite regular domain $S_i'=S_i\ps{x_{i,1},\ldots,x_{i,d_i}}$. Moreover, we let $J_i' = J_iS_i' + (x_{i,1},\ldots,x_{i,d_i})$. Note that $S_i/J_i \cong S_i'/J_i'$, and therefore $S'/J' \cong R$ where $S'=S_1' \times \ldots S_n'$ and $J'=J_1' \times \ldots \times J_n'$. With this reduction, we have that $S'$ is coequidimensional if and only if $S_i'$ is coequidimensional for each $i\in\{1,\ldots,n\}$. This allows us to reduce to the case in which the F-finite regular ring is a domain. For simplicity, we  make this assumption directly on $S$. We let $h=\height(I)$, $H=\bigheight(I)$, and we write $I = \bigcap_{j=h}^H I_j$, where $I_j$ is the intersection of the components of $I$ of pure height $j$. For any $j \in \{h,\ldots,H\}$ let $L_j = \bigcap_{i \ne j} I_i$, and observe that we must have $I_j+L_j= S$. In fact, if not, there would be a maximal ideal $\m$ of $S$ containing the sum. Note that $\Min(I) = \Min(I_j) \cup \Min(L_j)$, otherwise $R_\m$ would not be equidimensional because $\dim((S/Q)_\m) \ne \dim((S/Q')_\m)$ for any $Q \in \Min(I_j)$ and $Q' \in \Min(L_j)$. 

It follows from the observation above that $S/I \cong \prod_{j=h}^H S/I_j$. We now claim that for each $j$ we can find a coequidimensional F-finite regular domain $T_j$ and an ideal $I_j' \subseteq T_j$ such that $T_j/I_j' \cong S/I_j$. We let $\mathcal M_j$ be the set of maximal ideals of $S$ which contain $I_j$, and we let $W_j = S \smallsetminus \bigcup_{\m \in \mathcal M_j} \m$. Our previous observation shows that $\mathcal M_j \cap \mathcal M_{j'} = \emptyset$ for $j \ne j'$. Observe that for any $\m \in \mathcal M_j$ we must have $\dim(S_\m) = \dim((S/I_j)_\m) + \height(I_j S_\m) = \dim((S/I)_\m) + \height(I_j) = \dim(R) + j$, where for the second equality we use that the sets $\mathcal M_j$ are pairwise disjoint to conclude that $(S/I_j)_\m \cong (S/I)_\m$, while for the third equality we use that $R$ is coequidimensional. This shows that there exists a constant $t_j \geq 0$ only depending on $j=h,\ldots,H$ such that $t_j = \dim(S) - \dim(S_\m)$ for all $\m \in W_j$. In particular, we have that $t_j = \dim(S) - \dim(S_{W_j})$. We finally let $T_j = S_{W_j}\ps{y_{j,1},\ldots,y_{j,t_j}}$, which is an F-finite regular domain such that $\dim((T_j)_\m) = \dim(S)$ for all $\m \in \Max(T_j)$. In particular, $S'=T_h \times \ldots \times T_H$ is an F-finite coequidimensional regular ring of dimension $\dim(S)$. We let $I_j' = I_jT_j+(y_{j,1},\ldots,y_{j,t_j})$, and observe that $T_j/I_j' \cong S_{W_j}/I_jS_{W_j} \cong S/I_j$. Therefore if we let $I'=I_h' \times \ldots \times I_H'$ we finally have that $R \cong S/I \cong S'/I'$, as desired.
\end{proof}

\subsection{F-pure and strongly F-regular singularities}

\begin{definition} \label{DefFPure} $R$ is said to be {\it F-pure} if the Frobenius map $F\colon R \to R$ is a pure ring map. That is, if the map $R \otimes_R M \to R \otimes_R F_*(R)$ is injective for every $R$-module $M$. The ring $R$ is called {\it F-split} if the map $R \to F_*(R)$ splits as a map of $R$-modules.
\end{definition} 

If $R$ is F-split, then it is F-pure. The converse is false in general, even for regular rings \cite{DattaMurayamaTate}, but it holds if $R$ is either F-finite \cite[Corollary 5.3]{HRFpurity}, or essentially of finite type over a complete local ring \cite[Theorem 3.1.1]{DattaMurayamaSolid}.

We recall the following classical result of Fedder.

\begin{theorem}[{\cite[Lemma 1.6 and Proposition 1.7]{FedderFputityFsing}}]\label{FedderCriterion}
Let $(S,\m)$ be an F-finite regular local ring, $I\subseteq S$ an ideal and $R=S/I$.
There is a bijective correspondence
$(I^{[p^e]}:_SI)/I^{[p^e]} \cong \Hom_R (F_*^e(R), R)$ given by 
$f  \mapsto \Phi(F_*^e(f \cdot -))$, where $\Hom_S (F_*^e(S), S) \cong F_*^e (S) \cdot \Phi$. As a consequence, $R$ is F-pure if and only if 
$I^{[p^e]}:_S I\not\subseteq \m^{[p^e]}.$
\end{theorem}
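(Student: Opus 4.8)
The plan is to run Fedder's original argument, which reduces to one structural fact about the regular ring $S$ and is otherwise purely formal dual-basis bookkeeping.

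\textbf{The structural input.} Since $S$ is F-finite and regular local, $F^e_*S$ is a finite free $S$-module (Kunz), and one knows that $\Hom_S(F^e_*S,S)$ is free of rank one as a module over $F^e_*S$, where $g\in F^e_*S$ acts on $\phi$ by $(g\phi)(-)=\phi(g\cdot -)$. I would justify this by completing: $\widehat S\cong\kk\ps{x_1,\ldots,x_d}$ (the ring is equicharacteristic by Cohen, and F-finite), where a $p$-basis of $\kk$ together with the monomials $x^\beta$ with $0\le\beta_i\le p^e-1$ gives an explicit $S$-basis of $F^e_*\widehat S$ whose dual basis vector at $F^e_*(1)$ visibly generates $\Hom_{\widehat S}(F^e_*\widehat S,\widehat S)$ over $F^e_*\widehat S$; finite generation together with faithful flatness of $S\to\widehat S$ (exactly in the style of Lemma~\ref{LemmaCompletion D^{(n,e)}}) then transports this down to $S$. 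Fix such a generator $\Phi$. Since an F-finite regular local ring is F-split ($F^e_*(1)\notin\m\,F^e_*S$, because $1\notin\m^{[p^e]}$, so $F^e_*(1)$ is part of a free basis), there is $\psi\colon F^e_*S\to S$ with $\psi(F^e_*(1))=1$; writing $\psi=\Phi(F^e_*(g_0)\cdot -)$ gives $1=\Phi(F^e_*(g_0))$, hence $\Phi(F^e_*S)=S$.

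\textbf{The key translation.} Let $q\colon F^e_*S\twoheadrightarrow F^e_*R$ be the natural surjection; its kernel is $F^e_*(I)=\{F^e_*(a)\mid a\in I\}$, and one checks directly that $F^e_*(I^{[p^e]})=I\cdot F^e_*S$ as submodules of $F^e_*S$. I claim that for $f\in S$ the $S$-linear map $\Phi(F^e_*(f\cdot -))$ sends $F^e_*(I)$ into $I$ if and only if $f\in I^{[p^e]}:_SI$. Indeed, if $fI\subseteq I^{[p^e]}$ and $a\in I$, then $F^e_*(fa)\in F^e_*(I^{[p^e]})=I\cdot F^e_*S$, so $\Phi(F^e_*(fa))\in I\,\Phi(F^e_*S)\subseteq I$. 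Conversely, suppose $\Phi(F^e_*(fa))\in I$ for all $a\in I$. For $g\in S$ and $a\in I$ we have $ga\in I$, so $\Phi\bigl(F^e_*(g)\cdot F^e_*(fa)\bigr)=\Phi\bigl(F^e_*(f\cdot ga)\bigr)\in I$; since $\Hom_S(F^e_*S,S)=F^e_*S\cdot\Phi$ and $F^e_*S$ is free, the functionals $\Phi(F^e_*(g)\cdot -)$ include a dual basis, so their simultaneous vanishing modulo $I$ forces $F^e_*(fa)\in I\cdot F^e_*S=F^e_*(I^{[p^e]})$, i.e. $fa\in I^{[p^e]}$. The same argument applied with $a=1$ allowed (i.e. testing on all of $S$) shows $\Phi(F^e_*((f-f')\cdot -))$ has image inside $I$ if and only if $f-f'\in I^{[p^e]}$.

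\textbf{Assembling the bijection and the F-purity statement.} Any $\psi\in\Hom_S(F^e_*S,S)$ with $\psi(F^e_*(I))\subseteq I$ descends along $q$ to $\bar\psi\in\Hom_R(F^e_*R,R)$; conversely, given $\bar\psi\in\Hom_R(F^e_*R,R)=\Hom_S(F^e_*R,S/I)$, lift $\bar\psi\circ q\colon F^e_*S\to S/I$ along $S\twoheadrightarrow S/I$ using projectivity of $F^e_*S$ to get $\psi\colon F^e_*S\to S$ with $\psi(F^e_*(I))\subseteq I$ and $\psi=\Phi(F^e_*(f\cdot -))$, $\psi$ descending to $\bar\psi$; by the key translation $f\in I^{[p^e]}:_SI$, and it is unique modulo $I^{[p^e]}$. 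This gives the bijection $(I^{[p^e]}:_SI)/I^{[p^e]}\xrightarrow{\ \sim\ }\Hom_R(F^e_*R,R)$, $f\mapsto\Phi(F^e_*(f\cdot -))$. For the consequence: since $R$ is F-finite it is F-pure iff F-split iff $R\to F^e_*R$ splits for some (equivalently every) $e>0$, and for the local ring $R$ this happens iff some $\bar\psi\in\Hom_R(F^e_*R,R)$ has $\bar\psi(F^e_*(1))\notin\m_R$. Under the bijection $\bar\psi(F^e_*(1))=\overline{\Phi(F^e_*(f))}$, and for any $f$ the set $\{\Phi(F^e_*(gf))\mid g\in S\}$ equals the set of values of all functionals on $F^e_*(f)$, i.e. the ideal generated by the coordinates of $F^e_*(f)$ in the free basis, which is the whole ring exactly when $F^e_*(f)\notin\m\,F^e_*S$, i.e. when $f\notin\m^{[p^e]}$. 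Hence some $f\in I^{[p^e]}:_SI$ yields a splitting (after rescaling) iff $I^{[p^e]}:_SI\not\subseteq\m^{[p^e]}$, which is the asserted criterion.

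I expect the first paragraph to be the main obstacle: once one knows $\Hom_S(F^e_*S,S)$ is cyclic over $F^e_*S$ with a generator whose image is all of $S$, the rest is formal, but this cyclicity is precisely where F-finiteness and regularity of $S$ are used, and making the completion/descent rigorous (rather than just citing it) requires the same finite-generation bookkeeping as the lemmas of Section~\ref{ssect: differential}.
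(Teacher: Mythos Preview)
The paper does not supply a proof of this theorem; it is stated and attributed to Fedder \cite[Lemma 1.6 and Proposition 1.7]{FedderFputityFsing}, so there is no in-paper argument to compare against. Your reconstruction is essentially Fedder's original proof and is correct in outline and in almost every detail.

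One small slip: in your ``structural input'' paragraph you say the dual basis vector \emph{at $F^e_*(1)$} generates $\Hom_{\widehat S}(F^e_*\widehat S,\widehat S)$ over $F^e_*\widehat S$. It does not. The functional $\pi_1$ dual to $F^e_*(1)$ satisfies $\pi_1(F^e_*(x^\beta)\cdot F^e_*(x^\alpha))\in\m$ whenever $\beta\neq 0$, so $F^e_*S\cdot\pi_1$ misses every other dual basis vector. The correct generator $\Phi$ is the dual basis vector at the \emph{top} monomial $F^e_*(\lambda^{(p^e-1,\ldots,p^e-1)}x^{(p^e-1,\ldots,p^e-1)})$; then $\Phi(F^e_*(x^{(p^e-1,\ldots,p^e-1)-\alpha}\lambda^{(p^e-1,\ldots,p^e-1)-\beta})\cdot -)$ recovers each dual basis vector $\pi_{\alpha,\beta}$. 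With this corrected choice of $\Phi$ (which still satisfies $\Phi(F^e_*S)=S$), the rest of your argument goes through verbatim.
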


\begin{definition} An F-finite domain $R$ is said to be {\it strongly F-regular} if, for every $0\ne c \in R$, there exists $e>0$ and an $R$-linear map $\varphi\colon F^e_*(R) \to R$ such that $\varphi(F^e_*(c)) = 1$. 
\end{definition}

The property of being F-split can be detected and studied via the \emph{splitting ideals}.

\begin{definition}[{\cite{AE}, \cite{YaoFsig}}]
Let $(R,\m)$ be an F-finite local ring. We let
\[ I_e(R) \coloneqq \left\{ f\in R  \mid \psi(F^e_*(f))\in \m \text{ for all $R$-linear maps } \psi\colon F^e_*(R)\to R  \right\}. \] 
\end{definition}

\begin{remark} \label{rem: Fedder} If $(S,\m)$ is a regular local ring, and $R=S/I$, then it follows from the work of Fedder that $I_e(R) = \left(\m^{[p^e]}:_S(I^{[p^e]}:_SI)\right)/I$ for all $e \geq 0$ \cite{FedderFputityFsing}. 
\end{remark} 
By definition of $I_e(R)$, if $f \notin I_e(R)$ for some $e$, then there exists a map $\psi \colon F^e_*(R) \to R$ that splits the $R$-module inclusion $F^e_*(f) \cdot R \subseteq F^e_*(R)$. Hence, $R$ is F-pure if and only if $I_e (R) \neq R$ for all (some) $e >0$. It also follows from the definition that 
$I_{e}(R)^{[p]} \subseteq I_{e+1}(R)$ and that 
$\psi (F_* I_{e+1}(R)) \subseteq I_e(R)$ 
for every $\psi \colon F_*(R) \to R$ and $e>0$. 

The ideal $\cP(R) = \bigcap_{e >0} I_e(R)$ is prime and it is called the \emph{splitting prime} of $R$ \cite{AE}. Moreover, $\dim(R/\cP(R)) = \sdim(R)$ is called \emph{splitting dimension} of $R$ \cite{AE, BlickleSchwedeTucker}. In this setup, the three conditions are equivalent: $R$ is strongly F-regular, $\cP(R) = 0$, and $\sdim(R)=\dim(R)$.

\begin{definition}[{\cite{KarlCentersFpurity}}] \label{def: compatible}
Suppose that $R$ is an F-pure ring.
 An ideal $J \subseteq R$ is said to be \emph{compatible} if 
$\phi(F^e_*(J)) \subseteq J$
 for all $e >0$ and all $\phi \in \Hom_R(F^e_*(R),R)$. 
\end{definition}

We recall that  $\cP(R) $ is the largest compatible proper ideal of $R$ \cite{AE}. We also collect in the following remark a useful characterization of compatible ideals, making use Fedder's Criterion (Theorem~\ref{FedderCriterion}).

\begin{remark} \label{compatibleFedder}
Let $(S,\n)$ be a F-finite regular local ring, $I\subseteq S$ be an ideal, and $R=S/I$. Let $J\subseteq R$ be an ideal, and let $\widetilde{J}$ denote its pullback to $S$.
We have that $J$ is compatible if and only if
$(I^{[p^e]} :_S I)\subseteq (\widetilde J^{[p^e]} :_S \widetilde J)$ for all $e>0$.
\end{remark}

\subsection{F-pure thresholds}

\begin{definition}[\cite{TakagiWatanabe}]\label{DefFptGlobalIdeal}
Let $R$ be an F-finite F-pure ring, and $I\subseteq R$ be an ideal.
\begin{itemize}
\item For $\lambda \in \R_{\geq 0}$ we say that $(R,I^\lambda)$ is F-pure if, for all $e\gg 0$, there exists $f\in I^{\lfloor \lambda(p^e-1) \rfloor }$ such that the $R$-linear map $R\to F^e_*(R)$ sending $1 \mapsto F^e_*(f)$ splits.
\item The F-pure threshold of $I$ in $R$ is defined by 
$$
\fpt(I,R)=\sup\{ \lambda\in \mathbb{R}_{\geq 0}\; |\; (R,I^\lambda) \hbox{ is F-pure}\}.
$$
If the ring is clear from the context, we only write $\fpt(I)$.
If $R$ is local and $I$ is its maximal ideal, the F-pure threshold $\fpt(\m)$ is denoted by $\fpt(R)$.
\end{itemize}
\end{definition}

Note that, if $R$ is not F-pure, then $\fpt(I)= -\infty$ for any ideal $I$ if $R$. We now give a definition of the F-threshold for a local ring which is equivalent to Definition \ref{DefFptGlobalIdeal}. First we need to recall some notions.

\begin{definition}
Let $R$ be a ring, $I$ be an ideal of $R$, and $M$ be an $R$-module. 
Then the Loewy length of $M$ with respect to $I$ is defined as 
\[
\loewy_I (M) \coloneqq \inf \{n \mid I^nM = 0\}.
\]
In the case when $R$ is local and $I$ is its maximal ideal, we 
 omit the index and simply write $\loewy (M)$. 
\end{definition}

For an ideal $I\subseteq \m$, we let
\[ 
b_I(p^e,R) \coloneqq  \max \left\{t \in \NN \mid I^t \not\subseteq I_e(R) \right\}.
\]
Note that $I^t \subseteq \m^{[p^e]} \subseteq I_e(R)$ for all $t \gg 0$, therefore $b_I(p^e,R)$ is well-defined. Also, note that $b_I(p^e,R) = \loewy_I (R/I_e(R))-1$. If the ring is clear from the context, we only write $b_I(p^e)$, and if $R$ is local and $I$ is its maximal ideal, we only write $b(p^e)$.

The proof of the following result is analogous to that for standard graded $\kk$-algebras \cite[Proposition 3.10]{DSNBFpurity}. For this reason, we omit it.

\begin{proposition}[\cite{TakagiWatanabe,DSNBFpurity}]
Let $(R,\m)$ be an F-finite F-pure ring, and $I\subseteq R$ an ideal.  Then
\[
\fpt(I)=\lim\limits_{e\to \infty} \frac{b_I(p^e) }{p^e}.
\]
\end{proposition}

\begin{definition}[{\cite[Theorem A]{DSNBP}, see also \cite{HMTW}}]
Let $R$ be a ring of prime characteristic $p$. Given ideals $I,J$ inside $R$ satisfying $I \subseteq \sqrt{J}$, and $e \geq 0$, we let
\[ 
\nu^J_I(p^e)=\max\{t \in \NN \mid I^t \not\subseteq J^{[p^e]} \}.
\]
The F-threshold of $I$ in $J$ is defined as
$$
c^J(I)=\lim\limits_{e\to \infty}\frac{\nu^J_I(p^e)}{p^e}.
$$ 
\end{definition}

\begin{remark}\label{rem: fpt and ft inequality}
Since $I_e(R)^{[p^{e'}]} \subseteq I_{e+e'}(R)$, 
it follows that $\nu^{I_e(R)}_I(p^{e'}) \geq b_I (p^{e+e'})$, so that $p^e\fpt(I) \leq \Ft^{I_e(R)}(I)$. In particular, one gets $\fpt(I) \leq \Ft^{\m}(I)$ by choosing $e=0$. 
\end{remark}

We use Remark~\ref{rem: fpt and ft inequality} to derive a convergence 
estimate for F-pure threshold from a convergence estimate for F-threshold.

\begin{lemma}[{\cite[Lemma~3.3]{DSNBP}}]\label{lem: F-threshold convergence}
Let $R$ be a ring of prime characteristic, and $I,J$ be ideals of $R$ satisfying $I \subseteq \sqrt{J}$.
If $I$ can be generated by $\mu$ elements, then
for all integers $e_1, e_2 \geq 0$ one has
\[
\frac{\nu_I^J (p^{e_1 + e_2})}{p^{e_1 + e_2}} - \frac{\nu_I^J(p^{e_1})}{p^{e_1}}\leq \frac{\mu}{p^{e_1}}.
\]
\end{lemma}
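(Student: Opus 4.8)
The plan is to reduce to the case $e_1 = 0$ and then extract the estimate from a single ideal containment proved by a pigeonhole argument on monomial exponents.

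First I would set $q = p^{e_2}$ and $K = J^{[p^{e_1}]}$. Since Frobenius powers commute, $K^{[q]} = J^{[p^{e_1+e_2}]}$, and since a Frobenius power has the same radical as the original ideal, $I \subseteq \sqrt{J} = \sqrt{K}$; hence $\nu^K_I(1) = \nu^J_I(p^{e_1})$ and $\nu^K_I(q) = \nu^J_I(p^{e_1+e_2})$, and both are finite. Writing $\nu \coloneqq \nu^K_I(1)$, so that $I^{\nu+1} \subseteq K$, the inequality to be proven becomes, after multiplying through by $p^{e_1}$, the statement that $\nu^K_I(q) \leq \nu q + \mu(q-1)$.

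The heart of the matter is the containment $I^{\nu q + \mu(q-1) + 1} \subseteq K^{[q]}$. To prove it I would write $I = (f_1, \dots, f_\mu)$, take a monomial generator $f_1^{a_1}\cdots f_\mu^{a_\mu}$ of $I^{\nu q + \mu(q-1)+1}$, and split each exponent as $a_i = q b_i + c_i$ with $0 \leq c_i \leq q-1$. Since $\sum_i c_i \leq \mu(q-1)$, the degree count forces $q\sum_i b_i \geq \nu q + 1$, hence $\sum_i b_i \geq \nu + 1$, so that $g \coloneqq \prod_i f_i^{b_i} \in I^{\nu+1} \subseteq K$. Now I use that in characteristic $p$ the $q$-th power map is a ring homomorphism, whence $a \in K$ implies $a^q \in K^{[q]}$; applying this to $g$ and multiplying by $\prod_i f_i^{c_i} \in S$ gives $f_1^{a_1}\cdots f_\mu^{a_\mu} = g^q \prod_i f_i^{c_i} \in K^{[q]}$. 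This containment immediately yields $\nu^K_I(q) \leq \nu q + \mu(q-1)$, and dividing by $q$ and then by $p^{e_1}$ produces the asserted bound (in fact with $\tfrac{\mu}{p^{e_1}}$ improved to $\tfrac{\mu}{p^{e_1}} - \tfrac{\mu}{p^{e_1+e_2}}$).

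The main subtlety — and the reason the estimate is governed by $\mu$, the number of generators of $I$, rather than by the possibly much larger number of generators of $J^{[p^{e_1}]}$ — is that one cannot afford to compare $K^m$ with $K^{[q]}$ directly; one must route the argument through the additivity of Frobenius, which is precisely what makes the implication $a \in K \Rightarrow a^q \in K^{[q]}$ work and what makes passing to $K = J^{[p^{e_1}]}$ the correct reduction. The one other place that requires care is pinning down the exponent $\nu q + \mu(q-1)+1$ exactly: a cruder choice overshooting by $O(q)$ would only give a bound of the shape $\tfrac{\mu+1}{p^{e_1}}$, which is not good enough.
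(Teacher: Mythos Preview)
Your argument is correct and is exactly the standard pigeonhole proof; the paper itself does not supply a proof of this lemma but simply cites \cite[Lemma~3.3]{DSNBP}, and what you wrote is essentially the argument found there. Your reduction to $K = J^{[p^{e_1}]}$ is the right move, the exponent bookkeeping $a_i = qb_i + c_i$ is handled cleanly, and your remark that one actually obtains the sharper bound $\tfrac{\mu}{p^{e_1}} - \tfrac{\mu}{p^{e_1+e_2}}$ is accurate.
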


\begin{proposition}\label{prop: fpt convergence}
Let $(R, \m)$ be an F-finite F-pure local ring of prime characteristic.
Then for all $e \geq 0$ we have
\[
0 \leq \fpt(R) - \frac{b (p^e)}{p^{e}} \leq \frac{\edim (R)}{p^e}.
\]
\end{proposition}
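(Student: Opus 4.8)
The plan is to deduce this from the convergence estimate for F-thresholds in Lemma~\ref{lem: F-threshold convergence} together with the comparison between $b(p^e)$ and the F-threshold $\nu$ recorded in Remark~\ref{rem: fpt and ft inequality}. First I would prove the left-hand inequality $b(p^e)/p^e \leq \fpt(R)$: since $I_e(R)^{[p^{e'}]} \subseteq I_{e+e'}(R)$, raising $\m^{b(p^e)}$ to the $p^{e'}$-th power gives $\m^{b(p^e)p^{e'}} = (\m^{b(p^e)})^{[p^{e'}]} \not\subseteq I_{e+e'}(R)$ whenever $\m^{b(p^e)} \not\subseteq I_e(R)$ (using that Frobenius powers of a proper ideal stay out of $I_{e+e'}(R)$ after the appropriate inclusion), hence $b(p^{e+e'}) \geq b(p^e) p^{e'}$; dividing by $p^{e+e'}$ and letting $e' \to \infty$ yields $\fpt(R) \geq b(p^e)/p^e$. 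Concretely one can argue directly: $b(p^{e+e'})/p^{e+e'}$ is a subsequence-type lower bound converging to $\fpt(R)$, and it is bounded below by $b(p^e)/p^e$, so the limit is too.

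For the right-hand inequality, set $J = I_e(R)$ for the fixed $e$, and apply Remark~\ref{rem: fpt and ft inequality}: for every $e' \geq 0$ we have $p^e \fpt(\m) \leq c^{I_e(R)}(\m)$, but more usefully the displayed inequality there gives $\nu^{I_e(R)}_\m(p^{e'}) \geq b(p^{e+e'})$, i.e. $b(p^{e+e'})/p^{e+e'} \leq \nu^{I_e(R)}_\m(p^{e'})/p^{e+e'}$. Now I invoke Lemma~\ref{lem: F-threshold convergence} with $I = \m$, $J = I_e(R)$, $\mu = \edim(R)$ (the maximal ideal $\m$ is generated by $\edim(R)$ elements), $e_1 = e$, and $e_2 = e'$: this gives
\[
\frac{\nu_\m^{I_e(R)}(p^{e+e'})}{p^{e+e'}} - \frac{\nu_\m^{I_e(R)}(p^{e})}{p^{e}} \leq \frac{\edim(R)}{p^e}.
\]
Since $b(p^e) \not> \nu^{I_e(R)}_\m(p^e)$ — indeed $\nu^{I_e(R)}_\m(p^e) \geq b(p^{2e}) \geq b(p^e) p^e \geq \cdots$, but the cleanest route is that $\m^t \not\subseteq I_e(R)$ forces $t \leq \nu^{I_e(R)}_\m(p^0) $ is too weak; instead I would use $\nu^{I_e(R)}_\m(p^e)/p^e \geq \fpt(\m)$ is false in the wrong direction — let me instead note that taking $e' \to \infty$ in the last display and using $\nu_\m^{I_e(R)}(p^{e+e'})/p^{e+e'} \geq b(p^{e+e'})/p^{e+e'} \to \fpt(\m)$ together with $\nu_\m^{I_e(R)}(p^e)/p^e = b(p^e)/p^e$ (which holds because $\nu^{I_e(R)}_\m(p^e) = \max\{t \mid \m^t \not\subseteq I_e(R)^{[p^e]}\}$ and $I_e(R)^{[p^e]} \cap$-reasoning... ) — here I must be careful.

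The cleanest correct formulation: apply Lemma~\ref{lem: F-threshold convergence} with $e_1 = 0$, $e_2 = e$, $I = \m$, $J = I_e(R)$, getting $\nu_\m^{I_e(R)}(p^{e})/p^{e} - \nu_\m^{I_e(R)}(p^0) \leq \edim(R)$, which is not quite it either. The right setup is: take $J = \m$ itself is wrong since we need $I_e$. I would in fact use the observation $b(p^e) = \nu^{I_e(R)}_\m(1)$ is not standard; rather $b(p^e)$ compares $\m^t$ to $I_e(R)$ directly. So the genuinely right move is Lemma~\ref{lem: F-threshold convergence} with $I=\m$, $J = \m$, giving $\nu^\m_\m(p^{e+e'})/p^{e+e'} - \nu^\m_\m(p^e)/p^e \le \edim(R)/p^e$, combined with $b(p^e) \le \nu^\m_\m(p^e) \le$ ... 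The honest plan: observe $b(p^{e}) \ge \nu_\m^\m(p^e) - $ bounded error is the subtle point. I expect the main obstacle to be pinning down the precise relationship between $b(p^e)$ (defined via $I_e(R)$) and a genuine F-threshold numerator $\nu$ to which Lemma~\ref{lem: F-threshold convergence} applies verbatim; once that bookkeeping is done — most likely via $\nu^{I_0(R)}_\m = \nu^\m_\m$ and $b(p^e) \le \nu^\m_\m(p^e)$ with $\fpt(\m) = \lim b(p^e)/p^e$ squeezed between $\lim \nu^\m_\m(p^e)/p^e$ and itself — the estimate $\fpt(R) - b(p^e)/p^e \le \edim(R)/p^e$ falls out by taking $e_1 = e$, $e_2 \to \infty$ in the lemma and using that the $\nu$ and $b$ sequences have the same limit $\fpt(R) = c^\m(\m)$ along multiples of $p^e$. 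The left inequality is the easy, purely monotonicity part; the right inequality is where the real content (Lemma~\ref{lem: F-threshold convergence}, i.e. the subadditivity built from $\mu$-generation of $\m$) is used.
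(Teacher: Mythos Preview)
Your proposal has genuine gaps in both inequalities.

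\textbf{Left inequality.} You invoke the containment $I_e(R)^{[p^{e'}]} \subseteq I_{e+e'}(R)$, but this goes the wrong direction for what you want: knowing that $\m^{b(p^e)} \not\subseteq I_e(R)$ and that $I_e(R)^{[p^{e'}]}$ sits inside $I_{e+e'}(R)$ gives no information about whether $\m^{p^{e'}b(p^e)}$ lies in $I_{e+e'}(R)$. You also write $\m^{b(p^e)p^{e'}} = (\m^{b(p^e)})^{[p^{e'}]}$, which is simply false (the Frobenius power is much smaller than the ordinary power). The paper instead uses F-purity directly: a surjection $\psi\colon F_*(R)\to R$ satisfies $\psi(F_*(I_{e+1}(R)))\subseteq I_e(R)$, and from this one checks that $\m^{tp}\subseteq I_{e+1}(R)$ forces $\m^t\subseteq I_e(R)$. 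Taking the contrapositive with $t=b(p^e)$ gives $b(p^{e+1})\ge p\,b(p^e)$, hence $b(p^e)/p^e\le \fpt(R)$. Your argument never uses F-purity, which is the essential input here.

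\textbf{Right inequality.} You actually wrote down the correct setup---Lemma~\ref{lem: F-threshold convergence} with $I=\m$, $J=I_e(R)$, $e_1=0$---and then dismissed it. The key observation you are missing is that $\nu_\m^{I_e(R)}(p^0)=\max\{t:\m^t\not\subseteq I_e(R)\}=b(p^e)$ \emph{by definition} (the exponent $p^0=1$ makes the Frobenius power trivial). Once you see this, the lemma gives $\nu_\m^{I_e(R)}(p^{e_2})/p^{e_2}\le b(p^e)+\edim(R)$ for all $e_2$, so $c^{I_e(R)}(\m)\le b(p^e)+\edim(R)$; combined with $p^e\fpt(R)\le c^{I_e(R)}(\m)$ from Remark~\ref{rem: fpt and ft inequality}, you are done. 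All the other attempts (taking $J=\m$, taking $e_1=e$, trying to compare $\nu^\m_\m$ with $b$) are unnecessary detours.
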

\begin{proof}
We apply Lemma~\ref{lem: F-threshold convergence} to $I=\m$, $J=I_e(R)$, and $e_1=0$ to obtain that 
\[ \Ft^{I_e(R)} (\m) \leq b(p^e) + \edim (R).
\]
We use Remark \ref{rem: fpt and ft inequality} to get
\[
p^e \fpt (R) \leq \Ft^{I_e(R)} (\m) \leq b(p^e) + \edim (R).
\]

Since $R$ is F-pure there a surjection $\psi \colon F_*^1 (R) \to R$.
By definition, if $\m^{tp} \subseteq I_{e+1}(R)$ holds for some $t>0$,
then 
\[
I^t = \psi (F_* ((I^t)^{[p]})) \subseteq \psi(F_*(I^{tp})) \subseteq \psi (F_* (I_{e+1}(R))) \subseteq I_e(R).
\]
This yields the inequality $pb (p^e) \leq b (p^{e+1})$ and, iterating this argument, one gets $p^{e'}b(p^e) \leq b(p^{e+e'})$ for all $e' >0$. Dividing by $p^{e+e'}$ and taking limits as $e' \to \infty$ gives that $b (p^e)/p^e \leq \fpt(R)$, and the assertion follows. 
\end{proof}

\subsection{Definition and first properties of the defect of the F-pure threshold}

We formally introduce the main invariant studied in this article.
\begin{definition} 
Let $(R,\m)$ be an F-finite local ring. The {\it defect of the F-pure threshold} is
\[
\dfpt(R) = \dim(R) -\fpt(R).
\]
\end{definition}

We start by restarting two results of Takagi and Watanabe \cite{TakagiWatanabe}, which show that the defect of the F-pure threshold detects important properties of a local ring.

\begin{proposition}[{\cite[Proposition~2.6]{TakagiWatanabe}}]\label{prop: TW reduction}
Let $(R, \m)$ be an F-finite F-pure local ring with infinite residue field. 
Suppose that $\dim (R) > 0$ and let $J$ be a minimal reduction of $\m$. 
If $k$ is an integer such that $\dfpt (R) < k$, then 
we have an inclusion $\m^k \subseteq J$.  
\end{proposition}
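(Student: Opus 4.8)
The plan is to connect the defect bound $\dfpt(R) < k$ to the splitting ideals $I_e(R)$, then use the minimal reduction $J$ as a replacement for $\m$ at the level of Frobenius powers. First I would observe that by Proposition~\ref{prop: fpt convergence} the inequality $\dfpt(R) < k$, i.e. $\fpt(R) > \dim(R) - k$, forces $b(p^e)/p^e > \dim(R) - k$ for all $e$ (since $b(p^e)/p^e \le \fpt(R)$), hence $b(p^e) > (\dim(R)-k)p^e$, which by definition of $b(p^e)$ means $\m^{\lceil (\dim(R)-k)p^e \rceil} \not\subseteq I_e(R)$; equivalently there is a map $\psi\colon F^e_*(R) \to R$ and an element $F^e_*(f)$ with $f \in \m^{t}$, $t = b(p^e) > (\dim(R)-k)p^e$, splitting off.

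Next I would bring in the minimal reduction $J$, generated by a system of parameters $x_1,\dots,x_d$ where $d = \dim(R)$, satisfying $\m^{r+1} = J\m^r$ for some $r$ (the reduction number), so that $\m^{n} \subseteq J$ whenever $n$ is large relative to how $\m^n$ sits inside powers of $J$. The key mechanism should be: if $\m^k \not\subseteq J$, pick $g \in \m^k \setminus J$; I want to show this produces a splitting of $R \to F^e_*(R)$ through an element of $\m^{p^e}$ plus something controlled, contradicting the upper bound $\fpt(R) \le \dim(R)$, or more precisely contradicting the estimate that forces $b(p^e) \le (\dim(R) - k + o(1))p^e$ once $\m^k \not\subseteq J$. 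Concretely, since $\m^{p^e - 1}$ is contained in $J^{[p^e]} : $ (something) once we account for $\m^{d(p^e-1)} \subseteq J^{[p^e]}$ — indeed $(x_1\cdots x_d)^{p^e-1}$ generates the socle-type behavior, and $\m^{d(p^e-1)+1} \subseteq (x_1^{p^e},\dots,x_d^{p^e}) = J^{[p^e]}$ — I can estimate Frobenius powers of $\m$ against $J^{[p^e]}$. Combining $\m^{b(p^e)} \not\subseteq I_e(R)$ with the fact that $I_e(R)$ is contained in the colon ideal coming from $J^{[p^e]}$ (since any $\psi$ maps $F^e_* J^{[p^e]}$-related elements appropriately), one gets $b(p^e) + (\text{bound from }J) \le d(p^e - 1) + (\text{contribution of }g)$; the assumption $\m^k \not\subseteq J$ lets us insert the extra generator $g$ to improve the bound to $b(p^e) \le (d-k)p^e + C$ for a constant $C$ independent of $e$, which upon dividing by $p^e$ and letting $e \to \infty$ gives $\fpt(R) \le d - k$, i.e. $\dfpt(R) \ge k$, contradicting the hypothesis.

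I would organize the argument as: (1) reduce to showing $b(p^e)$ small when $\m^k\not\subseteq J$; (2) use that $J$ is a reduction and the parameter colon-capturing $\m^{d(p^e-1)+1}\subseteq J^{[p^e]}$ to write, for $g\in\m^k\setminus J$ and any $h\in\m^{s}$ with $s$ chosen so $gh \notin J^{[p^e]}$ as long as $s \le d(p^e-1) - kp^e$ or so, an element $gh^{p^e-1}$ or $g\cdot(\text{product of }x_i^{p^e-1})$-type witness that lies outside $J^{[p^e]}$ hence outside $I_e(R)$ only if $b(p^e)$ is correspondingly large — actually run it the other way: show $\m^{b(p^e)}\not\subseteq I_e(R)$ combined with $\m^k\not\subseteq J$ would force $\m^{b(p^e)+k}\not\subseteq J^{[p^e]}$-type statement violating $\m^{d(p^e-1)+1}\subseteq J^{[p^e]}$, giving $b(p^e)+k \le d(p^e-1)$; (3) divide by $p^e$, take the limit, and conclude $\fpt(R)\le d-k$.

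The hard part will be step (2): making precise the implication "$\m^k\not\subseteq J$ and $\m^{b(p^e)}\not\subseteq I_e(R)$" $\Rightarrow$ "$\m^{b(p^e)+k(p^e-1)}\not\subseteq J^{[p^e]}$" (or whatever the correct exponent bookkeeping is). This requires understanding precisely how $I_e(R)$ compares to $J^{[p^e]}$-colons — one wants something like $I_e(R) \supseteq J^{[p^e]} :_R (\m^{d(p^e-1)} \text{-complement})$, or rather to exploit that an element witnessing $\m^t \not\subseteq I_e(R)$ gives a splitting, and composing this splitting with multiplication by (a lift of) $g^{p^e}$ or by the single element $g$ must still behave well because $J \subseteq \m$ and parameters interact with $I_e$ in a controlled way. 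The cleanest route is probably via the F-threshold $c^J(\m)$ or $c^{J^{[p^e]}}$ language and the inequality $\m^{d(p^e-1)}\ni (x_1\cdots x_d)^{p^e-1}\notin J^{[p^e]}$ together with $\m^{d(p^e-1)+1}\subseteq J^{[p^e]}$, then transferring this to $I_e(R)$ using $J^{[p^e]}\subseteq I_e(R)$ — but one must be careful that $b(p^e)$ is defined with $I_e(R)$, not $J^{[p^e]}$, so the extra slack $k$ has to come from the hypothesis $\m^k\not\subseteq J$ via a multiplicativity/colon trick. Getting that trick exactly right, with the off-by-$p^e$ and off-by-one constants tracked so that they vanish in the limit, is where the real work lies.
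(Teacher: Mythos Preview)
The paper does not prove this proposition; it is quoted from Takagi--Watanabe. Your contrapositive strategy is equivalent to their direct argument, but your sketch has two concrete problems. First, the containment $\m^{d(p^e-1)+1}\subseteq J^{[p^e]}$ you invoke is false in general (already for $R=\kk\ps{t^2,t^3}$, $J=(t^2)$, one has $t^{2p+1}\in\m^{p}\setminus J^{[p]}$). What holds is $J^{d(p^e-1)+1}\subseteq J^{[p^e]}$ by pigeonhole on the $d$ generators, and then the reduction number $N$ gives $\m^{d(p^e-1)+N+1}\subseteq J^{d(p^e-1)+1}\subseteq J^{[p^e]}$; the extra $N$ is harmless after dividing by $p^e$.

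Second, the step you flag as ``the hard part'' is actually one line once you use the splitting correctly. Pick $f\in\m^{b(p^e)}$ with a splitting $\phi\colon F^e_*R\to R$, $\phi(F^e_*f)=1$, and suppose $g\in\m^k\setminus J$. If $fg^{p^e}\in J^{[p^e]}$ then
\[
g=g\cdot\phi(F^e_*f)=\phi\bigl(F^e_*(fg^{p^e})\bigr)\in \phi\bigl(F^e_*J^{[p^e]}\bigr)\subseteq J,
\]
a contradiction; hence $fg^{p^e}\notin J^{[p^e]}$. Since $fg^{p^e}\in\m^{b(p^e)+kp^e}$ (the exponent is $kp^e$, not $k(p^e-1)$), the corrected containment above forces $b(p^e)+kp^e\le d(p^e-1)+N$, so $\fpt(R)=\lim b(p^e)/p^e\le d-k$, contradicting $\dfpt(R)<k$. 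Takagi--Watanabe run this directly rather than by contradiction: for $z\in\m^k$ they choose $e\gg 0$ and $c\in\m^{\lfloor\lambda(p^e-1)\rfloor}$ with $d-k<\lambda<\fpt(\m)$ and $\phi(F^e_*c)=1$, check $cz^{p^e}\in J^{[p^e]}$, and conclude $z=\phi(F^e_*(cz^{p^e}))\in J$. (Your opening claim that $b(p^e)/p^e>d-k$ for all $e$ ``since $b(p^e)/p^e\le\fpt(R)$'' is a non sequitur, but this inequality is not actually needed.)
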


\begin{proposition}[{\cite[Theorem 2.7]{TakagiWatanabe}}] \label{prop TW regular}
Let $(R, \m)$ be an F-pure F-finite local ring of dimension $d > 0$. Then the following are equivalent:
\begin{enumerate}
\item $\dfpt(R)  <1$,
\item $\dfpt(R) =0$,
\item $R$ is regular.
\end{enumerate}
\end{proposition}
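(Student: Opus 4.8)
The plan is to prove Proposition~\ref{prop TW regular} by establishing the cycle $(3) \Rightarrow (2) \Rightarrow (1) \Rightarrow (3)$, where the implications $(2) \Rightarrow (1)$ is trivial since $0 < 1$. The remaining two implications are the content.

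\textbf{Proof of $(3) \Rightarrow (2)$.} If $R$ is regular, then for every $e > 0$ the Frobenius map $F^e \colon R \to F^e_*(R)$ is faithfully flat (Kunz's theorem), and in fact $F^e_*(R)$ is a free $R$-module of rank $p^{ed}$. Consequently, the splitting ideal $I_e(R)$ is just $\m^{[p^e]}$: indeed, choosing a basis of $F^e_*(R)$ compatible with the expansion in monomials $F^e_*(x_1^{a_1}\cdots x_d^{a_d})$ with $0 \le a_i < p^e$ (after passing to the completion, or using a regular system of parameters), an element $F^e_*(f)$ fails to lie in $I_e(R)$ precisely when some coordinate of $f$ is a unit, which is equivalent to $f \notin \m^{[p^e]}$. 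Therefore $b(p^e) = \max\{t \mid \m^t \not\subseteq \m^{[p^e]}\} = d(p^e - 1)$, since the socle of $R/\m^{[p^e]}$ sits in degree $d(p^e-1)$ and $\m^{d(p^e-1)} \not\subseteq \m^{[p^e]}$ while $\m^{d(p^e-1)+1} \subseteq \m^{[p^e]}$. Dividing by $p^e$ and letting $e \to \infty$ via Proposition~\ref{prop: fpt convergence} gives $\fpt(R) = d$, hence $\dfpt(R) = 0$.

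\textbf{Proof of $(1) \Rightarrow (3)$.} Assume $\dfpt(R) < 1$, i.e.\ $\fpt(R) > d - 1$. Since $R$ has infinite residue field after a harmless faithfully flat base change $R \to R[t]_{\m R[t]}$ (which changes neither $\dim$, nor $\fpt$, nor regularity — this should be recorded as a small reduction, or one invokes the definition directly), one may pick a minimal reduction $J = (y_1, \dots, y_d)$ of $\m$ generated by a system of parameters. Now apply Proposition~\ref{prop: TW reduction}: since $\dfpt(R) < 1$, we get $\m^1 = \m \subseteq J$, so $\m = J$ is generated by $d$ elements, which forces $\edim(R) = d = \dim(R)$, i.e.\ $R$ is regular. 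One must be slightly careful that Proposition~\ref{prop: TW reduction} requires $\dim(R) > 0$; but this is exactly the standing hypothesis $d > 0$ here. The case distinction is therefore clean.

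\textbf{Main obstacle.} The delicate point is $(3) \Rightarrow (2)$: one needs the precise identification $I_e(R) = \m^{[p^e]}$ in the regular case, or at least the value $b(p^e) = d(p^e - 1)$. The cleanest route is to reduce to a complete (or F-finite) regular local ring, use a coefficient field and regular system of parameters $x_1, \dots, x_d$ so that $F^e_*(R)$ is free on the monomials $F^e_*(x^{\underline a})$, $0 \le a_i < p^e$, and then read off that the unique (up to scalar) splitting $\psi_e$ of $F^e$ satisfies $\psi_e(F^e_*(x^{\underline a})) \in \m$ iff $\underline a \ne (p^e - 1, \dots, p^e - 1)$; consequently $x_1^{p^e-1}\cdots x_d^{p^e-1} \notin I_e(R)$ and every monomial of larger total degree lies in $\m^{[p^e]} \subseteq I_e(R)$. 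Everything else — the two reductions to infinite residue field and to the complete case, and the limit argument — is routine given Proposition~\ref{prop: fpt convergence} and Proposition~\ref{prop: TW reduction}.
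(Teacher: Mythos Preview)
The paper does not give its own proof of this proposition; it is simply cited from \cite[Theorem~2.7]{TakagiWatanabe}, so there is no in-paper argument to compare against.

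Your proof is correct and is essentially the natural one: $(3)\Rightarrow(2)$ via the identification $I_e(R)=\m^{[p^e]}$ for regular $R$, and $(1)\Rightarrow(3)$ via Proposition~\ref{prop: TW reduction} with $k=1$ after ensuring an infinite residue field. One minor inaccuracy: the rank of $F^e_*(R)$ over $R$ is $p^{e(d+a)}$ where $p^a=[\kk:\kk^p]$, not $p^{ed}$, but this is irrelevant to the argument---all you use is freeness of $F^e_*(R)$, which immediately yields $I_e(R)=\{f: F^e_*(f)\in \m F^e_*(R)\}=\m^{[p^e]}$. The base change to infinite residue field preserving $\fpt$ (and F-purity, regularity, dimension) is indeed routine; the paper records exactly this reduction, via $I_e(R)A=I_e(A)$ from \cite[Lemma~3.8]{AE}, in the proof of the multiplicity bound following Lemma~\ref{lem: compatible}.
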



\begin{lemma}\label{lem: compatible}
Let $(R,\m)$ be an F-finite and F-pure local ring. If $J\subseteq R$ is a compatible ideal, then 
$
\fpt(R)\leq \fpt(R/J).
$
In particular, $\dfpt(R)\geq \dim(R) - \sdim(R) \geq 
\dim(R)-\depth(R)$, the \emph{Cohen-Macaulay defect} of $R$.
\end{lemma}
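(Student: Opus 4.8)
The plan is to prove the first assertion directly from the definition of the F-pure threshold via splitting ideals, then deduce the two chains of inequalities. For the first part, let $J \subseteq R$ be compatible and write $\bar R = R/J$. The key observation is that compatibility of $J$ forces a compatibility of the splitting ideals: I claim $I_e(R) \bar R \subseteq I_e(\bar R)$ for all $e > 0$, equivalently that the preimage of $I_e(\bar R)$ in $R$ contains $I_e(R) + J$. To see this, take $f \notin I_e(R) + J$; in particular $f \notin I_e(R)$, so there is $\psi \colon F^e_*(R) \to R$ with $\psi(F^e_*(f)) \notin \m$, i.e. $\psi$ splits $F^e_*(f)R \subseteq F^e_*(R)$. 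Because $J$ is compatible, $\psi(F^e_*(J)) \subseteq J$, so $\psi$ descends to a map $\bar\psi \colon F^e_*(\bar R) \to \bar R$ with $\bar\psi(F^e_*(\bar f)) = \overline{\psi(F^e_*(f))} \notin \bar\m$ (since $\psi(F^e_*(f)) \notin \m$, and $\m$ is the preimage of $\bar\m$). Hence $\bar f \notin I_e(\bar R)$, proving the claim. Consequently, whenever $\m^t \not\subseteq I_e(R)$ we get $\bar\m^t \not\subseteq I_e(\bar R)$ as long as $\m^t \not\subseteq I_e(R) + J$; but in fact it is cleaner to argue that $\m^t \subseteq I_e(R)$ implies $\bar\m^t = \m^t \bar R \subseteq I_e(R)\bar R \subseteq I_e(\bar R)$, so $b(p^e, \bar R) \geq b(p^e, R)$ for all $e$. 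Wait — I must be careful about the direction: $\m^t \not\subseteq I_e(R)$ need not give $\m^t \not\subseteq I_e(R)+J$. The correct deduction is the contrapositive just written: $\m^t \subseteq I_e(R) \Rightarrow \bar\m^t \subseteq I_e(\bar R)$, hence $b(p^e,R) \leq b(p^e,\bar R)$; dividing by $p^e$ and letting $e \to \infty$ via Proposition~\ref{prop: fpt convergence} gives $\fpt(R) \leq \fpt(\bar R)$.

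For the ``in particular'', apply the first assertion to $J = \cP(R)$, which is compatible (indeed the largest compatible proper ideal). Then
\[
\dfpt(R) = \dim(R) - \fpt(R) \geq \dim(R) - \fpt(R/\cP(R)) \geq \dim(R) - \dim(R/\cP(R)) = \dim(R) - \sdim(R),
\]
where the middle inequality is the general bound $\fpt(R/\cP(R)) \leq \dim(R/\cP(R))$ from Takagi--Watanabe (recalled in the introduction, and following from Proposition~\ref{prop TW regular} together with $\fpt \leq \dim$). Finally $\sdim(R) = \dim(R/\cP(R)) \leq \depth(R)$ since, as a compatible ideal, $\cP(R)$ is contained in every associated prime of $R$ of maximal dimension — more directly, $\cP(R)$ annihilates no nonzero element outside a proper closed set, and the standard fact that $\sdim(R) \leq \depth(R)$ for F-pure rings (the splitting prime avoids a system of parameters for a $\depth(R)$-dimensional quotient) gives $\dim(R) - \sdim(R) \geq \dim(R) - \depth(R)$, the Cohen--Macaulay defect.

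The main obstacle I anticipate is getting the descent of splitting ideals exactly right: one must verify that an $R$-linear $\psi \colon F^e_*(R) \to R$ with $\psi(F^e_*(J)) \subseteq J$ genuinely induces an $\bar R$-linear map $F^e_*(\bar R) \to \bar R$, which uses that $F^e_*(J)$ is an $R$-submodule of $F^e_*(R)$ and that $F^e_*(\bar R) = F^e_*(R)/F^e_*(J)$ as $R$-modules, so the quotient map is well defined; and that every $\bar R$-linear map $F^e_*(\bar R) \to \bar R$ arises this way (surjectivity of $\Hom_R(F^e_*(R), R) \to \Hom_{\bar R}(F^e_*(\bar R), \bar R)$ is not needed for the inequality we want, only the one direction, which simplifies matters). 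The inequality $\sdim(R) \le \depth(R)$ is standard (it follows because $\cP(R)$, being compatible, is contained in a minimal prime of $R$ of coheight $\depth R$ after passing to a maximal regular sequence, cf.\ \cite{AE}), so I would simply cite it.
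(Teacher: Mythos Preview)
Your overall strategy---descend splittings along the quotient $R \to R/J$ using compatibility of $J$, then compare the numbers $b(p^e)$---is exactly the idea behind the paper's one-line proof (which appeals directly to Definitions~\ref{def: compatible} and~\ref{DefFptGlobalIdeal}). However, the execution contains a genuine logical error where two mistakes cancel.

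The containment you \emph{claim}, $I_e(R)\bar R \subseteq I_e(\bar R)$, is not what your argument proves. You take $f \notin I_e(R)+J$, find $\psi$ with $\psi(F^e_*(f))\notin\m$, and descend to get $\bar f \notin I_e(\bar R)$. The contrapositive says: $\bar f \in I_e(\bar R) \Rightarrow f \in I_e(R)+J$, i.e.\ the preimage of $I_e(\bar R)$ is \emph{contained in} $I_e(R)+J$. This is the reverse inclusion $I_e(\bar R) \subseteq I_e(R)\bar R$ (note $J\subseteq I_e(R)$ since $\psi(F^e_*(J))\subseteq J\subseteq\m$ for all $\psi$). The inclusion you stated need not hold: it would require every $\bar\psi\in\Hom_{\bar R}(F^e_*(\bar R),\bar R)$ to lift to $R$, which is not automatic.

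Then the implication you write, ``$\m^t \subseteq I_e(R) \Rightarrow \bar\m^t \subseteq I_e(\bar R)$, hence $b(p^e,R) \leq b(p^e,\bar R)$'', goes the wrong way: that implication gives $\{t:\bar\m^t\not\subseteq I_e(\bar R)\}\subseteq\{t:\m^t\not\subseteq I_e(R)\}$, so $b(p^e,\bar R)\leq b(p^e,R)$, the opposite of what you need. The correct one-step argument is: if $\m^t\not\subseteq I_e(R)$, pick $f\in\m^t\smallsetminus I_e(R)$; your descent shows $\bar f\in\bar\m^t\smallsetminus I_e(\bar R)$, so $\bar\m^t\not\subseteq I_e(\bar R)$; hence $b(p^e,R)\leq b(p^e,\bar R)$ and $\fpt(R)\leq\fpt(R/J)$. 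The paper sidesteps this by arguing with the pair definition: if $(R,\m^\lambda)$ is F-pure, the same splitting descends to show $(\bar R,\bar\m^\lambda)$ is F-pure.

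For the ``in particular'', your argument is fine; the paper obtains $\fpt(R)\leq\sdim(R)\leq\depth(R)$ the same way (taking $J=\cP(R)$), citing Yao and Blickle--Schwede--Tucker for $\sdim(R)\leq\depth(R)$.
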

\begin{proof}
The first assertion follows immediately from Definitions~\ref{def: compatible} and~\ref{DefFptGlobalIdeal}. For the second assertion, we let $J$ be the splitting prime of $R$ to get that 
$\fpt(R)\leq \sdim(R) \leq \depth(R),$
where the second inequality is obtained by combining work of Yao \cite[Lemma~2.2]{YaoFrep} with a characterization of the splitting dimension  \cite[Corollary~4.3]{BlickleSchwedeTucker} (see also \cite[Theorem~4.8]{AE}).
\end{proof}

A strongly F-regular local ring must satisfy $\dfpt (R) < \dim(R)$. 
The following proposition provides a partial converse and, furthermore, shows that rings with a small defect have mild singularities.

\begin{proposition}\label{prop: condition F-regular}
Let $(R,\m)$ be an F-finite and F-pure local ring. If there is a positive integer $k$ such that $\dfpt (R) < k + 1$ 
and 
$R_\p$ is strongly F-regular whenever $\dim (R_\p) \leq k$, then $R$ is strongly F-regular. In particular, if $\dfpt (R) < 2$ and $R$ is normal, then $R$ is strongly F-regular. 
\end{proposition}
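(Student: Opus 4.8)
The plan is to induct on $d = \dim(R)$, using the splitting prime $\cP(R)$ as the main tool. If $d \leq k$, then $R = R_\m$ is strongly F-regular by hypothesis, so assume $d > k$; in particular $d \geq k+1 \geq 2$. First I would dispose of the degenerate case $\cP(R) = 0$, which is exactly the statement that $R$ is strongly F-regular, so there is nothing to prove. Assume therefore that $\cP(R) \neq 0$. Since $\cP(R)$ is a compatible ideal (being the splitting prime), Lemma~\ref{lem: compatible} applies to give $\fpt(R) \leq \sdim(R) = \dim(R/\cP(R))$, hence
\[
\dfpt(R) = \dim(R) - \fpt(R) \geq \dim(R) - \dim(R/\cP(R)) = \height(\cP(R)),
\]
where the last equality uses that $R$ is equidimensional and catenary (an F-finite local ring is excellent by \cite{F-finExc}, and we may reduce to the complete, hence equidimensional case, or invoke that $\cP(R)$ is prime in a domain-like situation — this needs care). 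Combined with $\dfpt(R) < k+1$ this forces $\height(\cP(R)) \leq k$, so there is a prime $\p$ of $R$ with $\cP(R) \subseteq \p$ and $\dim(R_\p) \leq k$.

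Now I would localize at such a $\p$. By hypothesis $R_\p$ is strongly F-regular, so $\cP(R_\p) = 0$. On the other hand, the splitting prime localizes well: $\cP(R)_\p \subseteq \cP(R_\p)$ (this follows from the fact that splitting ideals localize, or directly from the characterization via $\Hom_R(F^e_*R, R)$ localizing), and since $\cP(R) \subseteq \p$ the ideal $\cP(R)_\p$ is a proper ideal of $R_\p$ with $\cP(R_\p) = 0$ — so in fact $\cP(R)_\p \subseteq \cP(R_\p) = 0$ means $\cP(R)$ has no primes contained in $\p$ other than... wait, more precisely $\cP(R) R_\p = 0$ forces $\cP(R) \subseteq \p$ to be contained in no minimal prime below $\p$, but $\cP(R)$ is prime and $\cP(R) \subseteq \p$, so $\cP(R) R_\p$ is the prime $\cP(R) R_\p$ of $R_\p$, which can only be zero if $\cP(R) = 0$ in $R$ (since $R$ is a domain once we pass to $R/\text{(minimal prime)}$ — another reduction). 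This contradicts $\cP(R) \neq 0$, completing the argument.

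The second assertion follows by taking $k = 1$: the hypothesis $\dfpt(R) < 2$ is $\dfpt(R) < k+1$, and we must check that $R_\p$ is strongly F-regular whenever $\dim(R_\p) \leq 1$. If $\dim(R_\p) = 0$ then $R_\p$ is a field (it is reduced, being a localization of the normal domain $R$), hence regular, hence strongly F-regular. If $\dim(R_\p) = 1$ then $R_\p$ is a one-dimensional normal local domain, i.e., a DVR, which is regular and therefore strongly F-regular. So the hypotheses of the first part are met and $R$ is strongly F-regular.

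\textbf{Main obstacle.} The delicate point is the inequality $\dfpt(R) \geq \height(\cP(R))$ and, relatedly, making the localization step rigorous: one needs $\dim(R) - \dim(R/\cP(R)) = \height(\cP(R))$, which requires $R$ to be equidimensional and catenary, and one needs the splitting prime to behave well under localization. The clean way to handle both is probably to reduce at the outset to the case where $R$ is a complete local domain (F-purity, strong F-regularity, and $\dfpt$ are all insensitive to completion for F-finite rings, and normality/reducedness let us split into domains), where catenarity and equidimensionality are automatic and the theory of $\cP(R)$ from \cite{AE} is cleanest. I would also want to double-check that the reduction in the second assertion — that $\dim(R_\p) \leq 1$ and $R$ normal forces $R_\p$ regular — uses Serre's criterion correctly ($R_1$ and $R_2$ hold since $R$ is normal, so codimension-$\leq 1$ localizations are regular).
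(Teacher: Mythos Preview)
Your overall strategy---bound $\height(\cP(R))$ via Lemma~\ref{lem: compatible}, then localize and use the hypothesis---is exactly the paper's. But there are unnecessary detours and one real gap.

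First, the detours. The induction on $\dim(R)$ is not used anywhere and can be dropped. More importantly, you do not need the equality $\dim(R)-\dim(R/\cP)=\height(\cP)$: the inequality $\height(\cP)+\dim(R/\cP)\leq\dim(R)$ holds in any Noetherian local ring, and that is all that is required to get $\height(\cP)\leq k$ from $\dim(R/\cP)\geq\dim(R)-k$. So the equidimensionality/catenarity discussion and the reduction to the complete case are red herrings. Also, once you know $\height(\cP)\leq k$, simply take $\p=\cP$ itself: then $\dim(R_\cP)=\height(\cP)\leq k$, so $R_\cP$ is strongly F-regular by hypothesis, hence $\cP(R_\cP)=0$, hence $\cP R_\cP=0$, i.e.\ $R_\cP$ is a field and $\cP$ is a minimal prime. (The paper cites \cite[Corollary~4.6]{AE} for this step.)

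The genuine gap is the final move. From ``$\cP$ is a minimal prime'' you try to conclude $\cP=0$ by saying ``since $R$ is a domain once we pass to $R/(\text{minimal prime})$---another reduction''. That reduction is not justified: neither the hypothesis $\dfpt(R)<k+1$ nor the condition on the strongly F-regular locus obviously transfers to $R/Q$ for a minimal prime $Q$. The correct argument, which the paper uses, is intrinsic: every zero-divisor of $R$ lies in $\cP$ (if $c\notin I_e(R)$ then $r\mapsto F_*^e(r^{p^e}c)$ is split injective, so $c$ is a non-zero-divisor). Since $R$ is F-pure it is reduced, so each minimal prime consists of zero-divisors and hence is contained in the minimal prime $\cP$; thus $\cP$ is the unique minimal prime, whence $\cP=\sqrt{0}=0$ and $R$ is strongly F-regular.

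Your argument for the ``in particular'' clause (normal $\Rightarrow$ $R_\p$ regular for $\dim R_\p\leq 1$) is fine and matches the paper.
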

\begin{proof}
Let $\cP$ denote the splitting prime of $R$. By Lemma~\ref{lem: compatible} 
\[
k + 1 > \dfpt(R) \geq \dim (R) - \sdim(R), 
\]
so that $\sdim (R) \geq \dim (R) - k$. 
Thus $\height (\cP ) \leq k$ and, since $R_{\cP}$ is strongly F-regular by assumption, this shows that $R_{\cP}$ is a field  \cite[Corollary~4.6]{AE}. Hence, $\cP$ is a minimal prime of $R$ and, in fact, it must be the only minimal prime because all zero-divisors are necessarily contained in $\cP$. Therefore  $\sdim (R) = \dim (R)$, and $R$ is strongly F-regular  \cite[Theorem~4.8]{AE}. The second assertion follows directly from the first.
\end{proof}

\begin{example}
Let $\kk$ be an F-finite field, and $R=\kk\ps{x,y,z}/(xy)$. Then $\dfpt(R)=1$, but $R$ is not strongly F-regular. This example shows that
the assumption on the F-regular locus is necessary, as $R$ fails to be strongly F-regular at the height one prime $(x,y)$. 
\end{example}

Even if not strongly F-regular, a local ring $(R, \m)$ with small $\dfpt (R)$ still has nice properties. For example, we may use the defect of the F-pure threshold to give a bound for the Hilbert-Samuel multiplicity of F-pure rings. This follows the ideas of previous work by Huneke and Watanabe \cite{HW15}. If $(R,\m)$ is local, we let $\ecodim(R) = \edim(R) - \dim(R)$ be the embedding codimension of $R$, and 
\[
\e(R) = \lim\limits_{n \to \infty} \frac{d!\ell_R(R/\m^{n})}{n^{d}}
\]
be its Hilbert-Samuel multiplicity, where $d=\dim(R)$ and $\ell_R(-)$ denotes the length of an $R$-module.

\begin{proposition} 
Let $(R,\m)$ be an F-finite F-pure local ring.
Then
$$
\e(R)\leq \binom{\ecodim(R)+ \lceil \dfpt(R)\rceil}{\lceil \dfpt(R)\rceil}.
$$
\end{proposition}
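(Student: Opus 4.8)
The plan is to reduce the bound to a statement about the Hilbert–Samuel multiplicity of an F-pure ring whose dimension and embedding codimension are controlled, and then invoke (or reprove via the same method) the Huneke–Watanabe type estimate. First I would pass to a minimal reduction: after possibly enlarging the residue field (which changes neither $\e(R)$, nor $\edim(R)$, nor $\dim(R)$, nor $\fpt(R)$, by standard flatness arguments — here one may cite the earlier preliminaries on flat extensions, or simply that $R \to R[t]_{\m R[t]}$ is faithfully flat with regular fiber), assume the residue field is infinite and pick a minimal reduction $J$ of $\m$ generated by $d = \dim R$ elements. Set $k = \lceil \dfpt(R) \rceil$, so $\dfpt(R) < k+1$ and hence, by Proposition~\ref{prop: TW reduction}, $\m^{k+1} \subseteq J$ (if $d = 0$ the ring is a field, $\e(R) = 1$, and the binomial on the right is $1$, so the statement is trivial; assume $d > 0$).

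Next I would estimate $\e(R)$ using the reduction $J$. Since $J$ is a minimal reduction, $\e(R) = \e(J;R) = \ell_R(R/J)$ when $R$ is Cohen–Macaulay; in general $\e(J;R) \le \ell_R(R/J)$, so it suffices to bound $\ell_R(R/J)$. The inclusion $\m^{k+1} \subseteq J$ gives a surjection $R/\m^{k+1} \twoheadrightarrow R/J$ after we further mod out by the images of the $d$ generators of $J$; more precisely $R/J$ is a quotient of $R/(\m^{k+1} + J)$, and $\m^{k+1} + J = J$, so $\ell_R(R/J) \le \ell_R\!\big((R/\m^{k+1})/(\text{images of } d \text{ general linear forms})\big)$. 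The ring $\gr_\m(R)$ is a quotient of a polynomial ring in $\edim(R)$ variables over the residue field, and killing $d$ general linear forms leaves a standard graded algebra in $\edim(R) - d = \ecodim(R)$ variables, truncated in degrees $\le k$. Hence $\ell_R(R/J)$ is at most the length of the degree-$\le k$ part of a polynomial ring in $\ecodim(R)$ variables, which is exactly $\sum_{i=0}^{k} \binom{\ecodim(R) + i - 1}{i} = \binom{\ecodim(R) + k}{k}$. Substituting $k = \lceil \dfpt(R)\rceil$ yields the claimed inequality.

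The step I expect to require the most care is making the "kill $d$ general linear forms" argument rigorous at the level of lengths rather than at the level of the associated graded ring: one needs that the images in $\gr_\m(R)$ of a minimal reduction's generators form (part of) a linear system of parameters, or at least that the truncated quotient has the asserted length bound, which is where the infinite residue field and the genericity of $J$ are genuinely used. An alternative, cleaner route that sidesteps this — and is the one I would actually write — is to invoke the Huneke–Watanabe philosophy directly: $\e(R) \le \ell_R(R/\m^{k+1})$ is too weak, but the sharper bound $\e(R) \le \binom{\ecodim(R) + k}{k}$ follows from the general fact (no F-purity needed) that if $\m^{k+1} \subseteq J$ for some minimal reduction $J$, then $\e(R) \le \binom{\ecodim(R)+k}{k}$; this is essentially \cite[Theorem~2.1 or its proof]{HW15} with the Loewy-type bound $k$ in place of theirs. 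Thus the only role of F-purity and of $\dfpt$ is to supply the containment $\m^{\lceil \dfpt(R)\rceil + 1} \subseteq J$ via Proposition~\ref{prop: TW reduction}, and the rest is the Huneke–Watanabe multiplicity estimate applied with that value of the reduction exponent.
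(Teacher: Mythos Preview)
Your proposal is correct and follows essentially the same approach as the paper. The paper's argument is precisely your ``cleaner route'': after passing to an infinite residue field, one extends a minimal reduction $J = (x_1,\ldots,x_d)$ to a minimal generating set $\m = (x_1,\ldots,x_d,y_1,\ldots,y_t)$ with $t = \ecodim(R)$, uses Proposition~\ref{prop: TW reduction} to get $\m^{\lceil\dfpt(R)\rceil+1} \subseteq J$, and concludes that $R/J$ is spanned as a $\kk$-vector space by the monomials $y_1^{\alpha_1}\cdots y_t^{\alpha_t}$ of total degree at most $\lceil\dfpt(R)\rceil$, giving $\e(R) \le \ell_R(R/J) \le \binom{t+\lceil\dfpt(R)\rceil}{\lceil\dfpt(R)\rceil}$. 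Your detour through $\gr_\m(R)$ and general linear forms is unnecessary; the direct counting argument you flagged at the end is exactly what the paper does.
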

\begin{proof}
Let $(S,\n)$ be an F-finite regular ring mapping onto $R$, and $I \subseteq S$ be an ideal such that $R=S/I$. We can assume without loss of generality that $\dim(S)=\edim(R)$, that is, $I \subseteq \n^2$. With this assumption, we have that $\ecodim(R) = \height_S(I)$.
Let $\kk$ be a coefficient field for $\widehat S$,
and let $T = S \cotimes{\kk} \kk(t)$, where $t$ is a variable. Note that $S \to  T$ is a flat local morphism, and that the maximal ideal of $S$ extends to the maximal ideal of $T$. Let $A=T/IT$. Then $I_e(R)A = I_e(A)$ for all $e$ \cite[Lemma 3.8]{AE}, and thus 
$\fpt(R)=\fpt(A)$. Moreover, because $\dim(R)=\dim(A)$, we have that $\dfpt(R)=\dfpt(A)$. By comparing Hilbert functions, we also deduce that
$\e(R)=\e(A)$.
Therefore, we may assume that $R$ is a complete local ring that  contains an infinite field.

Let $J$ be a minimal reduction of $\m$. We have that 
$
\e(R)\leq \ell_R(R/J)=\dim_\kk(R/J)
$ 
\cite[Corollary 4.7.11]{BrHe}.
Let $d=\dim(R)$ and $t= \height_S(I)$.
Let $x_1,\ldots,x_d\in R$ be minimal generators of $J$ and $y_1,\ldots,y_t\in R$ be such $\m=(x_1,\ldots,x_d,y_1,\ldots,y_t)$.
By Proposition~\ref{prop: TW reduction} $\m^{\lceil\dfpt(R)+1\rceil}\subseteq J$, so the set $\left\{ y_1^{\alpha_1}\cdots y_t^{\alpha_t} \mid \alpha_1, \ldots, \alpha_r \in \mathbb{Z}_{\geq 0}, \sum_{i = 1}^t \alpha_i \leq \lceil\dfpt(R)\rceil\right\}$  generates $R/J$ as a $\kk$-vector space. Hence, we conclude that
\[
\e(R)\leq \ell_R(R/J)=\dim_\kk(R/J)\leq \binom{t+\lceil\dfpt(R)\rceil}{\lceil\dfpt(R)\rceil}.
\qedhere
\]
\end{proof}

\section{Differential operators and the defect of the F-pure threshold}\label{sect: differential}

\subsection{Differential operators and differential powers}\label{ssect: differential}

\begin{definition}[\cite{EGA}]
Let $A$ be a ring, and $S$ be an $A$-algebra. We define the $A$-linear \emph{differential operators} of order at most $n \in \NN$ on $S$ inductively by setting
\begin{enumerate}
	\item $D^0_{S|A} = \Hom_S(S,S) \subseteq \Hom_{\ZZ}(S,S)$,
	\item $D^n_{S|A}=\{ \delta \in \Hom_{A}(S,S) \ | \ \delta \circ \phi - \phi \circ \delta \in D^{n-1}_{S|A} \ \text{for all} \ \phi\in D^0_{S}\}$.
\end{enumerate}

We call $D_{S|A}=\bigcup_{n \geq 0} D^n_{S|A}$ the \emph{ring of $A$-linear differential operators} on $S$. If $A=\ZZ$, we write $D^n_S$ and $D_{S}$ in place of $D^n_{S|\ZZ}$ and $D_{S|\ZZ}$, respectively.
\end{definition}

We now present a description of differential operators in 
prime characteristic due to Yekutieli \cite{Ye}. 
We also  refer to work of Quinlan-Gallego \cite[Proposition 2.4]{QG-Fmodules} for another proof of this statement.

\begin{theorem}[{\cite{Ye}}] \label{thm:Yek}
If $S$ is an F-finite ring of characteristic $p>0$, then
\[
D_{S}=\bigcup_{e\in \NN} D^{(e)}_S,
\]
where $D_S^{(e)}= \Hom_{S^{p^e}}(S,S)$.
\end{theorem}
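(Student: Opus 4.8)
The plan is to establish separately the two inclusions $\bigcup_{e} D^{(e)}_S \subseteq D_S$ and $D_S \subseteq \bigcup_{e} D^{(e)}_S$: the first rests on F-finiteness through a nilpotency argument, while the second is a formal commutator computation valid for any ring of characteristic $p$.

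For $D^{(e)}_S = \Hom_{S^{p^e}}(S,S) \subseteq D_S$, I would use the adjunction along the left factor identifying an $S^{p^e}$-linear map $\delta$ with the $S$-linear map $\overline\delta\colon S\otimes_{S^{p^e}} S \to S$, $s_1\otimes s_2\mapsto s_1\delta(s_2)$. Put $J = \ker(S\otimes_{S^{p^e}}S \to S)$. Since $S$ is F-finite it is module-finite over $S^{p^e}$, say generated by $s_1,\dots,s_r$; then $J$ is the ideal generated by the elements $t_j = 1\otimes s_j - s_j\otimes 1$. In the characteristic $p$ ring $S\otimes_{S^{p^e}}S$ the ``freshman's dream'' gives $t_j^{p^e} = 1\otimes s_j^{p^e} - s_j^{p^e}\otimes 1 = 0$ because $s_j^{p^e}\in S^{p^e}$; hence $J^N = 0$ for $N = r(p^e-1)+1$ by pigeonhole. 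Composing $\overline\delta$ with the natural surjection $S\otimes_\ZZ S \twoheadrightarrow S\otimes_{S^{p^e}}S$, which carries the diagonal ideal $\Delta = \ker(S\otimes_\ZZ S \to S)$ onto $J$, produces a $\ZZ$-linear map $\widetilde\delta\colon S\otimes_\ZZ S \to S$, $s_1\otimes s_2\mapsto s_1\delta(s_2)$, vanishing on $\Delta^N$; by the description of $D^n_S$ through the diagonal ideal that is equivalent to the inductive one (see \cite{EGA}), this places $\delta$ in $D^{N-1}_S\subseteq D_S$.

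For $D_S\subseteq\bigcup_e D^{(e)}_S$, I would show $D^n_S\subseteq D^{(e)}_S$ whenever $p^e>n$. Inside the $\FF_p$-algebra $\End_\ZZ(S)$, write $\lambda_a$ and $\rho_a$ for post- and pre-composition with multiplication by $a\in S$; these commute, so $\operatorname{ad}_a^{p^e} = (\rho_a-\lambda_a)^{p^e} = \rho_a^{p^e}-\lambda_a^{p^e} = \rho_{a^{p^e}}-\lambda_{a^{p^e}} = \operatorname{ad}_{a^{p^e}}$, where $\operatorname{ad}_a(\phi) = \phi\circ a - a\circ\phi$. Unwinding the inductive definition, every $\delta\in D^n_S$ satisfies $\operatorname{ad}_a^{n+1}(\delta) = 0$ for all $a$ (all arguments taken equal to $a$), so for $p^e>n$ we get $\operatorname{ad}_{a^{p^e}}(\delta) = \operatorname{ad}_a^{p^e}(\delta) = \operatorname{ad}_a^{\,p^e-n-1}\bigl(\operatorname{ad}_a^{n+1}(\delta)\bigr) = 0$, i.e. $\delta(a^{p^e}x) = a^{p^e}\delta(x)$ for all $a,x$. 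Since $S^{p^e} = \{a^{p^e}\mid a\in S\}$ is a subring, this is precisely $S^{p^e}$-linearity, so $\delta\in D^{(e)}_S$. Taking the union over $n$, together with the obvious $D^{(e)}_S\subseteq D^{(e+1)}_S$ (so the union is filtered), and combining with the first inclusion, gives the statement.

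The step I expect to be the main obstacle is the first inclusion: one must know a priori that an $S^{p^e}$-linear endomorphism has \emph{bounded} differential order, and this is exactly where F-finiteness is used --- without module-finiteness $J$ need not be nilpotent, and indeed the conclusion itself can fail for non-F-finite rings. The remaining ingredients --- the adjunction $\Hom_{S^{p^e}}(S,S)\cong\Hom_S(S\otimes_{S^{p^e}}S,S)$, the identity $\operatorname{ad}_a^{p^e} = \operatorname{ad}_{a^{p^e}}$, and the passage between the commutator and diagonal-ideal forms of the definition of $D^n_S$ --- are standard.
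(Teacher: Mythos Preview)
Your proof is correct. The paper does not supply its own proof of this statement: it is quoted as a result of Yekutieli \cite{Ye}, with a reference to Quinlan-Gallego \cite{QG-Fmodules} for an alternative argument, so there is no ``paper's proof'' to compare against. Your argument is essentially the standard one and would be recognized by anyone familiar with those references: the inclusion $D^{(e)}_S\subseteq D_S$ via nilpotence of the diagonal ideal in $S\otimes_{S^{p^e}}S$ (this is where F-finiteness enters), and the reverse inclusion $D^n_S\subseteq D^{(e)}_S$ for $p^e>n$ via the characteristic-$p$ identity $\operatorname{ad}_a^{p^e}=\operatorname{ad}_{a^{p^e}}$. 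Both steps are clean; your remark that F-finiteness is essential for the first inclusion is also correct.
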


\begin{remark} \label{rem: perfect K} Let $S$ be a ring, and $\kk \subseteq S$ be a field. By definition, we always have $D^n_{S|\kk} \subseteq D^n_S$ for every $n \in \NN$. In addition, if $\kk$ is perfect and $S$ is F-finite, we get equality. In fact, let $\delta \in D^n_S$, $\lambda \in \kk$ and $s \in S$. By Theorem \ref{thm:Yek} there exists $e \in \NN$ such that $\delta \in D^{(e)}_S$. Since $\kk$ is perfect there exists $\nu \in \kk$ such that $\lambda = \nu^{p^e} \in S^{p^e}$, and therefore $\delta(\lambda s) = \delta(\nu^{p^e}s) = \nu^{p^e}\delta(s) = \lambda \delta(s)$. It follows that $\delta \in D^n_{S|\kk}$, as claimed.
\end{remark}

\begin{notation} \label{NotationD(n,e)}
Let $A$ be a ring, and $S$ be an F-finite $A$-algebra. 
For all integers $n,e \geq 0$ we let
$$D^{(n, p^e)}_{S|A}=D_{S|A}^{n} \cap D_{S}^{(e)}.$$
Note that for all $e>0$ we have
\begin{itemize}
	\item $D^{(0,p^e)}_{S|A} = \Hom_S(S,S) \subseteq D^{(e)}_S$,
	\item  $D^{(n,p^e)}_{S|A}=\{ \delta \in D_S^{(e)} \cap \Hom_A(S,S)\ | \ \delta \circ \phi - \phi \circ \delta \in D^{(n-1,p^e)}_{S|A} \ \text{for all} \ \phi\in D^{(0,p^e)}_{S|A}\}$
\end{itemize}
We note that $S\cong \Hom_S(S,S)$, and by abusing notation, we write $s\in S$ instead of the map given by multiplication by $s$.
If $A=\ZZ$, we write $D^{(n,p^e)}_S$ instead of $D^{(n,p^e)}_{S|\ZZ}$.
\end{notation}

\begin{remark}\label{RemCommutator}
Let $\delta\in D^{(e)}_S$. Then $\delta \in D^{(n, p^e)}_S$ if and only if 
$$
[[\ldots [[\delta, s_0],s_1],\ldots],s_n]=0
$$
for every $s_0,\ldots,s_n\in S$, where $[-,-]$ denotes the commutator.
\end{remark}

\begin{example}\label{ex: divided powers}
In the simplest case of $S = A[x_1,\ldots,x_d]$, the ring
$D_{S|A}$ is generated as an $S$-module by the so-called \emph{divided powers} differential operators: 
for any tuple $\underline{\alpha} \in \NN^d$ we define 
\[
\partial^{(\underline{\alpha})} (x_1^{\beta_1} \cdots x_d^{\beta_d})
\coloneqq 
\begin{cases}
{\beta_1 \choose \alpha_1}\cdots {\beta_d \choose \alpha_d} x_1^{\beta_1-\alpha_1}\cdots x_d^{\beta_d-\alpha_d}
& \text{if } \beta_i \geq \alpha_i \text{ for all } i \\
0 & \text{otherwise}
\end{cases}.
\]
In fact, $D_{S|A}^n$ and $D^{(n, p^e)}_{S|A}$ are finitely generated free $S$-module: a basis of $D_{S|A}^n$ is given by $\partial^{(\underline{\alpha})}$
for $\underline{\alpha} \in \NN^d$ such that $\alpha_1 + \cdots + \alpha_d \leq n$, while a basis for $D^{(n, p^e)}_{S|A}$ 
is given by further imposing that $\alpha_i < p^e$ for all $i$.
\end{example}

\begin{remark}\label{PPFinite}
Let $A$ be a ring, and $S$ be an F-finite $A$-algebra. Let $\Delta_{S|A}$ be the kernel of the multiplication map $S\otimes_A S\to S$.
We note that 
$$
\frac{S\otimes_\ZZ S}{\Delta^{[p^e]}_{S|\ZZ}}\cong S\otimes_{S^{p^e}}S,
$$
and so, 
$$ 
\frac{S\otimes_\ZZ S}{\Delta^{n+1}_{S|\ZZ}+\Delta^{[p^e]}_{S|\ZZ}} \cong \frac{S \otimes_{S^{p^e}} S}{\Delta_{S|S^{p^e}}^{n+1}}
$$ is a finitely generated $S$-module for all $e,n \geq 0$. Furthermore, we have that
\begin{align*}
\Hom_S\left( \frac{S\otimes_\ZZ S}{\Delta^{n+1}_{S|\ZZ}+\Delta^{[p^e]}_{S|\ZZ}}, S\right) & \cong \Ann_S(\Delta^{n+1}_{S|\ZZ} + \Delta^{[p^e]}_{S|\ZZ}) \\
& \cong \Ann_{S} (\Delta^{n+1}_{S|\ZZ}) \cap \Ann_{S} (\Delta^{[p^e]}_{S|\ZZ})  \\
& \cong \Hom_S\left( \frac{S\otimes_\ZZ S}{\Delta^{n+1}_{S|\ZZ}}, S\right)  \cap \Hom_S\left( \frac{S\otimes_\ZZ S}{\Delta^{[p^e]}_{S|\ZZ}}, S\right) \\
& \cong D^n_S \cap D_S^{(e)}\\
& = D^{(n,p^e)}_{S},
\end{align*}
where the intersection in the third line comes from viewing the dual of a quotient of $S \otimes_\ZZ S$ as a submodule of $\Hom_S(S \otimes_\ZZ S,S) \cong \Hom_\ZZ(S,S)$. It follows that $D^{(n,p^e)}_{S}$ is a finitely generated $S$-module.
\end{remark}

\begin{lemma}\label{LemmaLocalizationD^{(n,e)}}
Let $S$ be an F-finite ring and $W\subseteq S$ be a multiplicative system. Then  $W^{-1}D^{(n,p^e)}_S \cong D^{(n,p^e)}_{W^{-1} S}$ for all $n,e \geq 0$.
\end{lemma}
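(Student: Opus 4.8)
The plan is to reduce the statement to the analogous well-known localization formulas for the two pieces in the intersection $D^{(n,p^e)}_S = D^n_S \cap D^{(e)}_S$, and then to check that the intersection behaves well under localization because everything in sight is finitely generated. First I would recall from Remark~\ref{PPFinite} the identification $D^{(n,p^e)}_S \cong \Hom_S\left( \dfrac{S \otimes_\ZZ S}{\Delta^{n+1}_{S|\ZZ} + \Delta^{[p^e]}_{S|\ZZ}}, S\right)$, where $\Delta_{S|\ZZ}$ is the kernel of the multiplication map $S \otimes_\ZZ S \to S$, and where the source of this Hom is a finitely generated $S$-module (via either of the two $S$-module structures on $S \otimes_\ZZ S$). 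The point is that $D^{(n,p^e)}_S$ is realized as a Hom-module $\Hom_S(M_{n,e}, S)$ with $M_{n,e}$ finitely presented over $S$.

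Next I would use the fact that localization commutes with Hom when the first argument is finitely presented: for a multiplicative set $W \subseteq S$ and a finitely presented $S$-module $M$, the natural map $W^{-1}\Hom_S(M,S) \to \Hom_{W^{-1}S}(W^{-1}M, W^{-1}S)$ is an isomorphism. So it remains to identify $W^{-1} M_{n,e}$ with the corresponding module $M_{n,e}'$ built over $W^{-1}S$, i.e. to check that $\dfrac{W^{-1}S \otimes_\ZZ W^{-1}S}{\Delta^{n+1}_{W^{-1}S|\ZZ} + \Delta^{[p^e]}_{W^{-1}S|\ZZ}}$ is the localization of $\dfrac{S \otimes_\ZZ S}{\Delta^{n+1}_{S|\ZZ} + \Delta^{[p^e]}_{S|\ZZ}}$ at the multiplicative image of $W$. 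For this I would note that $W^{-1}S \otimes_\ZZ W^{-1}S$ is a localization of $S \otimes_\ZZ S$ at the multiplicative set generated by $W \otimes 1$ and $1 \otimes W$, that $\Delta_{W^{-1}S|\ZZ}$ is the extension of $\Delta_{S|\ZZ}$ along this localization, and that taking powers and Frobenius powers of an ideal commutes with localization; then quotienting and localizing commute. Because of the finite generation observed in Remark~\ref{PPFinite}, one can equally well localize at the image of $W$ (under either structure map) rather than at the larger multiplicative set, since inverting the extra elements $1 \otimes w$ already becomes invertible on a finitely generated module once $w$ is inverted. Finally, unwinding the identifications, $W^{-1}\Hom_S(M_{n,e},S) \cong \Hom_{W^{-1}S}(M_{n,e}', W^{-1}S) \cong D^{(n,p^e)}_{W^{-1}S}$, which gives the claim.

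The main obstacle I expect is the bookkeeping in the previous paragraph: being careful about which of the two $S$-module structures on $S \otimes_\ZZ S$ one uses, and verifying that localizing the Hom-module $\Hom_S(M_{n,e},S)$ at $W$ matches localizing the enveloping algebra $S \otimes_\ZZ S$ at the full multiplicative set generated by $W \otimes 1$ and $1 \otimes W$. The clean way around this is precisely the finiteness from Remark~\ref{PPFinite}: since $D^{(n,p^e)}_S$ and $M_{n,e}$ are finitely generated $S$-modules, the operators in $D^{(n,p^e)}_{W^{-1}S}$ are automatically $S^{p^e}$-linear after clearing a single denominator, so one reduces to the identity $W^{-1}\Hom_S(M,S) = \Hom_{W^{-1}S}(W^{-1}M,W^{-1}S)$ for finitely presented $M$, which is standard. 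An alternative, perhaps cleaner, route is to invoke Theorem~\ref{thm:Yek} directly: $D^{(e)}_{W^{-1}S} = \Hom_{(W^{-1}S)^{p^e}}(W^{-1}S, W^{-1}S) = \Hom_{W^{-1}(S^{p^e})}(W^{-1}S, W^{-1}S)$, and since $S$ is module-finite over $S^{p^e}$ this is $W^{-1}\Hom_{S^{p^e}}(S,S) = W^{-1}D^{(e)}_S$; combined with the standard compatibility of the order filtration $D^n$ with localization (each $D^n_S$ is finitely generated over $S$ in the F-finite case, again by Remark~\ref{PPFinite}), one gets $W^{-1}(D^n_S \cap D^{(e)}_S) = W^{-1}D^n_S \cap W^{-1}D^{(e)}_S = D^n_{W^{-1}S} \cap D^{(e)}_{W^{-1}S} = D^{(n,p^e)}_{W^{-1}S}$, where the first equality uses that intersection of submodules commutes with flat base change.
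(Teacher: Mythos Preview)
Your approach is correct and is essentially the paper's: both use the identification $D^{(n,p^e)}_S \cong \Hom_S(M_{n,e}, S)$ from Remark~\ref{PPFinite} with $M_{n,e}$ finitely generated, and then commute localization with $\Hom$. One small correction: the reason localizing $M_{n,e}$ at $W$ via one $S$-structure already inverts the other side is that $1 \otimes w - w \otimes 1 \in \Delta_{S|\ZZ}$ is nilpotent in the quotient (since $\Delta^{n+1}$ is killed there), not finite generation per se; the paper sidesteps this bookkeeping by rewriting $M_{n,e}$ as $(S \otimes_{S^{p^e}} S)/\Delta^{n+1}_{S|S^{p^e}}$ and invoking standard localization of principal parts over the Noetherian base $S^{p^e}$.
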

\begin{proof}
We have that
\begin{align*}
W^{-1} D^{(n,p^e)}_S
& \cong D^{(n,p^e)}_S\otimes_R W^{-1} S\\
& \cong 
\Hom_S\left( \frac{S\otimes_\ZZ S}{\Delta^{n+1}_{S|\ZZ}+\Delta^{[p^e]}_{S|\ZZ}}, S\right)\otimes_S W^{-1} S\\
& \cong \Hom_{W^{-1} S}\left( \frac{S\otimes_\ZZ S}{\Delta^{n+1}_{S|\ZZ}+\Delta^{[p^e]}_{S|\ZZ}} \otimes_S W^{-1}  , W^{-1} S \right)\\
&\cong \Hom_{W^{-1} S}\left( \frac{S\otimes_{S^{p^e}} S}{\Delta^{n+1}_{S|S^{p^e}}} \otimes_S W^{-1} S , W^{-1} S \right)\\
&\cong \Hom_{W^{-1} S}\left( \frac{W^{-1} S\otimes_{W^{-1} S^{p^e}} W^{-1} S}{\Delta^{n+1}_{W^{-1} S |W^{-1} S^{p^e}}} , W^{-1} S\right)\\
&\cong \Hom_{W^{-1}S}\left(\frac{W^{-1}S \otimes_\ZZ W^{-1}S}{\Delta_{W^{-1}S|\ZZ}^{n+1} + \Delta_{W^{-1}S|\ZZ}^{[p^e]}},W^{-1}S\right) \\
& \cong D^{(n,p^e)}_{W^{-1} S},
\end{align*}
where we use the finite generation results established in Remark \ref{PPFinite} multiple times, as well as properties of principal parts with localization \cite[Proposition 2.16]{BJNB} and the fact that $W^{-1}S^{p^e} = (W^{-1}S)^{p^e}$.
\end{proof}

\begin{lemma}\label{LemmaCompletion D^{(n,e)}}
Let  $(S,\m,\kk)$ be an F-finite regular local ring. Then  $\widehat{D^{(n,p^e)}_S} \cong D^{(n,p^e)}_{\widehat{S}}$.
\end{lemma}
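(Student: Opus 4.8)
The statement to prove is Lemma~\ref{LemmaCompletion D^{(n,e)}}: for $(S, \m, \kk)$ an F-finite regular local ring, $\widehat{D^{(n,p^e)}_S} \cong D^{(n,p^e)}_{\widehat S}$. The plan is to mimic the localization argument of Lemma~\ref{LemmaLocalizationD^{(n,e)}}, replacing ``$W^{-1}(-)$'' by ``$\widehat{(-)}$'' and exploiting that $\m$-adic completion is exact and commutes with $\Hom$ against a finitely generated module over a Noetherian ring. Concretely, I would start from the presentation established in Remark~\ref{PPFinite},
\[
D^{(n,p^e)}_S \cong \Hom_S\!\left( \frac{S\otimes_\ZZ S}{\Delta^{n+1}_{S|\ZZ}+\Delta^{[p^e]}_{S|\ZZ}}, S\right),
\]
and observe that the module $P := (S\otimes_\ZZ S)/(\Delta^{n+1}_{S|\ZZ}+\Delta^{[p^e]}_{S|\ZZ}) \cong (S\otimes_{S^{p^e}} S)/\Delta^{n+1}_{S|S^{p^e}}$ is a finitely generated $S$-module (again by Remark~\ref{PPFinite}). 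Since $S$ is Noetherian and $P$ is finitely presented, completion commutes with this Hom: $\widehat{\Hom_S(P,S)} \cong \Hom_{\widehat S}(\widehat P, \widehat S)$, where $\widehat P = P \otimes_S \widehat S$.

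\textbf{Key steps.} First, record the finite generation of $P$ over $S$ and the standard fact $\widehat{\Hom_S(P,S)} \cong \Hom_{\widehat S}(\widehat P, \widehat S)$ for $P$ finitely presented over the Noetherian ring $S$; combined with $\widehat{D^{(n,p^e)}_S} \cong D^{(n,p^e)}_S \otimes_S \widehat S$ this gives $\widehat{D^{(n,p^e)}_S} \cong \Hom_{\widehat S}(\widehat P, \widehat S)$. Second, identify $\widehat P$ with the analogous principal-parts-type module for $\widehat S$: since $P \cong (S \otimes_{S^{p^e}} S)/\Delta^{n+1}_{S|S^{p^e}}$, I would use the behavior of (higher) principal parts under base change / completion --- in the same spirit as the citation \cite[Proposition 2.16]{BJNB} used in Lemma~\ref{LemmaLocalizationD^{(n,e)}} --- together with the key identity $\widehat{S}^{p^e} = (\widehat S)^{p^e}$ (valid since $S$, being F-finite regular local, has $S^{p^e} \hookrightarrow S$ module-finite, so $\m$-adic completion of the subring $S^{p^e}$ is compatible with that of $S$, and the $p^e$-th power map is continuous) to conclude
\[
\widehat P \;\cong\; \frac{\widehat S \otimes_{\widehat S^{p^e}} \widehat S}{\Delta^{n+1}_{\widehat S | \widehat S^{p^e}}} \;\cong\; \frac{\widehat S \otimes_\ZZ \widehat S}{\Delta^{n+1}_{\widehat S|\ZZ} + \Delta^{[p^e]}_{\widehat S|\ZZ}}.
\]
Third, apply Remark~\ref{PPFinite} (now for the F-finite regular local ring $\widehat S$) to recognize $\Hom_{\widehat S}(\widehat P, \widehat S) \cong D^{(n,p^e)}_{\widehat S}$, which closes the argument.

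\textbf{Main obstacle.} The routine part is the commutation of completion with $\Hom$ from a finitely presented module; the delicate point is the second step --- verifying that the $\m$-adic completion of the principal-parts-type module $P$ really is the corresponding object built from $\widehat S$ and $\widehat S^{p^e}$. This needs: (i) $\widehat{S^{p^e}} = (\widehat S)^{p^e}$ as subrings of $\widehat S$ (using F-finiteness, so that $S^{p^e} \subseteq S$ is module-finite and the $\m^{[p^e]}$-adic topology on $S^{p^e}$ matches the subspace topology from the $\m$-adic topology on $S$), and (ii) that forming the conormal module $\Delta_{S|S^{p^e}}$ and its powers commutes with completion --- which follows because $S \otimes_{S^{p^e}} S$ is a finite $S$-algebra (F-finiteness again) so completing it agrees with completing $S$ and base-changing, and $\Delta^{n+1}$ is a finitely generated ideal whose completion is the corresponding ideal of the completed ring. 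Once these are in place, the isomorphisms chain together exactly as in the proof of Lemma~\ref{LemmaLocalizationD^{(n,e)}}, with ``$W^{-1}$'' uniformly replaced by ``$\widehat{\phantom{x}}$''.
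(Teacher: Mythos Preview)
Your proposal is correct and follows essentially the same route as the paper: start from the Hom presentation of $D^{(n,p^e)}_S$ in Remark~\ref{PPFinite}, use finite generation to commute completion with $\Hom$ (i.e., $-\otimes_S \widehat S$), identify the completed principal-parts module with that of $\widehat S$ via $\widehat{S^{p^e}}\cong (\widehat S)^{p^e}$, and recognize the result as $D^{(n,p^e)}_{\widehat S}$. The paper's proof is exactly this chain of isomorphisms, with the same two ingredients (finite generation from Remark~\ref{PPFinite} and the compatibility $\widehat{S^{p^e}}\cong \widehat S^{p^e}$) singled out.
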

\begin{proof}
Let $\widehat{S}$ denote the $\m$-adic completion of $S$. We have that
\begin{align*}
\widehat{D^{(n,p^e)}_S} 
& \cong D^{(n,p^e)}_S\otimes_S \widehat{S}\\
& \cong \Hom_S\left( \frac{S\otimes_\ZZ S}{\Delta^{n+1}_{S|\ZZ}+\Delta^{[p^e]}_{S|\ZZ}}, S\right)\otimes_S \widehat{S}\\
& \cong \Hom_S\left( \frac{S\otimes_\ZZ S}{\Delta^{n+1}_{S|\ZZ}+\Delta^{[p^e]}_{S|\ZZ}} \otimes_S \widehat{S} , \widehat{S} \right)\\
&\cong \Hom_{\widehat{S}}\left( \frac{S\otimes_{S^{p^e}} S}{\Delta^{n+1}_{S|S^{p^e}}} \otimes_S \widehat{S} , \widehat{S} \right)\\
&\cong \Hom_{\widehat{S}}\left( \frac{\widehat{S}\otimes_{\widehat{S}^{p^e}} \widehat{S}}{\Delta^{n+1}_{\widehat{S}|\widehat{S}^{p^e}}} , \widehat{S} \right)\\
& \cong \Hom_{\widehat{S}}\left( \frac{\widehat{S}\otimes_{\ZZ} \widehat{S}}{\Delta^{n+1}_{\widehat{S}|\ZZ} + \Delta^{[p^e]}_{\widehat{S}|\ZZ}} , \widehat{S} \right) \\
&\cong D^{(n,p^e)}_{\widehat{S}},
\end{align*}
where we use the finite generation results obtained in Remark \ref{PPFinite} several times, together with the fact that $\widehat{S^{p^e}}\cong \widehat{S}^{p^e}$.
\end{proof}

\begin{definition}
Let $\kk$ be an F-finite field. We say that a set $\Lambda=\{\lambda_1,\ldots,\lambda_a\}\subseteq \kk\setminus \kk^p$ is a $p$-basis of $\kk$ if $\kk=\kk^p[\Lambda]$ and $[\kk:\kk^p]=p^a$.
\end{definition}

\begin{example}
Let $\kk$ be an F-finite field, $S=\kk\ps{x_1,\ldots, x_d}$, and $\Lambda=\{\lambda_1,\ldots,\lambda_a\}$ be a $p$-basis of $\kk$.
For $e\in \NN$ and  $\alpha\in\NN^a$ such that $0\leq \alpha_i\leq p^e-1$, we let $\lambda^\alpha=\lambda^{\alpha_1}_1\cdots \lambda^{\alpha_a}_a$. For $\beta \in \NN^d$ we also let $x^\beta=x_1^{\beta_1} \cdots x_d^{\beta_d}$. We note that $\{\lambda^\alpha x^\beta\; |\; 0\leq \alpha_i,\beta_i\leq p^e -1\}$ is a basis for $S$ as an $S^{p^e}$-module. Given $\gamma\in \ZZ^a_{\geq 0}$, we write
$(1\otimes\lambda-\lambda\otimes 1)^\gamma$ for
$\prod^a_{i=1}(1\otimes\lambda_i-\lambda_i\otimes 1)^{\gamma_i}$
Given $\theta\in \ZZ^d_{\geq 0}$, we write $(1\otimes x-x\otimes 1)^\theta$ for
$\prod^d_{i=1} (1\otimes x-x\otimes 1)^{\theta_i}$. Then, 
\begin{align*}
\frac{S\otimes_\ZZ S}{\Delta^{n+1}_{S|\ZZ}+\Delta^{[p^e]}_{R|\ZZ}} & \cong 
\frac{S\otimes_{S^{p^e}} S}{\Delta^{n+1}_{S|S^{p^e}}} \cong 
\frac{\bigoplus_{\substack{0\leq \alpha_i,\beta_j,\gamma_i,\theta_j\leq p^e-1}} S^{p^e}(\lambda^\alpha x^\beta\otimes \lambda^\gamma x^\theta)}{ (1\otimes \lambda_i  - \lambda_i \otimes 1,  1\otimes x_j-x_j\otimes 1)^{n+1} }\\
&\cong 
\frac{\bigoplus_{\substack{0\leq \alpha_i,\beta_j,\gamma_i,\theta_j\leq p^e-1}} S^{p^e} (\lambda^\alpha x^\beta\otimes 1 )
(1\otimes\lambda-\lambda\otimes 1)^\gamma (1\otimes x-x\otimes 1)^\theta}
{ (1\otimes \lambda_i  - \lambda_i \otimes 1,  1\otimes x_j-x_j\otimes 1)^{n+1} }\\
&\cong 
\bigoplus_{\substack{0\leq \alpha_1,\beta_j,\gamma_i,\theta_j\leq p^e-1 \\
|\gamma|+|\theta|\leq n}} S^{p^e} (\lambda^\alpha x^\beta\otimes 1 )
(1\otimes\lambda-\lambda\otimes 1)^\gamma (1\otimes x-x\otimes 1)^\theta\\
&\cong 
\bigoplus_{\substack{0\leq \gamma_i,\theta_j\leq p^e-1 \\
|\gamma|+|\theta|\leq n}} S
(1\otimes\lambda-\lambda\otimes 1)^\gamma (1\otimes x-x\otimes 1)^\theta,
\end{align*}
It follows that $ \frac{S\otimes_\ZZ S}{\Delta^{n+1}_{S|\ZZ}+\Delta^{[p^e]}_{S|\ZZ}}$ is a free $S$-module, and hence so is $D^{(n,p^e)}_{S}$.
\end{example}

We now introduce two notions of differential powers of a given ideal. The first one has been considered before in the literature, the second one is new.

\begin{definition}
Let $S$ ne a ring, and $I\subseteq S$ be an ideal. For $n,e\in\NN$
\begin{itemize}
\item we define the $n$-th differential power of $I$ as
$I^{\{n\}} \coloneqq \{s \in S \mid \delta (s) \in I \text{ for all } \delta \in D_{S}^{n-1}\}$ \cite{SurveySymbPowers};
\item we define the $n$-th differential power of level $e$ of $I$ as
$$I^{\{n,p^e\}} \coloneqq \{s \in S \mid \delta (s) \in I \text{ for all } \delta \in D_{S}^{(n-1,p^e)}\}.$$
\end{itemize} 
\end{definition}

\begin{proposition}\label{prop: dif pow properties}
Let $S$ be a F-finite ring and $I \subseteq S$ be an ideal.
Then $I^{\{n, p^e\}}$ is an ideal. Moreover, if $\p$ is a prime ideal, then  $\p^{\{n, p^e\}}$ is a $\p$-primary ideal containing $\p^n + \frpe{\p}$.
\end{proposition}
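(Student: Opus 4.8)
The plan is to prove each assertion by unwinding the definition of $I^{\{n,p^e\}}$ and exploiting the $S$-module structure of $D^{(n-1,p^e)}_S$ together with the commutator characterization from Remark~\ref{RemCommutator}. First I would show $I^{\{n,p^e\}}$ is an ideal: it is clearly an additive subgroup since each $\delta$ is additive, and if $s \in I^{\{n,p^e\}}$ and $a \in S$, I must check $\delta(as) \in I$ for all $\delta \in D^{(n-1,p^e)}_S$. The point is that $\delta \circ a = a \circ \delta + [\delta, a]$, where multiplication by $a$ is an element of $D^{(0,p^e)}_S$ and $[\delta,a] \in D^{(n-2,p^e)}_S \subseteq D^{(n-1,p^e)}_S$; hence $\delta(as) = a\delta(s) + [\delta,a](s)$, and both terms lie in $I$ (the first because $I$ is an ideal, the second because $[\delta,a]$ is again an operator of the allowed type, so $[\delta,a](s) \in I$ by hypothesis on $s$). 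An easy induction on $n$ handles the base case $n=1$ (where $D^{(0,p^e)}_S \cong S$ acts by multiplication, so $I^{\{1,p^e\}} = I$).

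Next I would verify that $\p^n + \p^{[p^e]} \subseteq \p^{\{n,p^e\}}$. For the $\p^n$ part: any $\delta \in D^{(n-1,p^e)}_S$ has order at most $n-1$, and a standard computation (iterating the Leibniz-type identity above) shows that an order $\leq n-1$ operator maps $\p^n$ into $\p$ — concretely, applying $\delta$ to a product $a_1\cdots a_n$ of elements of $\p$ and expanding via commutators produces only terms each of which still has at least one factor from $\p$ left undifferentiated, because there are $n$ factors and the commutators "use up" at most $n-1$ of them. For the $\p^{[p^e]}$ part: if $\delta \in D^{(e)}_S = \Hom_{S^{p^e}}(S,S)$, then $\delta$ is $S^{p^e}$-linear, so for $a \in \p$ we get $\delta(a^{p^e} t) = a^{p^e}\delta(t) \in \p$ for any $t$; since $\p^{[p^e]}$ is generated by such $a^{p^e}$, this gives $\delta(\p^{[p^e]}) \subseteq \p$.

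Finally, for $\p$-primariness of $\p^{\{n,p^e\}}$: since it contains $\p^n$ we have $\sqrt{\p^{\{n,p^e\}}} = \p$, so it suffices to check that if $xy \in \p^{\{n,p^e\}}$ and $y \notin \p$ then $x \in \p^{\{n,p^e\}}$; equivalently that $S/\p^{\{n,p^e\}}$ has no zero divisors outside $\p/\p^{\{n,p^e\}}$. The cleanest route is the identification in Remark~\ref{PPFinite}: $\p^{\{n,p^e\}}$ is exactly the $\p$-primary ``differential-operator'' description, and one can pass to the localization $S_\p$ using Lemma~\ref{LemmaLocalizationD^{(n,e)}} ($W^{-1}D^{(n,p^e)}_S \cong D^{(n,p^e)}_{W^{-1}S}$ with $W = S \setminus \p$), which shows $\p^{\{n,p^e\}}S_\p \cap S = \p^{\{n,p^e\}}$ and that $\p^{\{n,p^e\}}S_\p$ is $\p S_\p$-primary because in the local ring $S_\p$ it contains a power of the maximal ideal — so $S_\p/\p^{\{n,p^e\}}S_\p$ is Artinian local, hence its only associated prime is $\p S_\p$. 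Pulling back, $\Ass(S/\p^{\{n,p^e\}}) = \{\p\}$, which is the definition of $\p$-primary. I expect the main obstacle to be the bookkeeping in the $\p^n \subseteq \p^{\{n,p^e\}}$ step — making the commutator expansion of $\delta(a_1\cdots a_n)$ precise enough to see that every surviving summand retains an undifferentiated factor in $\p$ — and, secondarily, being careful that the localization argument correctly commutes taking differential powers with localizing at $\p$.
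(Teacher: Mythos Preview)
Your arguments for the ideal property and for the containment $\p^n + \p^{[p^e]} \subseteq \p^{\{n,p^e\}}$ match the paper's: both use the commutator identity $\delta(as) = a\,\delta(s) + [\delta,a](s)$ with induction on $n$ for closure under multiplication, and both handle the containment by noting that operators in $D^{(e)}_S$ are $S^{p^e}$-linear (for the $\p^{[p^e]}$ part) and that order $\leq n-1$ operators send $\p^n$ into $\p$ (the paper simply cites the known fact $\p^n \subseteq \p^{\{n\}}$ from \cite{SurveySymbPowers}, while you sketch the Leibniz expansion).

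For $\p$-primariness the two approaches diverge. The paper does a short direct induction on $n$ using the same commutator trick: if $sf \in \p^{\{n+1,p^e\}}$ with $s \notin \p$, then by the inductive hypothesis $f \in \p^{\{n,p^e\}}$, and for any $\delta \in D^{(n,p^e)}_S$ one has $s\,\delta(f) = \delta(sf) - [\delta,s](f) \in \p$, so $\delta(f) \in \p$ since $\p$ is prime. Your localization route can be made to work, but as written it is circular: the equality $\p^{\{n,p^e\}}S_\p \cap S = \p^{\{n,p^e\}}$ is precisely the assertion that elements of $S\setminus\p$ are nonzerodivisors on $S/\p^{\{n,p^e\}}$, which is the primariness you want. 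The fix is to establish instead that $(\p S_\p)^{\{n,p^e\}} \cap S = \p^{\{n,p^e\}}$, which follows directly from the definitions together with Lemma~\ref{LemmaLocalizationD^{(n,e)}} and the fact that $\p S_\p \cap S = \p$; since $(\p S_\p)^{\{n,p^e\}}$ sits between $(\p S_\p)^n$ and $\p S_\p$, it is $\p S_\p$-primary, and its contraction to $S$ is then $\p$-primary. The paper's induction is slightly cleaner because it is entirely self-contained, while your route has the advantage of foreshadowing Lemma~\ref{LemmaDiffPowersLocalization}.
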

\begin{proof}
First, $I^{\{n, p^e\}}$ is closed under addition.
We now show that it is closed under multiplication by induction on $n$.
Since $D^0_{S}=S$, we have that $I^{\{1, p^e\}} = I$. 
For $n \geq 1$, let $f \in I^{\{n+1, p^e\}}$ and $s \in S$. 
Then for any $\delta \in D_{S}^{n} \cap D_{S}^{(e)}$, we may compute 
\[
\delta (sf) = [\delta, s](f) + s\delta(f) \in I,
\]
where we use that $[\delta, s] \in  D_{S}^{n-1}\cap D_{S}^{(e)}$ and $f \in I^{n+1,p^e} \subseteq I^{n,p^e}$.
 
We now show that $\p^{\{n, p^e\}}$ is $\p$-primary, proceeding again by induction on $n$. We already observed that $\p^{\{1, p^e\}} = \p$.
Suppose that $f \in \p$, $s \notin \p$, and $sf \in \p^{\{n+1, p^e\}}$. 
By induction, $f \in \p^{\{n, p^e\}}$.
For any $\delta \in D_{S}^{n} \cap D_{S}^{(e)}$ we have 
that $[\delta, s] \in D_{S}^{n-1} \cap D_{S}^{(e)}$, so 
\[
s\delta(f) = \delta (sf) - [\delta, s](f) \in \p.
\]
Since $\p$ is prime, $\delta(f) \in \p$ and the claim follows. 

Lastly, since $\p^{\{n,p^e\}}$ is $\p$-primary, in order to show the containment it suffices to prove that $(\p^n + \frpe{\p})S_\p = \p^{\{n, p^e\}}S_\p$. 
We note that  
$$
\p^n\subseteq \{s \in S \mid \delta (s) \in \p \text{ for all } \delta \in D_{S}^{n-1}\}= \p^{\{n\}}\subseteq \p^{\{n, p^e\}}
$$ 
from previous work on differential powers \cite[Proposition 2.5]{SurveySymbPowers}.
In addition,   
$$
\p^{[p^e]}\subseteq \{s \in S \mid \delta (s) \in \p \text{ for all } \delta \in D_{S}^{(e)}\}\subseteq \p^{\{n, p^e\}}.
$$
It follows that $(\p^n + \frpe{\p})S_\p \subseteq  \p^{\{n, p^e\} }S_\p$, as claimed.
\end{proof}

\begin{lemma}\label{lemma: dif pow maximal series}
Let $S=\kk\ps{x_1,\ldots,x_d}$ be a power series ring over an F-finite field $\kk$, and $\m$ be its maximal ideal.
Then  $\m^{\{n, p^e\}}=\m^n + \frpe{\m}$ for all $n,e \geq 0$.
\end{lemma}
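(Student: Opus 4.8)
The inclusion $\m^n + \m^{[p^e]} \subseteq \m^{\{n,p^e\}}$ is already furnished by Proposition~\ref{prop: dif pow properties} applied to the prime ideal $\m$, so the plan is to prove the reverse inclusion, and I would argue by contraposition: starting from $f \in S$ with $f \notin \m^n + \m^{[p^e]}$, I want to produce a single differential operator $\delta \in D^{(n-1,p^e)}_S$ with $\delta(f) \notin \m$, which says exactly that $f \notin \m^{\{n,p^e\}}$. The degenerate cases cause no trouble: for $n = 0$ the claimed identity reads $S = S$ and for $e = 0$ it reads $\m = \m$, so I may assume $n \geq 1$.

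The mechanism rests on the fact that $J \coloneqq \m^n + \m^{[p^e]}$ is a monomial ideal of the power series ring $S = \kk\ps{x_1,\dots,x_d}$, generated by the degree-$n$ monomials together with $x_1^{p^e},\dots,x_d^{p^e}$; hence the (finite-dimensional, as $J \supseteq \m^n$) quotient $S/J$ has $\kk$-basis the images of the standard monomials $x^\beta = x_1^{\beta_1}\cdots x_d^{\beta_d}$ with $|\beta| < n$ and $\beta_i < p^e$ for every $i$. Writing $f = \sum_\beta c_\beta x^\beta$ with $c_\beta \in \kk$, the hypothesis $f \notin J$ yields a standard exponent $\beta^*$ with $c_{\beta^*} \neq 0$, and I would take as witness the divided-power operator $\partial^{(\beta^*)}$ of Example~\ref{ex: divided powers}, characterized by $\partial^{(\beta^*)}(x^\beta) = \binom{\beta}{\beta^*} x^{\beta - \beta^*}$ when $\beta \geq \beta^*$ componentwise and $\partial^{(\beta^*)}(x^\beta) = 0$ otherwise. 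This operator has order $|\beta^*| \leq n-1$, and because $\beta^*_i < p^e$ for all $i$ it is $S^{p^e}$-linear (a short check via Lucas' theorem applied to $\binom{p^e+\nu}{\beta^*_i}$, or directly the explicit free $S$-basis of $D^{(n,p^e)}_S$ over a power series ring computed above); hence $\partial^{(\beta^*)} \in D^{(|\beta^*|,p^e)}_S \subseteq D^{(n-1,p^e)}_S$.

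The conclusion is then immediate: since a differential operator is $\m$-adically continuous, $\partial^{(\beta^*)}$ may be applied termwise, giving $\partial^{(\beta^*)}(f) = \sum_{\beta \geq \beta^*} c_\beta \binom{\beta}{\beta^*} x^{\beta - \beta^*}$, and the only term of degree $0$ is the one with $\beta = \beta^*$, so the constant term of $\partial^{(\beta^*)}(f)$ equals $c_{\beta^*}\binom{\beta^*}{\beta^*} = c_{\beta^*} \neq 0$. Hence $\partial^{(\beta^*)}(f) \notin \m$ and $f \notin \m^{\{n,p^e\}}$, which gives $\m^{\{n,p^e\}} \subseteq \m^n + \m^{[p^e]}$ and, together with the first paragraph, the desired equality. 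I do not anticipate a genuine obstacle: the only point requiring mild care is confirming that $\partial^{(\beta^*)}$ really lies in $D^{(n-1,p^e)}_S$, and it is worth noting that no differential operators along a $p$-basis of $\kk$ are ever needed here — the witnessing operator is $\kk$-linear — so the argument is insensitive to whether $\kk$ is perfect.
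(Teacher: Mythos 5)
Your proof is correct and takes essentially the same route as the paper's: reduce to showing $\m^{\{n,p^e\}} \subseteq \m^n + \m^{[p^e]}$, exploit the monomial structure of the right-hand side to pick a ``standard'' exponent $\beta^*$ (i.e., $|\beta^*| \leq n-1$ and $\beta^*_i < p^e$) in the support of $f$, and apply the divided-power operator $\partial^{(\beta^*)} \in D^{(n-1,p^e)}_S$ to exhibit a nonzero constant term. The only cosmetic difference is that the paper chooses $\beta^*$ to be the degree-lexicographically minimal exponent of $\Supp(f)$ lying outside $\m^n+\m^{[p^e]}$, whereas you observe (correctly) that any standard exponent occurring in $\Supp(f)$ will serve.
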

\begin{proof}
By Proposition~\ref{prop: dif pow properties} it suffices to show that $\m^{\{n, p^e\}}\subseteq\m^n + \frpe{\m}$.
Let $\prec$ denote the degree-lexicographic order on monomials of $S$. Let $f\not\in \m^n + \frpe{\m}$, and consider the monomial
\[
x^\alpha \coloneqq x_1^{\alpha_1}\cdots x_{d}^{\alpha_d}
= \min_\prec \{x^\beta \in \Supp(f),  x^{\beta} \notin  \m^n+\m^{[p^e]}\}.
\]
By the way it is defined, we must have $\alpha_1 + \cdots + \alpha_t \leq n-1$ and $\alpha_i \leq p^e-1$ for every $i$. Thus, the divided powers operator 
$\delta \coloneqq \partial^{(\alpha_1, \ldots, \alpha_t)}$ 
is such that  $\delta \in  D^{(n,p^e)}_S$,  $\delta(x^{\alpha})=1$, and $\delta(x^\beta)\in\m$ for every monomial $\alpha\leq \beta$ such that $\alpha \neq \beta$. It follows that $\delta(f)\not\in\m$, and hence $f\not\in \m^{\{n, p^e\}}$.
\end{proof}

\begin{lemma}\label{LemmaDiffPowersLocalization}
Let $S$ be an F-finite domain, and $W \subseteq S$ be a multiplicative system. Let $I\subseteq S$ be an ideal such that $W^{-1} I \cap S=  I$. For all $n,e \geq 0$ we have that $W^{-1}(I^{\{n,p^e\}})=(W^{-1}I)^{\{n,p^e\}}$.
\end{lemma}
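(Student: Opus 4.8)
The plan is to prove the two inclusions $W^{-1}(I^{\{n,p^e\}}) \subseteq (W^{-1}I)^{\{n,p^e\}}$ and $(W^{-1}I)^{\{n,p^e\}} \subseteq W^{-1}(I^{\{n,p^e\}})$ separately, using the identification of $D^{(n,p^e)}$ with localization established in Lemma~\ref{LemmaLocalizationD^{(n,e)}}. The underlying mechanism is that $D^{(n,p^e)}_S$ is a finitely generated $S$-module (Remark~\ref{PPFinite}), so a fixed generating set $\delta_1, \ldots, \delta_r$ of $D^{(n,p^e)}_S$ pushes forward to a generating set of $W^{-1}D^{(n,p^e)}_S \cong D^{(n-1,p^e)}_{W^{-1}S}$ (shifting the index convention: the differential power $I^{\{n,p^e\}}$ uses operators in $D^{(n-1,p^e)}_S$), and testing membership in a differential power against a generating set of the module of operators is equivalent to testing against all operators, because the defining condition $\delta(s) \in I$ for the relevant ideal $I$ is $S$-linear in $\delta$ once $I$ is an ideal (here we use that $W^{-1}I$ is an ideal of $W^{-1}S$ and that $I = W^{-1}I \cap S$ is itself an ideal of $S$).

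First I would prove $\subseteq$: take $s \in I^{\{n,p^e\}}$ and $w \in W$; for any $\delta' \in D^{(n-1,p^e)}_{W^{-1}S}$, write $\delta' = \sum_i \tfrac{a_i}{u_i}\,\delta_i$ with $\delta_i \in D^{(n-1,p^e)}_S$ a generating set, $a_i \in S$, $u_i \in W$, using Lemma~\ref{LemmaLocalizationD^{(n,e)}}. Then $\delta'(s/w)$ is an $W^{-1}S$-combination of the elements $\delta_i(s) \in I$, hence lies in $W^{-1}I$; so $s/w \in (W^{-1}I)^{\{n,p^e\}}$. (One must check that the action of a localized operator on a localized element is computed as expected; this is part of the content of Lemma~\ref{LemmaLocalizationD^{(n,e)}} and its proof via principal parts, and I would simply cite it.) For the reverse inclusion $\supseteq$: take $t \in (W^{-1}I)^{\{n,p^e\}}$; clearing denominators, $t = s/w$ with $s \in S$, $w \in W$, and since $(W^{-1}I)^{\{n,p^e\}}$ is an ideal we may absorb the unit $w$ and assume $t = s/1$ with $s \in S$. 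Now for each generator $\delta_i \in D^{(n-1,p^e)}_S$, viewed inside $D^{(n-1,p^e)}_{W^{-1}S}$, we have $\delta_i(s/1) = \delta_i(s)/1 \in W^{-1}I$; since $\delta_i(s) \in S$ and $W^{-1}I \cap S = I$ by hypothesis, we get $\delta_i(s) \in I$ for all $i$, hence $\delta(s) \in I$ for every $\delta \in D^{(n-1,p^e)}_S$, i.e. $s \in I^{\{n,p^e\}}$ and $t \in W^{-1}(I^{\{n,p^e\}})$.

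The main obstacle will be bookkeeping the index shift (operators of order $n-1$ versus the label $n$ on the differential power) and, more substantively, being careful that the formation of differential powers commutes with the restriction map along $S \hookrightarrow W^{-1}S$ in the way the argument needs — concretely, that for $\delta \in D^{(n-1,p^e)}_S$ and $s \in S$ the element $\delta(s)$ is the same whether computed in $S$ or after localizing, and that every operator in $D^{(n-1,p^e)}_{W^{-1}S}$ is an $W^{-1}S$-linear combination of restrictions of operators from $S$. Both points are exactly what Lemma~\ref{LemmaLocalizationD^{(n,e)}} provides via the finite generation in Remark~\ref{PPFinite}, so once those citations are in place the argument is a short diagram chase; the hypothesis $W^{-1}I \cap S = I$ is used precisely once, in the $\supseteq$ direction, to pull membership back from $W^{-1}I$ to $I$. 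I would also remark that the domain hypothesis on $S$ is convenient (it guarantees $W^{-1}S^{p^e} = (W^{-1}S)^{p^e}$ and that $S \to W^{-1}S$ behaves well) but the essential input is just F-finiteness plus the finite generation of the modules of operators.
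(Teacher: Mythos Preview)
Your approach is essentially the same as the paper's: both inclusions go through Lemma~\ref{LemmaLocalizationD^{(n,e)}}, and the hypothesis $W^{-1}I \cap S = I$ is invoked exactly once, in the $\supseteq$ direction, just as you say. One correction to the $\subseteq$ direction: the assertion that $\delta'(s/w)$ is a $W^{-1}S$-combination of the elements $\delta_i(s)$ is false as written (for instance $\partial_x(1/x) = -1/x^2$ is not a $W^{-1}S$-multiple of $\partial_x(1)=0$), and Lemma~\ref{LemmaLocalizationD^{(n,e)}} does not furnish such a formula. The clean fix, which is what the paper does implicitly, is to argue only for $s/1$: then $\delta'(s/1)=\sum (a_i/u_i)\,\delta_i(s)/1 \in W^{-1}I$, so $s/1 \in (W^{-1}I)^{\{n,p^e\}}$, and since the differential power is an ideal (Proposition~\ref{prop: dif pow properties}) you may multiply by the unit $1/w$ afterwards.
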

\begin{proof}
Let  $f\in I^{\{n,p^e\}}$, so that $\delta (f)\in I$ for every $\delta \in D^{(n-1,e)}_S$.
From this condition it follows that $\delta' (f)\in W^{-1}I$ for every $\delta' \in W^{-1} D^{(n-1,e)}_S$. 
Since $W^{-1} D^{(n-1,e)}_S=D^{(n-1,e)}_{W^{-1}S}$ by Lemma \ref{LemmaLocalizationD^{(n,e)}}, we conclude that $W^{-1}(I^{\{n,p^e\}})\subseteq (W^{-1}I)^{\{n,p^e\}}.$

Conversely, let $f\in (W^{-1}I)^{\{n,p^e\}}\cap S$.  Then  $\delta (f)\in W^{-1} I$ for every $\delta \in D^{(n-1,e)}_S\subseteq W^{-1} D^{(n-1,e)}_S=D^{(n-1,e)}_{W^{-1}S}$. It follows that $\delta (f)\in W^{-1} I\bigcap S=I$, and hence $f\in I^{\{n,p^e\}}$. We conclude that
$(W^{-1}I)^{\{n,p^e\}}\cap S\subseteq I^{(n,p^e)}$, from which it follows that $ (W^{-1}I)^{\{n,p^e\}}\subseteq W^{-1}I^{\{n,p^e\}}$.
\end{proof}

\begin{lemma}\label{LemmaDiffPowersCompletion}
Let $(S,\m)$ be an F-finite regular local  ring and $I\subseteq S$ an ideal.
Then $(I\widehat{S})^{\{n,e\}}=I^{\{n,p^e\}}\widehat{S}$ for all $n,e \geq 0$.
\end{lemma}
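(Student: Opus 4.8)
Let $(S,\m)$ be an F-finite regular local ring and $I \subseteq S$ an ideal. Then $(I\widehat{S})^{\{n,p^e\}} = I^{\{n,p^e\}}\widehat{S}$ for all $n,e \geq 0$.

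**Approach.** The plan is to run the same kind of argument that proved the localization statement in Lemma~\ref{LemmaDiffPowersLocalization}, but replacing the localization functor $W^{-1}(-)$ by the $\m$-adic completion functor $(-)\otimes_S \widehat{S}$, and replacing the identity $W^{-1}D^{(n,p^e)}_S \cong D^{(n,p^e)}_{W^{-1}S}$ of Lemma~\ref{LemmaLocalizationD^{(n,e)}} by its completion analogue $\widehat{D^{(n,p^e)}_S} \cong D^{(n,p^e)}_{\widehat{S}}$, which is exactly Lemma~\ref{LemmaCompletion D^{(n,e)}}. The two key properties of completion I will use are: (a) $S \to \widehat{S}$ is faithfully flat, so $I^{\{n,p^e\}}\widehat{S} = I^{\{n,p^e\}} \otimes_S \widehat{S}$ is naturally a submodule of $\widehat{S}$, and for any finitely generated module the formation of kernels and the intersection $I^{\{n,p^e\}}\widehat{S} \cap S = I^{\{n,p^e\}}$ behave well; and (b) $\widehat{D^{(n-1,p^e)}_S}$ generates $D^{(n-1,p^e)}_{\widehat{S}}$ as a $\widehat{S}$-module, i.e., every differential operator on $\widehat{S}$ of the relevant type is an $\widehat{S}$-linear combination of (completions of) operators defined on $S$ — this is the content of Lemma~\ref{LemmaCompletion D^{(n,e)}}, using that $D^{(n-1,p^e)}_S$ is a finitely generated $S$-module by Remark~\ref{PPFinite}.

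**Key steps, in order.** First, the inclusion $I^{\{n,p^e\}}\widehat{S} \subseteq (I\widehat{S})^{\{n,p^e\}}$: take $f \in I^{\{n,p^e\}}$, so $\delta(f) \in I$ for all $\delta \in D^{(n-1,p^e)}_S$; then for the extension $\widehat\delta = \delta \otimes_S \widehat{S}$ we get $\widehat\delta(f) \in I\widehat{S}$, and since every operator in $D^{(n-1,p^e)}_{\widehat S}$ is an $\widehat S$-linear combination of such $\widehat\delta$'s (Lemma~\ref{LemmaCompletion D^{(n,e)}}), and $I\widehat S$ is an ideal of $\widehat S$, we conclude $f \in (I\widehat S)^{\{n,p^e\}}$; as the latter is an ideal (Proposition~\ref{prop: dif pow properties}), it contains $I^{\{n,p^e\}}\widehat{S}$. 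For the reverse inclusion, the clean route is to observe that both sides are finitely generated $S$- (resp.\ $\widehat S$-) modules and that completion commutes with the relevant kernel: writing $D := D^{(n-1,p^e)}_S$ with generators $\delta_1,\dots,\delta_r$ as an $S$-module, we have $I^{\{n,p^e\}} = \bigcap_{j} \delta_j^{-1}(I) = \ker\!\big(S \xrightarrow{(\delta_j)} (S/I)^{\oplus r}\big)$, and similarly $(I\widehat S)^{\{n,p^e\}} = \ker\!\big(\widehat S \xrightarrow{(\widehat\delta_j)} (\widehat S/I\widehat S)^{\oplus r}\big)$ because, again by Lemma~\ref{LemmaCompletion D^{(n,e)}}, the $\widehat\delta_j$ generate $D^{(n-1,p^e)}_{\widehat S}$ over $\widehat S$ and $s \in \widehat S$ lies in $(I\widehat S)^{\{n,p^e\}}$ iff $\widehat\delta_j(s) \in I\widehat S$ for all $j$. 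By flatness of $S \to \widehat S$, applying $-\otimes_S \widehat S$ to the first kernel sequence gives $I^{\{n,p^e\}}\widehat S = \ker\!\big(\widehat S \to (\widehat S/I\widehat S)^{\oplus r}\big)$, which is precisely $(I\widehat S)^{\{n,p^e\}}$. This yields equality.

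**Main obstacle.** The only real subtlety — and the place I would be careful — is the claim that $\widehat\delta_j(s) \in I\widehat S$ for all generators $\delta_j$ is equivalent to $\delta(s) \in I\widehat S$ for all $\delta \in D^{(n-1,p^e)}_{\widehat S}$, i.e., that reducing to a finite generating set is legitimate and that the generators of $\widehat{D}$ over $\widehat S$ really are (the completions of) the generators of $D$ over $S$. This is exactly what Lemma~\ref{LemmaCompletion D^{(n,e)}} delivers via the isomorphism $\widehat{D^{(n-1,p^e)}_S} \cong D^{(n-1,p^e)}_{\widehat S}$, together with finite generation from Remark~\ref{PPFinite}; one must simply track that the isomorphism there is induced by the natural map $D_S \to D_{\widehat S}$, $\delta \mapsto \delta \otimes_S \widehat S$, so that generators map to generators. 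Once that compatibility is in hand, everything else is the faithfully flat base-change bookkeeping sketched above, entirely parallel to Lemma~\ref{LemmaDiffPowersLocalization}. (Note the hypothesis that $S$ is regular local is used to invoke Lemma~\ref{LemmaCompletion D^{(n,e)}}, which was stated in that generality.)
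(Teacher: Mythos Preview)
Your proof is correct and follows essentially the same approach as the paper: both directions rest on the isomorphism $\widehat{D^{(n-1,p^e)}_S}\cong D^{(n-1,p^e)}_{\widehat S}$ from Lemma~\ref{LemmaCompletion D^{(n,e)}} together with finite generation of $D^{(n-1,p^e)}_S$ from Remark~\ref{PPFinite}. Your reformulation of $I^{\{n,p^e\}}$ as $\ker\!\big(S\xrightarrow{(\delta_j)}(S/I)^{\oplus r}\big)$ and the appeal to flat base change actually makes the reverse inclusion more transparent than the paper's contrapositive argument, which implicitly relies on the same kernel identity without stating it.
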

\begin{proof}
Let  $f\in I^{\{n,p^e\}}$. Then $\delta (f)\in I$ for every $\delta \in D^{(n-1,p^e)}_S$, and thus $\phi (f)\in I\widehat{S}$ for every $\phi \in  D^{(n-1,p^e)}_S \otimes_S \widehat{S}$. Since $S$ is F-finite, we have that $D^{(n-1,p^e)}_S \subseteq D^{(e)}_S$ is finitely generated. In particular, $D^{(n-1,p^e)}_S \otimes_S \widehat{S} \cong \widehat{D^{(n-1,p^e)}_S}$, and by 
Lemma \ref{LemmaCompletion D^{(n,e)}} the latter is isomorphic to $D^{(n-1,p^e)}_{\widehat{S}}$. We therefore conclude that $I^{\{n,p^e\}}\widehat{S}\subseteq (I\widehat{S})^{(n,p^e)}.$

Conversely, assume that $f\not\in I^{\{n,p^e\}}\widehat{S}$. Using the isomorphism described above, we conclude that there exists 
$\varphi \in   D^{(n-1,e)}_S\otimes_S \widehat{S}$ such that
$\varphi (f)\not\in  I\widehat{S}$. In particular, there must exist $\rho\in D^{(n-1,e)}_S$ such that $\rho(f)\not\in  I$.
Hence, $f\not\in (I\widehat{S})^{\{n,p^e\}}$, and therefore 
$(I\widehat{S})^{\{n,p^e\}}\subseteq I^{\{n,p^e\}}\widehat{S}.$
\end{proof}

\begin{proposition}\label{prop:Analogue of ZN}
Let $S$ be a F-finite regular ring, and $\p\subseteq S$ be a prime ideal.
Then $\p^{\{n,p^e\}}$ is the $\p$-primary component of $\p^n + \p^{[p^e]}$.
\end{proposition}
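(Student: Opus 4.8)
The plan is to prove the two inclusions separately, reducing the equality of two $\p$-primary ideals to the power-series computation of Lemma~\ref{lemma: dif pow maximal series}. One inclusion is already available: by Proposition~\ref{prop: dif pow properties}, $\p^{\{n,p^e\}}$ is a $\p$-primary ideal containing $\p^n+\p^{[p^e]}$. Since $\sqrt{\p^n+\p^{[p^e]}}=\p$, the prime $\p$ is the unique minimal prime over $\p^n+\p^{[p^e]}$, so the $\p$-primary component $Q\coloneqq(\p^n+\p^{[p^e]})S_\p\cap S$ is well defined, and it is the smallest $\p$-primary ideal containing $\p^n+\p^{[p^e]}$: for any $\p$-primary $J\supseteq \p^n+\p^{[p^e]}$ one has $J=JS_\p\cap S\supseteq(\p^n+\p^{[p^e]})S_\p\cap S=Q$. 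Hence $Q\subseteq\p^{\{n,p^e\}}$.

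For the reverse inclusion $\p^{\{n,p^e\}}\subseteq Q$, I would use that both ideals are $\p$-primary, hence contracted from $S_\p$, so it suffices to compare their extensions to $S_\p$; in fact I will show $\p^{\{n,p^e\}}S_\p=QS_\p$. Writing $\m\coloneqq\p S_\p$ for the maximal ideal of the F-finite regular local ring $S_\p$, we have $QS_\p=(\p^n+\p^{[p^e]})S_\p=\m^n+\m^{[p^e]}$. On the other hand, Lemma~\ref{LemmaDiffPowersLocalization} applied to $W=S\setminus\p$ (for which $W^{-1}\p\cap S=\p$ since $\p$ is prime) gives $\p^{\{n,p^e\}}S_\p=(\p S_\p)^{\{n,p^e\}}=\m^{\{n,p^e\}}$; to invoke that lemma one first replaces $S$ by the unique regular domain factor of the finite product $S=S_1\times\cdots\times S_m$ in which $\p$ lives, noting that both differential powers and primary components are compatible with such a product decomposition. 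Thus everything is reduced to the identity
\[
\m^{\{n,p^e\}}=\m^n+\m^{[p^e]}
\]
for an arbitrary F-finite regular local ring $(S_\p,\m)$.

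To finish I would pass to the completion: $\widehat{S_\p}$ is faithfully flat over $S_\p$, so it suffices to check the equality of these two ideals after extension to $\widehat{S_\p}$. By Lemma~\ref{LemmaDiffPowersCompletion}, $\m^{\{n,p^e\}}\widehat{S_\p}=(\m\widehat{S_\p})^{\{n,p^e\}}$, while $(\m^n+\m^{[p^e]})\widehat{S_\p}=(\m\widehat{S_\p})^n+(\m\widehat{S_\p})^{[p^e]}$. Since $S_\p$ has characteristic $p$, the complete regular local ring $\widehat{S_\p}$ is equicharacteristic, so by the Cohen structure theorem it is a power series ring $\kk\ps{x_1,\dots,x_d}$ over its residue field $\kk$, and $\kk$ is F-finite because $S_\p$ (hence $\widehat{S_\p}$) is. Now Lemma~\ref{lemma: dif pow maximal series} applies directly and yields $(\m\widehat{S_\p})^{\{n,p^e\}}=(\m\widehat{S_\p})^n+(\m\widehat{S_\p})^{[p^e]}$, completing the proof.

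The argument is essentially an assembly of the previously established lemmas, so there is no single deep step; the points requiring a little care are the reduction to a regular domain so that Lemma~\ref{LemmaDiffPowersLocalization} can be quoted, and the observation that $\p$ is the unique minimal prime of $\p^n+\p^{[p^e]}$ so that its $\p$-primary component is canonically defined. The conceptual heart of the matter is Lemma~\ref{lemma: dif pow maximal series} — that in a power series ring over an F-finite field the differential powers of level $e$ of the maximal ideal are exactly $\m^n+\m^{[p^e]}$ — and the rest is localization/completion bookkeeping designed to transport that fact to an arbitrary F-finite regular ring.
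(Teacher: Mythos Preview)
Your proof is correct and follows essentially the same approach as the paper's: reduce to showing $\p^{\{n,p^e\}}S_\p=(\p^n+\p^{[p^e]})S_\p$ using that $\p^{\{n,p^e\}}$ is $\p$-primary, localize via Lemma~\ref{LemmaDiffPowersLocalization}, complete via Lemma~\ref{LemmaDiffPowersCompletion}, and invoke Lemma~\ref{lemma: dif pow maximal series} in the power series ring. You are in fact slightly more careful than the paper in explicitly reducing to a regular domain factor before applying Lemma~\ref{LemmaDiffPowersLocalization}, whose hypothesis requires $S$ to be a domain.
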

\begin{proof}
It suffices to show that $\p^{\{n,p^e\}}S_\p=(\p^{[p^e]}+\p^n)S_\p$, because $\p^{\{n,p^e\}}$
is $\p$-primary by Lemma \ref{prop: dif pow properties}. Let $A=\widehat{S_\p}$ be the $\p S_\p$-adic completion of $S_\p$. 
By Lemma \ref{lemma: dif pow maximal series}, we have that
$(\p A)^{\{n,p^e\}}=(\p A)^{[p^e]}+(\p A)^n$.
It follows that
$(\p A)^{\{n,p^e\}}=(\p S_\p)^{\{n,p^e\}}A$ and 
$(\p A)^{[p^e]}+(\p A)^n=((\p S_\p)^{[p^e]}+(\p S_\p)^n)A$
by Lemma \ref{LemmaDiffPowersCompletion}, and then
$(\p S_\p)^{\{n,p^e\}}=(\p S_\p)^{[p^e]}+(\p S_\p)^n$.
Thus, we get 
$\p^{\{n,p^e\}}S_\p=(\p S_\p)^{\{n,p^e\}}$ by Lemma \ref{LemmaDiffPowersLocalization} and $(\p^{[p^e]}+\p^n)S_\p=(\p S_\p)^{[p^e]}+(\p S_\p)^n$. 
Finally, we conclude that
$\p^{\{n,p^e\}}S_\p=(\p^{[p^e]}+\p^n)S_\p$, as desired.
\end{proof}

\subsection{Formulas for F-pure thresholds and differential powers}\label{sect: main formula}

Recall from the previous section that, for an ideal $I$ in a ring $S$, we have defined its $n$-th differential power of level $e$ as 
\[
I^{\{n,p^e\}} = \{s \in S \mid \delta(s) \in I \text{ for all } \delta \in D^{(n-1,p^e)}_S\}.
\]

\begin{definition}\label{def: theta}
Let $(S, \m)$ be an F-finite regular local ring.
For an ideal $I$ and a positive integer $e$, we define
\[
\Theta_e (I) = \max \{n \mid I^{[p^e]} :_S I \subseteq \m^{\{n,p^e\}}=\m^{[p^e]}+\m^n\}.
\]
\end{definition}

The following formula, relating the newly introduced invariants $\Theta_e$ and F-pure thresholds, is crucial throughout this article.

\begin{theorem}\label{thm: main formula}
Let $(S,\n)$ be an F-finite regular local ring, and $I\subseteq S$ an ideal such that $R=S/I$ is F-pure. Then
\[
\loewy_R (R/I_e(R)) + \Theta_e(I)  = \dim (S) (p^e - 1) + 1.
\]
In particular, 
\[
\lim_{e \to \infty} \frac{\Theta_e(I)}{p^{e}}  = \dim (S) - \fpt(R) = \dfpt (R) + \height (I).
\]
\end{theorem}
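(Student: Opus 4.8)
The plan is to relate the invariant $\Theta_e(I)$ to the splitting ideals $I_e(R)$ via Fedder's description of $I_e(R)$, and then pass to the limit using the known convergence estimate for the F-pure threshold. Recall from Remark~\ref{rem: Fedder} that $I_e(R) = \bigl(\m^{[p^e]} :_S (I^{[p^e]}:_S I)\bigr)/I$, and recall that $b(p^e) = \loewy_R(R/I_e(R)) - 1$ is the largest $t$ with $\m^t \not\subseteq I_e(R)$.

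\begin{proof}
By Remark~\ref{rem: Fedder}, writing $J_e = I^{[p^e]}:_S I$, the pullback of $I_e(R)$ to $S$ is $\m^{[p^e]} :_S J_e$. Hence, for a positive integer $t$, we have $\m^t \subseteq I_e(R)$ in $R$ if and only if $\m^t J_e \subseteq \m^{[p^e]}$ in $S$, i.e.\ if and only if $J_e \subseteq \m^{[p^e]} :_S \m^t$. Now $S$ is regular local, so $\m^{[p^e]} :_S \m^t = \m^{[p^e]} + \m^{p^e(\dim S) - \dim(S) - t + ?}$; more precisely, using the self-duality of the Artinian Gorenstein ring $S/\m^{[p^e]}$ with socle degree $\dim(S)(p^e - 1)$, one has $\m^{[p^e]} :_S \m^t = \m^{[p^e]} + \m^{\dim(S)(p^e-1) + 1 - t}$ for $1 \le t \le \dim(S)(p^e-1)+1$. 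Therefore $\m^t \subseteq I_e(R)$ if and only if $J_e \subseteq \m^{[p^e]} + \m^{\dim(S)(p^e-1)+1-t} = \m^{\{\dim(S)(p^e-1)+1-t, p^e\}}$, where the last equality is Lemma~\ref{lemma: dif pow maximal series}. By Definition~\ref{def: theta}, this holds exactly when $\dim(S)(p^e-1)+1-t \le \Theta_e(I)$. Taking $t$ maximal with $\m^t \not\subseteq I_e(R)$, i.e.\ $t = b(p^e) = \loewy_R(R/I_e(R)) - 1$, we obtain that $\dim(S)(p^e-1)+1-t > \Theta_e(I) \ge \dim(S)(p^e-1)+1-(t+1)$, i.e.\ $\Theta_e(I) = \dim(S)(p^e-1) - b(p^e) = \dim(S)(p^e-1) + 1 - \loewy_R(R/I_e(R))$. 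This is the first displayed equality.

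For the limit, divide the first identity by $p^e$ and let $e \to \infty$. By Proposition~\ref{prop: fpt convergence}, $b(p^e)/p^e \to \fpt(R)$, so
\[
\lim_{e\to\infty}\frac{\Theta_e(I)}{p^e} = \dim(S) - \fpt(R).
\]
Finally, since $R = S/I$ with $S$ regular local, $\dim(R) = \dim(S) - \height(I)$, so $\dim(S) - \fpt(R) = \bigl(\dim(R) - \fpt(R)\bigr) + \height(I) = \dfpt(R) + \height(I)$, which completes the proof.
\end{proof}

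The step I expect to be the main obstacle is the precise colon computation $\m^{[p^e]} :_S \m^t = \m^{[p^e]} + \m^{\dim(S)(p^e-1)+1-t}$ in the regular local ring $S$: this requires the Gorenstein/Macaulay-inverse-systems duality for the complete intersection Artinian quotient $S/\m^{[p^e]}$ and a small bit of care with the boundary cases $t$ near $0$ or near $\dim(S)(p^e-1)+1$. Once that colon formula is in hand, the rest is bookkeeping together with the already-established convergence estimate in Proposition~\ref{prop: fpt convergence} and the dimension formula for quotients of a regular ring.
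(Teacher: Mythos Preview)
Your argument is correct and is, in essence, a slicker repackaging of the paper's proof. The key difference is the single identity
\[
\m^{[p^e]} :_S \m^t \;=\; \m^{[p^e]} + \m^{\dim(S)(p^e-1)+1-t},
\]
which you invoke to handle both inequalities at once. The paper instead proves the two inequalities separately: the bound $\loewy_R(R/I_e(R)) + \Theta_e(I) \le \dim(S)(p^e-1)+1$ is the easy containment $\m^{d(p^e-1)+1-\Theta_e(I)}\cdot J_e \subseteq \m^{d(p^e-1)+1}+\m^{[p^e]} \subseteq \m^{[p^e]}$, while for the reverse inequality the paper works explicitly with a monomial in the support of an element $f_e \in J_e \setminus \m^{[p^e]}$ and the Frobenius trace map~$\Phi$. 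That explicit monomial argument is, in effect, a hands-on proof of the nontrivial containment in your colon identity.

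What your approach buys is brevity and conceptual clarity: once you know the Gorenstein duality formula for $(0:\overline{\m}^t)$ in $\overline{S}=S/\m^{[p^e]}$, the result is immediate bookkeeping. What the paper's approach buys is self-containment: it never states or uses the full colon identity, only the trivial half plus an explicit witness. Since the colon formula is not proved elsewhere in the paper, your proof as written does lean on an external (though standard) fact. If you want to make it fully self-contained, you should either (i)~pass to the completion $\widehat{S}=\kk\ps{x_1,\ldots,x_d}$ as the paper does, and note there that the perfect pairing on $\overline{S}$ given by the socle functional makes $\overline{\m}^t$ and $\overline{\m}^{d(p^e-1)+1-t}$ orthogonal complements via the dimension count $\alpha \mapsto (p^e-1-\alpha_i)_i$; or (ii)~simply replace the colon identity by its two halves, at which point your proof becomes the paper's.

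Two small points of presentation: the half-formula with the ``$?$'' should be deleted, and the appeal to Lemma~\ref{lemma: dif pow maximal series} is unnecessary since Definition~\ref{def: theta} already phrases $\Theta_e(I)$ in terms of $\m^n+\m^{[p^e]}$ directly.
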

\begin{proof}
Since $S$ is a regular local ring, $\dim(S) = \dim(R) + \height(I)$. 
We may complete and assume that $S$ is a power series ring on $\mu \coloneqq \dim (S)$ variables. By a slight abuse of notation, we  still denote by $\m$ the maximal ideal of $R$. By definition we have that
\[
\loewy_R (R/I_e(R)) = \min \{t \mid \m^t \subseteq I_e(R)\} 
= \min \{t \mid \phi (F_*(\m^t)) \subseteq \m 
\text{ for all } \phi \in \Hom(F_*^e R, R)\}. 
\]
Via Fedder's characterization in Theorem \ref{FedderCriterion}, and using that $\m^{\{n,p^e\}} = \m^n+\m^{[p^e]}$ by Lemma \ref{lemma: dif pow maximal series}, this is equivalent to 
\[
\loewy_R (R/I_e(R)) = \min  \{t \mid \m^t (I^{[p^e]}:_SI) \subseteq \m^{[p^e]}\}.
\]

Since $S$ is regular,  $\mu (p^e - 1) + 1 = \loewy_S (S/\m^{[p^e]})$. 
Hence, 
\[
\m^{\mu (p^e - 1) + 1 - \Theta_e(I)} \cdot \left(I^{[p^e]}:_SI\right)  \subseteq 
\m^{\mu (p^e - 1) + 1 - \Theta_e(I)} (\m^{\Theta_e(I)} +\m^{[p^e]})\subseteq 
\m^{\mu (p^e - 1) + 1} +\m^{[p^e]}\subseteq  \m^{[p^e]},
\]
giving that $\loewy_R (R/I_e(R)) + \Theta_e(I) \leq  \mu (p^e - 1) + 1$.

We now show the opposite inequality.
Since $R$ is F-pure, we have that $I^{[p^e]}:I\not\subseteq \m^{[p^e]}$. 
This, together with the containment $I^{[p^e]}:I\subseteq \m^{\Theta_e(I)} +\m^{[p^e]}$ implies that there exists
$f_e \in \left(I^{[p^e]}:_S I\right)\smallsetminus \m^{[p^e]}$ such that 
$\Theta_e(I) = \max \{t \mid f_e \in \m^t + \m^{[p^e]} \}$.
Hence there is a monomial $x_1^{a_1}\cdots x_\mu^{a_\mu}$ in the power series expression of $f_e$ that has degree equal to $\Theta_e(I)$ and such that $a_i < p^e$ for all $i$.
Since $S$ is regular there exists an $S$-linear map $\Phi \colon F_*^e(S) \to S$ such that $x_1^{p^e - 1}\cdots x_\mu^{p^e - 1}\mapsto 1$. Using the special monomial, we obtain that 
 $1 \in \Phi (\m^{\mu (p^e - 1) - \Theta_e(I)} f_e)$,
and therefore $\m^{\mu (p^e - 1) - \Theta_e(I)} f_e \not\subseteq I_e(S) = \m^{[p^e]}$. Since $f_e \in I^{[p^e]}:_S I$ we have that $\m^{\mu (p^e - 1) - \Theta_e(I)} \not\subseteq \m^{[p^e]}:(I^{[p^e]}:_SI)$, and thus the image of $\m^{\mu (p^e - 1) - \Theta_e(I)}$ in $S/I$ is not contained in $I_e(R)$ by Remark \ref{rem: Fedder}. This shows that $\loewy_R(R/I_e(R)) \geq \mu (p^e - 1) - \Theta_e(I)$, and concludes the proof.
\end{proof}

\begin{remark}\label{rem: local theta}
It follows from Theorem~\ref{thm: main formula} that if $R$ is an F-finite F-pure local ring, then $\Theta_e (I)$ is independent 
of the presentation $R = S/I$ as long as $(S,\m)$ is an F-finite regular local ring and $I \subseteq \m^2$, which we may always assume. 
Also, if we write $\widehat{S} = \kk\ps{x_1, \ldots, x_d}$ and 
set $T = \mathbb{L}\ps{x_1, \ldots, x_d}$, then $\Theta_e (I)=\Theta_e (I \widehat{S} ) = \Theta_e (IT)$. This is due to faithful flatness of the extensions $S \to \widehat{S} \to T$ and the fact that the maximal ideal of $S$ extends to the maximal ideals of $\widehat{S}$ and $T$. In this way, $I^{[p^e]}T :_T IT = (I^{[p^e]} :_{\widehat{S}} I)T \subseteq \m^nT +  \m^{[p^e]}T$
holds if and only if the original containment held. Summarizing, the value of $\Theta_e (I)$ does not change under completion and 
field extensions.
\end{remark}

Using the convergence estimates for F-pure thresholds, we provide a uniform convergence result for the functions $\Theta_e$. In what follows, if $I \subseteq S$ is an ideal we let $V(I)$ denote the set $\{\p \in \Spec(S) \mid I \subseteq \p\}$. By a slight abuse of notation, we often identify $V(I)$ with $\Spec(S/I)$.

\begin{corollary} \label{uniform convergence fibers} 
If $S$ is an F-finite regular ring, and $R=S/I$ is F-pure, then for all $e >0$ and all $\q \in V(I)$ one has
\[
\bigg| \frac{\Theta_e(I_\q)}{p^e} - \dim (S_\q) + \fpt (R_\q) \bigg| < \frac{\dim (S)}{p^e}.
\]
\end{corollary}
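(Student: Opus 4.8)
The plan is to combine the local formula of Theorem~\ref{thm: main formula} with the convergence estimate of Proposition~\ref{prop: fpt convergence}, both applied after localizing at $\q$. First I would note that localization is compatible with all the relevant operations: since $S$ is regular and F-finite, $S_\q$ is an F-finite regular local ring, $R_\q = S_\q/I_\q$ is F-pure (F-purity localizes, e.g.\ via Fedder's criterion Theorem~\ref{FedderCriterion}, or because $I_e(R)$ localizes by Lemma~\ref{LemmaLocalizationD^{(n,e)}} applied through Remark~\ref{rem: Fedder}), and the colon ideal localizes, $(I^{[p^e]}:_S I)_\q = I_\q^{[p^e]} :_{S_\q} I_\q$. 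Crucially $\Theta_e(I_\q)$, as defined in Definition~\ref{def: theta}, is the invariant of the local ring $R_\q$ with its regular presentation $S_\q/I_\q$; after replacing $S_\q$ by a power series ring over a field with $I_\q$ in the square of the maximal ideal (permissible by Remark~\ref{rem: local theta}), Theorem~\ref{thm: main formula} applies verbatim.

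Next I would write down the two estimates. Theorem~\ref{thm: main formula}, applied to the F-finite regular local ring $S_\q$ and the F-pure quotient $R_\q$, gives the exact identity
\[
\loewy_{R_\q}(R_\q/I_e(R_\q)) + \Theta_e(I_\q) = \dim(S_\q)(p^e-1) + 1,
\]
equivalently $\Theta_e(I_\q) = \dim(S_\q)(p^e-1) + 1 - \big(b(p^e, R_\q) + 1\big) = \dim(S_\q)(p^e-1) - b(p^e, R_\q)$, using $\loewy_{R_\q}(R_\q/I_e(R_\q)) = b(p^e,R_\q)+1$. Dividing by $p^e$,
\[
\frac{\Theta_e(I_\q)}{p^e} = \dim(S_\q)\cdot\frac{p^e-1}{p^e} - \frac{b(p^e,R_\q)}{p^e}.
\]
On the other hand, Proposition~\ref{prop: fpt convergence} applied to the F-finite F-pure local ring $R_\q$ yields
\[
0 \leq \fpt(R_\q) - \frac{b(p^e,R_\q)}{p^e} \leq \frac{\edim(R_\q)}{p^e}.
\]

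Finally I would assemble the bound. Subtracting, and writing $d_\q = \dim(S_\q)$,
\[
\frac{\Theta_e(I_\q)}{p^e} - \big(d_\q - \fpt(R_\q)\big)
= -\frac{d_\q}{p^e} + \Big(\fpt(R_\q) - \frac{b(p^e,R_\q)}{p^e}\Big),
\]
so that $\big|\tfrac{\Theta_e(I_\q)}{p^e} - d_\q + \fpt(R_\q)\big| \leq \max\{\tfrac{d_\q}{p^e},\ \tfrac{\edim(R_\q)}{p^e}\}$ (more simply, the expression lies between $-d_\q/p^e$ and $\edim(R_\q)/p^e$, so its absolute value is at most $(d_\q+\edim(R_\q))/p^e$, or just the max of the two). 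It then remains to observe $d_\q = \dim(S_\q) \leq \dim(S)$ and $\edim(R_\q) \leq \edim(S_\q) = \dim(S_\q) \leq \dim(S)$ — the embedding dimension of a quotient is at most that of the ambient regular local ring — to conclude that each bound is strictly less than $\dim(S)/p^e$ provided $\dim(S_\q)<\dim(S)$; in the boundary case $\dim(S_\q)=\dim(S)$ one still gets $\leq \dim(S)/p^e$, and strictness of the stated inequality follows because the two one-sided bounds cannot be simultaneously attained (the left estimate is an equality only when $\fpt(R_\q)=b(p^e,R_\q)/p^e$, forcing the quantity to equal $-d_\q/p^e<\dim(S)/p^e$). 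The only mild subtlety — and the step I would be most careful about — is the bookkeeping that $\Theta_e$ of the localized ideal genuinely coincides with the $\Theta_e$-invariant of the local ring $R_\q$ appearing in Theorem~\ref{thm: main formula}, i.e.\ that Definition~\ref{def: theta} is stable under the localization $S \to S_\q$; this is where one invokes Remark~\ref{rem: local theta} together with the localization of colon ideals, and no genuinely new input is needed.
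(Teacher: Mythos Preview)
Your approach is exactly the paper's: localize at $\q$, apply Theorem~\ref{thm: main formula} to rewrite $b(p^e,R_\q)$ in terms of $\Theta_e(I_\q)$, and plug into Proposition~\ref{prop: fpt convergence}. Your identity
\[
\frac{\Theta_e(I_\q)}{p^e} - \big(d_\q - \fpt(R_\q)\big) = -\frac{d_\q}{p^e} + \Big(\fpt(R_\q) - \frac{b(p^e,R_\q)}{p^e}\Big)
\]
is correct and, combined with $0 \leq \fpt(R_\q) - b(p^e,R_\q)/p^e \leq \edim(R_\q)/p^e \leq d_\q/p^e$, gives that the quantity lies in $[-d_\q/p^e, 0]$, hence has absolute value at most $d_\q/p^e \leq \dim(S)/p^e$. (Your intermediate claim that it ``lies between $-d_\q/p^e$ and $\edim(R_\q)/p^e$'' is loose but harmless; the sharper upper bound is $(\edim(R_\q)-d_\q)/p^e \leq 0$.)

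One slip: your argument for \emph{strict} inequality does not work. In the case $d_\q = \dim(S)$ with $\fpt(R_\q) = b(p^e,R_\q)/p^e$, you correctly get that the quantity equals $-d_\q/p^e$, but then its \emph{absolute value} is $d_\q/p^e = \dim(S)/p^e$, not strictly less---you conflated the signed expression with its absolute value in the last clause. This situation genuinely occurs (e.g.\ $R = \kk\ps{x,y,z}/(x^3+y^3+z^3)$ with $p\equiv 1 \pmod 3$, where $\fpt(R)=0$ and $b(p^e)=0$). The paper's own proof also only establishes the non-strict bound $\leq \dim(S)/p^e$; the strict inequality in the statement is a minor imprecision, and the non-strict version is all that is used downstream (uniform convergence in Lemma~\ref{lemma semicont}).
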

\begin{proof}
By plugging the formula of Theorem~\ref{thm: main formula} into Proposition~\ref{prop: fpt convergence}
we obtain that 
\[
0 \leq \fpt (R_\q) - \frac{(p^e - 1) \dim (S_\q) - \Theta_e (I_\q)}{p^e} \leq \frac{\dim (S_\q)}{p^e},
\]
so it follows that 
\[
0 \leq \frac{\Theta_e (I_\q)}{p^e} - (\dim (S_\q) - \fpt (R_\q)) \leq \frac{\dim (S_\q)}{p^e}
\leq \frac{\dim (S)}{p^e}. \qedhere
\]
\end{proof}

The following corollary was suggested to us by K. E. Smith. 

\begin{corollary}\label{cor: thanks Karen}
Let 
$R = \kk[x_1, \ldots, x_m]/I$ and $S = \kk[y_1, \ldots, y_n]/J$.
Then 
\[
\dfpt ((R \otimes_\kk S)_{(x_1, \ldots, x_m, y_1, \ldots, y_n)}) = \dfpt (R_{(x_1, \ldots, x_m)}) + \dfpt (S_{(y_1, \ldots, y_n)}).
\]
\end{corollary}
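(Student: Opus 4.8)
The plan is to reduce everything to the behavior of the function $\Theta_e$ under the tensor product and then pass to the limit using the uniform convergence estimate of Corollary~\ref{uniform convergence fibers}. Write $\mathfrak{m} = (x_1,\ldots,x_m)$, $\mathfrak{n} = (y_1,\ldots,y_n)$, and present $R = A/I$, $S = B/J$ with $A = \kk[x_1,\ldots,x_m]$ and $B = \kk[y_1,\ldots,y_n]$ polynomial rings, which we may assume are minimal presentations so that $I \subseteq \mathfrak{m}^2$ and $J \subseteq \mathfrak{n}^2$; then $R \otimes_\kk S = (A \otimes_\kk B)/(I(A\otimes B) + J(A \otimes B))$, and $A \otimes_\kk B = \kk[x_1,\ldots,x_m,y_1,\ldots,y_n]$ is again regular. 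After localizing at the maximal ideal $\mathfrak{M} = \mathfrak{m}+\mathfrak{n}$ and completing, we are looking at $\widehat{A}\cotimes{\kk}\widehat{B} = \kk\ps{x_1,\ldots,x_m,y_1,\ldots,y_n}$ (up to the field-extension issues absorbed by Remark~\ref{rem: local theta}), and by that remark it suffices to compute $\Theta_e$ of the ideal $\mathcal{I} := I\ps{\underline{x},\underline{y}} + J\ps{\underline{x},\underline{y}}$ in this power series ring.

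The heart of the matter is the identity $\Theta_e(\mathcal{I}) = \Theta_e(I) + \Theta_e(J) - 1$ for every $e > 0$ (equivalently, by Theorem~\ref{thm: main formula}, the Loewy-length identity $\loewy(R \otimes S / I_e) = \loewy(R/I_e(R)) + \loewy(S/I_e(S)) - 1$, but it is cleaner to argue with $\Theta_e$ directly). First, the colon ideal splits: in the polynomial/power series ring in the disjoint variables, $\mathcal{I}^{[p^e]} :_{} \mathcal{I}$ is generated by the products $f\cdot g$ with $f \in I^{[p^e]}:_{}I$ and $g \in J^{[p^e]}:_{}J$. This is the standard fact that for ideals in disjoint sets of variables $(\mathcal{I}_1 + \mathcal{I}_2)^{[p^e]} : (\mathcal{I}_1 + \mathcal{I}_2) = (\mathcal{I}_1^{[p^e]}:\mathcal{I}_1)(\mathcal{I}_2^{[p^e]}:\mathcal{I}_2)$; it follows from flatness of $\kk\ps{\underline{y}} \to \kk\ps{\underline{x},\underline{y}}$ over $\kk\ps{\underline{x}}$ and a symmetric argument, or one can invoke Fedder's correspondence (Theorem~\ref{FedderCriterion}): a choice of $S$-linear dual generator $\Phi$ for $\kk\ps{\underline{x},\underline{y}}$ over its Frobenius power is the external product of the corresponding dual generators for $\kk\ps{\underline{x}}$ and $\kk\ps{\underline{y}}$, so the Frobenius maps on $R \otimes S$ are exactly the $\psi \otimes \varphi$. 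Second, for the maximal ideal, the elementary identity $\mathfrak{M}^t + \mathfrak{M}^{[p^e]} = \sum_{a+b = t}(\mathfrak{m}^a + \mathfrak{m}^{[p^e]})(\mathfrak{n}^b + \mathfrak{n}^{[p^e]})$ together with the "best monomial" description of $\Theta_e$ from the proof of Theorem~\ref{thm: main formula} — namely $\Theta_e$ of an ideal not inside $\mathfrak{M}^{[p^e]}$ is the largest $t$ such that there is an element whose lowest-degree monomial avoiding $\mathfrak{M}^{[p^e]}$ has degree $t$ — shows that the maximal such degree for $f\cdot g$ is the sum of the maximal degrees for $f$ and for $g$, since the relevant monomials in disjoint variables multiply with exponents $< p^e$ staying $< p^e$ and degrees adding. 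The subtraction of $1$ is the usual bookkeeping: $\Theta_e$ counts the max degree $t$ with $\mathfrak{M}^t + \mathfrak{M}^{[p^e]} \supseteq (\text{colon})$, and one $+1$ is shared. Concretely, this is cleanest phrased via the equivalent statement $b_{\mathfrak{M}}(p^e, R\otimes S) = b_{\mathfrak{m}}(p^e, R) + b_{\mathfrak{n}}(p^e,S)$ for the $b$-function of the splitting ideals: a monomial $x^{\alpha}y^{\beta}$ lies outside $I_e(R\otimes S) = (I_e(R) + I_e(S)$-type ideal$)$ iff $x^\alpha \notin I_e(R)$ and $y^\beta \notin I_e(S)$, giving the additivity with no correction, and then translate back through $\Theta_e = \dim(S)(p^e-1) + 1 - (b+1) = \dim(S)(p^e-1) - b$.

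Once the per-$e$ identity $\Theta_e(\mathcal{I}) = \Theta_e(I) + \Theta_e(J) - 1$ is in hand, divide by $p^e$ and let $e \to \infty$. By Theorem~\ref{thm: main formula}, $\lim_e \Theta_e(\mathcal{I})/p^e = (m+n) - \fpt((R\otimes S)_{\mathfrak{M}})$, $\lim_e \Theta_e(I)/p^e = m - \fpt(R_{\mathfrak{m}})$, $\lim_e \Theta_e(J)/p^e = n - \fpt(S_{\mathfrak{n}})$, and the constant $-1$ washes out; this gives $\fpt((R\otimes S)_{\mathfrak{M}}) = \fpt(R_{\mathfrak{m}}) + \fpt(S_{\mathfrak{n}})$, and subtracting both sides from $(m+n) = \dim(R\otimes S)_{\mathfrak{M}} = \dim R_{\mathfrak{m}} + \dim S_{\mathfrak{n}}$ yields the stated additivity of $\dfpt$. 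One should also note at the outset that $R \otimes_\kk S$ is F-pure at $\mathfrak{M}$ (so that all the invariants are finite and Theorem~\ref{thm: main formula} applies): this follows from the colon-splitting above, since $\mathcal{I}^{[p^e]}:\mathcal{I} = (I^{[p^e]}:I)(J^{[p^e]}:J) \not\subseteq \mathfrak{M}^{[p^e]}$ because neither factor is contained in $\mathfrak{m}^{[p^e]}$ resp. $\mathfrak{n}^{[p^e]}$ and the product of two ideals in disjoint variables, each escaping the respective Frobenius power of the maximal ideal, escapes $\mathfrak{M}^{[p^e]}$.

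The main obstacle I expect is nailing down the colon-ideal splitting $\mathcal{I}^{[p^e]}:\mathcal{I} = (I^{[p^e]}:I)(J^{[p^e]}:J)$ cleanly in the local/complete setting (it is folklore for polynomial rings, but one wants it after completion, where the tensor product must be replaced by the completed tensor product), and likewise being careful that completion and the field extension $\kk \to \mathbb{L}$ really do leave every $\Theta_e$ unchanged — this is exactly what Remark~\ref{rem: local theta} is for, so the bookkeeping is routine but must be done. The "best monomial" additivity and the $b$-function additivity are elementary combinatorics in disjoint variables. The limit step is immediate from Theorem~\ref{thm: main formula}.
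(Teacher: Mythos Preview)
Your approach is essentially the same as the paper's: both proofs establish the colon-ideal splitting
\[
\a^{[p^e]} :_C \a + \mathfrak{M}^{[p^e]} = (I^{[p^e]} :_A I)(J^{[p^e]} :_B J)C + \mathfrak{M}^{[p^e]}
\]
in $C = \kk[x_1,\ldots,x_m,y_1,\ldots,y_n]$, deduce additivity of $\Theta_e$, and pass to the limit via Theorem~\ref{thm: main formula}.

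One bookkeeping slip: your claimed identity $\Theta_e(\mathcal I) = \Theta_e(I) + \Theta_e(J) - 1$ is off by one; in fact $\Theta_e(\mathcal I) = \Theta_e(I) + \Theta_e(J)$ exactly, as your own computation via $b$ (and the formula $\Theta_e = \dim(S)(p^e-1) - b$) shows. This is harmless for the limit, as you note. Also, the passage to completion and completed tensor products is unnecessary: the paper works directly in the polynomial rings (after localizing at the relevant maximal ideals), where both the colon identity and the monomial combinatorics for $\mathfrak{M}^n + \mathfrak{M}^{[p^e]}$ are transparent.
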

\begin{proof}
Let $A = \kk[x_1, \ldots, x_m], B = \kk[y_1, \ldots, y_n], C = \kk[x_1, \ldots, x_m, y_1, \ldots, y_n]$. Then $R \otimes_\kk S = C/\a$ where the ideal $\a$ is generated by the images of $I$ and $J$ in $C$. We now use Theorem~\ref{thm: main formula}. One can readily check that 
\[
\a^{[p^e]} :_C \a + (x_1, \ldots, x_m, y_1, \ldots, y_n)^{[p^e]} 
= (I^{[p^e]} :_A I)(J^{[p^e]} :_B J)C + (x_1, \ldots, x_m, y_1, \ldots, y_n)^{[p^e]}. 
\]
It follows from this formula that 
$\Theta_e (\a) = \Theta_e (I) + \Theta_e (J)$. For instance, one has
\[
\a^{[p^e]} :_C \a  \subseteq 
(x_1, \ldots, x_m, y_1, \ldots, y_n)^{\Theta_e (I) + \Theta_e (J)}
+ (x_1^{p^e}, \ldots, x_m^{p^e}, y_1^{p^e}, \ldots, y_n^{p^e})
\]
and 
\[
\a^{[p^e]} :_C \a \not\subseteq 
(x_1, \ldots, x_m, y_1, \ldots, y_n)^{\Theta_e (I) + \Theta_e (J) + 2}
+ (x_1^{p^e}, \ldots, x_m^{p^e}, y_1^{p^e}, \ldots, y_n^{p^e}). \qedhere
\]
\end{proof}

We can now give an expression for $\Theta_e$ in the localization in terms of differential operators and differential powers.

\begin{proposition}\label{prop: differential theta}
Let $(S,\m)$ be an F-finite regular local ring, and $I\subset S$ be an ideal such that $R=S/I$ is F-pure. For any prime ideal $I\subseteq \p\subset S$ we have
\begin{align*}
\Theta_e(IS_\p) &= \max \{n \mid I^{[p^e]} :_S I \subseteq \p^{\{n,p^e\}}\}\\
&= 
\max \{n \mid \delta (I^{[p^e]} :_S I) \subseteq \p \text{ for all }
\delta \in D^{(n, e)}_S \}.
\end{align*}
\end{proposition}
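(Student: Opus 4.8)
The plan is to reduce the statement to the three local facts already assembled: the localization behavior of $\Theta_e$ via completion and field extension (Remark~\ref{rem: local theta}), the identification of $\p^{\{n,p^e\}}$ with the $\p$-primary component of $\p^n + \p^{[p^e]}$ (Proposition~\ref{prop:Analogue of ZN}), and the definition of $I^{\{n,p^e\}}$ in terms of differential operators in $D^{(n-1,p^e)}_S$. The second equality is essentially the definition of the differential power $\p^{\{n,p^e\}}$ unwound, so the real content is the first equality, $\Theta_e(IS_\p) = \max\{n \mid I^{[p^e]}:_S I \subseteq \p^{\{n,p^e\}}\}$.

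First I would apply $\Theta_e$ in the local ring $S_\p$. By Remark~\ref{rem: local theta}, applied to the presentation $R_\p = S_\p/IS_\p$ (after noting $IS_\p \subseteq (\p S_\p)^2$ may be arranged, or observing the value is presentation-independent), we have $\Theta_e(IS_\p) = \max\{n \mid (I^{[p^e]}:_S I)S_\p \subseteq (\p S_\p)^n + (\p S_\p)^{[p^e]}\}$, using that colon ideals commute with localization: $(I^{[p^e]}:_S I)S_\p = (IS_\p)^{[p^e]}:_{S_\p} IS_\p$. Next I would observe that since $\p^n + \p^{[p^e]}$ has $\p$-primary component exactly $\p^{\{n,p^e\}}$ (Proposition~\ref{prop:Analogue of ZN}), and since passing to $S_\p$ extracts precisely that primary component, we get $(\p^n + \p^{[p^e]})S_\p = \p^{\{n,p^e\}}S_\p$. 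Therefore the containment $(I^{[p^e]}:_S I)S_\p \subseteq (\p^n + \p^{[p^e]})S_\p$ is equivalent to $(I^{[p^e]}:_S I)S_\p \subseteq \p^{\{n,p^e\}}S_\p$, which — because $\p^{\{n,p^e\}}$ is $\p$-primary hence contracted from $S_\p$, i.e. $\p^{\{n,p^e\}}S_\p \cap S = \p^{\{n,p^e\}}$ — is in turn equivalent to $I^{[p^e]}:_S I \subseteq \p^{\{n,p^e\}}$. Taking the maximum over such $n$ gives the first displayed equality.

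For the second equality I would simply invoke the definition: $s \in \p^{\{n,p^e\}}$ iff $\delta(s) \in \p$ for all $\delta \in D^{(n-1,p^e)}_S$, so the condition $I^{[p^e]}:_S I \subseteq \p^{\{n,p^e\}}$ reads exactly as $\delta(I^{[p^e]}:_S I) \subseteq \p$ for all $\delta \in D^{(n-1,p^e)}_S$. One must be slightly careful with the index shift between $\p^{\{n,p^e\}}$ and $D^{(n-1,p^e)}_S$ versus the statement's $D^{(n,e)}_S$; reading the statement against Definition~\ref{def: theta} and the surrounding conventions, the indices are meant to match up so that the two maxima are over the same set of $n$, and I would make the bookkeeping explicit.

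The main obstacle I anticipate is purely the index/contraction bookkeeping: verifying cleanly that $\p^{\{n,p^e\}}$ is contracted from $S_\p$ (immediate from $\p$-primariness, Proposition~\ref{prop: dif pow properties}) and that localizing $\p^n + \p^{[p^e]}$ really does land on $\p^{\{n,p^e\}}S_\p$ rather than something with extra embedded behavior — but this is exactly the content of Proposition~\ref{prop:Analogue of ZN}, so no new idea is required. The proof is therefore a short assembly of previously established lemmas, with the only genuine care needed in matching the differential-order indices.
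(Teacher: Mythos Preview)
Your proposal is correct and follows essentially the same route as the paper's proof: start from Definition~\ref{def: theta} applied in $S_\p$, use that colons localize, identify $(\p S_\p)^{\{n,p^e\}}$ with $\p^{\{n,p^e\}}S_\p$, and contract back via $\p$-primariness of $\p^{\{n,p^e\}}$. The only minor difference is that the paper works directly with the differential-power side $(\p S_\p)^{\{n,p^e\}}$ and invokes Lemma~\ref{LemmaDiffPowersLocalization} implicitly, whereas you start from the $\m^n+\m^{[p^e]}$ side and invoke Proposition~\ref{prop:Analogue of ZN}; these are two faces of the same identification. Your citation of Remark~\ref{rem: local theta} is unnecessary here---Definition~\ref{def: theta} applied in $S_\p$ already gives what you need, and the $I\subseteq\m^2$ issue is irrelevant to this computation.
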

\begin{proof}
By definition, 
$
\Theta_e (IS_\p) = \max \{n \mid I^{[p^e]}S_\p :_{S_\p} IS_\p \subseteq (\p S_\p)^{\{n,p^e\}}\}$. 
Since $I^{[p^e]}S_\p :_{S_\p} IS_\p = (I^{[p^e]} :_S I)S_\p$,  and since $\p^{\{n,p^e\}}$ is $\p$-primary by \ref{prop:Analogue of ZN}, we have that 
\[
I^{[p^e]}S_\p :_{S_\p} IS_\p \subseteq (\p S_\p)^{\{n,p^e\}} \iff (I^{[p^e]}:_S I)S_\p \subseteq (\p^{\{n,p^e\}})S_\p \iff I^{[p^e]}:_S I \subseteq \p^{\{n,p^e\}}.
\]
The second equality follows directly from the definition of $\p^{\{n,p^e\}}$.
\end{proof}

\section{Defect of the F-pure thresholds for rings and schemes}\label{sect: global}

\subsection{Semi-continuity of the defect of the F-pure threshold}
We start this subsection by recalling the following definitions.

\begin{definition}
A real-valued function $f$ on a topological space $X$ is (strongly) upper semi-continuous if 
for any $a \in \mathbb{R}$ the set $\{x \in X \mid f(x) < a\}$ (respectively, $\{x \in X \mid f(x) \leq a\}$) is open. 
\end{definition}

A strongly upper semi-continuous function is upper semi-continuous because 
\[
\{x \in X \mid f(x) < a\} = \bigcup_{\varepsilon > 0} \{x \in X \mid f(x) \leq a - \varepsilon\}.
\]

\begin{lemma}\label{lemma semicont}
Let $S$ be an F-finite regular ring, and $I \subseteq R$ be an ideal such that $R=S/I$ is F-pure. Then the function $\Spec(R) \to \R$ defined as $\p \mapsto \dim (S_\p) - \fpt (R_\p)$ is upper semi-continuous. 
\end{lemma}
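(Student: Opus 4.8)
The plan is to present $\p \mapsto \dim(S_\p) - \fpt(R_\p)$ as a uniform limit of upper semi-continuous functions and then use that such a limit is again upper semi-continuous. By Theorem~\ref{thm: main formula}, for every $\p \in \Spec(R) = V(I)$ we have $\dim(S_\p) - \fpt(R_\p) = \lim_{e \to \infty} \Theta_e(IS_\p)/p^e$, and by Corollary~\ref{uniform convergence fibers} the error $\big|\Theta_e(IS_\p)/p^e - (\dim(S_\p) - \fpt(R_\p))\big|$ is bounded by $\dim(S)/p^e$, a quantity independent of $\p$. So it is enough to show that, for each fixed $e > 0$, the function $\p \mapsto \Theta_e(IS_\p)$ is upper semi-continuous on $\Spec(R)$.

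Since this function takes values in $\NN$, I would prove its upper semi-continuity by checking that the super-level set $Z_{e,n} \coloneqq \{\p \in V(I) \mid \Theta_e(IS_\p) \geq n\}$ is Zariski closed for every $n \in \NN$. This is exactly where the differential-operator description enters: the colon ideal $J \coloneqq I^{[p^e]} :_S I$ localizes well, but the ideals $\m^n + \m^{[p^e]}$ occurring in the definition of $\Theta_e$ do not, so we replace them by differential powers. Using that $\p^{\{m, p^e\}}$ is $\p$-primary and decreasing in $m$ and that colon ideals commute with localization, Proposition~\ref{prop: differential theta} (for the closed points one uses Definition~\ref{def: theta} directly, and Proposition~\ref{prop:Analogue of ZN} makes the argument go through for the non-local ring $S$) gives that $\Theta_e(IS_\p) \geq n$ if and only if $J \subseteq \p^{\{n, p^e\}}$, i.e., unwinding the definition of the differential power, if and only if $\delta(f) \in \p$ for every $f \in J$ and every $\delta \in D^{(n-1, p^e)}_S$. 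Now $D^{(n-1,p^e)}_S$ and $J$ are fixed global objects, so I let $\cc$ be the ideal of $S$ generated by $\{\delta(f) \mid \delta \in D^{(n-1,p^e)}_S,\ f \in J\}$ (it is an ideal, being an $S$-submodule of $S$, and it depends only on $e$ and $n$); the condition above then reads simply $\cc \subseteq \p$. Hence $Z_{e,n} = V(\cc) \cap V(I)$ is closed in $\Spec(R)$, as required.

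The two remaining ingredients are soft and I do not expect them to cause trouble: an $\NN$-valued function whose super-level sets $\{g \geq n\}$ are all closed is upper semi-continuous, and if upper semi-continuous functions $\varphi_e$ converge uniformly to $\varphi$ then $\varphi$ is upper semi-continuous (given $x$ with $\varphi(x) < a$, choose $e$ with $\sup_y|\varphi_e(y) - \varphi(y)| < \varepsilon \coloneqq (a - \varphi(x))/2$; then $x$ lies in the open set $\{\varphi_e < a - \varepsilon\}$, which is contained in $\{\varphi < a\}$). The real content has already been absorbed into Theorem~\ref{thm: main formula}, Corollary~\ref{uniform convergence fibers}, and Proposition~\ref{prop: differential theta}, so the crux — closedness of $Z_{e,n}$ — comes for free from the differential-power reformulation; the only points deserving care are the index shift between ``$\Theta_e \geq n$'' and the operator order ``$n-1$'', and the (trivial but essential) observation that $\cc$ is an ideal, so that $\cc \subseteq \p$ is a closed condition on $\p$.
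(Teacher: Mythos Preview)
Your proof is correct and follows essentially the same approach as the paper: both exploit the uniform convergence from Corollary~\ref{uniform convergence fibers} to reduce to showing that each $\p \mapsto \Theta_e(IS_\p)$ is upper semi-continuous, and both establish this via the differential-operator description in Proposition~\ref{prop: differential theta}. The paper argues directly that $\{\p \mid \Theta_e(IS_\p) < \lambda\}$ is open by exhibiting a single $\delta$ and $f$ with $\delta(f) \notin \p$, while you take the dual route and show that the super-level set $\{\p \mid \Theta_e(IS_\p) \geq n\}$ equals $V(\cc)\cap V(I)$ for the ideal $\cc = \big(\delta(f) \mid \delta \in D^{(n-1,p^e)}_S,\ f \in I^{[p^e]}:_S I\big)$; this is exactly the ideal appearing in Definition~\ref{DefTheta}, so your formulation aligns perfectly with the paper's global viewpoint.
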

\begin{proof}
By Corollary~\ref{uniform convergence fibers} the functions $\p \mapsto \Theta_e (IS_\p)/p^e$ converge uniformly to their limit $\dim (S_\p) - \fpt (R_\p)$.
We demonstrate that the individual $\Theta_e$ are upper semi-continuous, 
which imply that their uniform limit is upper semi-continuous. 

Suppose that $\Theta_e (IS_{\p}) < \lambda$. By Proposition~\ref{prop: differential theta} there exists $\delta \in D^{(\lceil \lambda \rceil, p^e)}_R$
and $f \in I^{[p^e]} :_S I$ such that $\delta (f) \notin \p$. Let $U = \{\q \in V(I) \mid \delta(f) \notin \q\}$, then $\Theta_e(IS_\q) \leq \lceil \lambda \rceil - 1 \leq \lambda$
for all $\q \in U$. It follows that $\p \mapsto \Theta_e (IS_\p)$ is upper semi-continuous. 
\end{proof}

\begin{theorem} \label{thm semicont}
Let $R$ be an F-finite F-pure ring of characteristic $p > 0$. Then the  functions on $\Spec(R) \to \mathbb {R}$ defined by
\[
\begin{array}{ccc}
         \p & \mapsto & \mfpt(R_\p) \coloneqq \edim(R_\p) - \fpt (R_\p) \\
         \p & \mapsto  & \dfpt (R_\p)
\end{array}
\]
are upper semi-continuous. 
\end{theorem}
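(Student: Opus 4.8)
The statement asserts upper semi-continuity of two functions on $\Spec(R)$. I would first reduce both statements to a statement about the invariant $\dim(S_\p) - \fpt(R_\p)$, which is already known to be upper semi-continuous by Lemma~\ref{lemma semicont}, provided one has a presentation $R = S/I$ with $S$ an F-finite regular ring; by Gabber's theorem (refined in Proposition~\ref{prop: equidimensional Gabber}) such a presentation exists, at least locally on $\Spec(R)$, and since upper semi-continuity is a local property on $\Spec(R)$ it suffices to treat the case where a global presentation $R = S/I$ is available. So fix such a presentation and set $c = \height(I)$ as a \emph{function} $\p \mapsto \height(IS_\p)$ on $\Spec(R)$. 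The key observation is the bookkeeping identity
\[
\dim(S_\p) = \dim(R_\p) + \height(IS_\p),
\]
valid because $S_\p$ is a regular (hence catenary, equidimensional) local ring, so that
\[
\dfpt(R_\p) = \dim(R_\p) - \fpt(R_\p) = \bigl(\dim(S_\p) - \fpt(R_\p)\bigr) - \height(IS_\p).
\]
Thus $\dfpt(R_\p)$ equals the upper semi-continuous function of Lemma~\ref{lemma semicont} \emph{minus} the height function $\p \mapsto \height(IS_\p)$. Since the height function $\p \mapsto \height(IS_\p)$ is \emph{lower} semi-continuous on $\Spec(R)$ — more precisely, on each irreducible component of $\Spec(R)$ it is constant on a dense open set and can only jump up on proper closed subsets — the difference is upper semi-continuous. (One must be slightly careful: lower semi-continuity of height in general holds because $\{\p \mid \height(IS_\p) \geq n\}$ is closed, being the non-free locus/a determinantal-type locus; alternatively, one notes that on $\Spec(S)$ the function $\q \mapsto \height(IS_\q)$ is lower semi-continuous by standard dimension theory for the fibers of $\Spec(S/I) \hookrightarrow \Spec(S)$, and this pulls back along the closed immersion $\Spec(R) \hookrightarrow \Spec(S)$.)

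For the embedding-dimension version, I would use the analogous identity. Choose the presentation so that $I \subseteq \m_S^2$ locally; then at a point $\p$ one has $\edim(R_\p) = \dim(S_\p) - (\text{number of relations needed at } \p)$, but this is not quite the clean statement one wants. Instead I would argue directly: $\mfpt(R_\p) = \edim(R_\p) - \fpt(R_\p) = \bigl(\edim(R_\p) - \dim(R_\p)\bigr) + \dfpt(R_\p) = \ecodim(R_\p) + \dfpt(R_\p)$. The embedding codimension $\p \mapsto \ecodim(R_\p) = \dim_{\kappa(\p)}(\p R_\p / \p^2 R_\p) - \dim(R_\p)$ is upper semi-continuous on $\Spec(R)$ (the Fitting-ideal / Jacobian description of the locus where $\edim$ jumps shows $\{\p \mid \ecodim(R_\p) \geq n\}$ is closed, and $\dim(R_\p)$ is lower semi-continuous, so their difference is upper semi-continuous). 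Since $\dfpt(R_\p)$ is upper semi-continuous by the first part, and a sum of two upper semi-continuous functions is upper semi-continuous, we conclude $\mfpt$ is upper semi-continuous as well. Alternatively, and perhaps more cleanly, one can run the Lemma~\ref{lemma semicont} argument verbatim with $\dim(S_\p)$ replaced by $\edim(R_\p) = \dim(S_\p) - (\dim(S_\p) - \edim(R_\p))$, absorbing the defect-of-presentation term; but since $\mfpt(R_\p) = \ecodim(R_\p) + \dfpt(R_\p)$ and both summands are handled, the additive approach is the shortest.

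\textbf{Main obstacle.} The genuine subtlety — the part I expect to require the most care — is the semi-continuity of the two \emph{auxiliary} functions: $\p \mapsto \height(IS_\p)$ (needed to be lower semi-continuous) and $\p \mapsto \ecodim(R_\p)$ (needed to be upper semi-continuous). These are classical facts about Noetherian schemes, but they must be invoked with the correct variance, and one needs $R$ (equivalently $S$) to be reasonably well-behaved — excellent, or at least catenary and with the relevant loci being constructible — which is guaranteed here since F-finite rings are excellent by Kunz's theorem \cite{F-finExc}. Once the variance of these two functions is pinned down, the rest is the purely formal manipulation: upper semi-continuous minus lower semi-continuous is upper semi-continuous, and a finite sum of upper semi-continuous functions is upper semi-continuous. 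I would also remark that passing to a local presentation $R = S/I$ is legitimate because upper semi-continuity is checked on an open cover, and Proposition~\ref{prop: equidimensional Gabber} (or simply Gabber's theorem) supplies a presentation over any affine open; the value $\dfpt(R_\p)$ is intrinsic to $R_\p$ (it does not depend on the presentation, cf. Remark~\ref{rem: local theta} and Theorem~\ref{thm: main formula}), so the pieces glue.
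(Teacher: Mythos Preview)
Your treatment of $\dfpt$ is correct and matches the paper exactly: write $\dfpt(R_\p) = \bigl(\dim(S_\p) - \fpt(R_\p)\bigr) - \height(IS_\p)$, invoke Lemma~\ref{lemma semicont} for the first term, and use lower semi-continuity of $\p \mapsto \height(IS_\p)$ for the second. The paper gives the height argument explicitly by listing the minimal primes of $I$ of small height; your appeal to ``standard dimension theory'' is adequate.

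Your argument for $\mfpt$, however, has a genuine gap. You write $\mfpt(R_\p) = \ecodim(R_\p) + \dfpt(R_\p)$ and claim both summands are upper semi-continuous. But $\ecodim$ is \emph{not} upper semi-continuous on $\Spec(R)$ in general, even for F-pure rings. For instance, take the Stanley--Reisner ring $R = \kk[a,b,c,d,e]/(eab, ecd)$ (which is F-pure since the ideal is squarefree monomial), $\m = (a,b,c,d,e)$, and $\p = (a,b,c,d)$. Then $\dim(R_\m) = 4$, $\edim(R_\m) = 5$, so $\ecodim(R_\m) = 1$; but $R_\p \cong \bigl(\kk(e)[a,b,c,d]/(ab,cd)\bigr)_{(a,b,c,d)}$ has $\dim = 2$, $\edim = 4$, so $\ecodim(R_\p) = 2$. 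Since $\p \subset \m$ and open sets are stable under generalization, the set $\{\q : \ecodim(R_\q) < 2\}$ contains $\m$ but not $\p$, hence is not open. Your justification (``Fitting-ideal/Jacobian description\ldots and $\dim(R_\p)$ is lower semi-continuous'') fails on both counts: Fitting ideals control $\edim$, not $\ecodim$, and $\p \mapsto \dim(R_\p) = \height(\p)$ is not lower semi-continuous (the set $\{\p : \height(\p) \geq 1\}$ is the complement of $\Min(R)$, which is almost never closed). The paper's later Theorem~\ref{thm: codim is semi} establishes upper semi-continuity of $\ecodim$ only under a \emph{local equidimensionality} hypothesis, which is not assumed here.

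The fix is precisely your ``alternative'' route, which is what the paper does: write $\mfpt(R_\p) = \bigl(\dim(S_\p) - \fpt(R_\p)\bigr) - \bigl(\dim(S_\p) - \edim(R_\p)\bigr)$ and show that $\phi(\p) \coloneqq \dim(S_\p) - \edim(R_\p)$ is lower semi-continuous. The paper does this via Nagata's criterion, choosing a regular parameter ideal $J \subseteq IS_\p$ with $\dim(S_\p/J) = \edim(R_\p)$ and spreading it out. This is not a one-line citation; it requires an argument, and your proposal does not supply it.
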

\begin{proof}
Fix a presentation $R= S/I$ where $S$ is a regular F-finite ring \cite{Gabber}. Because  $\dim(S_\p) = \dim(R_\p) + \height(IS_\p)$, as $S_\p$ is a regular local ring, Lemma \ref{lemma semicont} shows that the function $\p \mapsto \dfpt (R_\p) + \height (IS_\p)$ is upper semi-continuous. It now suffices to prove that $\p \mapsto \height (IS_\p)$ is lower semi-continuous. Namely, suppose that $\p$ is such that $\height (IS_\p) \geq h$. Let $Q_1,\ldots,Q_t$ be the minimal primes of $I$ satisfying $\height(Q_i)<h$, and let $U'=V(I) \smallsetminus V(Q_1\cap \ldots \cap Q_t)$. We have that $\p \in U'$ and 
$\height(IS_\q) \geq h$ for all $\q \in U'$.

We prove the second assertion in a similar way, by showing that $\phi \colon \p \mapsto \dim (S_\p) - \edim (R_\p)$ is lower semi-continuous. We use Nagata's criterion for openness. 
First, given that $\p \subseteq \q$ we want to show that $\phi (\p) \geq \phi (\q)$. By localizing at $\q$, we assume that $S$ and $R$ are local and $\q$ is the maximal ideal. By definition, we may choose a parameter ideal $J$ in $I$ such that $\dim (S/J) = \edim (R)$ and $S/J$ is still regular. Then 
\[
\phi(\q) = \height (J) = \height (J_\p) = \dim (S_\p) - \dim (S_\p/J S_\p) 
\leq \dim (S_\p) - \edim (R_\p) = \phi(\p). 
\]
Second, given a prime ideal $\p$ we need to assure that $\phi$ is constant in an open set of $V(\p)$. Similarly to above, there is a parameter ideal $J \subseteq IS_\p$ such that 
$\dim (S_\p/J) = \edim (R_\p)$ and $S_\p/J$ is still regular.
We may invert an element $a \notin \p$ so that $J$ is a parameter ideal of $IS_a$
and, because the regular locus is open, we may invert an element $b \notin \p$
so that $(S/J)_{ab}$ is regular. Then for any prime $ab \notin \q$ 
we have that 
\[
\phi (\q) =
\dim (S_\q) - \edim (R_\q)
= \height (J_\q) + \dim (S_\q/JS_\q) - \edim (R_\q) \geq \height (J) =  \phi (\p).
\]
Thus $\phi (\q) = \phi (\p)$ if $\p \subseteq \q$ by the first inequality.
\end{proof}

\begin{remark}\label{rmk: semicontinuity}
Semi-continuity still holds if we define $\dfpt (R) = \infty$ when $R$ is not F-pure, consistent with the fact that we have defined $\fpt(R)=-\infty$ in this case. Since the F-pure locus is open, the theorem still provides that the set 
\[
\{\p \mid \dfpt (R_\p) < a\}
\]
is open for every $a \in \mathbb{R} \cup \{\infty\}$.
\end{remark}

Any upper semi-continuous function on the spectrum of a Noetherian ring 
satisfies the ascending chain condition. However, due to a  result of Sato \cite{SatoACC}, we also have the decreasing chain condition in the $\mathbb Q$-Gorenstein case. Recall that, if $R$ is a normal ring, and $I \subseteq R$ is an ideal isomorphic to a canonical module of $R$, then the anticanonical cover of $R$ is $A= \bigoplus_{n \geq 0} J^{(n)}$, where $J$ is the inverse of $I$ in the divisor class group of $R$.

\begin{lemma}\label{LemmaAnticanonical}
Let $R$ be an F-finite F-pure normal ring. 
Suppose that $R$ has a finitely generated anticanonical cover $A$, and let $\a\subseteq R$ be an ideal.
Then $\fpt(\a,R)=\fpt(\a A,A)$.
\end{lemma}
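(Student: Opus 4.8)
The strategy is to compare the relevant Frobenius splitting maps on $R$ and on its anticanonical cover $A$, using the fact that the F-pure threshold of an ideal is detected by the non-vanishing of the splitting ideals $I_e(-)$ against powers of $\a$. Concretely, I would first recall that since $A = \bigoplus_{n \geq 0} J^{(n)}$ is a finitely generated $\NN$-graded $R$-algebra which is again F-finite and normal, there is a well-understood correspondence between $R$-linear maps $F^e_*(R) \to R$ and $A$-linear maps $F^e_*(A) \to A$: this is the standard fact (essentially due to Smith, Schwede, and used extensively in the theory of anticanonical covers, e.g.\ in work of Sato and of Carvajal-Rojas--Schwede--Tucker) that $\Hom_R(F^e_*R, R)$ generates $\Hom_A(F^e_*A, A)$ after passing to the graded pieces, and that the graded component of degree $0$ of any splitting of $A$ restricts to a splitting of $R$. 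The key point is that $A$ is F-pure (equivalently F-split, by F-finiteness) precisely because $R$ is, and more precisely that the Frobenius splittings of $A$ are controlled in each degree by those of $R$ twisted by the reflexive powers $J^{(n)}$.

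The second step is to translate the definition of $\fpt$ via Definition~\ref{DefFptGlobalIdeal}: $(R, \a^\lambda)$ is F-pure iff for all $e \gg 0$ there is $f \in \a^{\lfloor \lambda(p^e-1)\rfloor}$ with $1 \mapsto F^e_*(f)$ splitting, and similarly for $(A, (\a A)^\lambda)$. For one inequality, $\fpt(\a A, A) \geq \fpt(\a, R)$: a splitting $\psi\colon F^e_*(R) \to R$ with $\psi(F^e_*(f)) = 1$ for $f \in \a^{\lfloor\lambda(p^e-1)\rfloor}$ extends to a splitting of $A$ because $A$ is a direct sum of reflexive $R$-modules and one can build the extension degree by degree (the degree-zero part being $\psi$ itself); since $f \in \a^{\lfloor\lambda(p^e-1)\rfloor} \subseteq (\a A)^{\lfloor\lambda(p^e-1)\rfloor}$ sits in degree zero, the same $f$ witnesses that $(A, (\a A)^\lambda)$ is F-pure. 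For the reverse inequality, $\fpt(\a A, A) \leq \fpt(\a, R)$: given a splitting $\Psi\colon F^e_*(A) \to A$ with $\Psi(F^e_*(g)) = 1$ for some $g \in (\a A)^{\lfloor\lambda(p^e-1)\rfloor}$, one takes the degree-zero graded component $\Psi_0 \colon F^e_*(R) \to R$ (using that $A_0 = R$ and that $\Psi$ can be taken homogeneous of degree $0$ since $1 \in R = A_0$), and observes that the degree-zero part of $g$ lies in $\a^{\lfloor\lambda(p^e-1)\rfloor}$ and is sent to $1$ by $\Psi_0$. Here I would use that $(\a A)^{m} = \bigoplus_n \a^m J^{(n)}$ has degree-zero piece exactly $\a^m$.

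\textbf{Main obstacle.} The delicate point is the passage from an arbitrary splitting of $A$ to a \emph{homogeneous} one and the control of graded pieces of $\a A$ and of $\Hom_A(F^e_*A, A)$ — one must be careful that $F^e_*(A)$ is $\tfrac{1}{p^e}\NN$-graded (or rescale the grading) and that taking the degree-zero component of a splitting is again a splitting; this requires knowing the $A$-module $\omega_A$ (and hence the local cohomology / the Frobenius action) is concentrated appropriately, which is exactly where finite generation of the anticanonical cover and normality of $R$ enter. I would handle this by citing the structural results on anticanonical covers (as in \cite{SatoACC} and the references therein) that give: $A$ is normal and F-finite, $A$ F-pure $\iff$ $R$ F-pure, and the graded decomposition $\Hom_A(F^e_*A,A) = \bigoplus_{n} \Hom_R(F^e_*(J^{(n)} \text{-twist}), R)$ — the degree-zero slice of which is $\Hom_R(F^e_*R, R)$. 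Granting this, the two inequalities above combine to give $\fpt(\a, R) = \fpt(\a A, A)$, which is the claim.
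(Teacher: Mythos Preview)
Your strategy is exactly the one the paper uses: extend splittings from $R$ to $A$ (the paper cites \cite[Lemmas 3.1 \& 3.2]{CEMS} for this), and in the other direction compose with the inclusion $\iota\colon F^e_*(R)\hookrightarrow F^e_*(A)$ and the projection $\rho\colon A\to R$ onto degree zero. So the overall architecture is correct and matches the paper.

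There is, however, one concrete gap in your reverse inequality. You write that ``the degree-zero part of $g$ lies in $\a^{\lfloor\lambda(p^e-1)\rfloor}$ and is sent to $1$ by $\Psi_0$.'' This is not justified: even if you replace $\Psi$ by a homogeneous component, the equation $\Psi(F^e_*(g))=1$ may receive contributions from positive-degree pieces of $g$, so there is no reason that $\Psi_0(F^e_*(g_0))=1$. The paper sidesteps the grading bookkeeping entirely: given $g\in(\a A)^m$, write $g=\sum_j f_j a_j$ with $f_j\in\a^m$ and $a_j\in A$, and set $\varphi_j\coloneqq\varphi(F^e_*(a_j)\cdot-)$. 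Then $\sum_j\varphi_j(F^e_*(f_j))=1$ with each $f_j\in\a^m\subseteq R$, and now $\gamma_j\coloneqq\rho\circ\varphi_j\circ\iota\in\Hom_R(F^e_*(R),R)$ satisfy $\sum_j\gamma_j(F^e_*(f_j))=\rho(1)=1$. This absorption trick is the missing step; once you insert it, your argument becomes the paper's proof.
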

\begin{proof}
Let $\cB_e=\{t\in\NN\; |\; \Hom_R(F^e_*(R),R)\cdot F^e_*(I^t)=R\}$
and
 $$\cD_e=\{t\in\NN\; |\; \Hom_A(F^e_*(A),A)\cdot F^e_*(I^t A)=A\}.$$
From Definition \ref{DefFptGlobalIdeal}, it suffices to show that $\cB_e=\cD_e$ for every $e\geq 0$. We show this by double containment. Let $t\in\cB_e$. Then there exist $f_1, \ldots, f_n \in I^t$ and $\phi_1, \ldots, \phi_n \in \Hom_R(F^e_*(R),R)$ such that $\sum_{i = 0}^n \phi_i(F^e_*(f_i))=1$. For every $e \geq 0$ 
every map $\phi\in\Hom_R(F^e_*(R),R)$ has an extension
$\widetilde{\phi}\in \Hom_R(F^e_*(A),A)$ \cite[Lemmas 3.1 \& 3.2]{CEMS}.
Let $\widetilde{\phi_i}\in Hom_R(F^e_*(A),A)$ be extensions of $\phi_i$. Then,
$\sum_{i = 1}^n \widetilde{\phi}_i(F^e_*(f_i))=1$, and so,  $\Hom_A(F^e_*(A),A)\cdot F^e_*(I^t A)=A$.

Let $t\in\cD_e$. By definition we can find $g_1, \ldots, g_n \in I^tA$ and $\varphi_1, \ldots, \varphi_n \in \Hom_A(F^e_*(A),A)$ be such that $\sum_{i = 0}^n \varphi_i(F^e_*(g_i))=1$. We may assume that $g_i \in I^t$. Namely, if $g_i = \sum_{j = 1}^n f_{ij} a_{ij}$ where all $f_{ij} \in I^t$ and $a_{ij} \in A$, then the maps $\varphi_{ij}$ obtained by composing $\varphi_i$ with the multiplication by $F^e_*(a_{ij})$ satisfy
$\varphi_i (F^e_*(g_i)) = \sum_{j = 1}^n \varphi_{ij} (F^e_*(f_{ij}))$.

Now, let $\iota\colon F^e_*(R) \to F^e_*(A)$ be the inclusion, and $\rho\colon A\to R$ be the projection onto the degree zero component of $A$. We note that $\gamma_i=\rho\circ \varphi_i\circ \iota\in \Hom_R(F^e_*(R),R)$. Then $\sum_{i = 1}^n \gamma_i(F^e_*(g_i))=1$, and thus  $\Hom_R(F^e_*(R),R)\cdot F^e_*(I^t)=R$.
\end{proof}

\begin{lemma}\label{LemmaACC}
Let $R$ be an F-finite F-pure normal ring.
Suppose that $R$ has a finitely generated anticanonical cover $A$.
Then the set
$$
\{\fpt(R_\q) \mid \q\in\Spec(R) \}
$$
satisfies the ascending chain condition.
\end{lemma}
\begin{proof}
Because $A$ is F-finite there is an F-finite regular ring $S$ that surjects onto $A$. Since the embedding dimension of $A_\q$ is bounded by $\dim(S)$ and $A$ is a quasi-Gorenstein\footnote{In the published version of this article, it is claimed that $A$ is Gorenstein. However, in our assumptions one can only say that $A$ is quasi-Gorenstein; this correction does not affect the rest of the proof.} ring \cite[Theorem 2.2  and (3.1)]{WatanabeCover}, the set 
\[
\mathcal{F}_A \coloneqq \{\fpt(I A_Q,A_Q) \mid Q \in\Spec(A) \text{ and } \sqrt{I} = Q \}
\]
satisfies the ascending chain condition \cite[Theorem 4.7]{SatoACC}.
By Lemma \ref{LemmaAnticanonical} we have that 
$$
\fpt(R_\q)=\fpt(\q R_\q,R_\q)=\fpt(\q A_\q,A_\q), 
$$
therefore $\{\fpt(R_\q) \mid \q\in\Spec(R) \} \subseteq \mathcal{F}_A$ and 
the assertion follows. 
\end{proof}

We use the result to establish a stronger form of semi-continuity. 

\begin{theorem} \label{thm constructible}
Let $R$ be an F-finite F-pure Cohen-Macaulay normal ring. 
Suppose that $R$ has a finitely generated anticanonical cover $A$ (for example, $R$ is $\mathbb Q$-Gorenstein).
Then the set $\{\fpt(R_\q) \mid \q \in \Spec (R)\}$ is finite, and the functions 
\[
\q \mapsto \dfpt (R_\q) \ \text { and } \ \q \mapsto \mfpt(R_\q) \coloneqq \edim (R_\q) - \fpt (R_\q),
\]
are strongly upper semi-continuous. Moreover, they define finite stratifications of $\Spec(R)$ with constructible strata. 
\end{theorem}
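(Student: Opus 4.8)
The plan is to combine the ascending chain condition from Proposition~\ref{PropACC} with the (weak) upper semi-continuity from Theorem~\ref{thm semicont} and Sato's finiteness input to upgrade to strong semi-continuity and constructibility. First I would prove finiteness of the set of values. We already know from Proposition~\ref{PropACC} that $\{\fpt(R_\q) \mid \q \in \Spec(R)\}$ satisfies ACC; but in the proof of Proposition~\ref{PropACC} the set is contained in $\mathcal{F}_A$, which by \cite[Theorem 4.7]{SatoACC} actually satisfies ACC \emph{and} DCC (Sato's theorem gives a discrete set bounded above, hence finite once we also bound below, and $\fpt \geq 0$ always). So $\{\fpt(R_\q)\}$ is a finite set; equivalently $\{\dfpt(R_\q)\}$ and $\{\mfpt(R_\q)\}$ are finite sets of real numbers, since $\dim(R_\q)$ and $\edim(R_\q)$ take finitely many values on a Noetherian scheme.

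Next I would deduce strong upper semi-continuity. A function $f$ on $\Spec(R)$ with finitely many values $a_1 < a_2 < \cdots < a_m$ is strongly upper semi-continuous if and only if it is upper semi-continuous: indeed, for any real $a$, if $a_i \leq a < a_{i+1}$ then $\{f \leq a\} = \{f < a_{i+1}\}$, which is open by (weak) upper semi-continuity; and if $a \geq a_m$ the set is everything, while if $a < a_1$ it is empty. Since Theorem~\ref{thm semicont} gives that $\q \mapsto \dfpt(R_\q)$ and $\q \mapsto \mfpt(R_\q)$ are upper semi-continuous, and we have just shown they take only finitely many values, both are strongly upper semi-continuous.

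Finally, for the stratification: for each value $a_i$ the stratum $\{f = a_i\} = \{f \leq a_i\} \setminus \{f \leq a_{i-1}\}$ (with $\{f \leq a_0\} = \emptyset$) is the difference of two open sets, hence locally closed, hence constructible; these strata are finite in number and partition $\Spec(R)$. The same argument applies verbatim to $\dfpt$ and to $\mfpt$.

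\begin{proof}
By Proposition~\ref{PropACC}, and more precisely by its proof, the set $\{\fpt(R_\q) \mid \q \in \Spec(R)\}$ is contained in the set $\mathcal{F}_A$ of \cite[Theorem 4.7]{SatoACC}, which is a discrete subset of $\R$ that is bounded above. Since $\fpt(R_\q) \geq 0$ for every $\q$, this set is also bounded below, and therefore it is finite. As $\Spec(R)$ is Noetherian, the functions $\q \mapsto \dim(R_\q)$ and $\q \mapsto \edim(R_\q)$ take finitely many values, so
\[
\{\dfpt(R_\q) \mid \q \in \Spec(R)\} \quad \text{and} \quad \{\mfpt(R_\q) \mid \q \in \Spec(R)\}
\]
are finite sets as well.

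Let $f$ denote either of these two functions, and list its values as $a_1 < a_2 < \cdots < a_m$; set $a_0 = -\infty$. For any real number $a$, let $i$ be the largest index with $a_i \leq a$ (with the convention that $\{f \leq a\} = \emptyset$ if $a < a_1$). Then $\{\q \mid f(\q) \leq a\} = \{\q \mid f(\q) < a_{i+1}\}$ if $i < m$, and equals $\Spec(R)$ if $i = m$; in either case it is open, by Theorem~\ref{thm semicont}. Hence $f$ is strongly upper semi-continuous.

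For each $i \in \{1, \ldots, m\}$, the set
\[
Z_i \coloneqq \{\q \in \Spec(R) \mid f(\q) = a_i\} = \{\q \mid f(\q) \leq a_i\} \smallsetminus \{\q \mid f(\q) \leq a_{i-1}\}
\]
is the difference of two open subsets of $\Spec(R)$, hence locally closed and in particular constructible. The $Z_i$ are finitely many and partition $\Spec(R)$, so they define a finite stratification with constructible strata. Applying this to $f = \dfpt(R_{\blank})$ and to $f = \mfpt(R_{\blank})$ completes the proof.
\end{proof}
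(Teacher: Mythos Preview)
Your argument for strong upper semi-continuity and constructibility (the second and third paragraphs of the proof) is correct and matches the paper's approach: once the value set is finite, weak upper semi-continuity upgrades to strong, and each level set is locally closed as a difference of two open sets.

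The gap is in your finiteness argument. You assert that Sato's Theorem~4.7 shows $\mathcal F_A$ is a \emph{discrete} subset of $\R$, and then conclude finiteness from boundedness. But Sato's result, as cited and used in the paper, only gives the ascending chain condition for $\mathcal F_A$; ACC together with boundedness does not imply finiteness (e.g., $\{1/n : n \geq 1\}$ satisfies ACC, is bounded, and is infinite). So your step from ``ACC and bounded'' to ``finite'' does not go through as written.

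The paper obtains the missing chain condition from the other ingredient you already have at hand: upper semi-continuity. By Theorem~\ref{thm semicont}, $\q \mapsto \dfpt(R_\q)$ and $\q \mapsto \mfpt(R_\q)$ are upper semi-continuous on the Noetherian space $\Spec(R)$, and an upper semi-continuous real-valued function on a Noetherian space has a value set satisfying ACC (this is the content of \cite[Theorem~2.3]{SmirnovTucker}). On the other hand, Proposition~\ref{PropACC} gives ACC for $\{\fpt(R_\q)\}$, which, combined with the fact that $\dim(R_\q)$ and $\edim(R_\q)$ take only finitely many values, yields DCC for $\{\dfpt(R_\q)\}$ and $\{\mfpt(R_\q)\}$. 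A subset of $\R$ satisfying both ACC and DCC is finite. With finiteness in hand, the rest of your proof goes through unchanged.
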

\begin{proof}
By Theorem~\ref{thm semicont} the sets  
$\{\dfpt(R_\q) \mid \q \in \Spec (R)\}$ and $\{\mfpt(R_\q) \mid \q \in \Spec (R)\}$ satisfy the ascending chain condition (\emph{e.g.}, see \cite[Theorem~2.3]{SmirnovTucker}). By Lemma~\ref{LemmaACC} they also satisfy the descending chain condition.
Hence the sets are finite. 

For the second assertion we prove that every stratum of a finitely valued upper semi-continuous function is locally closed. Namely, for any given $a$ we may choose $\varepsilon > 0$ such that 
$\{\q \in \Spec (R) \mid \dfpt(R_\q) \leq a\} = \{\q \in \Spec (R) \mid \dfpt(R_\q) < a + \varepsilon\}$. It follows that
\begin{align*}
\{\q \in \Spec (R) \mid \dfpt(R_\q) = a\} 
&= 
\{
\q \in \Spec (R) \mid \dfpt(R_\q) \geq a \text{ and }
\dfpt(R_\q) < a + \varepsilon\}
\\
&= 
\{
\q \mid \dfpt(R_\q) \geq a\} \cap 
\{
\q  \mid 
\dfpt(R_\q) < a + \varepsilon\}. \qedhere
\end{align*}
\end{proof}

\subsection{Global defect of the F-pure threshold}

Our next goal is to define an invariant for an F-finite F-pure ring, not necessarily local, which encodes the local behavior of the defect of the F-pure threshold. This type of process has been carried out before for other numerical invariants of rings of characteristic $p>0$ \cite{DSPY,DSPY1,DSPY2}. 

\begin{definition}\label{DefTheta}
Let $S$ be a regular F-finite ring and $I\subsetneq S$ be an ideal such that $R=S/I$ is an F-pure ring. We define 
\[
\Theta_e(I) \coloneqq \max \left\{n \ \bigg| \ \left( \delta (\frpe{I} :_S I) \mid \delta \in D^{(n, p^e)}_S \right) \text{ is a proper ideal} \right\},
\]
and
\[
\Theta(I) \coloneqq \lim_{e \to \infty} \frac{\Theta_e(I) }{p^e}.
\]
\end{definition}

\begin{remark} \label{remark Fedder Theta} 
In light of Proposition~\ref{prop:Analogue of ZN} and Proposition~\ref{prop: differential theta}, one could also define $\Theta_e(I)$ as follows:
\begin{align*}
\Theta_e(I) & = \max\left\{n \ \bigg| \ \frpe{I}:_S I \not\subseteq (\p^n+\frpe{\p})S_\p \cap S \text{ for all } \p \in \Spec(R) \right\} \\
& = \max\left\{n \ \bigg| \ \frpe{I}:_S I \not\subseteq \m^n + \m^{[p^e]} \text{ for all } \m \in \Max\Spec(R) \right\}.
\end{align*}
This alternative point of view, which more closely resembles Fedder’s criterion, can be useful to keep in mind as it could give a more direct approach to some problems. 
However, for the purposes of this article, we have chosen to maintain a unified approach based on differential operators, as in our view this often leads to more intrinsic and global statements. 
\end{remark}

\begin{proposition}\label{prop: global theta}
Let $S$ be an F-finite regular ring and $R = S/I$ be F-pure. Then
the sequence $\Theta_e (I)/p^e$ converges and its limit $\Theta(I)$ satisfies
\[
\Theta (I) = \max \{\dim (S_\p) - \fpt (R_\p) \mid \p \in \Spec (R)\}.
\]
\end{proposition}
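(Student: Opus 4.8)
The plan is to relate the global $\Theta_e(I)$ of Definition~\ref{DefTheta} to the local $\Theta_e(IS_\p)$ of Definition~\ref{def: theta} through the differential powers, then pass to the limit using the uniform convergence of Corollary~\ref{uniform convergence fibers}. The key identity I would establish first is that, for each fixed $e$,
\[
\Theta_e(I) = \max \{ \Theta_e(IS_\p) \mid \p \in \Spec(R) \} = \max \{ \Theta_e(IS_\p) \mid \p \in \MaxSpec(R) \}.
\]
Indeed, by Proposition~\ref{prop: differential theta}, for a prime $\p \in V(I)$ one has $\Theta_e(IS_\p) \geq n$ if and only if $\delta(I^{[p^e]}:_S I) \subseteq \p$ for all $\delta \in D^{(n,p^e)}_S$, i.e. if and only if the ideal $\left(\delta(I^{[p^e]}:_S I) \mid \delta \in D^{(n,p^e)}_S\right)$ is contained in $\p$. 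Hence this ideal is proper precisely when it is contained in \emph{some} maximal ideal $\p$, which happens precisely when $\Theta_e(IS_\p) \geq n$ for some (maximal, hence some) $\p$. Taking the supremum over $n$ gives the displayed equality. Note the maxima are attained because $\Theta_e(IS_\p) \leq \dim(S)(p^e-1)$ is uniformly bounded and $\Spec(R)$ is quasi-compact (this also shows $\Theta_e(I)$ is finite, so the definition makes sense).

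\textbf{Passing to the limit.} Having the equality $\Theta_e(I) = \max_\p \Theta_e(IS_\p)$ for every $e$, I divide by $p^e$ and let $e \to \infty$. The right-hand side is a maximum over $\p \in \Spec(R)$ of the sequences $\Theta_e(IS_\p)/p^e$, each of which converges to $\dim(S_\p) - \fpt(R_\p)$ by Theorem~\ref{thm: main formula}. The point is that this convergence is \emph{uniform} in $\p$: Corollary~\ref{uniform convergence fibers} gives
\[
\left| \frac{\Theta_e(IS_\q)}{p^e} - \bigl(\dim(S_\q) - \fpt(R_\q)\bigr) \right| < \frac{\dim(S)}{p^e}
\]
with a bound independent of $\q$. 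A standard argument then shows that the maximum of a uniformly convergent family of functions converges to the maximum (equivalently supremum) of the limits: for any $\varepsilon > 0$, choosing $e$ with $\dim(S)/p^e < \varepsilon/2$ forces $\Theta_e(I)/p^e$ to lie within $\varepsilon$ of $\sup_\p (\dim(S_\p) - \fpt(R_\p))$. In particular $\Theta_e(I)/p^e$ converges, its limit $\Theta(I)$ equals that supremum, and since the supremum of $\dfpt(R_\p) + \height(IS_\p) = \dim(S_\p) - \fpt(R_\p)$ is already attained by Theorem~\ref{thm semicont} (an upper semi-continuous function on a Noetherian spectrum attains its maximum), the supremum is a maximum.

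\textbf{Main obstacle.} The only genuinely delicate point is the first step — verifying that $\Theta_e(I)$ as defined globally (via the ideal generated by $\delta(I^{[p^e]}:_S I)$ being proper) agrees with the maximum over primes of the localized $\Theta_e$. This requires being careful that the differential operators localize correctly, which is exactly the content of Lemma~\ref{LemmaLocalizationD^{(n,e)}} and Proposition~\ref{prop: differential theta}: one must check that $\delta(I^{[p^e]}:_S I) \subseteq \p$ for all $\delta \in D^{(n,p^e)}_S$ is equivalent to the corresponding condition over $S_\p$, using that $D^{(n,p^e)}_S$ is finitely generated and localizes to $D^{(n,p^e)}_{S_\p}$. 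Once that bridge is in place, the rest is a routine uniform-convergence argument.
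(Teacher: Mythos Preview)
Your proposal is correct and follows essentially the same route as the paper: establish $\Theta_e(I)=\max_{\p}\Theta_e(IS_\p)$ via Proposition~\ref{prop: differential theta}, then pass to the limit using the uniform bound of Corollary~\ref{uniform convergence fibers}. The only slip is the citation at the end: the upper semi-continuity you need is that of $\p\mapsto \dim(S_\p)-\fpt(R_\p)$, which is Lemma~\ref{lemma semicont} rather than Theorem~\ref{thm semicont} (the latter concerns $\dfpt$ and $\mfpt$, not their sum with $\height(IS_\p)$); the paper's own proof invokes Lemma~\ref{lemma semicont} for exactly this reason.
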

\begin{proof}
Let $\eta = \max \{\dim (S_\p) - \fpt (R_\p) \mid \p \in \Spec (R)\}$, which exists by Lemma~\ref{lemma semicont}.
By Proposition~\ref{prop: differential theta} we have that 
$\Theta_e (I) = \max \{\Theta_e (IS_\q) \mid \q \in V(I) \}$, so that 
for each $e$ we can find a prime ideal $\q_e$ such that $\Theta_e (IS_{\q_e}) = \Theta_e (I)$.
We want to show that $\eta = \lim_{e \to \infty} \Theta_e (IS_{\q_e})/p^e$. 

By Corollary \ref{uniform convergence fibers}, given any $\varepsilon>0$ there exists $e_0 \geq 0$ such that, for all $e\geq e_0$ and all primes $\q \in V(I)$, one has 
\[
\left|\frac{\Theta_e (IS_\q)}{p^e} - \dim(S_\q) + \fpt(R_\q) \right| < \frac{\varepsilon}{2}.
\]
Let $q_{\max}$ be any prime ideal such that $\dim(S_{\q_{\max}}) - \fpt(R_{\q_{\max}}) = \eta$. Since $\Theta_e (IS_{\q_{\max}}) \leq \Theta_e (IS_{\q_e})$ by our choice of $\q_e$, 
for any $e \geq e_0$ we obtain inequalities
\[
\eta \geq \dim(S_{\q_e}) - \fpt(R_{\q_e}) \geq 
\frac{\Theta_e (IS_{\q_e})}{p^e} - \frac{\varepsilon}{2} \geq 
\frac{\Theta_e (IS_{\q_{\max}})}{p^e} - \frac{\varepsilon}{2} 
\geq \eta - \varepsilon
\]
and the assertion follows.
\end{proof}

\begin{theorem} \label{thm: global (new)}
Let $R$ be an F-finite F-pure ring which is either 
\begin{enumerate} 
\item a domain, or 
\item biequidimensional.
\end{enumerate} 
Let $S$ be an F-finite regular ring mapping onto $R$, which we assume being coequidimensional in the second case (such representation always exists by Proposition~\ref{prop: equidimensional Gabber}). Write $R \cong S/I$ for some ideal $I \subseteq S$. 
We define the global defect of the F-pure threshold by
\[
\dfpt(R) \coloneqq \Theta (I) - \height(I).
\]
Then $\dfpt(R) = \max \{\dfpt (R_\p) \mid \p \in \Spec (R)\}$. In particular, it
is independent of the presentation (coequidimensional presentation in the second case) and, therefore, is well-defined. 
\end{theorem}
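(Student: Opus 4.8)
The plan is to show that $\Theta(I) - \height(I) = \max\{\dfpt(R_\p) \mid \p \in \Spec(R)\}$ and then deduce well-definedness from the fact that the right-hand side does not involve the presentation. First I would invoke Proposition~\ref{prop: global theta}, which already gives $\Theta(I) = \max\{\dim(S_\p) - \fpt(R_\p) \mid \p \in \Spec(R)\}$. So the crux is to relate $\dim(S_\p) - \height(I)$ to $\dim(R_\p)$, uniformly in $\p$. Since $S$ is regular, for each $\p \in \Spec(R)$ (viewed in $V(I) \subseteq \Spec(S)$) we have $\dim(S_\p) = \dim(R_\p) + \height(IS_\p)$. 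Thus
\[
\Theta(I) - \height(I) = \max_{\p}\bigl(\dim(R_\p) + \height(IS_\p)\bigr) - \height(I) = \max_{\p}\bigl(\dfpt(R_\p) + \fpt(R_\p) + \height(IS_\p)\bigr) - \height(I),
\]
so it suffices to show that $\height(IS_\p) = \height(I)$ for those primes $\p$ achieving the maximum, and more robustly, to arrange that $\height(IS_\p) - \height(I)$ can be absorbed correctly. The clean way is to prove that in both cases (1) and (2) one has $\dim(S_\p) - \height(I) = \dim(R_\p)$ for \emph{all} $\p$, equivalently $\height(IS_\p) = \height(I)$ for all $\p \in V(I)$; this is exactly where equidimensionality of $R$ and coequidimensionality of $S$ enter.

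The key steps, in order: (i) reduce to showing $\height(IS_\p) = \height(I)$ for every $\p \in \Spec(R)$; (ii) in the domain case, $S$ may be taken a domain, $\height(I)$ equals the common height of all minimal primes of $I$ (as $R = S/I$ is a domain, $I$ is prime, so this is immediate: $\height(IS_\p) = \height(I)$ for all $\p \supseteq I$ since $S$ is a regular — hence catenary — domain and $I$ is prime, giving $\height(IS_\p) = \height(I)$); (iii) in the biequidimensional case, use that $R$ is equidimensional and $S$ is coequidimensional: for $\p \supseteq I$ corresponding to a maximal ideal we get $\dim(S_\p) = \dim(S)$ and $\dim(R_\p) = \dim(R)$, and $\dim(S) - \dim(R) = \height(I)$ because $R$ is equidimensional over the regular $S$; for a general $\p$, embed it in a maximal ideal and use catenarity of $S$ together with $R_\p$ being equidimensional (inherited from $R$ being biequidimensional) to propagate the equality $\height(IS_\p) = \height(I)$ down. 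Then (iv) conclude
\[
\dfpt(R) = \Theta(I) - \height(I) = \max_{\p}\bigl(\dim(S_\p) - \fpt(R_\p)\bigr) - \height(I) = \max_{\p}\bigl(\dim(R_\p) - \fpt(R_\p)\bigr) = \max_{\p}\dfpt(R_\p),
\]
and (v) observe the right-hand side is intrinsic to $R$, so the left-hand side is independent of the chosen (coequidimensional) presentation.

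The main obstacle is step (iii): keeping track of the dimension bookkeeping for a general, non-maximal prime $\p$ in the biequidimensional case, since $R_\p$ need not be biequidimensional and one must be careful that $\height(IS_\p)$ does not drop. The tool to handle this is that a biequidimensional ring is catenary and all its localizations are equidimensional, combined with the fact that $S$ coequidimensional and $S/I$ equidimensional forces $\height(I_j) = \dim(S) - \dim(R)$ for every minimal prime, so that passing to any $\p$ the height is preserved; essentially one reruns the argument from the proof of Proposition~\ref{prop: equidimensional Gabber}. Once this is established, the identification with $\max_\p \dfpt(R_\p)$ and the consequent well-definedness are formal.
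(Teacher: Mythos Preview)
Your proposal is correct and follows essentially the same route as the paper: invoke Proposition~\ref{prop: global theta}, reduce to showing $\height(IS_\p)=\height(I)$ for all $\p\in V(I)$, handle the domain case trivially (since $I$ is prime), and in the biequidimensional case use coequidimensionality of $S$ together with biequidimensionality of $R$ to force every minimal prime of $I$ to have height $\dim(S)-\dim(R)$. One small slip: in your first displayed equation you dropped the term $-\fpt(R_\p)$ (so the expression should read $\max_\p\bigl(\dim(R_\p)+\height(IS_\p)-\fpt(R_\p)\bigr)$), but your final display and surrounding discussion show you have the right computation in mind.
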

\begin{proof}
Proposition~\ref{prop: global theta} shows that 
\[
\dfpt (R) + \height (I) = \max \{\dfpt (R_\p) + \height (IS_\p)\mid \p \in \Spec (R)\}, 
\]
so it suffices to verify that 
$\height (IS_\p) = \height (I)$ for all $\p \in V(I)$. If $I$ is a prime ideal, this is trivial. 
Otherwise, let $Q$ be a minimal prime ideal of $I$. 
For any maximal ideal $\m$ of $V(I)$ we then have 
\[
\height (Q) = \dim (S_\m) -  \dim (S_\m/QS_\m) 
= \dim (S) - \dim (S_\m/QS_\m). 
\]
However, $\dim (S_\m/QS_\m)  = \dim (R_\m/QR_\m) = \dim(R)$ as $R$ is biequidimensional, so $\height(Q) = \dim(S) - \dim(R)$ is independent of $Q$, and the assertion follows. 
\end{proof}

The following example shows that the assumptions of Theorem \ref{thm: global (new)} are needed on $R$ and its regular presentation in order for local and global invariants to patch up.

\begin{example} \label{bad ex1} Let $S=\kk\ps{x,t} \times \kk\ps{y,z}$ and $I = (0) \times (yz)$. Note that $S$ is coequidimensional, but $R=S/I$ is not equidimensional. We have that $\height(I)=0$ and $\Theta_e(I) = \max\{\Theta_e(IS_\p) \mid \p \in V(I)\} = 2(p^e-1)$. It follows that $\lim\limits_{e \to \infty} \Theta_e(I)/p^e - \height(I) = 2$, while $\max\{\dim(R_\p) - \fpt(R_\p) \mid \p\in \Spec(R)\} = 1$, attained at $\p = \kk\ps{x,t} \times (y,z)$.
\end{example}
\begin{example} \label{bad ex2} Let $S=\kk\ps{x} \times \kk\ps{y,z}$ and $I = (0) \times (yz)$. Note that $R=S/I$ is biequidimensional, but its regular presentation $S$ is not coequidimensional. As in Example \ref{bad ex1} we have that $\height(I)=0$ and $\Theta_e(I) = \max\{\Theta_e(IS_\p) \mid \p \in V(I)\} = 2(p^e-1)$, so that $\lim\limits_{e \to \infty} \Theta_e(I)/p^e - \height(I) = 2$. However, $\max\{\dim(R_\p) - \fpt(R_\p) \mid \p\in \Spec(R)\} = 1$, attained at the prime ideal $\p = \kk\ps{x} \times (y,z)$.
\end{example}

\begin{proposition} 
Let $R$ be an F-finite F-pure ring which is either a domain or biequidimensional. Then the following are equivalent
\begin{enumerate}
\item $\dfpt(R)  <1$,
\item $\dfpt(R) =0$,
\item $R$ is regular.
\end{enumerate}
\end{proposition}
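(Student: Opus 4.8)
The plan is to reduce the whole equivalence to the local statement already proved in Proposition~\ref{prop TW regular}, using the global identity $\dfpt(R) = \max\{\dfpt(R_\p) \mid \p \in \Spec(R)\}$ from Theorem~\ref{thm: global (new)}, which is available exactly because $R$ is a domain or biequidimensional. First I would record two easy preliminary facts used throughout: localization preserves F-finiteness and F-purity, so each $R_\p$ is an F-finite F-pure local ring; and $\fpt(R_\p) \leq \dim(R_\p)$ (as recalled in the introduction), so $\dfpt(R_\p) \geq 0$ for every $\p$. Consequently $\dfpt(R) = 0$ is equivalent to $\dfpt(R_\p) = 0$ for all $\p$, and $\dfpt(R) < 1$ is equivalent to $\dfpt(R_\p) < 1$ for all $\p$; in particular the implication (2) $\Rightarrow$ (1) is immediate.

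For (1) $\Rightarrow$ (3) I would fix a maximal ideal $\m$ of $R$ and treat two cases. If $\dim(R_\m) > 0$, then since $R_\m$ is F-finite, F-pure and $\dfpt(R_\m) < 1$, Proposition~\ref{prop TW regular} applies directly and gives that $R_\m$ is regular. If $\dim(R_\m) = 0$, then $R_\m$ is Artinian local; since $R$ is F-pure it is reduced, hence so is $R_\m$, and a reduced Artinian local ring is a field, so again $R_\m$ is regular. As regularity can be tested at the maximal ideals, $R$ is regular.

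For (3) $\Rightarrow$ (2) I would argue similarly in the other direction: if $R$ is regular then every $R_\p$ is a regular local ring, and Proposition~\ref{prop TW regular} gives $\dfpt(R_\p) = 0$ when $\dim(R_\p) > 0$, while when $\dim(R_\p) = 0$ the ring $R_\p$ is a field and $\dfpt(R_\p) = \dim(R_\p) - \fpt(R_\p) = 0$ (using $0 \leq \fpt(R_\p) \leq \dim(R_\p) = 0$). Taking the maximum over $\p$ and invoking Theorem~\ref{thm: global (new)} gives $\dfpt(R) = 0$.

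There is no serious obstacle in this argument; the only point that is not a literal citation of Proposition~\ref{prop TW regular} is the behavior at a zero-dimensional localization, and this is handled by the observation that an F-pure ring is reduced, so any zero-dimensional local ring occurring here is automatically a field.
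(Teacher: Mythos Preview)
Your proof is correct and follows essentially the same approach as the paper's: reduce to the local case via the identity $\dfpt(R) = \max_{\p}\dfpt(R_\p)$ from Theorem~\ref{thm: global (new)}, then invoke Proposition~\ref{prop TW regular}. Your additional care in separately handling the zero-dimensional localizations (which are excluded by the hypothesis $d>0$ in Proposition~\ref{prop TW regular}) is appropriate and makes the argument complete.
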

\begin{proof}
The claim can be reduced to local rings by Proposition~\ref{prop: global theta}, and then the result follows from Proposition~\ref{prop TW regular}.
\end{proof}

Now we show that  the maximum value of the defect of the F-pure threshold
of a positively graded algebra over a field
is achieved at the homogeneous maximal ideal. This result is analogous to those obtained for Frobenius Betti numbers \cite{DSPY2} and F-splitting ratio \cite{DSPY1}.

\begin{proposition} Let $R$ be an F-finite and F-pure positively graded $\kk$-algebra which is either a domain or biequidimensional. Let $\m$ denote the unique homogeneous maximal ideal of $R$, that is, the ideal generated by elements of $R$ of positive degrees. Then $\dfpt(R) = \dfpt(R_\m)$.
\end{proposition}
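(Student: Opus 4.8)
The plan is to reduce the global statement to the local results already established. By Theorem~\ref{thm: global (new)}, $\dfpt(R)$ is well defined and $\dfpt(R)=\max\{\dfpt(R_\p)\mid \p\in\Spec(R)\}$; as $\m$ is a prime of $R$, the inequality $\dfpt(R_\m)\le\dfpt(R)$ is immediate, so the content of the statement is that $\dfpt(R_\p)\le\dfpt(R_\m)$ for every $\p\in\Spec(R)$. Given such a $\p$, I would pass to its homogeneous core $\p^\ast$, the ideal generated by the homogeneous elements of $\p$. It is standard that $\p^\ast$ is a homogeneous prime contained in $\p$, and being homogeneous and proper it is contained in the irrelevant ideal $\m$. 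I then aim to prove
\[
\dfpt(R_\p)=\dfpt(R_{\p^\ast})\le\dfpt(R_\m),
\]
from which the statement follows.

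For the equality, the key observation is that the localization map $R_{\p^\ast}\to R_\p$ is a flat local homomorphism of F-finite F-pure rings: flat and local because it is a localization and $\p^\ast\subseteq\p$, and F-finite and F-pure because both rings are localizations of $R$. Its closed fiber is $R_\p/\p^\ast R_\p\cong (R/\p^\ast)_{\p/\p^\ast}$, and I claim this ring is regular. Indeed, $A:=R/\p^\ast$ is an $\NN$-graded domain with $A_0=\kk$, and $\p/\p^\ast$ contains no nonzero homogeneous element of $A$; hence inverting all nonzero homogeneous elements of $A$ yields a $\ZZ$-graded domain $B$ through which the localization $(R/\p^\ast)_{\p/\p^\ast}$ factors. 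Every nonzero homogeneous element of $B$ is a unit, so $B_0$ is a field and the set of degrees in which $B$ is nonzero is a subgroup of $\ZZ$; therefore $B$ is either a field or a Laurent polynomial ring $B_0[t,t^{-1}]$, hence regular of dimension at most one, and so is its localization $(R/\p^\ast)_{\p/\p^\ast}$. Now Corollary~\ref{coroll A1} applies and gives $\dfpt(R_{\p^\ast})=\dfpt(R_\p)$.

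For the remaining inequality, note that $\m\in V(\p^\ast)$ since $\p^\ast\subseteq\m$, so $\m$ is a specialization of $\p^\ast$; by upper semi-continuity of $\q\mapsto\dfpt(R_\q)$ from Theorem~\ref{thm semicont}, the value of $\dfpt$ cannot drop under specialization, so $\dfpt(R_{\p^\ast})\le\dfpt(R_\m)$. Combining the two displays and taking the maximum over $\p$ yields $\dfpt(R)=\dfpt(R_\m)$. The step I expect to require the most care is the regularity of the closed fiber of $R_{\p^\ast}\to R_\p$, that is, the structural fact that localizing an $\NN$-graded domain over a field at a prime with trivial homogeneous core produces a regular ring; once this is available, the argument is a direct combination of Corollary~\ref{coroll A1}, Theorem~\ref{thm semicont}, and Theorem~\ref{thm: global (new)}.
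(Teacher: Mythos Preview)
Your proof is correct and takes a genuinely different route from the paper's. The paper works directly with a graded presentation $R=S/I$, $S=\kk[x_1,\ldots,x_n]$: it shows that $D^{(n,p^e)}_S$ is a $\ZZ$-graded $S$-module, so the ideal $\bigl(\delta(I^{[p^e]}:I)\mid\delta\in D^{(n,p^e)}_S\bigr)$ is homogeneous and therefore proper if and only if its localization at $\m$ is proper. Combined with $(D^{(n,p^e)}_S)_\m\cong D^{(n,p^e)}_{S_\m}$, this gives $\Theta_e(I)=\Theta_e(IS_\m)$ for every $e$, and since $\height(I)=\height(IS_\m)$ for a homogeneous ideal, the conclusion follows straight from the definition of the global $\dfpt$.

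Your argument instead bypasses the differential-operator machinery entirely and reduces to three structural ingredients already available: the identification $\dfpt(R)=\max_\p\dfpt(R_\p)$ (Theorem~\ref{thm: global (new)}), invariance of $\dfpt$ under flat local maps with regular closed fiber (Corollary~\ref{coroll A1}), and upper semi-continuity (Theorem~\ref{thm semicont}), together with the classical fact that localizing a positively graded domain at a prime with trivial homogeneous core factors through a Laurent-type localization and is therefore regular. This is more conceptual and would transfer verbatim to any numerical invariant satisfying those three properties; the paper's approach, on the other hand, gives the sharper statement $\Theta_e(I)=\Theta_e(IS_\m)$ at each finite level. One minor organizational remark: Corollary~\ref{coroll A1} appears in Section~\ref{sect: local}, after the proposition you are proving, so your argument makes a forward reference; there is no circularity, as the proof of Corollary~\ref{coroll A1} does not use this proposition, but it is worth noting if one were to reorganize the exposition around your approach.
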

\begin{proof}
We may write $R$ as a quotient of $S=\kk[x_1,\ldots,x_n]$,  $\deg(x_i)=d_i>0$, by a homogeneous ideal $I \subseteq S$.  We now show that $D^{(n,p^e)}_S$ is a $\ZZ$-graded $S$-module for every $n,e \geq 0$. Since $S$ is F-finite, 
$$
\frac{S\otimes_\ZZ S}{\Delta^{n+1}_{S|\ZZ}+\Delta^{[p^e]}_{S|\ZZ}} \cong \frac{S \otimes_{S^{p^e}} S}{\Delta_{S|S^{p^e}}^{n+1}}
$$
and
$$\Hom_R\left( \frac{R\otimes_\ZZ R}{\Delta^{n+1}_{R|\ZZ}+\Delta^{[p^e]}_{R|\ZZ}}, R\right) \cong D^{(n,p^e)}_{R}
$$
by Remark \ref{PPFinite}.
Since $\frac{S\otimes_\ZZ S}{\Delta^{n+1}_{S|\ZZ}+\Delta^{[p^e]}_{S|\ZZ}} $ is a graded module, we have that $D^{(n,p^e)}_{R}$ is also graded.

Since $I^{[p^e]}:I$ is a homogeneous ideal and $D^{(n,p^e)}_S$ is graded, 
the ideal $\left(\delta(I^{[p^e]}:I) \mid \delta \in D^{(n,p^e)}_S \right)$ is homogeneous, and therefore it is proper if and only if its localization at $\m$ is proper. On the other hand, $D^n_{S} \subseteq D^{(e)}_S$ for some $e$, thus $D^n_{S}$ is a finitely generated $S$-module, so we conclude that $(D^n_S)_\m \cong D^n_{S_\m}$ with an argument similar to the proof of Lemma~\ref{LemmaDiffPowersLocalization}. 
Therefore,
$
\left(\delta(I^{[p^e]}:I) \mid \delta \in D^{(n,p^e)}_S \right)_\m \cong \left(\delta'(I^{[p^e]}S_\m:IS_\m) \mid \delta' \in D^{(n,p^e)}_{S_\m}\right).
$
It follows that $\Theta(IS_\m) = \Theta(I)$, and, since $\height(I) = \height(IS_\m)$ because $I$ is homogeneous, the proof is complete.
\end{proof}

We end the section by extending the definition of $\dfpt$ to schemes. Let $X$ be a Noetherian $\FF_p$-scheme. We say that $X$ is F-finite if there exists a finite open affine cover $\{U_i = \Spec(R_i)\}_{i=1}^t$ such that $R_i$ is F-finite. An F-finite scheme is F-pure if each $R_i$ is F-pure. In particular, an F-pure scheme is reduced.
\begin{definition}\label{Def dpt(X)}
If $X$ is a Noetherian F-finite F-pure scheme, we define 
\[
\dfpt(X) = \max \{\dfpt(\OO_{X,x}) \mid x \in X\}.
\]
\end{definition}
Note that, with the notation introduced above, we have 
$\dfpt(X) = \max\{d_i \mid i=1,\ldots,t\}$ where $d_i = \sup\{\dfpt((R_i)_\p) \mid \p \in \Spec(R_i)\} = \max\{\dfpt((R_i)_\p) \mid \p \in \Spec(R_i)\}$ by Theorem \ref{thm semicont}. 
If the rings $R_i$ are either domains or biequidimensional, then $d_i = \dfpt(R_i)$ as shown in Theorem \ref{thm: global (new)}.

\subsection{Global Fedder's criteria}
Let $R$ be an F-finite local ring. There exists an F-finite regular local ring $S$ mapping onto $R$; write $R=S/I$ \cite{Gabber}. When $(R,\m)$ is local, Fedder's criterion as already recalled in Theorem \ref{FedderCriterion} allows to characterize when $R$ is F-pure with a very explicit calculation. Such a criterion, however, heavily depends on the fact that $R$ is local. Even when $R$ is not local, since an F-finite ring $R$ is F-pure if and only if $R_\m$ is F-pure for every maximal ideal $\m$, one could in theory apply Fedder's criterion to every localization to test whether $R$ is F-pure or not. In practice, this is often not feasible. Instead, the description of $\Theta_e$ in terms of differential operators, gives a ``global'' condition that characterizes F-purity in a way analogous to Fedder's, but which holds for F-finite rings which are not necessarily local.

\begin{proposition}\label{prop: global Fedder}
Let $S$ be a F-finite regular ring, $I\subseteq S$ be an ideal, and $R=S/I$.
Let $n=\dim(S)$.
Then $R$ is F-pure if and only if $\left(D^{(n(p^e-1),p^e)}_S \left( I^{[p^e]}:I\right)\right) = S$ for some (equivalently, for every) integer $e>0$.
\end{proposition}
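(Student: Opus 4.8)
The plan is to reduce the global statement to the classical Fedder criterion (Theorem~\ref{FedderCriterion}) applied at each maximal ideal, using the description of $\Theta_e$ in terms of differential operators from Proposition~\ref{prop: differential theta}. The key observation is that the ideal $J_e \coloneqq \left(D^{(n(p^e-1),p^e)}_S\left(I^{[p^e]}:_S I\right)\right)$ is a global object whose properness can be detected locally: $J_e = S$ if and only if $J_e \not\subseteq \m$ for every maximal ideal $\m$ of $S$ containing $I$ (maximal ideals not containing $I$ are irrelevant since $I^{[p^e]}:_S I \supseteq I$, hence $1 \in J_e$ trivially survives there — or more precisely one only needs to worry about primes in $V(I)$).

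First I would unwind what $J_e \not\subseteq \m$ means at a fixed maximal ideal $\m \in V(I)$. By Proposition~\ref{prop: differential theta},
\[
\Theta_e(IS_\m) = \max\{k \mid \delta\left(I^{[p^e]}:_S I\right) \subseteq \m \text{ for all } \delta \in D^{(k,p^e)}_S\},
\]
so $J_e \not\subseteq \m$ is equivalent to the existence of some $\delta \in D^{(n(p^e-1),p^e)}_S$ and some $f \in I^{[p^e]}:_S I$ with $\delta(f) \notin \m$; that is, $J_e \not\subseteq \m$ holds precisely when $\Theta_e(IS_\m) < n(p^e-1)$, i.e. $\Theta_e(IS_\m) \le n(p^e-1)-1$. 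Here I need the localization compatibility $D^{(k,p^e)}_S \otimes_S S_\m \cong D^{(k,p^e)}_{S_\m}$ from Lemma~\ref{LemmaLocalizationD^{(n,e)}} to match the global differential operators with the local ones used in Proposition~\ref{prop: differential theta}.

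Next I would connect the numerical threshold to F-purity via Theorem~\ref{thm: main formula} applied in the local ring $S_\m$ (with $R_\m = S_\m/IS_\m$, and after completing, a power series ring in $\dim(S_\m) \le n$ variables). That theorem gives $\loewy_{R_\m}(R_\m/I_e(R_\m)) + \Theta_e(IS_\m) = \dim(S_\m)(p^e-1)+1$ whenever $R_\m$ is F-pure. Combined with Fedder's criterion (Theorem~\ref{FedderCriterion}), $R_\m$ is F-pure iff $I^{[p^e]}:_S I \not\subseteq \m^{[p^e]}$ for some $e$, which translates to $\Theta_e(IS_\m) \le \dim(S_\m)(p^e-1) \le n(p^e-1)$ being genuinely bounded; conversely if $R_\m$ is not F-pure then $I^{[p^e]}:_S I \subseteq \m^{[p^e]} \subseteq \m^{n(p^e-1)+1}$ for all $e$, forcing $\Theta_e(IS_\m) \ge n(p^e-1)+1 > n(p^e-1)$, so $J_e \subseteq \m$. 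Putting these together: $J_e = S$ for some $e$ iff for every $\m \in V(I)$ we have $\Theta_e(IS_\m) \le n(p^e-1)$ for that $e$ — and one must be slightly careful to get a single $e$ working for all $\m$ simultaneously. This uniformity is handled by the monotonicity $p^{e'}b(p^e) \le b(p^{e+e'})$ (equivalently, once $J_e = S$ it persists for all larger $e$, since the containment $I^{[p^e]}:_S I \subseteq \m^{\{k,p^e\}}$ degrades in a controlled way under raising $e$, cf. the proof of Proposition~\ref{prop: fpt convergence}), so F-purity of all $R_\m$ gives a common $e$.

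The main obstacle I anticipate is precisely this last uniformity point: passing from ``for each maximal ideal there is an $e$'' to ``there is a single $e$ good for all maximal ideals at once''. For the forward direction (assume $R$ F-pure, produce a global $e$) one can argue that $R$ F-pure means $R_\m$ F-pure for all $\m$, each giving $\Theta_e(IS_\m) \le \dim(S_\m)(p^e-1)$ for \emph{all} $e \ge 1$ by Theorem~\ref{thm: main formula} (the formula holds for every $e$ once $R_\m$ is F-pure), hence $J_e \not\subseteq \m$ for all $e \ge 1$ and all $\m$, so $J_e = S$ for every $e \ge 1$ — this direction is actually clean and gives the ``for every $e$'' clause. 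For the converse, if $J_{e_0} = S$ for some $e_0$, then for each $\m \in V(I)$ we get $\Theta_{e_0}(IS_\m) \le n(p^{e_0}-1) < n(p^{e_0}-1)+1 \le \dim(S_\m)(p^{e_0}-1)+1 + (\text{slack})$, and comparing with the contrapositive of the non-F-pure estimate above (which gave $\Theta_{e_0}(IS_\m) \ge n(p^{e_0}-1)+1$ when $R_\m$ is not F-pure) forces $R_\m$ to be F-pure; since this holds for every maximal ideal, $R$ is F-pure. So in fact the ``some $e$'' and ``every $e$'' versions coincide, and I would state the proof to extract both.
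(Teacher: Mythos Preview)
Your overall strategy—reduce to maximal ideals, apply Fedder's criterion locally, and use the localization compatibility of $D^{(k,p^e)}_S$—is exactly the paper's approach. The paper presents it as a direct chain of equivalences without routing through $\Theta_e$ or Theorem~\ref{thm: main formula}, but the content is the same.

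There is, however, a genuine error in your converse argument. You claim that if $R_\m$ is not F-pure then $I^{[p^e]}:_S I \subseteq \m^{[p^e]} \subseteq \m^{n(p^e-1)+1}$, but the second containment is false: for $n = \dim(S) \geq 2$ the element $x_1^{p^e} \in \m^{[p^e]}$ has $\m$-adic order $p^e < n(p^e-1)+1$. What you actually need is $\m^{[p^e]} \subseteq \m^{\{n(p^e-1)+1,\,p^e\}}$, which holds for a different reason: every $\delta \in D^{(e)}_S$ is $S^{p^e}$-linear, so $\delta(\m^{[p^e]}) \subseteq \m^{[p^e]} \subseteq \m$ (cf.\ Proposition~\ref{prop: dif pow properties}). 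Equivalently—and this is how the paper argues—after completing $S_\m$ to a power series ring in $\dim(S_\m) \leq n$ variables, Lemma~\ref{lemma: dif pow maximal series} gives $\m^{\{n(p^e-1)+1,\,p^e\}} = \m^{n(p^e-1)+1} + \m^{[p^e]}$, and the pigeonhole principle yields $\m^{n(p^e-1)+1} \subseteq \m^{[p^e]}$ (the reverse of what you wrote), so this differential power \emph{equals} $\m^{[p^e]}$. Fedder's criterion $I^{[p^e]}:I \not\subseteq \m^{[p^e]}$ is then literally the statement $J_e \not\subseteq \m$.

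Two smaller points. First, there is an off-by-one slip: unwinding the definition of $\m^{\{k,p^e\}}$ via $D^{(k-1,p^e)}$, the condition $J_e \not\subseteq \m$ is equivalent to $\Theta_e(IS_\m) \leq n(p^e-1)$, not $\leq n(p^e-1)-1$. Second, invoking Proposition~\ref{prop: differential theta} and Theorem~\ref{thm: main formula} for the converse is circular, since both are stated under the hypothesis that $R=S/I$ is F-pure; you should argue the containment $\m^{[p^e]} \subseteq \m^{\{n(p^e-1)+1,\,p^e\}}$ directly as above. Your uniformity worry at the end is unnecessary: once the identification $\m^{\{n(p^e-1)+1,\,p^e\}} = \m^{[p^e]}$ is established in $\widehat{S_\m}$, the equivalence ``$R_\m$ F-pure $\Leftrightarrow J_e \not\subseteq \m$'' holds for each fixed $e$ separately, so ``some $e$'' and ``every $e$'' coincide immediately.
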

\begin{proof}
We fix $e>0$.
We have that
\begin{align*}
R\hbox{ is F-pure }& \Longleftrightarrow R_\m \hbox{ is F-pure for every }\m\in\MaxSpec(S)\\
& \Longleftrightarrow \widehat{R_\m} \hbox{ is F-pure for every }\m\in\MaxSpec(S)\\
& \Longleftrightarrow I^{[p^e]}\widehat{S_\m}:I \widehat{S_\m}\not\subseteq \m^{[p^e]}\widehat{S_\m} \hbox{ for every }\m\in\MaxSpec(S)\\
& \Longleftrightarrow \left(D^{(n(p^e-1),p^e)}_{\widehat{S_m}}\left( I^{[p^e]}\widehat{S_\m}:I \widehat{S_\m}\right)\right) \not\subseteq \m \widehat{S_\m} \hbox{ for every }\m\in\MaxSpec(S)\\
& \Longleftrightarrow \left(D^{(n(p^e-1),p^e)}_{S_m}\left( I^{[p^e]}S_\m:I S_\m\right)\right) =S_\m \hbox{ for every }\m\in\MaxSpec(S)\\
& \Longleftrightarrow \left(D^{(n(p^e-1),p^e)}_{S} \left( I^{[p^e]}:I\right)\right) =S.
\end{align*}
The proof for every $e>0$ is analogous, and it is left to the reader.
\end{proof}

We recall the definition of geometrically F-pure  algebras introduced by
Schwede and Zhang \cite[Definition 2.1]{SchwedeZhang}.

\begin{definition} \label{def: geometrically F-pure} Let $\kk$ be an F-finite field. We say that a finite type $\kk$-algebra $R$ is geometrically F-pure if $R_{\kk'}=R \otimes_\kk \kk'$ is F-pure for every finite extension $\kk \subseteq \kk'$.
\end{definition}

One can check that $R$ is geometrically F-pure if and only if  $R_{\overline{\kk}}$ is F-pure, where $\overline{\kk}$ denotes an algebraic closure of $\kk$. Recall that, when $\kk$ is perfect, one has $D_S^{(n,p^e)} = D_{S|\kk}^{(n,p^e)}$ for all $n,e$ (see Remark \ref{rem: perfect K}). However, in general, the containment $D_{S|\kk}^{(n,p^e)} \subseteq D_S^{(n,p^e)}$ can be strict; for instance, see the forthcoming Example~\ref{ex: Fedder}. This difference allows  a suitable modification of Proposition~\ref{prop: global Fedder} to test for geometric F-purity.

\begin{proposition}\label{prop: geo Fedder}
Let $\kk$ be an F-finite field, $S=\kk[x_1,\ldots,x_n]$, $I\subseteq S$ be an ideal, and $R=S/I$.
Then $R$ is geometrically F-pure if and only if $\left(D^{(n(p^e-1),p^e)}_{S|\kk}\left( I^{[p^e]}:I \right)\right)= S$ for some (equivalently, for every) $e>0$.
\end{proposition}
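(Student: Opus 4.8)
The plan is to mimic the proof of Proposition~\ref{prop: global Fedder}, but replacing absolute differential operators by $\kk$-linear ones and $R$ by its base change to the algebraic closure $\overline{\kk}$. First I would reduce geometric F-purity to F-purity of a single ring: $R$ is geometrically F-pure if and only if $R_{\overline{\kk}} = R \otimes_\kk \overline{\kk}$ is F-pure, as recalled after Definition~\ref{def: geometrically F-pure}. Writing $\overline{S} = S \otimes_\kk \overline{\kk} = \overline{\kk}[x_1,\ldots,x_n]$ and $\overline{I} = I\overline{S}$, so that $R_{\overline{\kk}} = \overline{S}/\overline{I}$, Proposition~\ref{prop: global Fedder} applied to the $n$-dimensional regular ring $\overline{S}$ tells us that $R_{\overline{\kk}}$ is F-pure if and only if $\bigl(D^{(n(p^e-1),p^e)}_{\overline{S}}(\overline{I}^{[p^e]} :_{\overline{S}} \overline{I})\bigr) = \overline{S}$ for some (equivalently every) $e>0$. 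Since $\overline{\kk}$ is perfect, Remark~\ref{rem: perfect K} gives $D^{(n(p^e-1),p^e)}_{\overline{S}} = D^{(n(p^e-1),p^e)}_{\overline{S}|\overline{\kk}}$, so the criterion becomes $\bigl(D^{(n(p^e-1),p^e)}_{\overline{S}|\overline{\kk}}(\overline{I}^{[p^e]} :_{\overline{S}} \overline{I})\bigr) = \overline{S}$.

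The key step is then to check that $\kk$-linear differential operators and colon ideals are both compatible with the flat base change $\kk \subseteq \overline{\kk}$. Concretely I would establish $D^{(n,p^e)}_{\overline{S}|\overline{\kk}} \cong D^{(n,p^e)}_{S|\kk} \otimes_\kk \overline{\kk}$ and $\overline{I}^{[p^e]} :_{\overline{S}} \overline{I} = (I^{[p^e]} :_S I)\overline{S}$. The second is standard: $\kk \subseteq \overline{\kk}$ is faithfully flat, hence so is $S \subseteq \overline{S}$, and colon ideals by a finitely generated ideal commute with flat base change, while $\overline{I}^{[p^e]} = I^{[p^e]}\overline{S}$ is immediate since Frobenius powers are generated by $p^e$-th powers of generators. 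The first follows from the principal-parts description used in Remark~\ref{PPFinite}: since $S$ is a polynomial ring, $D^{(n,p^e)}_{S|\kk}$ is a finitely generated (in fact free) $S$-module by Example~\ref{ex: divided powers}, realized as $\Hom_S\!\bigl((S\otimes_\kk S)/(\Delta_{S|\kk}^{n+1} + \Delta_{S|\kk}^{[p^e]}), S\bigr)$; base-changing the module of principal parts along $\kk\to\overline{\kk}$ gives the corresponding object for $\overline{S}|\overline{\kk}$, and since everything is finitely presented the $\Hom$ commutes with the base change $S\to\overline{S}$. Putting these together, the generators of the ideal $\bigl(D^{(n(p^e-1),p^e)}_{\overline{S}|\overline{\kk}}(\overline{I}^{[p^e]}:_{\overline{S}}\overline{I})\bigr)$ are precisely the images of the generators of $\bigl(D^{(n(p^e-1),p^e)}_{S|\kk}(I^{[p^e]}:_S I)\bigr)$, so one of these ideals equals the whole ring if and only if the other does (using again faithful flatness of $S\to\overline{S}$, and that an extension $J\overline{S} = \overline{S}$ forces $J = S$ for $J\subseteq S$).

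Combining the two paragraphs: $R$ is geometrically F-pure $\iff$ $R_{\overline{\kk}}$ is F-pure $\iff$ $\bigl(D^{(n(p^e-1),p^e)}_{\overline{S}|\overline{\kk}}(\overline{I}^{[p^e]} :_{\overline{S}} \overline{I})\bigr) = \overline{S}$ for some (every) $e$ $\iff$ $\bigl(D^{(n(p^e-1),p^e)}_{S|\kk}(I^{[p^e]} :_S I)\bigr) = S$ for some (every) $e$, which is the claim. The main obstacle I anticipate is the bookkeeping around the base change of $\kk$-linear differential operators — making sure that $D^{(n,p^e)}_{S|\kk}\otimes_\kk\overline{\kk}$ really is $D^{(n,p^e)}_{\overline{S}|\overline{\kk}}$ rather than something merely contained in it. This is where one genuinely uses that $S$ is a polynomial ring (so the $D^{(n,p^e)}$ are free with an explicit divided-powers basis that is visibly preserved under base change), rather than an arbitrary F-finite regular $\kk$-algebra; the explicit basis from Example~\ref{ex: divided powers} makes this transparent. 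The rest is routine faithfully flat descent, and the "some $\iff$ every" equivalence is inherited directly from Proposition~\ref{prop: global Fedder}.
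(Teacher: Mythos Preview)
Your proposal is correct and follows essentially the same route as the paper's proof: reduce to F-purity of $R_{\overline{\kk}}$, apply Proposition~\ref{prop: global Fedder} over $\overline{S}$, use perfectness of $\overline{\kk}$ to pass to $\overline{\kk}$-linear operators, and then descend via the base-change identifications $D^{(n,p^e)}_{\overline{S}|\overline{\kk}} \cong D^{(n,p^e)}_{S|\kk}\otimes_\kk\overline{\kk}$ and $(I^{[p^e]}:_S I)\overline{S} = \overline{I}^{[p^e]}:_{\overline{S}}\overline{I}$. You supply more justification than the paper does for the base-change step (the divided-powers basis of Example~\ref{ex: divided powers} is indeed the cleanest way to see it), but the logical skeleton is identical.
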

\begin{proof}
Fix $e>0$, and let $S_{\overline{\kk}}$ denote $S\otimes_\kk \overline{\kk}$.
We have that
\begin{align*}
R\hbox{ is geometrically F-pure }& \Longleftrightarrow R\otimes_\kk \overline{\kk} \hbox{ is F-pure}\\
& \Longleftrightarrow \left(D^{(n(p^e-1),p^e)}_{S_{\overline{\kk}}}\left( I^{[p^e]}S_{\overline{\kk}}:IS_{\overline{\kk}} \right) \right)= S_{\overline{\kk}}\\
& \Longleftrightarrow \left(D^{(n(p^e-1),p^e)}_{S_{\overline{\kk}}|\overline{\kk}} \left( I^{[p^e]}S_{\overline{\kk}}:IS_{\overline{\kk}} \right)\right)= S_{\overline{\kk}} \hbox{ because }\overline{\kk}\hbox{ is perfect}\\
& \Longleftrightarrow \left(\left(D^{(n(p^e-1),p^e)}_{S|\kk}\otimes_\kk \overline{\kk}\right) \left( I^{[p^e]}S_{\overline{\kk}}:IS_{\overline{\kk}}\right)\right) = S_{\overline{\kk}}\\
& \Longleftrightarrow \left(D^{(n(p^e-1),p^e)}_{S|\kk} \left( I^{[p^e]}:I\right)\right) = S.\\
\end{align*}
The proof of the equivalence for every $e>0$ is analogous.
\end{proof}

\begin{remark} We note, as a consequence of Propositions \ref{prop: global Fedder} and \ref{prop: geo Fedder}, that an F-finite ring $R=S/I$ with $S=\kk[x_1.\ldots,x_n]$ is geometrically F-pure if and only if $R_{\kk'}$ is F-pure for every extension $\kk \subseteq \kk'$ such that $\kk'$ is F-finite. In fact, every finite extension of $\kk$ is F-finite, since $\kk$ itself is F-finite. Conversely, assume that $R_{\overline{\kk}}$ is F-pure and $\kk'$ is an F-finite extension of $\kk$. Then $\left(D^{(n(p-1),p)}_{S|\kk}\left( I^{[p^e]}:I \right)\right)= S$ by Proposition~\ref{prop: geo Fedder}, and tensoring with $\kk'$ we get $\left(D^{(n(p-1),p)}_{S_{\kk'}|\kk'}\left( I^{[p^e]}S_{\kk'}:IS_{\kk'} \right)\right)= S_{\kk'}$. Since $D^{n(p-1)}_{S_{\kk'}|\kk'} \subseteq D^{n(p-1)}_{S_{\kk'}}$, we must also have 
\[
\left(D^{(n(p-1),p)}_{S_{\kk'}}\left( I^{[p^e]}S_{\kk'}:IS_{\kk'} \right)\right)= S_{\kk'}.
\]
It follows that $R_{\kk'}$ is F-pure by Proposition \ref{prop: global Fedder}.
\end{remark}

We now provide an example to show how these criteria work, and how to two notions of F-purity and geometrical F-purity can differ.

\begin{example} \label{ex: Fedder}
Let $\kk=\FF_2(u)$, the fraction field of the polynomial ring $\FF_2[u]$.
Let  $S=\kk[x]$, $f=x^2-u$, and $I=(f)$.
The ring $S/I$ is regular, and therefore F-pure. However, $S/I$ is not geometrically F-pure, as $(S/I)\otimes_\kk \kk(u^{1/2})$ is not even reduced.
We note that $\partial_u f^{p-1}=\partial_u f=-1$ and $\partial_u\in D^{(1,2)}_{S|\FF_2} $. Hence, 
$D^{(1,2)}_{S|\FF_2} \left( I^{[2]}:I\right) = S$.
However, $\partial_u\not\in D^{(1,2)}_{S|\kk}$. In fact, $D^{(1,2)}_{S|\kk}=S\cdot 1\oplus S\cdot\partial_x $, so
$ D^{(1,2)}_{S|\kk} \left( I^{[2]}:I\right) = D^{(1,2)}_{S|\kk}((f)) = (f)\neq S.$
\end{example}

\subsection{Bertini theorems for the defect of the F-pure threshold} 

We start by recalling the notions of equidimensionality and biequdimensionality for a scheme $X$. In case $X$ is affine they coincide with those given in Section \ref{sect: prelim}.

\begin{definition} Let $X$ be a finite dimensional Noetherian scheme. We say that $X$ is
\begin{itemize}
\item {\it equidimensional} if all irreducible components of $X$ have the same dimension,
\item {\it biequidimensional} if all maximal chains of irreducible closed subsets of $X$ have the same length.
\end{itemize}
\end{definition}

\begin{remark} \label{remark DattaSimpson} Let $X$ be a scheme of finite type over a field $\kk$. Then equidimensionality and biequidimensionality coincide \cite[Remark 2.5.2 (2)]{DattaSimpson}, and $X$ is (bi)equidimensional if and only if $X \times_\kk \kk'$ is (bi)equidimensional for any field extension $\kk \subseteq \kk'$ \cite[Proposition 2.5.3]{DattaSimpson}.
\end{remark}

\begin{theorem}[{\cite[Theorem 1]{CGM} \label{thm CGM}}]
Let $X$ be an equidimensional scheme of finite type over an algebraically closed field $\kk$ of characteristic $p>0$. Let $\phi\colon X \to \PP^n_{\kk}$ be a finite type morphism with separably generated (but not necessarily algebraic) residue field extensions (for example, an immersion). Suppose $X$ has a local property $\mathcal P$ satisfies the following axioms: 
\begin{itemize}
\item[(A1)]  Whenever $\phi \colon Y \to Z$ is a flat morphism of F-finite ring with regular fibers and $Z$ is $\mathcal P$, then $Y$ is $\mathcal P$.
\item[(A2)] Let $\phi \colon Y \to S$ be a morphism of finite type where $S$ is F-finite and integral with generic point $\eta$. If $Y_\eta$ is equidimensional and geometrically $\mathcal P$, then there exists an open neighborhood $U$ of $\eta$ in $S$ such that $Y_s$ is geometrically $\mathcal P$ for each $s \in U$.
\end{itemize}
Then there exists a nonempty open subset $U$ of $(\PP^n)^*$ such that $\phi^{-1}(H)$ has the property $\mathcal P$ for each hyperplane $H \in U$.
\end{theorem}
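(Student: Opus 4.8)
The plan is to run the classical incidence-variety argument for Bertini theorems, feeding the two axioms at the right places. Write $\check{\PP}^n$ for $(\PP^n)^*$ and form the \emph{incidence variety}
\[
\mathcal I=\{(x,H)\in X\times_{\kk}\check{\PP}^n\mid \phi(x)\in H\}=X\times_{\PP^n}\mathcal J,
\]
where $\mathcal J\subseteq\PP^n\times_{\kk}\check{\PP}^n$ is the universal hyperplane. Let $p\colon\mathcal I\to X$ and $q\colon\mathcal I\to\check{\PP}^n$ be the two projections. Since $\mathcal J\to\PP^n$ is a projective bundle (Zariski-locally trivial with fibre $\PP^{n-1}$), so is its base change $p$; in particular $p$ is smooth with geometrically integral, hence geometrically regular, fibres, and $\mathcal I$ is equidimensional of dimension $\dim X+n-1$. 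For a closed point $H\in\check{\PP}^n$ the fibre $q^{-1}(H)$ is exactly the scheme $\phi^{-1}(H)$ whose property $\mathcal P$ we must control, so it suffices to exhibit a nonempty open $U\subseteq\check{\PP}^n$ over which every fibre of $q$ has $\mathcal P$.

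First I would apply axiom (A1) to $p$: as $X$ has $\mathcal P$ and $p$ is flat with regular fibres, $\mathcal I$ has $\mathcal P$. To then invoke axiom (A2) for $q$ one must check that the generic fibre $\mathcal I_\eta$ over $K\coloneqq\kappa(\eta)$, with $\eta$ the generic point of $\check{\PP}^n$, is equidimensional and geometrically $\mathcal P$. Equidimensionality is a dimension count: the irreducible components of $\mathcal I$ are the $p$-preimages of the components of $X$; a component on which $\phi$ has positive-dimensional image dominates $\check{\PP}^n$ and contributes to $\mathcal I_\eta$ a piece of pure dimension $\dim X-1$, while a component contracted by $\phi$ to a point has image contained in a proper linear subspace of $\check{\PP}^n$ and so does not meet $\mathcal I_\eta$; hence $\mathcal I_\eta$ is equidimensional of dimension $\dim X-1$ (and empty only if $\phi$ is constant, a trivial case).

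The heart of the matter, and the step I expect to be the main obstacle, is that $\mathcal I_\eta$ be \emph{geometrically} $\mathcal P$, i.e.\ that $\mathcal I_\eta\times_K\overline K$ have $\mathcal P$. Here I would introduce $G=\mathrm{PGL}_{n+1,\kk}$ acting transitively on $\check{\PP}^n$: fixing $H_0\in\check{\PP}^n(\kk)$, the orbit map $a\colon G\to\check{\PP}^n$, $g\mapsto gH_0$, is smooth and surjective with geometrically regular fibres (torsors under a parabolic), so $\mathcal I_G\coloneqq\mathcal I\times_{\check{\PP}^n,a}G\to\mathcal I$ is smooth with regular fibres and (A1) gives that $\mathcal I_G$ has $\mathcal P$. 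Over the generic point of $G$ one has the tautological automorphism $g_{\mathrm{univ}}\in G(\kk(G))$, and $g_{\mathrm{univ}}^{-1}$ carries the generic hyperplane back to $H_0$; tracing this identification exhibits the geometric generic fibre of $q$ as obtained from the $\mathcal P$-scheme $\mathcal I_G$ by a flat map with geometrically regular fibres, followed by a separable (ind-\'etale) and then a purely inseparable field extension. Ascent of $\mathcal P$ along the first two steps is handled by (A1); the purely inseparable step is the only genuinely delicate one, and it is precisely here that the hypothesis that $\phi$ have separably generated residue field extensions enters — it forces $\mathcal I_\eta$ to be geometrically reduced and, in the framework of \cite{CGM}, guarantees that $\mathcal P$ is not destroyed by that base change. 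Granting this, $\mathcal I_\eta$ is equidimensional and geometrically $\mathcal P$.

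Finally I would apply axiom (A2) to $q\colon\mathcal I\to\check{\PP}^n$: it produces a nonempty open $U\ni\eta$ such that $q^{-1}(s)=\phi^{-1}(H_s)$ is geometrically $\mathcal P$ for every $s\in U$. A closed point $H\in U$ has residue field $\kk$, which is algebraically closed, so ``geometrically $\mathcal P$'' reduces to $\mathcal P$; thus $\phi^{-1}(H)$ has $\mathcal P$ for every hyperplane $H$ in the nonempty open set $U\cap\check{\PP}^n(\kk)$ of $(\PP^n)^*$, which is the desired conclusion.
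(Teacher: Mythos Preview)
Your outline is essentially the classical CGM incidence-variety argument and is correct in its architecture. The paper, however, does not reprove the theorem: its ``proof'' simply cites \cite{CGM} and then explains, via \cite{DattaSimpson}, why the slightly non-standard hypotheses still suffice --- namely, (i) that all schemes appearing in the CGM construction are F-finite once $X$ and $\PP^n_\kk$ are, so one may invoke the axioms in the F-finite form stated here, and (ii) that equidimensionality of $X$ forces the generic fibre $\mathcal I_\eta$ to be equidimensional, so the extra hypothesis in (A2) is met at the unique place it is used. In that sense you have done more than the paper does.

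The one place where your sketch is genuinely incomplete is exactly the step you flag: showing that $\mathcal I_\eta$ is \emph{geometrically} $\mathcal P$ over $K=\kk(\check{\PP}^n)$. Your description --- ``obtained from $\mathcal I_G$ by a flat map with geometrically regular fibres, followed by a separable and then a purely inseparable field extension'' --- is not quite a proof: you have not identified the flat map precisely, and the assertion that the purely inseparable base change preserves $\mathcal P$ cannot be deduced from (A1), since a purely inseparable field extension $L/K$ gives a map $\Spec L\to\Spec K$ whose fibre is \emph{not} geometrically regular. The actual CGM argument at this point does not pass through $\overline K/K$ directly; rather, one analyses the composite $\mathcal I_\eta\hookrightarrow\mathcal I\xrightarrow{p} X$ (which is flat with fibres that are localizations of the $\PP^{n-1}$-bundle) and uses the separable generation hypothesis on $\phi$ to control the residue field extensions of $\mathcal I_\eta$ over $K$, so that the base change to $\overline K$ is flat with regular fibres and (A1) applies. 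Your PGL-translation idea can be made to work along similar lines, but as written it elides precisely this verification.
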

\begin{proof}
This theorem was proved in \cite{CGM} without 
the equidimensionality assumption on $X$ and with axioms that do not require F-finiteness. However, under our assumptions, $X$ and $\PP_{\kk}^n$ are F-finite and this implies that all schemes involved in the proof are F-finite,
see \cite[Discussion~4.2.1]{DattaSimpson} and especially the diagram (4.12) therein. 
Moreover, it was explained in \cite[Proposition~4.2.4]{DattaSimpson}
that the equidimensionality of $X$ implies equidimensional generic fiber $Y_\eta$ at the place where (A2) is invoked. 
\end{proof}

\begin{definition}
Given $\lambda \in \R_{\geq 0}$, we say that a Noetherian F-finite $\FF_p$-scheme $X$ satisfies $\mathcal P(\lambda)$ if it is F-pure and $\dfpt(X)<\lambda$, \emph{i.e.}, $\dfpt (\OO_{X,x}) < \lambda$ for all $x \in X$. 

If $X$ is a $\kk$-scheme of finite type, with $\kk$ an F-finite field, we say that it satisfies $\mathcal P(\lambda)$ geometrically if it is geometrically F-pure and $\dfpt(X \times_\kk \overline{\kk}) < \lambda$. 
\end{definition}

In order to show that Condition (A1) holds for $\mathcal P(\lambda)$, we need the following two results regarding flat extensions. We further study flat maps in Subsection \ref{sub: flat}.

\begin{proposition} \label{prop A1} Let $(A,\m_A) \to (R,\m_R)$ be a faithfully flat map of F-finite local rings such that $A$ is F-pure and $R/\m_A R$ is regular. Then
\[
\loewy_R(R/I_e(R)) = \loewy_A(A/I_e(A)) + \dim(R/\m_A R)(p^e-1).
\]
In particular, $\fpt(R) = \fpt(A) + \dim(R/\m_A R)$.
\end{proposition}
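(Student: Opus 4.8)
The plan is to reduce to the case in which $R$ is literally a power series ring over $A$, and then to read off the identity from the formula of Theorem~\ref{thm: main formula} applied to two compatible presentations. Completion changes nothing: if $A \to R$ is flat local then so is $\widehat A \to \widehat R$, the closed fibre $\widehat R/\m_{\widehat A}\widehat R \cong \widehat{R/\m_A R}$ stays regular, and $\fpt$, $\dim(R/\m_A R)$, and $\loewy_{(-)}\big((-)/I_e(-)\big)$ are all insensitive to completion (note $R/I_e(R)$ is Artinian, since $\m_R^{[p^e]} \subseteq I_e(R)$). So I would first assume $A$ and $R$ complete. Next I would arrange that $A$ and $R$ have a common residue field: with $d = \dim(R/\m_A R)$, pick $y_1, \dots, y_d \in \m_R$ lifting a regular system of parameters of $R/\m_A R$; flatness of $R$ over $A$ and Cohen--Macaulayness of $R/\m_A R$ make $y_1, \dots, y_d$ a regular sequence on $R$, so that $B \coloneqq R/(y_1, \dots, y_d)$ is $A$-flat with $\m_A B = \m_B$ and residue field that of $R$. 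The $d=0$ instance of the statement --- which amounts to the stability of $I_e$ and $\fpt$ under flat local residue-field extensions, cf.\ \cite[Lemma~3.8]{AE} --- then lets me replace $A$ by $B$; after this, a Nakayama/Cohen-structure argument (using the exactness coming from flatness of $R$ over $B$ together with $R/\m_B R \cong (B/\m_B)\ps{y_1, \dots, y_d}$) identifies $R$ with $B\ps{y_1, \dots, y_d}$. Carrying out this residue-field reduction rigorously in the possibly inseparable case is, I expect, the one delicate point.

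Now assume $R = A\ps{y_1, \dots, y_d}$, write $A = S/I$ with $(S, \m_S) = \kk\ps{x_1, \dots, x_m}$ an F-finite regular local ring and $I \subseteq \m_S^2$, and set $S' \coloneqq S\ps{y_1, \dots, y_d}$, again F-finite regular local with $\dim S' = m + d$, and $I' \coloneqq IS'$, so that $R = S'/I'$. Since $S \to S'$ is faithfully flat and colon ideals of finitely generated ideals commute with flat base change,
\[
I'^{[p^e]} :_{S'} I' \;=\; I^{[p^e]}S' :_{S'} IS' \;=\; (I^{[p^e]} :_S I)\,S'.
\]
In particular, choosing (via F-purity of $A$ and Fedder's criterion) an element $f \in (I^{[p^e]} :_S I)\smallsetminus \m_S^{[p^e]}$, the same $f$ viewed in $S'$ lies in $(I'^{[p^e]} :_{S'} I')\smallsetminus \m_{S'}^{[p^e]}$, so $R$ is F-pure by Theorem~\ref{FedderCriterion}.

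It remains to compute $\Theta_e(I')$. Any element of $(I^{[p^e]} :_S I)S'$ has the form $\sum_{\mathbf b} c_{\mathbf b}(\mathbf x)\,\mathbf y^{\mathbf b}$ with every $c_{\mathbf b} \in I^{[p^e]}:_S I$, and conversely for each fixed $\mathbf b$ the coefficient $c_{\mathbf b}$ may be taken to be any element of $I^{[p^e]}:_S I$; moreover $\m_{S'} = \m_S S' + (\mathbf y)$ and $\m_{S'}^{[p^e]} = \m_S^{[p^e]}S' + (y_1^{p^e}, \dots, y_d^{p^e})$. Inspecting monomials then gives both inclusions: if $f \in (I^{[p^e]}:_S I)\smallsetminus(\m_S^n + \m_S^{[p^e]})$ then a monomial of $f$ of degree $<n$ not divisible by any $x_i^{p^e}$ witnesses $f \notin \m_{S'}^n + \m_{S'}^{[p^e]}$; and if $I^{[p^e]}:_S I \subseteq \m_S^{n} + \m_S^{[p^e]}$, then every monomial $\mathbf x^{\mathbf a}\mathbf y^{\mathbf b}$ occurring in an element of $(I^{[p^e]}:_S I)S'$ and divisible by no $x_i^{p^e}$ or $y_j^{p^e}$ has $|\mathbf a| \ge n$, hence $|\mathbf a| + |\mathbf b| \ge n$. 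Therefore $(I^{[p^e]}:_S I)S' \subseteq \m_{S'}^n + \m_{S'}^{[p^e]}$ if and only if $I^{[p^e]}:_S I \subseteq \m_S^n + \m_S^{[p^e]}$, that is, $\Theta_e(I') = \Theta_e(I)$.

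Finally, applying Theorem~\ref{thm: main formula} to $A = S/I$ and to $R = S'/I'$ gives
\[
\loewy_R(R/I_e(R)) = (m+d)(p^e-1) + 1 - \Theta_e(I'), \qquad \loewy_A(A/I_e(A)) = m(p^e-1) + 1 - \Theta_e(I),
\]
and subtracting, with $\Theta_e(I') = \Theta_e(I)$, yields $\loewy_R(R/I_e(R)) = \loewy_A(A/I_e(A)) + d(p^e-1)$. Dividing by $p^e$, subtracting $1/p^e$, and letting $e \to \infty$ --- recall $\fpt = \lim_e b(p^e)/p^e$ with $b(p^e) = \loewy_{(-)}\big((-)/I_e(-)\big) - 1$ --- gives $\fpt(R) = \fpt(A) + d = \fpt(A) + \dim(R/\m_A R)$. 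Thus the whole argument rests on the residue-field reduction of the first paragraph; once $R$ is a genuine power series ring over $A$, the rest is the direct computation above.
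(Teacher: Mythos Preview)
Your reduction step has a genuine gap, which you yourself flag. You define $B \coloneqq R/(y_1,\ldots,y_d)$ as a \emph{quotient} of $R$, then speak of ``flatness of $R$ over $B$'' and ultimately claim $R \cong B\ps{y_1,\ldots,y_d}$. But there is no map $B \to R$ until you construct one: what you need is a ring-theoretic section of the surjection $R \to B$, and producing such a section is precisely the Cohen-type structural statement you are trying to invoke. In characteristic $p$ with a possibly inseparable residue field extension $k_A \hookrightarrow k_R$, this is not automatic, and your sketch (``Nakayama/Cohen-structure argument'') does not supply it. The logic is circular as written: you need the section to make sense of $R$ as a $B$-algebra, but you appeal to $R$ being a $B$-algebra to build the section. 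Once you \emph{assume} $R = A\ps{y_1,\ldots,y_d}$, the rest of your argument (the computation $\Theta_e(IS') = \Theta_e(I)$ via monomial bookkeeping and the subtraction of the two instances of Theorem~\ref{thm: main formula}) is correct and clean.

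The paper's proof sidesteps this structural question entirely. It uses the identity $I_e(R) = I_e(A)R + J^{[p^e]}$ with $J = (y_1,\ldots,y_d)$, taken from \cite[Claim~3.4]{CRST} and \cite[Lemma~3.8]{AE}; this is proved directly from the definition of the splitting ideals and flatness, with no need to realize $R$ as a power series ring over anything. The Loewy length identity then follows from two elementary observations: faithful flatness of $R' = R/J$ over $A$ gives $\loewy_A(A/I_e(A)) = \loewy_R(R/(I_e(A)R + J))$, and a pigeonhole/regular-sequence argument gives $\loewy_R(R/(I_e(A)R + J^{[p^e]})) = \loewy_R(R/(I_e(A)R + J)) + d(p^e-1)$. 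This route is both shorter and avoids the delicate point you identified.
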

\begin{proof}
Our assumptions guarantee that $R$ is F-pure as well \cite[Proposition 4.8]{SchwedeZhang}. Let $t=\dim(R/\m_A R)$, and $x_1,\ldots,x_t \in R$ be elements whose images in $R/\m_A R$ form a regular system of parameters. Let $J=(x_1,\ldots,x_t)$. We have that $I_e(R) = I_e(A)R + J^{[p^e]}$ \cite[Claim 3.4]{CRST} \cite[Lemma 3.8]{AE}. Since $x_1,\ldots,x_t$ is a regular sequence modulo $\m_A R$, the quotient $R' \coloneqq R/J$ is a faithfully flat $A$-algebra (\cite[Section 21]{Matsumura}) such that  $\m_A R'= \m_{R'}$. By faithful flatness it then follows that 
\begin{align*}
\m_A^n \subseteq I_e(A) & \Longleftrightarrow I_e(A):_A\m_A^n = A \\
&  \Longleftrightarrow (I_e(A):_A\m_A^n)R' = R' \\
&  \Longleftrightarrow (I_e(A):_A \m_A^n)R' = R' \\
 &  \Longleftrightarrow (I_e(A)R':_{R'} \m_A^nR') = R' \\
  &  \Longleftrightarrow (I_e(A)R':_{R'} \m_{R'}^n) = R' \\
  & \Longleftrightarrow \m_{R'}^n \subseteq I_e(A)R' \\
  & \Longleftrightarrow \m_R^n \subseteq I_e(A)R+J.
\end{align*}
Hence $\loewy_A(A/I_e(A)) = \loewy_{R'}(R'/I_e(A)R') = \loewy_R(R/(I_e(A)R + J))$.

Assume that $\m_R^n \subseteq I_e(A)R + J$ or, equivalently, $\m_A^n \subseteq I_e(A)$. We have that
\[
\m_R^{t(p^e-1)+n} = (\m_AR+J)^{t(p^e-1)+n} \subseteq \m_A^nR + J^{t(p^e-1)+1} \subseteq I_e(A)R + J^{[p^e]}
\]
using the pigeonhole principle.

Conversely, assume that $r \in \m_R^n$ is such that $r \notin I_e(A)R + J$. Let $x=x_1\cdots x_t$, and note that $x^{p^e-1}r \in \m_R^{t(p^e-1)+n}$. For $x_1,\ldots,x_t$ is a regular sequence modulo $\m_A R$, we have that $x_1,\ldots,x_t$ is a regular sequence in $R/I_e(A)R$ \cite[Lemma~2.3]{PolstraSmirnov2}. As a consequence, we have that $x^{p^e-1}r\notin I_e(A)R + J^{[p^e]}$. These considerations show that $\m_R^n \subseteq I_e(A)R + J$ if and only if $\m_R^{t(p^e-1)+n} \subseteq I_e(A)R + J^{[p^e]}$ and, as a consequence, $\loewy_R(R/(I_e(A)R+J^{[p^e]})) = t(p^e-1) + \loewy_R(R/(I_e(A)R+J))$. Putting everything together, we conclude that
\begin{align*}
\loewy_R(R/I_e(R)) &  = \loewy_R(R/(I_e(A)R+J^{[p^e]})) \\
& = t(p^e-1) + \loewy_R(R/I_e(A)R+J)) = t(p^e-1) + \loewy_A(A/I_e(A)),
\end{align*}
as desired. The second equality follows now immediately after dividing by $p^e$ and taking limits.
\end{proof}

\begin{corollary} \label{coroll A1} Let $(A,\m_A) \to (R,\m_R)$ be a faithfully flat map of F-finite and F-pure local rings with $R/\m_A R$ regular. Then 
$\dfpt(R) = \dfpt(A)$.
\end{corollary}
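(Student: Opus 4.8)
The plan is to derive the corollary directly from Proposition~\ref{prop A1} together with the basic dimension formula for flat local maps. First I would observe that since $(A,\m_A)\to(R,\m_R)$ is faithfully flat, we have the standard equality $\dim(R)=\dim(A)+\dim(R/\m_A R)$. Since both $A$ and $R$ are F-finite and F-pure by hypothesis, and the closed fiber $R/\m_A R$ is regular, all the hypotheses of Proposition~\ref{prop A1} are in place (note in particular that a flat local map to a local ring is automatically faithfully flat, and that an F-finite ring is F-pure iff F-split, so the ambient regularity of the fiber suffices to run the argument). Proposition~\ref{prop A1} then gives $\fpt(R)=\fpt(A)+\dim(R/\m_A R)$.

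Putting these two displays together, I would simply compute
\[
\dfpt(R)=\dim(R)-\fpt(R)=\bigl(\dim(A)+\dim(R/\m_A R)\bigr)-\bigl(\fpt(A)+\dim(R/\m_A R)\bigr)=\dim(A)-\fpt(A)=\dfpt(A).
\]
There is essentially no obstacle here: the entire content has been front-loaded into Proposition~\ref{prop A1}, whose proof handles the genuinely delicate point — namely that $I_e(R)=I_e(A)R+J^{[p^e]}$ where $J=(x_1,\dots,x_t)$ lifts a regular system of parameters of the fiber, and the careful Loewy-length bookkeeping with the pigeonhole principle and the regular-sequence argument on $R/I_e(A)R$. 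The only thing the corollary's proof must do is invoke that proposition and the dimension formula, so I would keep it to a couple of sentences.

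If one wanted to be slightly more self-contained, I would additionally remark that the F-purity of $R$ is not an extra assumption to verify but follows from the F-purity of $A$ and the regularity of the fiber by \cite[Proposition~4.8]{SchwedeZhang} (this is already noted inside the proof of Proposition~\ref{prop A1}), and that F-purity of $A$ in turn is forced by F-purity of $R$ via faithfully flat descent \cite[Proposition~5.13]{HRFpurity}; thus the two hypotheses ``$A$ and $R$ F-pure'' in the corollary are mildly redundant but harmless. Beyond that, the proof is a one-line substitution.
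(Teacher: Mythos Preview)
Your proposal is correct and follows essentially the same approach as the paper's own proof: invoke the dimension formula $\dim(R)=\dim(A)+\dim(R/\m_A R)$ for a faithfully flat local map, combine it with the identity $\fpt(R)=\fpt(A)+\dim(R/\m_A R)$ from Proposition~\ref{prop A1}, and subtract. Your additional remarks on the mild redundancy of the F-purity hypotheses are accurate but not needed for the argument.
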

\begin{proof}
We have that $\dim(R) = \dim(A) + \dim(R/\m_A R)$ by faithful flatness of the map. The result is now an immediate consequence of Proposition \ref{prop A1}.
\end{proof}

\begin{theorem} \label{thm A1} 
Let $\phi\colon Y \to Z$ be a flat morphism of F-finite $\FF_p$-schemes. Suppose $\phi$ has regular fibers, and suppose that $Z$ satisfies $\mathcal P(\lambda)$ 
for some $\lambda \in \R_{\geq 0}$. Then $Y$ satisfies $\mathcal P(\lambda)$.
\end{theorem}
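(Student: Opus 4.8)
The plan is to reduce the statement to a local, algebraic computation about the invariants $\Theta_e$ and then use the differential-operator presentation developed in Section~\ref{sect: differential}. First I would pass to an affine situation: $\mathcal{P}(\lambda)$ is a local property, so it suffices to check it at every point $y \in Y$, and I may replace $Z$ by an affine open $\Spec(A)$ containing the image of $y$ and $Y$ by an affine open $\Spec(B)$ containing $y$, so that $A \to B$ is a flat map of F-finite rings with regular fibers. Localizing further at $y$ and its image, I reduce to showing: if $(A,\m_A) \to (B,\m_B)$ is a flat local map of F-finite rings with $B/\m_A B$ regular, and $A$ is F-pure with $\dfpt(A) < \lambda$, then $B$ is F-pure with $\dfpt(B) < \lambda$. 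The F-purity of $B$ follows from flat ascent of F-purity (the map has regular, hence geometrically reduced, fibers), so the real content is the inequality $\dfpt(B) < \lambda$.

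Next I would compute $\dfpt(B)$ via Theorem~\ref{thm: main formula}. Choose an F-finite regular local ring $(S,\n)$ with $S \twoheadrightarrow A$, write $A = S/I$; then build a regular presentation of $B$ compatible with the flat map. Since $B/\m_A B$ is regular, one can arrange a flat local map $(S,\n) \to (S',\n')$ of F-finite regular local rings with $S'/\n S'$ regular (adjoining the variables that cut out the closed fiber), such that $B = S'/IS'$ and $\n S' + (\text{fiber variables}) = \n'$. The key computation is then that the colon ideal behaves well: $(IS')^{[p^e]} :_{S'} IS' = (I^{[p^e]} :_S I) S'$, because $S \to S'$ is flat. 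Combined with $\n' = \n S' + \mathfrak{f}$ where $\mathfrak{f}$ is generated by a regular system of parameters of the fiber, one gets $(I^{[p^e]} :_S I)S' \subseteq \n^n S' + \n^{[p^e]} S' \subseteq \n'^{\,n} + \n'^{\,[p^e]}$, which yields $\Theta_e(IS') \geq \Theta_e(I)$. Passing to the limit and subtracting heights (note $\height(IS') = \height(I)$ since $S \to S'$ is flat and $\n' \supseteq \n S'$), this gives $\dfpt(B) \geq \dfpt(A)$; but I want the reverse-direction bound $\dfpt(B) < \lambda$, so I actually need the opposite inequality on $\Theta_e$ — which is exactly where the regularity of the fiber is used: because the fiber variables are part of a regular system of parameters, a monomial witnessing membership of $(I^{[p^e]}:I)S'$ in $\n'^{\,n}+\n'^{\,[p^e]}$ but not in a higher power can only involve the $S$-variables (the colon ideal is extended from $S$), so $\Theta_e(IS') = \Theta_e(I)$ exactly. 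Hence $\dfpt(B) = \dfpt(A) < \lambda$.

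Alternatively — and perhaps more cleanly — I would invoke Corollary~\ref{coroll A1} (the statement $\dfpt(A) = \dfpt(B)$ for a flat local map with regular fiber, which is part of Theorem~\ref{MainThmLocal}) directly, once the reduction to the local case is in place, so that Theorem~\ref{thm A1} becomes essentially a globalization of that local fact together with flat ascent of F-purity. In that case the only remaining work is the patching: $\dfpt$ of a scheme is the supremum over stalks, and each stalk $\OO_{Y,y}$ sits in a flat local map from $\OO_{Z,\phi(y)}$ with regular fiber (the fiber of $\phi$ being regular by hypothesis), so $\dfpt(\OO_{Y,y}) = \dfpt(\OO_{Z,\phi(y)}) < \lambda$ for every $y$, whence $\dfpt(Y) < \lambda$.

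The main obstacle I anticipate is the bookkeeping in the reduction step: ensuring that a flat local map $A \to B$ of F-finite rings with regular closed fiber can be lifted to a flat local map of the ambient regular local rings $S \to S'$ with regular closed fiber and matching maximal ideals, so that the colon-ideal identity $(IS')^{[p^e]}:_{S'}IS' = (I^{[p^e]}:_S I)S'$ and the height identity are available. One should be careful that $B$ need not be a complete-intersection quotient of $A$ tensored with a polynomial ring; the existence of the lift uses that $B$ is F-finite (so a regular presentation exists by Gabber's theorem) together with flatness to control how the ideal and the maximal ideal extend. If a direct construction is awkward, completing and using Cohen's structure theorem to write $\widehat{B} = \widehat{A} \cotimes{} (\text{fiber})$ (valid since the fiber is regular, hence a power series ring over a field extension) sidesteps the difficulty, and $\Theta_e$ is insensitive to completion by Remark~\ref{rem: local theta}.
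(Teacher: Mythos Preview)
Your second approach (the ``Alternatively'' paragraph) is correct and is precisely what the paper does: reduce to stalks, invoke Corollary~\ref{coroll A1} to get $\dfpt(\OO_{Y,y}) = \dfpt(\OO_{Z,\phi(y)}) < \lambda$, and cite flat ascent of F-purity for the F-purity claim. The only cosmetic difference is that the paper first uses Theorem~\ref{thm semicont} to pick a single point $y$ realizing $\dfpt(Y)$, whereas you check every stalk; both are valid since $\mathcal P(\lambda)$ is by definition a pointwise condition.

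Your first approach via a direct $\Theta_e$ computation is a genuine alternative route to the local equality $\dfpt(B)=\dfpt(A)$, but note that it is not how the paper proves Corollary~\ref{coroll A1}: there the argument goes through the Loewy length of $R/I_e(R)$ and the identity $I_e(R)=I_e(A)R+J^{[p^e]}$, avoiding entirely the need to lift the flat map to ambient regular rings. The obstacle you flagged (lifting $A\to B$ to a flat map $S\to S'$ of regular local rings with $B=S'/IS'$) is real and would require the completion-and-Cohen-structure workaround you describe; the paper's Loewy-length argument sidesteps this.
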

\begin{proof}
By Theorem~\ref{thm semicont} there is a point 
$y \in Y$ such that $\dfpt(Y) = \dfpt(\OO_{Y, y})$. Let $z=\phi(y)$ and  consider the map on stalks $\phi_y\colon \OO_{Z,z} \to \OO_{Y,y}$. 
Since $\phi_y$ is a flat local map of F-finite local rings and its closed fiber is regular by assumption, the F-purity of $\OO_{Z, z}$ implies the F-purity of 
$\OO_{Y, y}$ by Proposition~\ref{prop A1}. Thus we conclude by Corollary~\ref{coroll A1} that $\dfpt(Y) = \dfpt(\OO_{Y,y}) = \dfpt(\OO_{Z,z}) \leq \dfpt(Z) < \lambda$. Hence $Y$ satisfies $\mathcal P(\lambda)$.
\end{proof}

We now focus on Condition (A2).
Let $A \to R$ be a finite type map of Noetherian rings. For $\p \in \Spec(A)$ we let $\kappa(\p) \coloneqq (A/\p)_\p$ and $R(\p) \coloneqq R \otimes_A \kappa(\p)$,  the fiber of $R$ over $\p$. Observe that if we write $R=A[x_1,\ldots,x_d]/I$, then $R(\p) \cong S(\p)/I(\p)$, where $S(\p) = \kappa(\p)[x_1,\ldots,x_d]$ and $I(\p) = I S(\p)$. Similarly, we denote by $R(\overline{\p}) \coloneqq R \otimes_A \overline{\kappa(\p)} \cong S(\overline{\p})/I(\overline{\p})$ the base change of $R(\p)$ to $\overline{\kappa(\p)}$, the algebraic closure of $\kappa(\p)$.

In the context of F-finite and F-pure rings, we recall that for a ring $R=S/I$ we defined $\Theta_e(I) = \max\{n \mid (D^{(n,p^e)}_S(I^{[p^e]}:_S I)) \ne S\}$.

\begin{proposition} \label{prop A2} Let $A$ be an F-finite and F-pure Noetherian integral domain, and let $R=A[x_1,\ldots,x_d]/I$. Assume that $R(\overline{0})$ is F-pure. For any $e \in \NN$ there exists $0 \ne a \in A$, possibly depending on $e$, such that $R(\overline{\p})$ is F-pure and $\Theta_e(I({\overline{\p}})) \leq \Theta_e(I({\overline{0}}))$ for all $\p \in \Spec(A)$ with $a \notin \p$.
\end{proposition}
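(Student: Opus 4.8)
The plan is to run a spreading-out argument driven by generic freeness over the base $A$, using the global Fedder criterion (Proposition~\ref{prop: global Fedder}) and the differential description of $\Theta_e$ (Definition~\ref{DefTheta}) to translate both ``F-purity of $R(\overline\p)$'' and ``$\Theta_e(I(\overline\p))\le m_0$'' into statements about whether certain explicit ideals of a polynomial ring are proper. Throughout, fix $e$, write $S_A=A[x_1,\ldots,x_d]$, $K=\Frac(A)$, and set $N=d(p^e-1)$; since $S(\overline\p)=\overline{\kappa(\p)}[x_1,\ldots,x_d]$ is a polynomial ring in $d$ variables over a (perfect, hence F-finite) field, this is the exponent that appears in Proposition~\ref{prop: global Fedder} for each fiber. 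Let $m_0=\Theta_e(I(\overline 0))$, computed with $S=\overline K[x_1,\ldots,x_d]$; because $R(\overline 0)$ is F-pure, Proposition~\ref{prop: global Fedder} forces $m_0\le N-1$.

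First I would organize the relevant ideals over $S_A$ and base change them. Put $C_e=I^{[p^e]}:_{S_A}I=\ker\bigl(S_A\to\bigoplus_{i=1}^r S_A/I^{[p^e]}\bigr)$, the map being multiplication by generators $g_1,\ldots,g_r$ of $I$; this is a finitely generated $S_A$-module. By Example~\ref{ex: divided powers}, $D^{(m,p^e)}_{S_A\mid A}$ is a finite free $S_A$-module on the divided-power basis $\{\partial^{(\underline\alpha)}:0\le\alpha_i<p^e,\ |\underline\alpha|\le m\}$, and these operators are ``universal'': $D^{(m,p^e)}_{S_A\mid A}\otimes_A B\cong D^{(m,p^e)}_{S_B\mid B}$ for any $A$-algebra $B$. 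For $0\le m\le N$ let $J_m\subseteq S_A$ be the ideal $D^{(m,p^e)}_{S_A\mid A}\cdot C_e$, equivalently the image of the $S_A$-linear map $\mu_m\colon D^{(m,p^e)}_{S_A\mid A}\otimes_{S_A}C_e\to S_A$, $\delta\otimes c\mapsto\delta(c)$; note $J_0\subseteq J_1\subseteq\cdots$. Applying generic freeness (e.g.\ \cite[IV, 6.9.1]{EGA}) to the finitely many finitely generated $S_A$-modules that enter these exact sequences ($C_e$, the image and cokernel of $S_A\to\bigoplus S_A/I^{[p^e]}$, and $\ker\mu_m$, $J_m$, $S_A/J_m$ for $0\le m\le N$) yields a single $0\ne a_1\in A$ such that, over $A'=A[1/a_1]$, all of these become $A'$-flat. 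Consequently, for every $A'$-algebra $B$ the formation of $C_e$ commutes with $-\otimes_A B$ (so $C_e\otimes_A B=I^{[p^e]}S_B:_{S_B}IS_B$, injecting into $S_B$), and likewise $J_m\otimes_A B$ is, as an ideal of $S_B$, equal to $D^{(m,p^e)}_{S_B\mid B}\cdot(I^{[p^e]}S_B:_{S_B}IS_B)$. In particular, taking $B=\overline{\kappa(\p)}$ with $\p$ avoiding $a_1$ and using that $\overline{\kappa(\p)}$ is perfect (Remark~\ref{rem: perfect K}, so the operators may be taken absolute), $J_m\otimes_A\overline{\kappa(\p)}$ is precisely the ideal whose properness is tested in $\Theta_e(I(\overline\p))$, and its being the unit ideal for $m=N$ is equivalent to F-purity of $R(\overline\p)$.

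Second I would spread out the generic behavior. By the definition of $m_0$ we have $J_{m_0+1}\otimes_A\overline K=S_{\overline K}$, hence already $J_{m_0+1}S_K=S_K$; writing $1=\sum_i h_i\,\partial^{(\underline\alpha_i)}(f_{j_i})$ over $S_K$ (with $f_1,\ldots,f_s$ generators of $C_e$ over $S_A$) and clearing a common denominator $0\ne a'\in A$ of the $h_i$ produces $a'=\sum_i c_i\,\partial^{(\underline\alpha_i)}(f_{j_i})\in J_{m_0+1}\cap A$ with $c_i\in S_A$. Now set $a=a_1a'$. For any $\p\in\Spec(A)$ with $a\notin\p$, the image of $a'$ in $\kappa(\p)$ is a nonzero, hence invertible, element lying in $J_{m_0+1}S(\p)$, so $J_{m_0+1}S(\overline\p)=S(\overline\p)$; by the first step this says $D^{(m_0+1,p^e)}_{S(\overline\p)}\bigl(I(\overline\p)^{[p^e]}:_{S(\overline\p)}I(\overline\p)\bigr)=S(\overline\p)$. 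Since $m_0+1\le N$ and the $J_m$ are increasing, also $J_N S(\overline\p)=S(\overline\p)$, so $R(\overline\p)$ is F-pure by Proposition~\ref{prop: global Fedder}; moreover the properness threshold of these ideals for $R(\overline\p)$ is at most $m_0$, i.e.\ $\Theta_e(I(\overline\p))\le m_0=\Theta_e(I(\overline 0))$, which is exactly the claim.

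The hard part will be the bookkeeping in the second paragraph: one must ensure that a \emph{single} localization of $A$ simultaneously forces (a) the colon ideal $I^{[p^e]}:_{S_A}I$ and (b) each differential-operator ideal $J_m$ to commute with \emph{arbitrary} base change and to stay a subideal of $S_B$, and one must keep track of the fact that $D^{(m,p^e)}_{S_A\mid A}$ genuinely base-changes to $D^{(m,p^e)}_{S_B\mid B}$ because on a polynomial ring these are the ``universal'' divided-power operators. The passage to the \emph{algebraically closed} residue field is what lets us replace $D^{(m,p^e)}_{S(\overline\p)\mid\overline{\kappa(\p)}}$ by the absolute $D^{(m,p^e)}_{S(\overline\p)}$ occurring in the definition of $\Theta_e$ (Remark~\ref{rem: perfect K}); this is also why the statement is phrased for $R(\overline\p)$ rather than for $R(\p)$, where the two kinds of differential operators may genuinely differ (cf.\ Example~\ref{ex: Fedder}).
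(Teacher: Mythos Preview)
Your proof is correct and follows essentially the same approach as the paper's: both use the divided-power basis to base-change the differential-operator ideals, both use generic freeness over $A$, and both pass through the algebraic closure to identify the $A$-linear and absolute differential operators. The organization differs slightly. You force the colon $C_e=I^{[p^e]}:_{S_A}I$ and every ideal $J_m$ (for $0\le m\le N$) to commute with base change by applying generic freeness to all of the associated kernels, images, and cokernels, and then you exhibit an explicit $a'\in J_{m_0+1}\cap A$ by clearing denominators. The paper is more economical: it only uses the automatic containment $J(\overline\p)\subseteq I(\overline\p)^{[p^e]}:_{S(\overline\p)}I(\overline\p)$, which gives $\Theta_e(I(\overline\p))+1\le\min\{n\mid\coker(\phi_n)\otimes_A\overline{\kappa(\p)}=0\}$ without needing the colon to commute with base change at closed points, and then applies generic freeness to the single module $\coker(\phi_t)$ with $t=m_0+1$; since this cokernel vanishes at the generic point (where flatness does force the colon to commute), freeness forces it to vanish on the whole open set. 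Your argument buys a cleaner identification of $J_m\otimes_A\overline{\kappa(\p)}$ with the exact ideal governing $\Theta_e$ on each fiber; the paper's buys a single invocation of generic freeness.
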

\begin{proof}
Set $S = A[x_1, \ldots, x_d]$. 
Since both $D^{(n,p^e)}_{S|A}$ and $D^{(n,p^e)}_{S(\overline{\p})|\overline{\kappa (\p)}}$ are free modules with basis given by divided powers differential operators (see Example~\ref{ex: divided powers}), for every $\p \in \Spec (A)$ and $n \in \NN$ we have that 
$
D^{(n,p^e)}_{S(\overline{\p})|\overline{\kappa (\p)}} 
= D^{(n, p^e)}_{S | A} \otimes_S S(\overline{\p})
= D^{(n, p^e)}_{S | A} \otimes_A \overline{\kappa(\p)}.
$
Let $J = I^{[p^e]} :_S I$, and for $n \in \NN$ let
\[
\phi_n \colon D^{(n, p^e)}_{S|A} \otimes_S J \to S
\]
be the evaluation map. 
Note that 
\begin{align*}
D^{(n, p^e)}_{S(\overline{\p})|\overline{\kappa(\p)}} \otimes_{S(\overline{\p})} J(\overline{\p}) & \cong D^{(n, p^e)}_{S | A} \otimes_S S(\overline{\p}) \otimes_{S(\overline{\p})} J(\overline{\p}) \\
& \cong  D^{(n, p^e)}_{S | A} \otimes_S J \otimes_S S(\overline{\p}) \\
& \cong D^{(n, p^e)}_{S | A} \otimes_S J \otimes_A \overline{\kappa(\p)}.
\end{align*}
In particular, $D^{(n,p^e)}_{S(\overline{\p})|\overline{\kappa(\p)}}(J(\overline{\p})) = S(\overline{\p})$ if and only if $\coker(\phi_n) \otimes_A \overline{\kappa(\p)} = 0$. Since $\overline{\kappa(\p)}$ is perfect, by Remark \ref{rem: perfect K} we have that 
\[
\Theta_e(I(\overline{\p})) = \max\{n \mid D^{(n,p^e)}_{S(\overline{\p})|\overline{\kappa(\p)}}(I^{[p^e]}(\overline{\p}):_{S(\overline{\p})} I(\overline{\p}))\}.
\]
Because $J(\overline{\p}) \subseteq I^{[p^e]}(\overline{\p}) :_{S(\overline{\p})} I(\overline{\p})$, from all the above we obtain that
\[
\Theta_e (I(\overline{\p})) +1 \leq 
\min \left \{
n \mid 0 = \coker(\phi_n) \otimes_A \overline{\kappa(\p)} \right \}.
\]
Since the map $A\to \overline{\kappa(0)}$ is flat, so is $S \to S(\overline{0})$. We then have that
$(I^{[p^e]} :_{S} I)  \otimes_A \overline{\kappa(0)} \cong (I^{[p^e]} :_{S} I)  \otimes_S S(\overline{0}) \cong 
I(\overline{0})^{[p^e]} :_{S(\overline{0})} I(\overline{0})$, and hence
\[
\Theta_e (I(\overline{0})) +1 =
\min \left \{
n \mid 0 = \coker \phi_n \otimes_A \overline{\kappa(0)} \right \}.
\]

Set $t = \Theta_e (I(\overline{0})) +1$. By generic freeness, 
there $0 \neq a \in A$ such that $(\coker(\phi_t))_a$ is free.
Since  $\coker(\phi_t) \otimes_A \overline{\kappa(0)} = 0$, 
this implies that $\coker(\phi_t) \otimes_A \overline{\kappa(\p)} = 0$ for any prime $\p \in \Spec (A)$ such that $a \notin \p$. We then have that
\[
\Theta_e (I(\overline{\p})) +1 \leq 
\min \left \{
n \mid 0 = \coker(\phi_n) \otimes_A \overline{\kappa(\p)} \right \}
\leq t =  \Theta_e (I(\overline{0})) +1.
\]
To conclude the proof we need to show that $R(\overline{\p})$ is F-pure for all $\p$ not containing $a$. However, since $R(\overline{0})$ is F-pure by Proposition \ref{prop: global Fedder} we have that $D_{S(\overline{0})}^{(d(p^e-1),p^e)}\left(I^{[p^e]}(\overline{0}):_{S(\overline{0})} I(\overline{0})\right) = S(\overline{0})$. For $\p \in \Spec(A)$ not containing $a$ we have shown that $\Theta_e(I({\overline{\p}}))  \leq \Theta_e(I({\overline{0}})) \leq d(p^e-1)-1$, and it follows that $D_{S(\overline{\p})}^{(d(p^e-1),p^e)}\left(I^{[p^e]}(\overline{\p}):_{S(\overline{\p})} I(\overline{\p})\right) = S(\overline{\p})$. Another application of \ref{prop: global Fedder} gives that $R(\overline{\p})$ is F-pure.
\end{proof}

\begin{proposition} \label{prop1 A2} Let $A$ be an F-finite F-pure integral domain and $R$ be an $A$-algebra of finite type. Assume that $R({\overline{0}})$ is biequidimensional. 
If $R({\overline{0}})$ satisfies $\mathcal P(\lambda)$, then there exists an open subset $V \subseteq \Spec(A)$ such that $R(\overline{\p})$ satisfies $\mathcal P(\lambda)$ for every $\p \in V$.
\end{proposition}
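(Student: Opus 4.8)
The plan is to reduce everything to Proposition~\ref{prop A2}, which already spreads out both F-purity and the value of a single $\Theta_e$ along the fibers of $\Spec R\to\Spec A$, and then to promote that single-level information to a statement about the limit $\Theta$ using the uniform convergence of Corollary~\ref{uniform convergence fibers}.

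Set $S=A[x_1,\dots,x_d]$ and $K=\Frac(A)$. First I would dispose of the degenerate case $R\otimes_AK=0$: then the structure map $A\to R$ kills some $0\neq a\in A$, so $R(\overline\p)=0$ for all $\p$ with $a\notin\p$, and the empty scheme vacuously satisfies $\mathcal P(\lambda)$; hence from now on assume $R\otimes_AK\neq 0$. Let $h=\height\bigl(I(\overline 0)\bigr)$, computed in the polynomial ring $S(\overline 0)=\overline K[x_1,\dots,x_d]$ (which is F-finite regular, $\overline K$ being perfect), so that $\dim R(\overline 0)=d-h$. Since $R(\overline 0)$ is biequidimensional and F-pure, Theorem~\ref{thm: global (new)} applies to it and yields $\dfpt(R(\overline 0))=\Theta(I(\overline 0))-h$, which is $<\lambda$ by hypothesis.

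Next I would record two numerical facts, valid for any F-finite regular ring $T$ and any ideal $J$ with $T/J$ F-pure. First, Proposition~\ref{prop: global theta} together with the elementary $\max_\q(a_\q-b_\q)\le\max_\q a_\q-\min_\q b_\q$ (the minimum of the relevant heights being attained at a minimal prime) gives
\[
\max_{\q}\dfpt\bigl((T/J)_\q\bigr)\le \Theta(J)-\height(J).
\]
Second, Corollary~\ref{uniform convergence fibers} and Proposition~\ref{prop: differential theta} give, for every $e$,
\[
\Theta(J)\ \le\ \frac{\Theta_e(J)}{p^e}\ \le\ \Theta(J)+\frac{\dim T}{p^e}.
\]
Now fix $e$ with $d/p^e<\lambda-\dfpt(R(\overline 0))$, and apply Proposition~\ref{prop A2} with this $e$ to obtain $0\neq a\in A$ such that $R(\overline\p)$ is F-pure and $\Theta_e(I(\overline\p))\le\Theta_e(I(\overline 0))$ whenever $a\notin\p$. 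It is classical that over the irreducible base $\Spec A$ the fiber dimension of the finite-type morphism $\Spec R\to\Spec A$ can jump up only over a proper closed subset; shrinking the open set $\{\p\mid a\notin\p\}$ accordingly, I get a dense open $V\subseteq\Spec A$ on which in addition $\dim R(\p)\le\dim R(\overline 0)=d-h$. Since dimension is unchanged by the algebraic field extension $\kappa(\p)\subseteq\overline{\kappa(\p)}$, this means $\dim R(\overline\p)\le d-h$, i.e.\ $\height(I(\overline\p))\ge h$, for every $\p\in V$.

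Finally, for $\p\in V$ the ring $R(\overline\p)=S(\overline\p)/I(\overline\p)$ is F-pure and $S(\overline\p)=\overline{\kappa(\p)}[x_1,\dots,x_d]$ is F-finite regular of dimension $d$, so the two displayed facts combine with Proposition~\ref{prop A2} and $\height(I(\overline\p))\ge h$ to give
\[
\dfpt\bigl(R(\overline\p)\bigr)\le \Theta\bigl(I(\overline\p)\bigr)-\height\bigl(I(\overline\p)\bigr)\le \frac{\Theta_e\bigl(I(\overline\p)\bigr)}{p^e}-h\le \frac{\Theta_e\bigl(I(\overline 0)\bigr)}{p^e}-h\le \Theta\bigl(I(\overline 0)\bigr)+\frac{d}{p^e}-h<\lambda,
\]
so $R(\overline\p)$ satisfies $\mathcal P(\lambda)$, which is the assertion. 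The one delicate point is the penultimate paragraph: a priori the fiber dimension can jump up over special points of $\Spec A$, which would push $\height(I(\overline\p))$ below $h$ and ruin the estimate, so it is essential to first restrict to the open locus where the generic fiber dimension is preserved and only then intersect with $\{\p\mid a\notin\p\}$. Everything else is bookkeeping, since Proposition~\ref{prop A2} already carries out the hard algebra of spreading out F-purity and the bound on $\Theta_e$, while Corollary~\ref{uniform convergence fibers} supplies the $O(d/p^e)$ comparison between $\Theta_e/p^e$ and its limit.
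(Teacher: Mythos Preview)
Your overall strategy is exactly that of the paper, and in one respect cleaner: by only using the \emph{inequality} $\max_{\q}\dfpt((T/J)_\q)\le\Theta(J)-\height(J)$ (your fact~(a)) rather than the equality of Theorem~\ref{thm: global (new)}, you never need biequidimensionality of the fibers $R(\overline{\p})$ and can dispense with the paper's extra open set $U''$ obtained from spreading out biequidimensionality. That is a genuine simplification.

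There is, however, an error in your fact~(b). The inequalities go the other way: from Proposition~\ref{prop: fpt convergence} and Theorem~\ref{thm: main formula} one has, locally, $\Theta_e(J_\q)/p^e\le\dim T_\q-\fpt((T/J)_\q)$ with error on the \emph{lower} side, so after taking maxima via Proposition~\ref{prop: differential theta} one obtains
\[
\Theta(J)-\frac{\dim T}{p^e}\ \le\ \frac{\Theta_e(J)}{p^e}\ \le\ \Theta(J),
\]
the opposite of what you wrote. Concretely, for $T=\kk[x,y]$ and $J=(xy)$ one computes $\Theta_e(J)=2(p^e-1)$, so $\Theta_e(J)/p^e=2-2/p^e<2=\Theta(J)$. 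Your displayed chain therefore fails at the step $\Theta(I(\overline{\p}))-\height(I(\overline{\p}))\le \Theta_e(I(\overline{\p}))/p^e-h$. The fix is immediate: with the corrected bounds one gets
\begin{align*}
\dfpt\bigl(R(\overline\p)\bigr)
&\le \Theta\bigl(I(\overline\p)\bigr)-h
\le \frac{\Theta_e\bigl(I(\overline\p)\bigr)}{p^e}+\frac{d}{p^e}-h
\le \frac{\Theta_e\bigl(I(\overline 0)\bigr)}{p^e}+\frac{d}{p^e}-h\\
&\le \Theta\bigl(I(\overline 0)\bigr)+\frac{d}{p^e}-h
= \dfpt\bigl(R(\overline 0)\bigr)+\frac{d}{p^e}<\lambda,
\end{align*}
which is the same final estimate you claimed, only with the $d/p^e$ entering one step earlier. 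So the argument survives once the direction of the convergence is corrected.
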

\begin{proof}
Write $R=S/I$ where $S=A[x_1,\ldots,x_d]$, and let $\varepsilon = \frac{\lambda - \dfpt(R({\overline{0}}))}{4}$. Since $\edim((S({\overline{\p}}))_Q) \leq d$ for any $\p \in \Spec(A)$ and $Q \in \Spec(R({\overline{\p}}))$, by Corollary~\ref{uniform convergence fibers} there exists $e \in \NN$ such that
\[
\bigg| \frac{\Theta_{e'}((I({\overline{\p}}))_Q)}{p^{e'}} - \frac{\Theta_e((I({\overline{\p}}))_Q)}{p^e} \bigg| < \varepsilon
\]
for all $e' \geq e$, all $\p \in \Spec(A)$ and all prime ideals $Q \in \Spec(S({\overline{\p}}))$ containing $I({\overline{\p}})$.
By Proposition~\ref{prop A2} there exists an open subset $U \subseteq \Spec(A)$ such that $R(\overline{\p})$ is F-pure and $\Theta_e(I({\overline{\p}})) \leq \Theta_e(I({\overline{0}}))$ for all $\p \in U$. For $e' \geq e$ and $\p \in U$, let $Q_{e'}$ be such that $\Theta_{e'}(I({\overline{\p}})) = \Theta_{e'}((I({\overline{\p}}))_{Q_{e'}})$. Then
\begin{align*}
\Theta_{e'}(I({\overline{\p}})) & = \Theta_{e'}((I_{\overline{\p}})_{Q_{e'}}) \\
& <  p^{e'-e} \Theta_e((I({\overline{\p}}))_{Q_{e'}}) + \varepsilon p^{e'} \\
& \leq p^{e'-e}\Theta_e(I({\overline{\p}})) + \varepsilon p^{e'}  \leq p^{e'-e}\Theta_e(I({\overline{0}})) + \varepsilon p^{e'}.
\end{align*}
Now let $P_e \in \Spec(S)$ be such that $\Theta_e(I({\overline{0}})) = \Theta_e((I({\overline{0}}))_{P_e})$. Then 
\begin{align*}
p^{e'-e}\Theta_e(I({\overline{0}})) & =  p^{e'-e}\Theta_e((I({\overline{0}}))_{P_e}) \\
& < \Theta_{e'}((I({\overline{0}}))_{P_e}) + \varepsilon p^{e'} \leq \Theta_{e'}(I({\overline{0}})) + \varepsilon p^{e'}.
\end{align*}
There exists an open subset $U'$ of $\Spec(A)$ such that $\dim(R(\p)) = \dim(R(0))$ for all $\p \in U'$ \cite[Lemma 37.30.1]{stacks-project}. In particular, this gives that $\height(I({\overline{\p}})) = \height(I({\overline{0}}))$ for all $\p \in U'$. Using Remark \ref{remark DattaSimpson} and \cite[Corollary 2.6.4]{DattaSimpson} there exists an open subset $U'' \subseteq \Spec(A)$ such that $R \otimes_A \overline{\kappa(\p)}$ is biequidimensional for every $\p \in U''$. Let $V=U \cap U' \cap U''$. Since $S(\overline{\p})$ is coequidimensional for any $\p \in \Spec(A)$, 
using Theorem \ref{thm: global (new)} we conclude that 
\begin{align*}
\dfpt(R({\overline{\p}})) & = \lim_{e \to \infty} \frac{\Theta_e(I({\overline{\p}}))}{p^e} - \height(I({\overline{\p}})) \\
& \leq \lim_{e \to \infty} \frac{\Theta_e(I({\overline{0}}))}{p^e} - \height(I({\overline{0}})) + 2\varepsilon' \\
& = \dfpt(R({\overline{0}})) + 2 \varepsilon < \lambda
\end{align*}
for all $\p \in V$. It follows that $R(\overline{\p})$ satisfies $\mathcal P(\lambda)$ for any $\p \in V$.
\end{proof}

\begin{theorem} \label{thm A2} Let $\phi \colon Y \to S$ be a morphism of finite type, where $S$ is F-finite and integral, with generic point $\eta$. Assume that $Y_\eta$ is geometrically F-pure. 
If $Y_\eta$ is equidimensional and geometrically $\mathcal P(\lambda)$, then there exists an open neighborhood $V$ of $\eta$ such that $Y_s$ is geometrically $\mathcal P(\lambda)$ for each $s \in V$.
\end{theorem}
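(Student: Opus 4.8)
The plan is to verify that the property $\mathcal P(\lambda)$ fulfils axiom (A2) of Theorem~\ref{thm CGM}, by assembling the fibrewise statements of Propositions~\ref{prop A2} and~\ref{prop1 A2}, which were engineered for exactly this. \emph{First reductions:} since $S$ is F-finite it is excellent \cite{F-finExc}, so its regular locus is open, and it contains $\eta$ because $\OO_{S,\eta}=\kappa(\eta)$ is a field. Replacing $S$ by an affine open neighbourhood of $\eta$ inside this regular locus (and $Y$ by its preimage, which is harmless as $Y_\eta$ and the finite-type hypothesis are unchanged), we may assume $S=\Spec(A)$ with $A$ an F-finite regular domain --- in particular F-pure, which is precisely the hypothesis on the base needed in Propositions~\ref{prop A2} and~\ref{prop1 A2}. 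Now $\eta$ is the prime $(0)$ and $\kappa(\eta)=\Frac(A)$. As $\phi$ is of finite type over the Noetherian ring $A$, $Y$ admits a finite affine open cover $Y=\bigcup_{i=1}^m\Spec(R_i)$ with each $R_i$ a finitely generated $A$-algebra. The case $Y_\eta=\emptyset$ is immediate: by Chevalley's theorem $\phi(Y)$ is constructible and misses the generic point of the irreducible $S$, so its closure is a proper closed subset whose complement is an open neighbourhood $V$ of $\eta$ with $Y_s=\emptyset$ for all $s\in V$; so assume $Y_\eta\neq\emptyset$.

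\emph{Transporting the hypotheses to the charts:} base change along $A\to\kappa(\eta)$ gives the open cover $Y_\eta=\bigcup_i\Spec(R_i\otimes_A\kappa(\eta))$. Since $Y_\eta$ is equidimensional and of finite type over the field $\kappa(\eta)$, each of its open subschemes $\Spec(R_i\otimes_A\kappa(\eta))$ is again equidimensional --- an irreducible component of an open subset is dense in a component of the whole space, hence of the same dimension --- therefore biequidimensional by Remark~\ref{remark DattaSimpson}; the same remark, applied to $\kappa(\eta)\subseteq\overline{\kappa(\eta)}$, shows that $R_i(\overline{0})=R_i\otimes_A\overline{\kappa(\eta)}$ is biequidimensional for every $i$. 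Geometric F-purity of $Y_\eta$ means $Y_\eta\times_{\kappa(\eta)}\overline{\kappa(\eta)}=\bigcup_i\Spec(R_i(\overline 0))$ is F-pure, so each $R_i(\overline 0)$ is F-pure, and $\dfpt(Y_\eta\times_{\kappa(\eta)}\overline{\kappa(\eta)})<\lambda$ forces $\dfpt\bigl((R_i(\overline 0))_\q\bigr)<\lambda$ for every $\q$. Thus each $R_i(\overline 0)$ is biequidimensional and satisfies $\mathcal P(\lambda)$.

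\emph{Conclusion:} apply Proposition~\ref{prop1 A2} to each $A$-algebra $R_i$ to obtain an open $V_i\subseteq\Spec(A)$ such that $R_i(\overline{\p})$ satisfies $\mathcal P(\lambda)$ for all $\p\in V_i$; inspecting that proof, $V_i$ is an intersection of finitely many open subsets of $\Spec(A)$ each containing the generic point (the locus furnished by Proposition~\ref{prop A2}, the locus where the fibre dimension equals that of the generic fibre, and the locus of biequidimensional geometric fibres), so $\eta=(0)\in V_i$. Put $V=\bigcap_{i=1}^m V_i$, an open neighbourhood of $\eta$ in $S$. For $s\in V$, the charts $\Spec(R_i(\overline s))$ form an open cover of $Y_s\times_{\kappa(s)}\overline{\kappa(s)}$, and each $R_i(\overline s)$ is F-pure with $\dfpt\bigl((R_i(\overline s))_\q\bigr)<\lambda$ for all $\q$; since F-purity is a local property, $Y_s\times_{\kappa(s)}\overline{\kappa(s)}$ is F-pure (so $Y_s$ is geometrically F-pure; note this scheme is F-finite as $\overline{\kappa(s)}$ is perfect), and
\[
\dfpt\bigl(Y_s\times_{\kappa(s)}\overline{\kappa(s)}\bigr)=\max\bigl\{\dfpt(\OO_{Y_s\times_{\kappa(s)}\overline{\kappa(s)},\,x})\mid x\in Y_s\times_{\kappa(s)}\overline{\kappa(s)}\bigr\}<\lambda
\]
by Definition~\ref{Def dpt(X)}. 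Hence $Y_s$ is geometrically $\mathcal P(\lambda)$ for every $s\in V$.

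The genuinely hard work is already contained in Propositions~\ref{prop A2} and~\ref{prop1 A2}: controlling the invariants $\Theta_e$ of the geometric fibres \emph{uniformly} in $\p$, via generic freeness of the cokernels of the evaluation maps on differential operators together with the uniform convergence of $\Theta_e/p^e$ from Corollary~\ref{uniform convergence fibers}. What remains, and the point I would be most careful about, is purely descent bookkeeping: checking that equidimensionality passes from $Y_\eta$ to its affine charts and survives base change to $\overline{\kappa(\eta)}$ (so that Theorem~\ref{thm: global (new)} applies chart by chart and the local/global values of $\dfpt$ agree), and that every open subset produced along the way still contains the generic point $\eta$.
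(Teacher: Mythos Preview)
Your proof is correct and follows essentially the same route as the paper's: reduce $S$ to an affine $\Spec(A)$ with $A$ F-finite F-pure (you use the regular locus, which is one valid way), cover $Y$ by finitely many affines $\Spec(R_i)$, verify that each $R_i(\overline{0})$ is biequidimensional and satisfies $\mathcal P(\lambda)$ via Remark~\ref{remark DattaSimpson}, and then apply Proposition~\ref{prop1 A2} chart by chart. Your write-up is more explicit than the paper's about the bookkeeping (the empty-fibre case, checking $\eta\in V_i$, and reassembling the cover), but the substance is the same.
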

\begin{proof}
By considering an open subset of $S$, we can directly assume that $S = \Spec(A)$ is affine, with $A$ an F-finite F-pure integral domain. By working on finite affine covers of $Y$ we may also reduce to the case where $Y = \Spec(R)$ with $R=A[x_1,\ldots,x_d]/I$. The fact that $Y$ is geometrically $\mathcal P(\lambda)$ gives that $\dfpt(R({\overline{0}})) < \lambda$. Since $Y_\eta$ is equidimensional, so is the affine cover $R(0)$. Since $R(0)$ is of finite type over $\kappa(0)$, using Remark \ref{remark DattaSimpson} we conclude that $R({\overline{0}})$ is biequidimensional. The theorem now follows from Proposition~\ref{prop1 A2}.
\end{proof}

We are now ready to show that a Bertini-type theorem holds for the property $P(\lambda)$.

\begin{corollary} \label{coroll Bertini}
Let $X$ be an F-pure equidimensional quasi-projective subscheme of $\PP^n_\kk$, with $\kk$ algebraically closed of characteristic $p>0$. Then, for any $\varepsilon > 0$, there is an open subset $U \subseteq (\PP^n_\kk)^*$ such that $\dfpt(X \cap H) \leq \dfpt(X) + \varepsilon$
for all hyperplanes $H \in U$.

Moreover, if $\kk$ is uncountable, then 
$\dfpt(X \cap H) \leq \dfpt(X)$ for a very general hyperplane $H \subseteq \PP^n_\kk$.
\end{corollary}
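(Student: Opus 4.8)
The plan is to deduce the statement from the axiomatic Bertini theorem of Cumino--Greco--Manaresi (Theorem~\ref{thm CGM}), applied to the local property $\mathcal{P}(\lambda)$, whose axioms (A1) and (A2) were verified in Theorems~\ref{thm A1} and~\ref{thm A2}. Concretely, fix $\varepsilon>0$ and set $\lambda=\dfpt(X)+\varepsilon$; this lies in $\R_{\geq 0}$ because $\dfpt$ of an F-pure local ring is nonnegative (as $\fpt(\m)\leq\dim R$, cf.\ the introduction). By Definition~\ref{Def dpt(X)} we have $\dfpt(\OO_{X,x})\leq\dfpt(X)<\lambda$ for all $x\in X$, so $X$ satisfies $\mathcal{P}(\lambda)$. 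Since $X$ is a quasi-projective subscheme of $\PP^n_{\kk}$, the inclusion $\phi\colon X\hookrightarrow\PP^n_{\kk}$ is an immersion, hence a finite type morphism with separably generated residue field extensions; moreover $X$ is equidimensional and of finite type over the algebraically closed field $\kk$ of characteristic $p>0$. Thus all hypotheses of Theorem~\ref{thm CGM} hold, $\mathcal{P}(\lambda)$ satisfies (A1) by Theorem~\ref{thm A1} and (A2) by Theorem~\ref{thm A2}, so Theorem~\ref{thm CGM} yields a nonempty open $U\subseteq(\PP^n_{\kk})^*$ with $\phi^{-1}(H)=X\cap H$ satisfying $\mathcal{P}(\lambda)$ for every $H\in U$; in particular $\dfpt(X\cap H)<\lambda=\dfpt(X)+\varepsilon$, which is the first assertion.

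For the second assertion, suppose $\kk$ is uncountable. For each integer $m\geq 1$, the first part with $\varepsilon=1/m$ furnishes a nonempty open $U_m\subseteq(\PP^n_{\kk})^*$ such that $\dfpt(X\cap H)<\dfpt(X)+1/m$ for all $H\in U_m$. Since $(\PP^n_{\kk})^*$ is irreducible, each $(\PP^n_{\kk})^*\smallsetminus U_m$ is a proper closed subset, and as $\kk$ is uncountable the space $(\PP^n_{\kk})^*$ is not a countable union of proper closed subsets; hence $\bigcap_{m\geq 1}U_m$ is nonempty (indeed dense), and a very general hyperplane $H$ may be taken in it. For any such $H$ one has $\dfpt(X\cap H)<\dfpt(X)+1/m$ for all $m$, hence $\dfpt(X\cap H)\leq\dfpt(X)$.

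Essentially all the substance has already been carried out in Theorems~\ref{thm A1} and~\ref{thm A2}, so no real obstacle remains in this corollary; the one point deserving care is the bookkeeping of the framework --- confirming that $\mathcal{P}(\lambda)$ is a local property in the sense of \cite{CGM} and that the geometric hypotheses of Theorem~\ref{thm CGM} (equidimensionality of $X$, finite type over an algebraically closed field, the immersion having separably generated residue field extensions) are exactly the ones available here --- together with disposing of the degenerate case $X\cap H=\emptyset$, where the inequality holds trivially under the convention $\dfpt(\emptyset)=-\infty$.
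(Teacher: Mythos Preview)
Your proof is correct and follows essentially the same approach as the paper: set $\lambda=\dfpt(X)+\varepsilon$, verify that $\mathcal P(\lambda)$ satisfies (A1) and (A2) via Theorems~\ref{thm A1} and~\ref{thm A2}, apply Theorem~\ref{thm CGM}, and for the second part intersect the open sets $U_m$ obtained from $\varepsilon=1/m$. Your write-up is in fact more detailed than the paper's own proof, spelling out the hypotheses of Theorem~\ref{thm CGM} and the uncountability argument explicitly.
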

\begin{proof}
For the first assertion, consider the property $\mathcal P(\lambda)$ with $\lambda = \dfpt (X) + \varepsilon$. By Theorem~\ref{thm A1} we have that $\mathcal P(\lambda)$ satisfies (A1). and by Theorem~\ref{thm A2} it also satisfies (A2). This finishes the proof of the first assertion via Theorem~\ref{thm CGM}.
For the second assertion, we apply the first to find open subsets $U_n \subseteq (\PP^n_\kk)^*$ such that 
$\dfpt (X \cap H) \leq \dfpt (X) + 1/n$ for $H \in U_n$. 
Then for any $H \in \cap_{n \in \NN} U_n$ we must have 
$\dfpt (X \cap H) \leq \dfpt (X)$.
\end{proof}

\begin{corollary} \label{coroll subset Bertini}
Let $X \subseteq \PP^n_\kk$ be an equidimensional quasi-projective subscheme of $\PP^n_\kk$, with $\kk$ algebraically closed of characteristic $p>0$. Then, for any $\lambda > 0$, there is an open subset $U \subseteq (\PP^n_\kk)^*$ such that for all $H \in U$
\[
\{x \in X \mid \dfpt ({\mathcal O}_{X, x}) < \lambda\} \cap H \subseteq 
\{x \in X \cap H \mid \dfpt ({\mathcal O}_{X\cap H, x}) < \lambda\}.
\]
\end{corollary}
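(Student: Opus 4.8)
The plan is to apply the Cumino--Greco--Manaresi machinery (Theorem~\ref{thm CGM}) not to $X$ itself --- which, unlike in Corollary~\ref{coroll Bertini}, is not assumed to be F-pure --- but to the open locus where the defect is already small. Concretely, I would set
\[
X' = \{x \in X \mid \dfpt(\OO_{X,x}) < \lambda\}.
\]
By the semi-continuity of Theorem~\ref{thm semicont}, together with the convention of Remark~\ref{rmk: semicontinuity} assigning $\dfpt$ the value $\infty$ off the F-pure locus, $X'$ is an open subscheme of $X$. If $X'$ is empty the asserted inclusion is trivial, so I assume $X' \neq \emptyset$; then for every $x \in X'$ the stalk $\OO_{X',x} = \OO_{X,x}$ has finite defect less than $\lambda$, hence is F-pure, so $X'$ is an F-finite F-pure scheme satisfying the local property $\mathcal P(\lambda)$.

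The main point requiring care is that Theorem~\ref{thm CGM} needs the source to be equidimensional, whereas $X'$ is only an open subscheme of the equidimensional scheme $X$. I would dispose of this by the standard observation that an open subscheme of an equidimensional scheme of finite type over a field is again equidimensional: any irreducible component $W$ of $X'$ has irreducible closure in $X$, hence is contained in some component $X_i$ of $X$, and then $W = X' \cap X_i$ by maximality, since $X' \cap X_i$ is irreducible and closed in $X'$; as $X' \cap X_i$ is a nonempty open, hence dense, subset of $X_i$, and dimension is preserved under passing to dense opens for schemes of finite type over a field, $\dim W = \dim X_i = \dim X$.

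It then remains to run Theorem~\ref{thm CGM} on the immersion $\phi\colon X' \hookrightarrow \PP^n_\kk$ with the property $\mathcal P(\lambda)$, which satisfies axiom (A1) by Theorem~\ref{thm A1} and axiom (A2) by Theorem~\ref{thm A2}, exactly as in the proof of Corollary~\ref{coroll Bertini}. This yields a nonempty open $U \subseteq (\PP^n_\kk)^*$ such that $\phi^{-1}(H) = X' \cap H$ satisfies $\mathcal P(\lambda)$ for every $H \in U$, i.e.\ $\dfpt(\OO_{X' \cap H, x}) < \lambda$ for all $x \in X' \cap H$. Finally I would transfer this to $X \cap H$: since $X'$ is open in $X$, the scheme $X' \cap H$ is an open subscheme of $X \cap H$, so $\OO_{X\cap H, x} = \OO_{X'\cap H, x}$ for $x \in X'\cap H$, and on the level of underlying sets $\{x \in X \mid \dfpt(\OO_{X,x}) < \lambda\} \cap H = X' \cap H$; combining the two gives the claimed containment. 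I do not expect any genuine difficulty beyond the equidimensionality bookkeeping, since the substantive content is entirely contained in Theorems~\ref{thm A1}, \ref{thm A2} and~\ref{thm CGM}, all already available.
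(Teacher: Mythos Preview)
Your proposal is correct and follows the same route as the paper: the paper's proof simply observes (via Remark~\ref{rmk: semicontinuity}) that $X' = \{x \in X \mid \dfpt(\OO_{X,x}) < \lambda\}$ is open and then applies Corollary~\ref{coroll Bertini} to it, leaving the equidimensionality of $X'$ and the identification of stalks implicit. You have unpacked exactly these details and invoked Theorem~\ref{thm CGM} directly rather than through Corollary~\ref{coroll Bertini}, but the argument is the same.
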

\begin{proof}
By Remark~\ref{rmk: semicontinuity}, the locus $\{x \in X \mid \dfpt ({\mathcal O}_{X, x}) < \lambda\}$ is open in $X$
and we may apply Corollary~\ref{coroll Bertini} to it. 
\end{proof}

\begin{corollary}\label{coroll Bertini Gor}
Let $X$ be an F-pure normal Gorenstein quasi-projective subscheme of $\PP^n_\kk$, with $\kk$ algebraically closed of characteristic $p>0$. Assume that $X$ is F-finite and F-pure. Then there is an open subset $U \subseteq (\PP^n_\kk)^*$ such that $\dfpt(X \cap H) \leq \dfpt(X)$
for all hyperplanes $H \in U$.
\end{corollary}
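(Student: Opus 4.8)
The plan is to sharpen the $\varepsilon$-estimate of Corollary~\ref{coroll Bertini} by exploiting that, when $X$ is Gorenstein, the defect of the F-pure threshold takes only finitely many values \emph{uniformly} along the family of hyperplane sections, so that a single choice of $\varepsilon$ suffices. We may assume $X$ is irreducible: a normal scheme is the disjoint union of its integral components, only finitely many of which are hit, and both sides of the desired inequality are computed componentwise. If $\dim X\le 1$, then $X$ is normal and F-pure, hence regular, so $\dfpt(X)=0$; a general hyperplane either misses $X$ or cuts it in finitely many reduced $\kk$-points, whence $\dfpt(X\cap H)\le 0=\dfpt(X)$. So assume $\dim X\ge 2$ from now on.

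First I would pass to a dense open $B^\circ\subseteq(\PP^n_\kk)^*$ over which the sections are as good as possible. By Bertini's theorem for normality (where $\dim X\ge 2$ and the normality of $X$ are used) and for F-purity (the local property governing Corollary~\ref{coroll Bertini}; cf.\ \cite{CGM}), and by the elementary fact that the quotient of a Gorenstein local ring by a nonzerodivisor is again Gorenstein, there is a dense open $B^\circ$ such that $X\cap H$ is normal, Gorenstein, F-finite and F-pure for every $H\in B^\circ$. (For the Gorenstein conclusion one only needs the linear form cutting out $H$ to avoid the unique component of $X$, which holds on a dense open: its image in each $\OO_{X,x}$ is then a nonzero element of a Cohen--Macaulay domain, hence a nonzerodivisor.)

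The crux is a uniform finiteness statement: there is a finite set $\mathcal F\subseteq\R$, depending only on $n$ and $p$, with $\dfpt(X\cap H)\in\mathcal F$ for all $H\in B^\circ$. Since $X\cap H$ is Gorenstein, its canonical class is trivial, so for every $x\in X\cap H$ the anticanonical cover of $\OO_{X\cap H,x}$ is the polynomial ring $\OO_{X\cap H,x}[t]$, which admits a regular presentation $T/J$ with $\dim T\le n$, a bound independent of $H$ and of $x$. Combining this with Theorem~\ref{thm constructible} and with the fact that Sato's bound \cite{SatoACC} on the set of F-pure thresholds depends only on the dimension of such a presentation, one obtains a single finite set containing $\fpt((X\cap H)_Q)$ for all $H\in B^\circ$ and all primes $Q$; since $\dfpt(X\cap H)=\max_{x}\bigl(\dim\OO_{X\cap H,x}-\fpt(\OO_{X\cap H,x})\bigr)$ and the dimensions involved are bounded by $n$, the values $\dfpt(X\cap H)$ also lie in a fixed finite set $\mathcal F$.

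To finish, set $g\coloneqq\min\{v-\dfpt(X)\mid v\in\mathcal F,\ v>\dfpt(X)\}$, with $g\coloneqq 1$ if this set is empty, and apply Corollary~\ref{coroll Bertini} with $\varepsilon=g/2$ to get a dense open $U\subseteq(\PP^n_\kk)^*$ with $\dfpt(X\cap H)\le\dfpt(X)+g/2$ for all $H\in U$. Then for every $H$ in the dense open $U\cap B^\circ$ we have both $\dfpt(X\cap H)\in\mathcal F$ and $\dfpt(X\cap H)<\dfpt(X)+g$, which by the definition of $g$ forces $\dfpt(X\cap H)\le\dfpt(X)$; this $U\cap B^\circ$ is the asserted open set. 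The main obstacle, and the only place where the Gorenstein hypothesis is essential, is the uniform finiteness of $\mathcal F$: Theorem~\ref{thm constructible} alone gives finitely many values of $\dfpt$ for each \emph{fixed} section, and one must check that the count (equivalently, the size of a regular presentation of the anticanonical cover) is bounded independently of $H$ --- which is exactly what being Gorenstein guarantees, by trivializing the anticanonical cover.
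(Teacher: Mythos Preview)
Your approach is essentially the same as the paper's: use Bertini to ensure that a general hyperplane section is again normal, Gorenstein, and F-pure; then use Sato's ACC result to find a gap $\varepsilon>0$ above $\dfpt(X)$ among the possible values of $\dfpt$ for such rings; finally feed this $\varepsilon$ into Corollary~\ref{coroll Bertini}. The paper does exactly this, citing \cite[Theorem~4.7]{SatoACC} directly to produce an $\varepsilon>0$ with no normal Gorenstein local ring of embedding dimension $\leq n$ having $\dfpt$ in the interval $(\dfpt(X),\dfpt(X)+\varepsilon)$.

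One point deserves tightening. You claim a \emph{finite} set $\mathcal F$, depending only on $n$ and $p$, containing all values $\dfpt(X\cap H)$, and you justify this by invoking Theorem~\ref{thm constructible} together with Sato's bound. But Theorem~\ref{thm constructible} gives finiteness only for a \emph{fixed} scheme (combining ACC from Sato with DCC from upper semi-continuity on that scheme); it does not give uniform finiteness across the whole family of hyperplane sections. Sato's result alone gives ACC for the set of $\fpt$-values of normal Gorenstein local rings with bounded embedding dimension, and hence DCC for the corresponding $\dfpt$-values (since the dimensions are bounded). Fortunately, DCC is exactly what you need: it guarantees that $\{v\in\mathcal F:v>\dfpt(X)\}$, if nonempty, has a minimum, so your $g$ is well-defined and positive. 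So your argument is correct once you replace ``finite'' by ``satisfies DCC'' and drop the reference to Theorem~\ref{thm constructible} in favor of a direct appeal to \cite{SatoACC}, precisely as the paper does.
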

\begin{proof}
There is $\varepsilon > 0$ such that 
there is there is no normal Gorenstein local ring $(S, \m)$ of characteristic $p > 0$
such that $\edim (S) \leq n$ and $\dfpt (X) < \dfpt (S) < \dfpt (X) + \varepsilon$ \cite[Theorem~4.7]{SatoACC}. 
By Corollary~\ref{coroll Bertini} there is an open set $U_1 \subseteq (\PP^n_\kk)^*$ such that $\dfpt (X \cap H) \leq \dfpt (X) + \varepsilon$ for all $H \in U_1$. Furthermore, by the Bertini theorem for normality (e.g., by applying \cite[Theorem~5.2]{Flenner} to the closure of $X$)
there is an open set $U_2 \subseteq (\PP^n_\kk)^*$ such that $X \cap H$ is normal whenever $H \in U_2$.
Last, we note that $X \cap H$ is still Gorenstein as long as $H$ does not contain an irreducible components of $X$, giving us condition $U_3$.
It now follows that for any $H \in U_1 \cap U_2 \cap U_3$ we must have 
$\dfpt (X \cap H) \leq \dfpt (X)$.  
\end{proof}

\section{Local properties of the defect of the F-pure threshold} \label{sect: local}

We now list some applications of the methods developed in the previous sections.

\subsection{Behavior for hypersurfaces}

We recall the construction of the Peskine--Szpiro functor on 
 a regular local ring $S$: if $M$ is a finitely generated $S$-module, then 
 $\cF^e_S(M)$ is the additive abelian group of $M$ equipped with the 
 $S$-module structure such that $F_*^e \cF^e_S (M) = M \otimes_S F^e_*(S)$. 

\begin{remark}\label{rmk: Peskine-Szpiro}
By the flatness of Frobenius on $S$, the Peskine--Szpiro functor is exact, so if  $\mathbf{G}_\bullet$ is a finite free resolution of $M$, 
then $\cF^e_S(\mathbf{G}_\bullet)$ is a free resolution of $\cF^e_S(M)$. Explicitly, 
$\cF^e_S(\mathbf{G}_\bullet)$ is obtained from $\mathbf{G}_\bullet$ by
raising to the power $p^e$ all the entries of the matrices representing the maps in $\mathbf{G}_\bullet$.
In particular, if $M = S/J$, then the complex $\cF_S^e(\mathbf{G}_\bullet)$ is a free resolution of $S/J^{[p^e]}$. 
\end{remark}

The following lemma is probably well-known to experts. However, we could not find a precise reference and include a proof for completeness.

\begin{lemma}\label{LemmaColonIdeal}
Let $(S,\m)$ be an F-finite regular local ring, and $I\subsetneq S$ be an ideal such that $R=S/I$ is a Gorenstein ring.
Let $f\in \m$ be a regular element of $R$, and $J=fS+I$.
Then $(J^{[p^e]}:J)=f^{p^e-1} (I^{[p^e]}:I)+J^{[p^e]}$.
\end{lemma}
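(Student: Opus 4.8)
The plan is to use the Peskine--Szpiro functor together with the Gorenstein hypothesis to reduce the colon computation to a question about the canonical module. First I would note that, since $R=S/I$ is Gorenstein and $f$ is a regular element of $R$, the quotient $R/fR = S/J$ is also Gorenstein. Write $d = \height(I)$, so that $\height(J) = d+1$. Let $\mathbf{G}_\bullet$ be a minimal free resolution of $R=S/I$ over $S$; since $R$ is Gorenstein of codimension $d$, this resolution is self-dual: $\Ext^d_S(R,S) \cong R$ and $\mathbf{G}_\bullet^\vee[-d]$ is again a (minimal) resolution of $R$. By Remark~\ref{rmk: Peskine-Szpiro} the complex $\cF^e_S(\mathbf{G}_\bullet)$ is a minimal free resolution of $S/I^{[p^e]}$, obtained by raising the matrix entries to the $p^e$-th power, and it is again self-dual with $\Ext^d_S(S/I^{[p^e]},S) \cong S/I^{[p^e]}$.

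\textbf{The colon ideal via duality.} The key classical fact I would invoke is the following translation of colon ideals into Ext modules: if $R = S/I$ is Cohen--Macaulay of codimension $d$ with canonical module $\omega_R = \Ext^d_S(R,S)$, and $g \in S$ is a regular element on $R$, then for the ideal $I' = (I,g)$ one has a canonical identification of $\Ext^{d+1}_S(S/I',S)$ with $\omega_R/g\omega_R$, and moreover the natural surjection $S \twoheadrightarrow S/(I'^{[?]}:\cdots)$ corresponds under local duality to the socle-type generator. Concretely, the cleanest route: since $R$ is Gorenstein, $(I : J)/I \cong \Hom_R(R/fR, R) \cong \operatorname{Ann}_R(f) = 0$ is not quite what we want — instead I would use that $(J^{[p^e]} : J)/J^{[p^e]}$ is, by Fedder's correspondence (Theorem~\ref{FedderCriterion}) applied to $S/J$, identified with $\Hom_{S/J}(F^e_*(S/J), S/J)$, which since $S/J$ is Gorenstein is a cyclic $F^e_*(S/J)$-module generated by a single element. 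So it suffices to exhibit \emph{one} element of $J^{[p^e]}:_S J$ lying in $f^{p^e-1}(I^{[p^e]}:_S I) + J^{[p^e]}$ that generates this cyclic module, plus check the reverse inclusion.

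\textbf{Carrying out the two inclusions.} For the inclusion $\supseteq$: take $h \in I^{[p^e]} :_S I$, so $hI \subseteq I^{[p^e]}$. Then $f^{p^e-1}h \cdot J = f^{p^e-1}h(fS + I) = f^{p^e}hS + f^{p^e-1}hI \subseteq f^{p^e}S + f^{p^e-1}I^{[p^e]} \subseteq J^{[p^e]}$, using $f^{p^e} \in J^{[p^e]}$ and $I^{[p^e]} \subseteq J^{[p^e]}$; hence $f^{p^e-1}h \in J^{[p^e]}:_S J$, and since $J^{[p^e]} \subseteq J^{[p^e]}:_S J$ trivially, the containment $f^{p^e-1}(I^{[p^e]}:_S I) + J^{[p^e]} \subseteq J^{[p^e]}:_S J$ follows. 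For the reverse inclusion $\subseteq$, I would argue by a length/colength count. Both sides are ideals containing $J^{[p^e]}$; passing to $R/fR = S/J$ and using that this ring is Gorenstein, the module $(J^{[p^e]}:_S J)/J^{[p^e]}$ has a known colength (it corresponds to $\loewy$ data as in Theorem~\ref{thm: main formula}, or more simply: by Fedder for a Gorenstein quotient it is a rank-one $F^e_*(S/J)$-module, hence has length $[\kappa:\kappa^{p^e}]^{?}\cdot \ell(S/J^{[p^e]})/\ell(\cdots)$ — I will instead just show the submodule $f^{p^e-1}(I^{[p^e]}:_SI) + J^{[p^e]}$ already has the full colength). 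For this, observe $f^{p^e-1}(I^{[p^e]}:_SI)+J^{[p^e]} \supseteq f^{p^e-1}(I^{[p^e]}:_S I) + I^{[p^e]} + f^{p^e}S$, and in $S/I^{[p^e]}$ the module $(f^{p^e-1})\cdot\big((I^{[p^e]}:_S I)/I^{[p^e]}\big)$ is isomorphic to $(I^{[p^e]}:_SI)/I^{[p^e]}$ modulo the annihilator of $f^{p^e-1}$; since $R = S/I$ is Gorenstein and $f$ is $R$-regular, $f^{p^e-1}$ is also regular on $S/I^{[p^e]}$ (as $S/I^{[p^e]}$ is Cohen--Macaulay with the same support and $f$ is a parameter modulo $I^{[p^e]}$, because $\cF^e_S$ preserves the regular sequence property).

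\textbf{Expected main obstacle.} The routine inclusion $\supseteq$ is immediate; the genuine content, and the step I expect to be delicate, is the reverse inclusion — specifically, showing that $f^{p^e-1}$ acts as a nonzerodivisor on $(I^{[p^e]}:_S I)/I^{[p^e]} \cong \omega_{S/I^{[p^e]}}$ and that the resulting submodule of $(J^{[p^e]}:_S J)/J^{[p^e]}$ exhausts it. The clean way to see this is: apply $\Hom_S(-, S)$ (i.e. the canonical-module functor) to the short exact sequence $0 \to S/I \xrightarrow{f} S/I \to S/J \to 0$ after twisting by $\cF^e_S$, getting $0 \to \omega_{S/J} \to \omega_{S/I^{[p^e]}} \xrightarrow{f^{p^e}} \omega_{S/I^{[p^e]}}$, wait — more carefully, $\cF^e_S$ applied to $0\to R \xrightarrow{f} R \to R/fR\to 0$ gives $0 \to S/I^{[p^e]} \xrightarrow{f^{p^e}} S/I^{[p^e]} \to S/J^{[p^e]} \to 0$ (since $\cF^e_S$ raises the multiplication map $f$ to $f^{p^e}$), hence dualizing into $S$ yields the isomorphism $\omega_{S/J^{[p^e]}} \cong \omega_{S/I^{[p^e]}}/f^{p^e}\omega_{S/I^{[p^e]}}$, and chasing this isomorphism through Fedder's identifications $\omega_{S/I^{[p^e]}} \leftrightarrow (I^{[p^e]}:_SI)/I^{[p^e]}$ and $\omega_{S/J^{[p^e]}} \leftrightarrow (J^{[p^e]}:_SJ)/J^{[p^e]}$ will show that the connecting map is precisely multiplication by $f^{p^e-1}$ (the discrepancy $f^{p^e}$ versus $f^{p^e-1}$ arising from the shift built into how Fedder's $\Phi$ interacts with passing from $S/I$-duality to $S/J$-duality over $S$, i.e. the extra factor of $f$ in the Koszul-type boundary). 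Getting this shift-by-one bookkeeping exactly right is the crux; I would do it by explicitly resolving $S/J$ as the mapping cone of $\cdot f \colon \mathbf{G}_\bullet \to \mathbf{G}_\bullet$, applying $\cF^e_S$, dualizing, and reading off the top homology.
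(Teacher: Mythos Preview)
Your easy inclusion $\supseteq$ is correct, and the approach you land on in the last sentence --- resolve $S/J$ as the mapping cone of $\cdot f \colon \mathbf{G}_\bullet \to \mathbf{G}_\bullet$, apply $\cF^e_S$, and read off the top comparison map --- is exactly what the paper does. The paper writes $\mathbf{C}_\bullet = \mathbf{M}_\bullet \otimes_S \mathbf{N}_\bullet$ where $\mathbf{N}_\bullet$ is the Koszul complex on $f$; the comparison map $\cF^e_S(\mathbf{C}_\bullet) \to \mathbf{C}_\bullet$ lifting $S/J^{[p^e]} \twoheadrightarrow S/J$ is then $\alpha_\bullet \otimes \beta_\bullet$, and at the top spot this is multiplication by $f^{p^e-1}h$ where $h$ is the top map for $I$. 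One then invokes \cite[Lemma~1]{VraciuGorTightClosure}: for a Gorenstein quotient the colon ideal is generated over the Frobenius power by this single top comparison element, giving both $(I^{[p^e]}:I) = hS + I^{[p^e]}$ and $(J^{[p^e]}:J) = f^{p^e-1}hS + J^{[p^e]}$.

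One genuine gap to flag in your intermediate reasoning: the identification ``$\omega_{S/I^{[p^e]}} \leftrightarrow (I^{[p^e]}:_S I)/I^{[p^e]}$'' you invoke is not correct. Since $S/I^{[p^e]}$ is Gorenstein one has $\omega_{S/I^{[p^e]}} \cong S/I^{[p^e]}$, whereas $(I^{[p^e]}:_S I)/I^{[p^e]} = \Hom_S(S/I, S/I^{[p^e]})$ is the annihilator of $I$ in $S/I^{[p^e]}$, a proper cyclic submodule. So the short-exact-sequence-plus-dualize route, as you have written it, does not directly compare the two colon ideals; the $f^{p^e}$ you obtain on $\omega$ is not yet the $f^{p^e-1}$ you need on the colon quotient, and the ``shift-by-one'' cannot be extracted from that identification alone. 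You were right to abandon it for the mapping cone, where the tensor decomposition makes the factor $f^{p^e-1}$ appear transparently as the top map of $\cF^e_S(\mathbf{N}_\bullet) \to \mathbf{N}_\bullet$.
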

\begin{proof}
Let $\mathbf{M}_\bullet$ and $\mathbf{N}_\bullet\colon 0\to S \FDer{f} S\to 0$ be minimal free resolutions of $R$ and $S/(f)$ over $S$, respectively.
Let $\mathbf{C}_\bullet=\mathbf{M}_\bullet\otimes_S \mathbf{N}_\bullet$. We note that $\mathbf{C}_\bullet$
is the mapping cone of the map of complexes given by $\mathbf{M}_\bullet \FDer{f} \mathbf{M}_\bullet$. Since $f$ is not a zerodivisor on $R$,  $\mathbf{C}_\bullet$ is a minimal free resolution of $S/J$. 
Let $\cF_S$ denote the Peskine-Szpiro functor on $S$, so that $\cF_S^e(\mathbf{C}_\bullet)=\cF_S^e(\mathbf{M}_\bullet)\otimes_S \cF_S^e(\mathbf{N}_\bullet)$ is a minimal free resolution of
$S/J^{[p^e]}$. Let $\alpha_\bullet \colon \cF_S^e(\mathbf{M}_\bullet)\to \mathbf{M}_\bullet$ and 
 $\beta_\bullet \colon \cF_S^e(\mathbf{N}_\bullet)\to \mathbf{N}_\bullet$ be the maps induced by the natural surjections $S/I^{[p^e]}\to S/I$ and $S/(f^{p^e})\to S/(f)$. We note that $\gamma_\bullet=\alpha_\bullet\otimes_S \beta_\bullet \colon \cF^e_S(\mathbf{C}_\bullet)\to \mathbf{C}_\bullet$ is the map induced by 
the quotient map $S/J^{[p^e]}\to S/J$.
Let $n=\dim(S)$ and $d=\dim(R)$.
Let $h\in S$ be the element such that $\alpha_{n-d}\colon S\to S$ is multiplication by $h$.
We note that $\beta_1\colon S\to S$ is given by the multiplication by $f^{p^e-1}$.
Hence, $\gamma_{n-d+1}=\alpha_{n-d}\otimes \beta_1 \colon S\to S$ is given by multiplication by $f^{p^e-1}h$. We then have that $(I^{[p^e]}:I)=hS+I^{[p^e]}$, and that
 $$
 (J^{[p^e]}:J)=f^{p^e-1}hS+J^{[p^e]}=f^{p^e-1} (I^{[p^e]}:I)+J^{[p^e]},
 $$ 
  because $S/I$ and $S/J$ are Gorenstein \cite[Lemma 1]{VraciuGorTightClosure}.
\end{proof}

The following result is along the lines of the work of Takagi and Watanabe \cite[Proposition~4.3]{TakagiWatanabe}. Compared to their result, in Proposition \ref{prop hyperplane} we remove the assumption of normality, but we require that the ring is Gorenstein, instead of just $\QQ$-Gorenstein.
We point out that it is already known that F-purity deforms for Gorenstein ring \cite[Theorem 3.4]{FedderFputityFsing}. 

\begin{proposition}\label{prop hyperplane}
Let $(R,\m)$ be an F-finite Gorenstein local ring.
If $f\in \m$ is a nonzero divisor such that $R/(f)$ is F-pure, then
$R$ is F-pure, $\fpt(R/(f))\leq \fpt(R)-\ord(f)$, and 
$\dfpt(R)\leq \dfpt(R/(f))$.
\end{proposition}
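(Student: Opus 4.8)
The plan is to combine the presentation formula of Theorem~\ref{thm: main formula} with the colon-ideal identity of Lemma~\ref{LemmaColonIdeal}. Write $R = S/I$ with $(S,\m_S)$ an F-finite regular local ring, set $a = \ord(f)$, and choose a lift $\tilde f \in S$ of $f$. Since $f \in \m^a$, after modifying an arbitrary lift by an element of $I$ we may assume $\tilde f \in \m_S^a$ (so in fact $\ord_S(\tilde f) = a$, though only $\tilde f \in \m_S^a$ will be used). Put $J = \tilde f S + I$, so that $R/(f) = S/J$; since $R$ is Gorenstein and $f$ is a nonzerodivisor on $R$, both $S/I$ and $S/J$ are Gorenstein, and Lemma~\ref{LemmaColonIdeal} gives $J^{[p^e]} :_S J = \tilde f^{\,p^e-1}(I^{[p^e]} :_S I) + J^{[p^e]}$ for all $e$.

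First I would record two purely formal facts in $S$. Expressing $\tilde f$ through a regular system of parameters and using $(u+v)^{p^e} = u^{p^e} + v^{p^e}$ in characteristic $p$ gives $\tilde f^{\,p^e} \in \m_S^{[p^e]}$, hence $J^{[p^e]} = \tilde f^{\,p^e}S + I^{[p^e]} \subseteq \m_S^{[p^e]}$; and from $\tilde f \in \m_S^a$ we get $\tilde f^{\,p^e-1} \in \m_S^{a(p^e-1)}$. With these in hand, F-purity of $R$ is immediate: by Fedder's criterion (Theorem~\ref{FedderCriterion}) the F-purity of $R/(f)$ says $J^{[p^e]}:_S J \not\subseteq \m_S^{[p^e]}$, and since $J^{[p^e]} \subseteq \m_S^{[p^e]}$ the identity above forces $\tilde f^{\,p^e-1}(I^{[p^e]}:_S I) \not\subseteq \m_S^{[p^e]}$, so a fortiori $I^{[p^e]}:_S I \not\subseteq \m_S^{[p^e]}$, and $R$ is F-pure.

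Next I would bound $\Theta_e(J)$ from below. By the definition of $\Theta_e$ (Definition~\ref{def: theta}) we have $I^{[p^e]}:_S I \subseteq \m_S^{\Theta_e(I)} + \m_S^{[p^e]}$; multiplying by $\tilde f^{\,p^e-1}$ and using $\tilde f^{\,p^e-1} \in \m_S^{a(p^e-1)}$ together with $\tilde f^{\,p^e-1}\m_S^{[p^e]} \subseteq \m_S^{[p^e]}$ and $J^{[p^e]} \subseteq \m_S^{[p^e]}$, the identity of Lemma~\ref{LemmaColonIdeal} yields $J^{[p^e]}:_S J \subseteq \m_S^{\,a(p^e-1)+\Theta_e(I)} + \m_S^{[p^e]}$, that is, $\Theta_e(J) \geq a(p^e-1) + \Theta_e(I)$. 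Dividing by $p^e$, letting $e \to \infty$, and applying Theorem~\ref{thm: main formula} to both $R$ and $R/(f)$ (legitimate since both are now known to be F-pure), we obtain $\dfpt(R/(f)) + \height(J) \geq a + \dfpt(R) + \height(I)$. Because $f$ is a nonzerodivisor, $\dim(R/(f)) = \dim(R) - 1$, hence $\height(J) = \height(I) + 1$, and therefore $\dfpt(R/(f)) \geq \dfpt(R) + a - 1 \geq \dfpt(R)$, the last step using $a = \ord(f) \geq 1$. Rewriting this via $\dfpt = \dim - \fpt$ and $\dim(R/(f)) = \dim(R) - 1$ gives $\fpt(R/(f)) \leq \fpt(R) - \ord(f)$, completing the argument.

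Apart from invoking Theorem~\ref{thm: main formula} and Lemma~\ref{LemmaColonIdeal}, everything here is formal manipulation of Frobenius powers and colon ideals; the only points needing a little attention are the choice of lift $\tilde f$ landing in $\m_S^a$ and the characteristic-$p$ computation $\tilde f^{\,p^e} \in \m_S^{[p^e]}$. I do not expect a serious obstacle beyond this bookkeeping.
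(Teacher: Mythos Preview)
Your proof is correct and follows essentially the same route as the paper: invoke Lemma~\ref{LemmaColonIdeal} to write $J^{[p^e]}:_SJ = \tilde f^{\,p^e-1}(I^{[p^e]}:_SI) + J^{[p^e]}$, deduce the inequality $\Theta_e(J) \geq \Theta_e(I) + (p^e-1)\ord(f)$, and conclude via Theorem~\ref{thm: main formula}. You are in fact slightly more careful than the paper on two bookkeeping points: you justify the choice of lift $\tilde f \in \m_S^a$ explicitly, and you spell out the Fedder argument for F-purity of $R$ (the paper just says ``in particular, $R$ is F-pure'').
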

\begin{proof}
Let $S$ be an F-finite regular local ring mapping onto $R$ \cite{Gabber}, and write $R=S/I$ for some ideal $I \subseteq S$.
Since $R$ is Gorenstein, there exists $g\in S$ such that $g+I^{[p^e]}=I^{[p^e]}:I$. 
By Lemma \ref{LemmaColonIdeal}, we have that $f^{p^e-1}g+J^{[p^e]}=J^{[p^e]}:J$, where $J=I+fS$. 
Then,
$$\Theta_e(J)\geq \Theta_e(I)+\ord(f^{p^e-1})=\Theta_e(I)+(p^e-1)\ord(f),$$
in particular, $R$ is F-pure. Hence,
by Theorem \ref{thm: main formula},
$\fpt(R/(f))\leq \fpt(R)-\ord(f)$.
Since $\ord(f)\geq 1$, we deduce that 
$\dfpt(R)\leq \dfpt(R/(f))$.
\end{proof}

\subsection{Continuity in the $\m$-adic topology}

The Hilbert--Kunz multiplicity and the F-signature are continuous functions with respect to the $\m$-adic topology in certain settings \cite{PolstraSmirnov, PolstraSmirnovE}. We now show that such a property also holds for the defect of the F-pure threshold in Gorenstein rings.

\begin{theorem}\label{thm m-adic}
Let $(R,\m)$ be an F-finite F-pure Gorenstein local ring, and $f_1,\ldots,f_\ell\in \m$ be a regular sequence such that $R/(f_1, \ldots, f_\ell)$ is F-pure.
For every $\varepsilon>0$ there exists $t\in \NN$ such that, for every $h_1, \ldots, h_\ell \in \m^t$, 
\begin{itemize}
\item[(a)] $f_1+h_1,\ldots,f_\ell+h_\ell$ is a regular sequence;
\item[(b)] $R/(f_1+h_1,\ldots,f_\ell+h_\ell)$ is F-pure;
\item[(c)] $|\dfpt(R/(f_1, \ldots, f_\ell))-\dfpt(R/(f_1+h_1,\ldots,f_\ell+h_\ell))|<\varepsilon.$
\end{itemize}
\end{theorem}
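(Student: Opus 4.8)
The plan is to fix a presentation $R=S/I$ with $(S,\m_S)$ an F-finite regular local ring and $I\subseteq\m_S^2$ (Remark~\ref{rem: local theta}), pick lifts $\widetilde f_i\in\m_S$ of $f_i$, set $J=I+(\widetilde f_1,\dots,\widetilde f_\ell)S$ and $\widetilde F=\widetilde f_1\cdots\widetilde f_\ell$, so $B\coloneqq R/(f_1,\dots,f_\ell)=S/J$; write $d=\dim R$. Given $h_1,\dots,h_\ell\in\m_R^t$, pick lifts $\widetilde h_i\in\m_S^t$, set $J'=I+(\widetilde f_1+\widetilde h_1,\dots,\widetilde f_\ell+\widetilde h_\ell)S$ and $\widetilde F'=(\widetilde f_1+\widetilde h_1)\cdots(\widetilde f_\ell+\widetilde h_\ell)$, so $B'\coloneqq R/(f_1+h_1,\dots,f_\ell+h_\ell)=S/J'$, and note $\widetilde F'-\widetilde F\in\m_S^t$. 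For (a): since $R$ is Cohen--Macaulay, it suffices to check $\height(f_1+h_1,\dots,f_\ell+h_\ell)=\ell$. Complete $f_1,\dots,f_\ell$ to a system of parameters $f_1,\dots,f_\ell,g_1,\dots,g_{d-\ell}$, choose $N$ with $\m_R^N\subseteq(f_1,\dots,f_\ell,g_1,\dots,g_{d-\ell})$; then for $t\ge N+1$ one has $h_i\in\m_R^t\subseteq\m_R\cdot(f_1,\dots,f_\ell,g_1,\dots,g_{d-\ell})$, so Nakayama's lemma gives $(f_1+h_1,\dots,f_\ell+h_\ell,g_1,\dots,g_{d-\ell})=(f_1,\dots,f_\ell,g_1,\dots,g_{d-\ell})$, an $\m_R$-primary ideal, whence $\height(f_1+h_1,\dots,f_\ell+h_\ell)=\ell$ by Krull's theorem. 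Consequently every partial quotient of $R$ by either of the two sequences is Gorenstein, and $\height(J)=\height(I)+\ell=\height(J')$, so $\dim B=\dim B'$.

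The heart of the argument is a perturbation-stable formula for each $\Theta_e$. Iterating Lemma~\ref{LemmaColonIdeal} along the regular sequence --- legitimate by the previous paragraph, since all intermediate quotients of $R$ are Gorenstein --- yields
\[
J^{[p^e]}:_S J=\widetilde F^{\,p^e-1}\bigl(I^{[p^e]}:_S I\bigr)+J^{[p^e]},\qquad J'^{[p^e]}:_S J'=(\widetilde F')^{p^e-1}\bigl(I^{[p^e]}:_S I\bigr)+J'^{[p^e]}.
\]
Writing $\nu_e(\mathfrak a)=\max\{n\mid\mathfrak a\subseteq\m_S^n+\m_S^{[p^e]}\}$ for an ideal $\mathfrak a\subseteq S$, one has $\nu_e(\mathfrak a+\mathfrak b)=\min\{\nu_e(\mathfrak a),\nu_e(\mathfrak b)\}$ and $\nu_e(\mathfrak a)=\infty$ if and only if $\mathfrak a\subseteq\m_S^{[p^e]}$ (by the Krull intersection theorem in $S/\m_S^{[p^e]}$). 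Since $J,J'\subseteq\m_S$ the Frobenius-power summands above lie in $\m_S^{[p^e]}$, so $\Theta_e(J)=\nu_e\bigl(\widetilde F^{\,p^e-1}(I^{[p^e]}:_SI)\bigr)$ and likewise for $J'$. Fix $e_0$; because $B$ is F-pure, $J^{[p^{e_0}]}:_SJ\not\subseteq\m_S^{[p^{e_0}]}$, so $m_0\coloneqq\nu_{e_0}\bigl(\widetilde F^{\,p^{e_0}-1}(I^{[p^{e_0}]}:_SI)\bigr)$ is finite. From $\widetilde F'-\widetilde F\in\m_S^t$ we get $\bigl((\widetilde F')^{p^{e_0}-1}-\widetilde F^{\,p^{e_0}-1}\bigr)(I^{[p^{e_0}]}:_SI)\subseteq\m_S^t$, so for $t\ge m_0+1$ the ideals $\widetilde F^{\,p^{e_0}-1}(I^{[p^{e_0}]}:_SI)$ and $(\widetilde F')^{p^{e_0}-1}(I^{[p^{e_0}]}:_SI)$ each contain the other modulo $\m_S^{m_0+1}$, and a direct comparison of the filtration pieces $\m_S^n+\m_S^{[p^{e_0}]}$ forces $\nu_{e_0}\bigl((\widetilde F')^{p^{e_0}-1}(I^{[p^{e_0}]}:_SI)\bigr)=m_0$ as well. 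Hence $\Theta_{e_0}(J')=\Theta_{e_0}(J)<\infty$, and this finiteness gives $J'^{[p^{e_0}]}:_SJ'\not\subseteq\m_S^{[p^{e_0}]}$, i.e.\ $B'$ is F-pure by Fedder's criterion (Theorem~\ref{FedderCriterion}); that is part (b).

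For (c) I would pass to the limit via uniform convergence. Applying Corollary~\ref{uniform convergence fibers} at the maximal ideal of $S$ gives $\bigl|\Theta_e(J)/p^e-(\dfpt(B)+\height(J))\bigr|<\dim(S)/p^e$ and, with the same constant, $\bigl|\Theta_e(J')/p^e-(\dfpt(B')+\height(J'))\bigr|<\dim(S)/p^e$. Since $\height(J)=\height(J')$ by the first paragraph, the triangle inequality yields
\[
\bigl|\dfpt(B)-\dfpt(B')\bigr|\le\frac{2\dim(S)}{p^{e_0}}+\frac{\bigl|\Theta_{e_0}(J)-\Theta_{e_0}(J')\bigr|}{p^{e_0}}.
\]
Thus, given $\varepsilon>0$, the recipe is: choose $e_0$ with $2\dim(S)/p^{e_0}<\varepsilon$, let $m_0=m_0(e_0)$ be as above, and set $t=\max\{N+1,\,m_0+1\}$; then for all $h_1,\dots,h_\ell\in\m_R^t$ parts (a) and (b) hold and $\Theta_{e_0}(J)=\Theta_{e_0}(J')$, so $\bigl|\dfpt(B)-\dfpt(B')\bigr|<\varepsilon$.

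The main obstacle I anticipate is organizational rather than deep: part (a) has to be dealt with first, both to license the iteration of Lemma~\ref{LemmaColonIdeal} (one needs every intermediate quotient of $R$ by the perturbed sequence to be Gorenstein) and to guarantee $\dim B=\dim B'$; and one must check that nothing depends on the auxiliary choices of lifts $\widetilde f_i,\widetilde h_i$ to $S$, which is covered by Remark~\ref{rem: local theta} together with the freedom to take $\widetilde f_i\in\m_S$ and $\widetilde h_i\in\m_S^t$. The only genuinely substantive point is the perturbation-stability of a single $\Theta_{e_0}$, which becomes elementary once $\Theta_e(J)$ is identified with the truncated order $\nu_e\bigl(\widetilde F^{\,p^e-1}(I^{[p^e]}:_SI)\bigr)$; the uniform convergence of $\Theta_e(\cdot)/p^e$ then does the rest.
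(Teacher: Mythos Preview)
Your proposal is correct and follows essentially the same route as the paper: present $R=S/I$, iterate Lemma~\ref{LemmaColonIdeal} to identify $J^{[p^e]}:_S J$ and $J'^{[p^e]}:_S J'$, fix a single $e_0$ large enough that the uniform bound of Corollary~\ref{uniform convergence fibers} gives the desired $\varepsilon$, and choose $t$ so that $\Theta_{e_0}(J)=\Theta_{e_0}(J')$. The only differences are cosmetic: the paper cites existing literature for (a) and (b) and picks $t$ via the coarser condition $\m_S^t\subseteq\m_S^{[p^{e_0}]}$ (so the colon ideals agree modulo $\m_S^{[p^{e_0}]}$), whereas you argue (a) directly, deduce (b) from the equality $\Theta_{e_0}(J')=\Theta_{e_0}(J)<\infty$, and choose the sharper $t\ge m_0+1$ so the relevant ideals agree modulo $\m_S^{m_0+1}$.
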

\begin{proof}
The first two assertions are already known by previous work
\cite[Lemma~2]{SrinivasTrivedi},
 \cite[Corollary 2.2]{PolstraSmirnovE}
(see also \cite[Corollary 3.10]{PolstraSmirnov}).
It only remains to show the $m$-adic continuity of the defect of the F-pure threshold.
Let $(S,\n)$ be an F-finite regular local ring, and $I\subseteq S$ be an ideal such that $R=S/I$. Let $g\in S$ be such that $g+I^{[p^e]}=I^{[p^e]}:I$.
Given any $\varepsilon > 0$, we now pick $e$ such that $\frac{\dim(S)}{p^e}<\frac{\varepsilon}{2}$.
We take $t \in \NN$ so that the first two assertions hold 
and $\n^{t}\subseteq \n^{[p^e]}$.

By abusing notation we lift 
$f_1, \ldots, f_\ell$ to $S$. We let $J = (I, f_1, \ldots, f_\ell) \subseteq S$, and 
for any tuple $\underline{h} \in \oplus^\ell \m^t$ we 
denote $J_{\underline{h}} \coloneqq I + (f_1+h_1, \ldots,f_\ell+h_\ell)$.
By Lemma \ref{LemmaColonIdeal}, we have that
$$
J_{\ul h}^{[p^e]}:J_{\ul h}=(f_1+h_1)^{p^e-1}\cdots (f_\ell+h_\ell)^{p^e-1} g+J_{\ul h}^{[p^e]}.
$$
Since $\n^{t}\subseteq \n^{[p^e]}$, we deduce that
$(f_1+h_1)^{p^e-1}\cdots (f_\ell+h_\ell)^{p^e-1} gS+ \n^{[p^e]}
=f_1^{p^e-1}\cdots f_\ell^{p^e-1} gS+ \n^{[p^e]}$. It follows that
\begin{align*}
J_{\underline{h}}^{[p^e]}:J_{\underline{h}}+\n^{[p^e]}
=(f_1+h_1)^{p^e-1}\cdots (f_\ell+h_\ell)^{p^e-1} gS+ \n^{[p^e]}
=(J^{[p^e]}:J)+\n^{[p^e]}.
\end{align*}
Therefore $\Theta_e(J)=\Theta_e(J_{\underline{h}})$ and it follows from Corollary~\ref{uniform convergence fibers} that
\[
\left|\dfpt(S/J_{\underline{h}})-\dfpt(S/J) \right|
< 2\frac{\dim(S)}{p^e}
< \varepsilon.
\qedhere
\]
\end{proof}


We now provide an example showing that the assumption that $R$ is Gorenstein in Theorem \ref{thm m-adic} is needed.

\begin{example}[{\cite{FedderFputityFsing,SinghDef}}]\label{ExRef}
Let $\kk$ be a field of characteristic $p>0$, and set $R=S/I$ with $S=\kk\ps{x,y,z,w,t}$ and $I=(xy,xz, z(y+t))$. Let $f=t$, and $h_n=w^n$ for $n\geq 2$. Then $f$ is a regular element in $R$ such that $R/(f)$ is F-pure, but $R/(f+h_n)$ is not F-pure for any $n\geq 2$.
\end{example}

\subsection{F-purity of the associated graded ring}

Let $R$ be a ring, $I \subseteq R$ be an ideal and $M$ an $R$-module. An $I$-filtration of $M$ is a collection of submodules $\mathbb{G} = \{G_n\}_{n = 0}^\infty$ of $M$ such that $M=G_0$,  $G_n \supseteq G_{n+1}$ for all $n$, and $I^n G_m \subseteq G_{n+m}$ for all $n,m \geq 0$. The filtration is called \emph{separated} if $\bigcap_{n \geq 0} G_n = (0)$.
If $\mathbb{G}$ is separated, for $0\ne x \in M$ we let $\ord_{\mathbb G}(x) = \max\{i \mid x \in G_i\}$.

\begin{lemma} \label{lemma filtration}
Let $(R,\m)$ be a local ring and $G,H,M,N$ be finitely generated $R$-modules, equipped respectively with separated $\m$-filtrations $\mathbb{G},\mathbb{H}, \mathbb{M}$, and $\mathbb{N}$. Assume that $G$ is free. If $\varphi \in \Hom_R(G,H)$, $\psi \in \Hom_R(M,N)$ and $\alpha \in \Hom_R(H,N)$ are compatible with the filtrations on the given modules, with $\psi$ surjective, then there exists $\beta \in \Hom_R(G,M)$ compatible with the filtrations on $G$ and $M$ that makes the following diagram commute:
\[
\xymatrix{
G \ar@{-->}[d]_-{\beta} \ar[r]^-\varphi & H \ar[d]^-{\alpha} \\
M \ar@{>>}[r]_-\psi & N
}
\]
In particular, we have an induced commutative square with degree preserving maps:
\[
\xymatrix{
\gr_{\mathbb{G}}(G) \ar[d]_-{\gr(\beta)} \ar[r]^-{\gr(\varphi)} & \gr_{\mathbb{H}}(H) \ar[d]^-{\gr(\alpha)} \\
\gr_{\mathbb{M}}(M) \ar[r]_-{\gr(\psi)} & \gr_{\mathbb{N}}(N)
}
\]
\end{lemma}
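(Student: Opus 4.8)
\textbf{Proof strategy for Lemma~\ref{lemma filtration}.}

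The plan is to construct $\beta$ by lifting a basis of the free module $G$ through the surjection $\psi$, exactly as one would in the non-filtered setting, and then check that the lift can be chosen to respect the orders. First I would fix a free basis $e_1, \ldots, e_r$ of $G$, and for each $i$ record the order $d_i \coloneqq \ord_{\mathbb G}(e_i)$, so that $e_i \in G_{d_i}$. The element $\alpha(\varphi(e_i)) \in N$ then lies in $\mathbb N_{d_i}$ because $\varphi$ and $\alpha$ are compatible with the filtrations and $\varphi(e_i) \in \mathbb H_{d_i}$. The key point is that $\psi$ is compatible with the filtrations and surjective; however compatibility alone gives only $\psi(\mathbb M_{d_i}) \subseteq \mathbb N_{d_i}$, not the reverse inclusion. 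So the first real task is to argue that we may still lift $\alpha(\varphi(e_i))$ to an element $m_i \in \mathbb M_{d_i}$ with $\psi(m_i) = \alpha(\varphi(e_i))$.

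For this I expect one needs either a mild additional hypothesis or a standard maneuver: in the intended application (see the proof of Theorem~\ref{thm associated graded}) all filtrations in sight are $\m$-adic or are the $\m$-adic filtration shifted/truncated, and for a surjection $\psi \colon M \to N$ of finitely generated modules over a Noetherian local ring the Artin--Rees lemma guarantees that $\psi(\m^n M) = \m^n N$ for all $n$ (or at least for $n$ large, which after a shift is enough), i.e. $\psi$ is strict. I would therefore either state the strictness of $\psi$ as part of what ``compatible with the filtrations'' is taken to mean here, or invoke Artin--Rees to get it, and then lift each $\alpha(\varphi(e_i))$ to $m_i \in \mathbb M_{d_i}$. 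Defining $\beta(e_i) \coloneqq m_i$ and extending $R$-linearly gives an $R$-linear map $G \to M$; since $\beta(G_n) = \beta(\sum_i \m^{n - d_i} e_i + \cdots)$ expands into $\m$-multiples of the $m_i$ lying in the appropriate filtration pieces, $\beta$ is compatible with $\mathbb G$ and $\mathbb M$, and $\psi \circ \beta = \alpha \circ \varphi$ holds by construction on the basis, hence everywhere.

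The passage to associated graded modules is then formal: a filtered map $f \colon (P, \mathbb P) \to (Q, \mathbb Q)$ of order zero induces $\gr(f) \colon \gr_{\mathbb P}(P) \to \gr_{\mathbb Q}(Q)$ sending the class of $x \in \mathbb P_n$ in $\mathbb P_n/\mathbb P_{n+1}$ to the class of $f(x)$ in $\mathbb Q_n/\mathbb Q_{n+1}$, which is well-defined because $f(\mathbb P_{n+1}) \subseteq \mathbb Q_{n+1}$, and this assignment is functorial and additive; applying it to the commutative square $\psi \circ \beta = \alpha \circ \varphi$ yields the commutative square $\gr(\psi) \circ \gr(\beta) = \gr(\alpha) \circ \gr(\varphi)$ of degree-preserving maps. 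The main obstacle is genuinely the strictness/surjectivity subtlety in lifting into the correct filtration piece of $M$: once that is pinned down (via Artin--Rees or by the precise meaning of ``compatible'' in force here), everything else is the usual diagram-chase for projective lifts together with the elementary functoriality of $\gr$.
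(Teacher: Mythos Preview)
Your approach is exactly the one the paper takes: pick a basis $e_1,\ldots,e_r$ of $G$, record $\nu_i=\ord_{\mathbb G}(e_i)$, lift each $\alpha(\varphi(e_i))\in N_{\nu_i}$ through $\psi$ to some $y_i\in M_{\nu_i}$, set $\beta(e_i)=y_i$, and then check $\beta(G_n)\subseteq M_n$ using $G_n=\sum_j \m^{n-\nu_j}e_j$. You are right to flag the strictness issue---the paper simply writes ``since $\psi$ is surjective, there exists $y_i\in M_{\nu_i}$'' without further comment, and likewise uses $G_n=\sum_j R_{n-\nu_j}e_j$ as if it were the definition; both points are justified only because in the sole application (Proposition~\ref{prop initial form Gorenstein}) all filtrations on the free modules are \emph{defined} by shifting the $\m$-adic filtration on the basis elements, and the surjection $S/I^{[p^e]}\twoheadrightarrow S/I$ is visibly strict for the induced $\m$-adic filtrations.
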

\begin{proof}
Let $e_1,\ldots,e_n$ be a basis of $G$ and define $\nu_i = \ord_{\mathbb G} (e_i)$. By the compatibility of maps $\alpha(\varphi(e_i)) \in N_{\nu_i}$ and, since $\psi$ is surjective, there exists $y_i \in M_{\nu_i}$ such that $\psi(y_i) = \alpha(\varphi(e_i))$. Since $G$ is free, we can just define $\beta(e_i) = y_i$ on the basis of $G$. Then the diagram commutes by construction, and moreover
\[
\beta(G_i) = \beta(\sum_{j=1}^n R_{i-\nu_j}e_j) \subseteq \sum_{j=1}^n R_{i-\nu_j}\beta(e_j) \subseteq \sum_{j=1}^n R_{i-\nu_j}N_{\nu_j} \subseteq N_i,
\]
where $R_n = 0$ if $n<0$, $R_0=R$ and $R_n = \m^n$ for $n>0$.
\end{proof}

\begin{proposition} \label{prop initial form Gorenstein}
Let $(S,\m, \kk)$ be an F-finite regular local ring, $I \subseteq S$ be an ideal, and $R=S/I$. Let $T=\gr_\m(S) \cong\kk[x_1,\ldots,x_n]$ and let $J = \IN(I) \subseteq T$, so that $\gr_\m(R) \cong T/J$. Assume that $\gr_\m(R)$ is Gorenstein. Then there exists $f \in S$ such that, for all $e >0,$ one has $I^{[p^e]}:_S I  = (f_e) + I^{[p^e]}$ with $f_e = f^{1+p+\cdots+p^{e-1}}$, and $J^{[p^e]} :_T J = (g_e) + J^{[p^e]}$ with $g_e = \IN(f)^{1+p+\cdots+p^{e-1}}$. Moreover, $g_e \in T$ has degree $(p^e-1)(n+a)$, where $a$ is the $a$-invariant of $\gr_\m(R)$.
\end{proposition}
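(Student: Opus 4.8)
## Proof proposal

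The plan is to reduce the statement to two facts: first, that the "colon ideal" describing $\Hom$ into the Frobenius power is principal modulo $J^{[p^e]}$ (resp. $I^{[p^e]}$) because the rings are Gorenstein, and second, that taking initial forms is compatible with minimal free resolutions in a way that identifies the two generators. Concretely, since $\gr_\m(R) = T/J$ is Gorenstein, Vraciu's lemma (\cite{VraciuGorTightClosure}, as used in Lemma~\ref{LemmaColonIdeal}) gives that $J^{[p^e]} :_T J = (g_e) + J^{[p^e]}$ for a single element $g_e$; similarly $I^{[p^e]} :_S I = (h_e) + I^{[p^e]}$ for a single element $h_e$ in $S$. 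The multiplicative structure $f_e = f^{1+p+\cdots+p^{e-1}}$ comes from the fact that the comparison map $\cF^e_S(\mathbf{M}_\bullet) \to \mathbf{M}_\bullet$ (Remark~\ref{rmk: Peskine-Szpiro}) at the top spot is the $p^e$-th "divided" iterate of the comparison map for $e=1$: if $\alpha_{n-d}\colon S \to S$ is multiplication by $f$ for $e = 1$, then for general $e$ it is multiplication by $f^{1+p+\cdots+p^{e-1}}$, because $\cF^e_S = \cF^{e-1}_S \circ \cF^1_S$ and $\cF^{e-1}_S$ raises matrix entries to the power $p^{e-1}$. This is exactly the standard computation behind the fact that for a Gorenstein ring the "Fedder element" is a $(p^e-1)$-st power in the hypersurface case and, more generally, behaves multiplicatively; I would record it as a short lemma or cite it.

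The heart of the argument is then the following: I would choose a minimal free resolution $\mathbf{G}_\bullet$ of $R = S/I$ over $S$ such that its "initial form" (the complex whose maps are given by the lowest-degree homogeneous parts, with suitable shifts recording the orders of the generators) is a minimal free resolution $\overline{\mathbf{G}}_\bullet$ of $\gr_\m(R) = T/J$ over $T$. That such an adapted resolution exists is where Lemma~\ref{lemma filtration} enters: filtering $S$ by powers of $\m$ and each free module by the induced filtration, one constructs the resolution of $R$ compatibly with filtrations, and the associated graded complex is then a (necessarily minimal, by the degree bookkeeping) resolution of $T/J$. Applying the Peskine–Szpiro functor $\cF^e_S$ to $\mathbf{G}_\bullet$ and $\cF^e_T$ to $\overline{\mathbf{G}}_\bullet$, and invoking that $\cF^e$ is just "raise entries to the $p^e$-th power," one sees that the top comparison maps are compatible: the initial form of the Fedder element $h_e \in S$ for $R$ is (up to unit) the Fedder element $g_e \in T$ for $\gr_\m(R)$. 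Setting $f := h_1$, uniqueness of $h_1$ modulo $I^{[p]}$ forces $h_e = f^{1+p+\cdots+p^{e-1}}$ modulo $I^{[p^e]}$, and $\IN(f^{1+p+\cdots+p^{e-1}}) = \IN(f)^{1+p+\cdots+p^{e-1}} = g_e$ provided $\IN$ is multiplicative on the relevant elements — which holds because $T/J$ is a domain... no, $T/J$ need not be a domain, so instead I would argue that $\IN(h_e)$ is a nonzerodivisor generator of $J^{[p^e]}:_T J$ modulo $J^{[p^e]}$ and that such generators are unique up to unit, then identify $\IN(h_e)$ with $\IN(h_1)^{1+p+\cdots+p^{e-1}}$ by comparing the two resolutions at each level of the tower $\cF^e = \cF^{e-1}\circ\cF^1$.

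For the degree count: $g_e$ is the top comparison-map entry for the minimal free resolution of $T/J^{[p^e]}$ mapping to that of $T/J$. The resolution $\overline{\mathbf{G}}_\bullet$ of the Gorenstein graded ring $T/J$ has length $n - d$ (where $d = \dim \gr_\m(R)$) and its top free module is $T(-(n+a))$ by the structure of the graded canonical module: $\omega_{T/J} = \Ext^{n-d}_T(T/J, T(-n)) \cong (T/J)(a)$, so $\Ext^{n-d}_T(T/J, T) \cong (T/J)(n+a)$, forcing the top term of the resolution to be $T(-(n+a))$. Applying $\cF^e_T$ multiplies all twists by $p^e$, so the top term of the resolution of $T/J^{[p^e]}$ is $T(-p^e(n+a))$, and the comparison map $T(-p^e(n+a)) \to T(-(n+a))$ is multiplication by a homogeneous element of degree $p^e(n+a) - (n+a) = (p^e-1)(n+a)$, which is $\deg g_e$. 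The main obstacle I anticipate is the careful bookkeeping in constructing the $\m$-adically adapted minimal resolution of $R$ and verifying that its associated graded is genuinely a \emph{minimal} resolution of $\gr_\m(R)$ (this uses that $\gr_\m(R)$ is Gorenstein, so its resolution is forced to have the right shape, together with Lemma~\ref{lemma filtration} to lift maps), and then tracking the top comparison map through $\cF^e$ compatibly on both sides — the rest is formal once that diagram is in place.
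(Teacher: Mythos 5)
Your overall strategy closely mirrors the paper's: construct a free resolution of $S/I$ whose associated graded complex resolves $T/J$, apply the Peskine--Szpiro functor, track the top comparison map via Lemma~\ref{lemma filtration}, invoke Vraciu's lemma to identify the colon ideals as principal modulo the Frobenius power, and read off the degree via the $a$-invariant. The multiplicative identity $f_e = f^{1+p+\cdots+p^{e-1}}$ from factoring the comparison map is also the paper's argument.

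However, there is a genuine gap in the way you set up the resolution. You propose to ``choose a minimal free resolution $\mathbf{G}_\bullet$ of $R = S/I$ over $S$'' whose initial forms constitute a minimal free resolution of $T/J$, and you attribute the existence of such an adapted resolution to Lemma~\ref{lemma filtration}. Neither claim is correct. In general the Betti numbers satisfy only $\beta_i^S(S/I) \le \beta_i^T(T/J)$, and when the inequality is strict there is \emph{no} filtration on the minimal $S$-free resolution of $S/I$ whose associated graded is a resolution of $T/J$ at all (exactness fails). What one needs is the nontrivial theorem of Robbiano \cite{Robbiano} (cf.\ \cite{RS1,RS2}): there exists a \emph{possibly non-minimal} $S$-free resolution $\mathbf{M}_\bullet$ of $S/I$ together with a separated filtration such that $\gr_{\mathbb M}(\mathbf{M}_\bullet)$ is the minimal free resolution of $T/J$. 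Lemma~\ref{lemma filtration} does something different --- it lifts a commutative square of filtered maps --- and is used later for the comparison map $\cF^e_S(\mathbf{M}_\bullet)\to\mathbf{M}_\bullet$, not to manufacture the adapted resolution. Note that the Gorenstein hypothesis on $T/J$ still forces the top free module of $\mathbf{M}_\bullet$ to be of rank one (its ranks agree with those of $\gr_{\mathbb M}(\mathbf{M}_\bullet)$, and the minimal resolution of a graded Gorenstein quotient ends in $T(-(n+a))$), which is what one actually needs; minimality of $\mathbf{M}_\bullet$ itself is neither needed nor available.

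A minor remark: your worry about multiplicativity of $\IN$ is unfounded and the detour through nonzerodivisor uniqueness is unnecessary. The initial forms live in $T = \gr_\m(S)$, which is a polynomial ring and hence a domain, so $\IN(ab) = \IN(a)\IN(b)$ holds automatically; whether $T/J$ is a domain plays no role here.
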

\begin{proof} A result due to Robbiano \cite{Robbiano} (see also \cite{RS1,RS2}) shows that there is a free resolution $\mathbf{M}_\bullet$ of $S/I$, possibly not minimal, and a separated filtration $\mathbb{M}$ of $\mathbf{M}_\bullet$ such that $\gr_{\mathbb{M}}(\mathbf{M}_\bullet)$ is a minimal free resolution of $T/J$. More specifically, if $M_i = \bigoplus_{j=1}^t Sw_j$ is a free module appearing in $\mathbf{M}_\bullet$, then $\mathbb{M}$ is obtained by appropriately assigning degrees $\nu_1,\ldots,\nu_t$ to the basis elements $w_1,\ldots,w_t$, and then setting $(M_i)_\nu = \bigoplus_{j=1}^t S_{\nu-\nu_j} w_j$, with $S_{<0}=0$, $S_0=S$ and $S_\ell = \m^\ell$ for $\ell>0$.

By applying the Peskine-Szpiro functor $\cF_S^e$ to $M_\bullet$ (see Remark~\ref{rmk: Peskine-Szpiro}), we obtain 
a free resolution $\mathbf{N}_\bullet$ of $S/I^{[p^e]}$. Moreover, for any free module $N_i = \bigoplus_{j=1}^t S w_{e,j}$ appearing in $\mathbf{N}_\bullet$, if we let $\ord(w_{e,j}) = p^e \ord(w_j) = p^e\nu_j$ and set $(N_i)_\nu = \bigoplus_{j=1}^t S_{\nu - p^e\nu_j}w_{e,j}$, then this gives a separated filtration $\mathbb{N}$ of $\mathbf{N}_\bullet$ such that ${\rm gr}_{\mathbb{N}}(\mathbf{N}_\bullet)$ is a minimal graded free resolution of $T/J^{[p^e]}$. This is because ${\rm gr}_{\mathbb{N}}(\mathbf{N}_\bullet)$ coincides with the complex whose maps are given by raising to the power $p^e$ the entries of the matrices representing the maps of ${\rm gr}_{\mathbb M}(\mathbf{M}_\bullet)$, that is, $\cF_T^e({\rm gr}_{\mathbb M}(\mathbf{M}_\bullet))$ (see Remark~\ref{rmk: Peskine-Szpiro}).

The natural map $S/I^{[p^e]} \twoheadrightarrow S/I$ induces a comparison map between the free resolutions which, thanks to a repeated application of Lemma \ref{lemma filtration}, preserves the filtrations:
\[
\xymatrix{
0 \ar[r] & S \ar[r] & \cdots \ar[r] & S \ar[r] & S/I \ar[r] & 0 \\
0 \ar[r] & S \ar[u]^-{\alpha} \ar[r] & \cdots \ar[r] & S \ar@{=}[u] \ar[r] & S/I^{[p^e]} \ar@{->>}[u] \ar[r] & 0
}
\]
We focus on the last map $\alpha\colon S \to S$, and we notice that it is indeed a map between cyclic $S$-modules because $\gr_{\mathbb{M}}(\mathbf{M}_\bullet)$ and ${\rm gr}_{\mathbb N}(\mathbf{N}_\bullet)$ are minimal free resolutions of $T/J$ and of $T/J^{[p^e]}$, respectively, and $T/J$ is assumed to be Gorenstein. As already explained above, if $w_e$ is the chosen basis of the source and $w$ that of the target, then $\ord(w_e) = p^e\ord(w)$. It follows that $\alpha\colon S w_e \to S w$ is the multiplication by an element $f_e \in S_{\nu(p^e-1)}$. If we pass to the associated graded objects, the last map is $\gr(\alpha)\colon \gr_{\mathbb{N}}(S) \to \gr_{\mathbb{M}}(S)$, given by multiplication by $\IN(f_e) \in S_{\nu(p^e-1)}/S_{\nu(p^e-1)+1} \cong T_{\nu(p^e-1)}$. However, the last map in the comparison between the Gorenstein $\kk$-algebras $T/J^{[p^e]} \twoheadrightarrow T/J$ is well-understood: it is multiplication by a homogeneous element $\cdot g_e\colon T(-p^e(n+a)) \to T(-(n+a))$, where $a$ is the $a$-invariant of $T/J$. Our construction yields that $g_e$ coincides with $\IN(f_e) \in T_{\nu(p^e-1)}$, and by comparing degrees we also have that $\nu=n+a$. 
Furthermore, we have that $I^{[p^e]}:_SI = (f_e)+I^{[p^e]}$ and $J^{[p^e]}:_TJ = (g_e) + J^{[p^e]}$ with $g_e=\IN(f_e)$ \cite[Lemma 1]{VraciuGorTightClosure} (see also \cite{DSNBFpurity}). 
Since the comparison map $S/I^{[p^e]} \to S/I$ factors as $S/I^{[p^e]} \to S/I^{[p^{e-1}]} \to \cdots \to S/I^{[p]} \to S/I$, it follows that $f_e = f_1 \cdot f_1^p \cdots f_1^{p^{e-1}}$, as claimed. Our previous considerations give that $g_e = \IN(f_e) = \IN(f_1)^{1+p+\cdots+p^{e-1}} = g_1^{1+p+\cdots +p^{e-1}}$, which has degree $\nu(p^e-1) = (n+a)(p^e-1)$ as desired.
\end{proof}

\begin{remark} We recall that the $a$-invariant of a positively graded $\kk$-algebra $R$, which appears in the last shift of a resolution in the proof of Proposition \ref{prop initial form Gorenstein}, is $a(R) = \max\{j \mid H^{\dim(R)}_\m(R)_j \ne 0\}$, where $\m$ is the ideal generated by the elements of $R$ of positive degree.
\end{remark}

\begin{theorem} \label{thm associated graded}
Let $(R,\m)$ be an F-finite local ring such that $\gr_\m(R)$ is Gorenstein. If $\gr_\m(R)$ is F-pure, then $\fpt(R) =  -a(\gr_\m(R)) = \fpt(\gr_\m(R))$ and, in particular, $R$ is F-pure.

If $\gr_\m(R)$ is Gorenstein and strongly F-regular, then $\s(\gr_\m(R)) \leq \s(R)$ and, in particular, $R$ is strongly F-regular.
\end{theorem}

\begin{proof}
Observe that if $\widehat{R}$ is F-pure then so is $R$, and in this case $\fpt(\widehat{R}) = \fpt(R)$. Similarly, if $\widehat{R}$ is strongly F-regular then so is $R$, and in this case $\s(\widehat{R}) = \s(R)$. Since $\gr_\m(R) \cong \gr_{\widehat{\m}}(\widehat{R})$, we may assume that $R$ is complete. 
We then write $R=S/I$ 
where $S = \kk\ps{x_1,\ldots,x_n}$ and we let $\n = (x_1,\ldots,x_n)$. 
We may assume that $I \subseteq \n^2$.

Let $f$ be as in Proposition \ref{prop initial form Gorenstein}, and let $\nu$ be its $\n$-adic order, so that we can write $f=g+F$ with $F \in \n^{\nu + 1}$ and $g \in \n^\nu \smallsetminus \n^{\nu+1}$. 
Since $\gr_\m(R)$ is F-pure, by Fedder's criterion the support of $g = \IN(f)$ contains a monomial
$y_1^{\beta_1}\cdots y_n^{\beta_n}$ such that $\beta_i<p$ for all $i=1,\ldots,n$. 
It follows from Proposition~\ref{prop initial form Gorenstein} that $R$ is F-pure as well. Note that 
$\fpt(\gr_\m(R)) = -a(\gr_\m(R))$ \cite[Theorem B]{DSNBFpurity}, 
so it is only left to compute the F-pure threshold of $R$.

Let $a=a(\gr_\m(R))$. By Proposition~\ref{prop initial form Gorenstein} 
we obtain that $f_e = f^{1+p+\cdots+p^{e-1}} = g_e+F_e$ where $F_e \in \n^{\nu_e+1}$ and $g_e = g^{1+p+\cdots+p^{e-1}}$ an element of order $\nu_e=(p^e-1)(n+a)$  containing the monomial $\ul{x}^{\ul{\beta}(1+p+\cdots+p^{e-1})} \notin \m^{[p^e]}$ in its support. 
Consider the divided power differential operators 
$\delta_e\coloneqq \partial^{(\ul{\beta}(1+p+\cdots+p^{e-1}))} \in D^{(\nu_e,e)}_S$. Since $\delta_e (F_e) \subseteq \delta_e (\n^{\nu_e+1}) \subseteq \n$, we conclude that $\delta_e(f_e) \equiv \delta_e(g_e) \bmod \n$, and thus $\left(\delta_e(I^{[p^e]}:I) \right)= S$. On the other hand, $D^{(\nu_e-1,e)}_{S}(f_e) \subseteq D^{\nu_e-1}_S(\n^{\nu_e}) \subseteq \n$. It follows that $\Theta_e(I) = \nu_e-1 = (p^e-1)(n+a)-1$, and hence $\fpt(R) = -a$ follows from Theorem \ref{thm: main formula}.

We now proceed to the second assertion. 
Let $T = \kk[y_1,\ldots,y_n]$ and $\eta=(y_1,\ldots,y_n)T$
and write $G \coloneqq \gr_\m(R) = T/J$. By the Fedder-like formula for the F-splitting ideals (see Remark \ref{rem: Fedder}), we have that 
$I_e(R) = \left( (\n^{[p^e]}:_S (I^{[p^e]} :_S I) \right)R
= \left( (\n^{[p^e]}:f_e)\right) R$. Since passing to initial ideals does not change colengths, we may compute that 
\begin{align*}
\ell_R\left(R/I_e(R)\right)
= \ell_{S}\left(S/(\n^{[p^e]}:f_e)\right)
= \ell_{\gr_\m(S)}\left(\gr_\m(S)/(\IN(\n^{[p^e]}:f_e))\right).
\end{align*}
After noting that $\eta^{[p^e]} = \IN(\n^{[p^e]})$, we obtain a similar expression
\[
\ell_G (G/I_e(G)) = 
\ell_T\left(T/(\eta^{[p^e]}:g_e)\right)
=\ell_{\gr_\m(S)}\left(\gr_\m(S)/(\IN(\n^{[p^e]}):\IN(f_e))\right).
\]
Since $\IN(\n^{[p^e]}:f_e) \subseteq \IN(\n^{[p^e]}):\IN(f_e)$, 
we immediately obtain that $\ell_G (G/I_e(G)) \leq \ell_R\left(R/I_e(R)\right)$.
Because  $\dim (R) = \dim (G)$, the desired bound is now immediate from the definition of F-signature:
\begin{align*}
\s(G)
= \lim_{e\to \infty} \frac{\ell_{\gr_\m(S)}\left(\gr_\m(S)/I_e(G)\right)}{p^{e(\dim (G))}} 
\leq \lim_{e\to \infty} \frac{\ell_{R}\left(R/I_e(R)\right)}{p^{e(\dim (G))}}  = \s(R).
\end{align*}
Finally, since positivity of the F-signature characterizes strong F-regularity \cite{AL}, the above inequality concludes the proof.
\end{proof}

The following example, suggested by the referee, shows one use of Theorem \ref{thm associated graded}. 
\begin{example}
Let $\kk$ be a field of characteristic $p>0$, $T=\kk[x_1,\ldots, x_d] $ with the standard grading, and $\m=(x_1,\ldots, x_d)$.
Let $f\in T$ be an homogeneous element such that $G= T/(f)$ is F-pure. Let $\epsilon\in \m^{\deg(f)+1}.$  If $R=T_\m/(f+\epsilon)T_\m$, then $G\cong \gr_\m(R)$ and $\dfpt(G)=\dfpt(R)$
by Theorem \ref{thm associated graded}. Furthermore, if $G$ is strongly F-regular, then $s(G)\leq s(R)$ by Theorem \ref{thm associated graded}.
\end{example}

We note that the assumption that $\gr_\m(R)$ is Gorenstein is needed in Theorem \ref{thm associated graded} in order to conclude that $R$ is F-pure when $\gr_\m(R)$ is F-pure. This is not too surprising since passing from $\gr_\m(R)$ to $R$ can be seen as a flat deformation $\kk[T^{-1}] \to R[\m T,T^{-1}]$ from the special fiber to a generic fiber, and it is known that F-purity deforms for Gorenstein rings \cite{FedderFputityFsing} (or, more generally, for $\QQ$-Gorenstein rings \cite{PolstraSimpson,SchwedeFAdj}), but not in general.

\begin{example}[{\cite{FedderFputityFsing,SinghDef}}]\label{ExRef}
Let $\kk$ be a field of characteristic $p>0$, $T=\kk\ps{x,y,z,w}$ and $I=(xy,xz,z(y-w^n))$ with $n\geq 2$. Let $R=T/I$ and $\m=(x,y,z,w)R$. Then $R$ is not F-pure though ${\rm gr}_\m(R) \cong \kk[x,y,z,w]/(xy,xz,yz)$ is F-pure.
\end{example}

We apply Theorem~\ref{thm associated graded} to control the defect of the F-pure threshold along the blowup at the maximal ideal in the case in which the associated graded ring is F-pure and Gorenstein.

\begin{proposition} \label{prop: blowup}
Let $(R,\m)$ be an F-finite local ring,  let $\pi\colon X \to \Spec (R)$ be the blowup of $R$ at $\m$, and $S$ be any local ring of $X$. 
If $\gr_\m(R)$ is Gorenstein and F-pure, then 
$\dfpt(S) \leq \dfpt(R)$. 
\end{proposition}
\begin{proof}
By definition, 
$X$ is the Proj of the extended Rees algebra  $\cR = R[\m T, T^{-1}]$ and $S$ is its homogeneous localization at a homogeneous prime ideal $P\in \Proj(\cR)$. By Theorem~\ref{thm semicont} we may assume that $S$ corresponds to a closed point and, since $\pi$ is an isomorphism over the punctured spectrum, 
we may assume that $P$ contracts to $\m$, i.e., $T^{-1} \in P$. 
Note that $\cR /T^{-1}\cR \cong \gr_\m(R)$, so $\cR_P /T^{-1}\cR_P = (\gr_\m(R))_P$ is Gorenstein and F-pure. 
Since $T^{-1}$ is a regular element, 
$\dfpt (\cR_P) \leq \dfpt((\gr_\m(R))_{P})$ by Proposition~\ref{prop hyperplane}. 
Furthermore, by the localization inequality and Theorem~\ref{thm associated graded},
\[
\dfpt((\gr_\m(R))_{P}) \leq \dfpt(\gr_{\m} (R)) = \dfpt (R).
\]
It is left to compare the homogeneous localization $S = \cR_{(P)}$ with $\cR_P$.

Because $P$ is a point of $\Proj(\cR)$, it does not contain  $x = fT$ for some $f \in \m$. Hence,  $\cR_P$ is the localization of $A = \cR[x^{-1}]$ at the image of $P$. If we let $B = \{gx^{-n} \mid n \in \NN,$ $g \notin P$ homogeneous with $\deg(g)=n\}$, then $B$ is the subring of $A$ consisting of homogeneous elements of degree zero. Note that $A = B [x, x^{-1}]$; in fact, for any homogeneous $g \in \cR$ of degree $n$ one can write $g=(gx^{-n})x^n$. We akso have that $\cR_{(P)}$ is the localization of $B$ at the prime ideal $Q  = PA \cap B$.  Note that $QA = Q[x, x^{-1}]$ is still a prime ideal and $A_{QA} = B[x, x^{-1}]_{Q[x, x^{-1}]} \cong B_Q (y) \cong \cR_{(P)}(y)$, where $y$ is a variable over $B_Q$ and $B_Q(y) = (B_Q[y])_{QB_Q[y]}$. Similarly for $\cR_{(P)}(y)$.
Since the defect of the F-pure threshold can only decrease when localizing we get
\[
\dfpt (\cR_{(P)}) = \dfpt (\cR_{(P)}(y))
= \dfpt(A_{QA}) \leq \dfpt(A_{PA}) = \dfpt(\cR_P),
\]
and the assertion follows.
\end{proof}

\subsection{Behavior under flat extensions}
\label{sub: flat}

The intuition about how invariants typically behave under faithfully flat maps suggests that, given a flat local map of rings $(A,\m_A) \to (R,\m_R)$, we should expect the inequalities
\begin{equation}
\label{ineqFlat}
\dfpt (A) \leq \dfpt (R) \leq \dfpt (A) + \dfpt (R/\m_A R).
\end{equation}
However, some assumptions are needed in order for this to hold. Indeed, as remarked by Hashimoto \cite[Remark~2.17]{Hashimoto}, Singh's counterexample to deformation of strong F-regularity gives a faithfully flat map $k[T]_{(T)} \to R$ such that $R$ is not F-pure but the closed fiber is strongly F-regular \cite[Proposition~4.3 and Proposition~6.2]{singh99}.
Thus, it is only natural to assume that $R$ is F-pure. Note that  $A$ is also F-pure in this case \cite[Proposition~5.13]{HRFpurity}.

\begin{remark}
It is known that F-purity descends under arbitrary pure maps $A \to R$ \cite[Proposition~5.13]{HRFpurity},
so it is natural to ask if this result can be generalized to F-pure thresholds. 
However, the naive inequality $\dfpt (R) \geq \dfpt (A)$ does not hold: 
if we take $R = \kk\ps{x,y}$ and $A = \kk\ps{x^2, xy, y^2}$ 
then $\dfpt (R) = 0$ and 
$\dfpt (A) = 1$  \cite[Example~2.4]{TakagiWatanabe}. 
\end{remark}
If we add conditions on the closed fiber of the map, we do get some positive answers as in Corollary \ref{coroll A1}.
Furthermore, thanks to Corollary \ref{coroll A1} we can prove one Inequality (\ref{ineqFlat}) only assuming that the closed fiber is reduced.

\begin{proposition} \label{prop: ineq flat 1}
Let $(A,\m_A) \to (R,\m_R)$ be a faithfully flat map of F-finite and F-pure local rings. 
If $R/\m_A R$ is reduced,  then  
$\dfpt(R) \geq  \dfpt(A)$.
\end{proposition}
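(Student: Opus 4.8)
The plan is to reduce the general reduced-fiber case to the already-proved regular-fiber case of Corollary~\ref{coroll A1}, by inserting an intermediate ring. Since the fiber $R/\m_A R$ is a reduced F-finite local ring (it is reduced by hypothesis, and F-finiteness passes to the fiber), it is in particular generically regular, so there is a parameter-like chain we can exploit. The key observation is: the fiber $R/\m_A R$ being reduced means that $\dfpt(R/\m_A R) \geq \dim(R/\m_A R) - \depth(R/\m_A R) \geq 0$ via Lemma~\ref{lem: compatible}, but more importantly it allows us to find elements behaving well. Concretely, I would argue as follows. First, by Corollary~\ref{coroll A1}, if the fiber were regular we would have equality $\dfpt(R) = \dfpt(A)$, which is the desired inequality (with equality). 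So the content is in passing from ``regular fiber'' to ``reduced fiber''.

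The approach I would take: localize and complete so that we may assume $A$ and $R$ are complete local; the invariants $\dfpt$ and F-purity are unaffected (by Remark~\ref{rem: local theta} and the fact that F-purity ascends and descends under completion for F-finite rings). Write $A = A_0/\mathfrak{b}$ with $A_0$ regular F-finite local, and lift to get $R = R_0 / \mathfrak{b}R_0$ where $A_0 \to R_0$ is a flat local map with regular closed fiber, e.g. $R_0 = R \widehat{\otimes}_A A_0$-type construction — actually cleaner is to use Cohen's structure theorem to factor $A \to R$ through $A \to A\ps{y_1, \ldots, y_t} \to R$ where $t = \dim(R/\m_A R)$ and the second map is surjective with kernel $\mathfrak{q}$ defining the reduced fiber ring $R/\m_A R = (A/\m_A)\ps{y_1, \ldots, y_t}/\overline{\mathfrak{q}}$. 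The intermediate ring $B \coloneqq A\ps{y_1, \ldots, y_t}$ is flat over $A$ with regular closed fiber, so by Corollary~\ref{coroll A1} we have $\dfpt(B) = \dfpt(A)$. It now suffices to show $\dfpt(R) \geq \dfpt(B)$ for the surjection $B \twoheadrightarrow R$ whose kernel $\mathfrak{q}$ has the property that $B/(\m_A B + \mathfrak{q}) = R/\m_A R$ is reduced (equivalently: $\mathfrak{q}$ reduces to a radical ideal mod $\m_A$).

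For this last step I would use Theorem~\ref{thm: main formula} together with a colon-ideal comparison in the spirit of Lemma~\ref{LemmaColonIdeal} and Proposition~\ref{prop hyperplane}. Presenting $B = S/I$ with $S$ regular F-finite local and $R = S/J$ with $I \subseteq J$, I want to show $\Theta_e(J) \geq \Theta_e(I)$ for all $e$; taking limits and using that $\height(J) = \height(I) + t$ while $\dim(R) = \dim(B) - t$ then yields $\dfpt(R) = \Theta(J) - \height(J) \geq \Theta(I) - \height(I) = \dfpt(B) = \dfpt(A)$. The inequality $\Theta_e(J) \geq \Theta_e(I)$ should follow because, when the fiber $B/(\m_A B + \mathfrak{q})$ is reduced, the colon ideal $J^{[p^e]} :_S J$ contains a suitable ``Fedder element'' that is at least as deep in the $\m$-adic filtration as one for $I^{[p^e]} :_S I$ — the reducedness of the fiber is exactly what guarantees that the relevant splitting survives, via the descent of F-purity under pure maps \cite[Proposition~5.13]{HRFpurity} applied fiberwise, and it prevents the order from dropping.

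\textbf{Main obstacle.} The hard part will be the very last step: controlling $\Theta_e(J)$ versus $\Theta_e(I)$ when the fiber is merely reduced rather than regular. In the regular-fiber case one has the explicit clean formula from Proposition~\ref{prop A1} ($I_e(R) = I_e(A)R + J^{[p^e]}$ for the parameter ideal $J$), and there is no analogue of this when the fiber is only reduced, so one cannot compute Loewy lengths directly. I expect one must instead argue more indirectly: pick an element $c$ in the reduced fiber ring lying in the regular locus (which is dense), lift it, and use it as a test element to compare splitting ideals, combining Fedder's criterion (Theorem~\ref{FedderCriterion}) with the observation that for a reduced F-finite local ring the F-pure threshold is controlled by its behavior after inverting such a $c$, where the fiber becomes regular. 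Making this precise so that the inequality $\dfpt(R) \geq \dfpt(A)$ comes out cleanly — rather than an inequality going the wrong way or only a statement about a localization — is the delicate point, and it is presumably where the ``Gorenstein fiber'' refinement (Theorem~\ref{gorenstein flat}) diverges, since there one gets the sharper two-sided control.
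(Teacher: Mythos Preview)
Your instinct to reduce to the regular-fiber case of Corollary~\ref{coroll A1} is exactly right, but your chosen route---factoring through a surjection $B = A\ps{y_1,\ldots,y_t} \twoheadrightarrow R$ and then comparing $\Theta_e(J)$ to $\Theta_e(I)$---creates precisely the obstacle you identify and cannot resolve. There is no general mechanism forcing $J^{[p^e]}:_S J$ to sit at least as deep in the maximal-adic filtration as $I^{[p^e]}:_S I$ when $I \subseteq J$, and reducedness of the fiber does not obviously supply one. (There is also some confusion in your dimension bookkeeping: to obtain a surjection $A\ps{y_1,\ldots,y_t} \twoheadrightarrow R$ one needs $t \geq \edim(R/\m_A R)$, not $t = \dim(R/\m_A R)$, so your height and dimension formulas do not match up as written.)

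The paper's proof avoids this entirely by going in the opposite direction: rather than enlarging to a cover $B$ of $R$, it \emph{localizes} $R$. Pick any minimal prime $P$ of $\m_A R$. Then $A \to R_P$ is still faithfully flat and local, and its closed fiber $(R/\m_A R)_P$ is the localization of the reduced ring $R/\m_A R$ at a minimal prime---hence an Artinian reduced local ring, i.e.\ a field. So the fiber is regular, and Corollary~\ref{coroll A1} gives $\dfpt(R_P) = \dfpt(A)$. The inequality $\dfpt(R_P) \leq \dfpt(R)$ then comes for free from the upper semi-continuity already established in Theorem~\ref{thm semicont}. That is the whole proof.

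You actually anticipated this in your final paragraph but dismissed it as possibly yielding ``only a statement about a localization'' or an inequality ``going the wrong way.'' Semi-continuity is precisely what guarantees that localization moves $\dfpt$ the right way, so this concern is unfounded. The moral: once semi-continuity is available, localization is a much cheaper reduction tool than trying to control Fedder-type colon ideals under a surjection.
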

\begin{proof}
Let $P$ be a minimal prime of $\m_A R$ such that $\dim(R_P) = \dim(A)$. 
Then $\dfpt (R_P) \leq \dfpt (R)$ by Theorem \ref{thm semicont}, so it suffices to assume that $P = \m_R$. 
In this case, because $R/\m_A R$ is Artinian and reduced, it must be a field, 
so $\dfpt (R) = \dfpt (A)$ by Corollary~\ref{coroll A1}.
\end{proof}

Building on Ma's argument \cite[Proposition~5.4]{MaLC}, we settle a particular case of the second inequality in (\ref{ineqFlat}).

\begin{theorem} \label{gorenstein flat}
Let $(A,\m_A) \to (R,\m_R)$ be a faithfully flat map of F-finite and F-pure local rings with $R/\m_A R$ Gorenstein and F-pure. Then  
$\dfpt(R) \leq  \dfpt(A)+\dfpt(R/\m_A R)$.
\end{theorem}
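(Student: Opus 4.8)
The plan is to reduce to a comparison of the invariants $\Theta_e$ using the main formula of Theorem~\ref{thm: main formula}, following the structure of the proof of Proposition~\ref{prop A1} but replacing the regular closed fiber with a Gorenstein F-pure one. First I would complete everything: replace $A$ by $\widehat A$, $R$ by $\widehat R$, and the closed fiber by its completion, noting that F-purity, Gorensteinness, and all the relevant invariants are unaffected, and that the completed map is still faithfully flat with the same closed fiber. Then pick an F-finite regular local ring $(S,\n)$ surjecting onto $\widehat A$, write $\widehat A = S/I$, and choose generators of $\n$ so that the closed fiber $\widehat R/\m_A\widehat R$ is presented as $S'/I'$ for a regular local ring $S'$ adjoining finitely many variables $y_1,\dots,y_t$ to $S$. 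The point is to produce a regular presentation $\widehat R = T/K$ with $T$ F-finite regular and relate $K^{[p^e]}:_T K$ to $(I^{[p^e]}:_S I)$ and to the colon ideal computing the F-pure threshold of the closed fiber.

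The key algebraic input is a Gorenstein-flat base change statement for colon ideals of Frobenius powers, analogous to Lemma~\ref{LemmaColonIdeal}: since $\widehat R$ is flat over $\widehat A$ with Gorenstein closed fiber, a minimal free resolution of $\widehat A = S/I$ over $S$, base-changed, combines with a resolution of the closed fiber to resolve $\widehat R$ over $T$; applying the Peskine--Szpiro functor (Remark~\ref{rmk: Peskine-Szpiro}) and the comparison map $T/K^{[p^e]} \to T/K$ gives that the socle generator of $K^{[p^e]}:_T K$ is, modulo $K^{[p^e]}$, the product of a generator of $(I^{[p^e]}:_S I)T$ and a generator of the corresponding colon ideal for the closed fiber. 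This is exactly the kind of multiplicativity already exploited in the proofs of Proposition~\ref{prop hyperplane} and Theorem~\ref{thm m-adic}, just for a general Gorenstein flat extension rather than a hypersurface section. From this one reads off the inequality
\[
\Theta_e(K) \ge \Theta_e(I) + \Theta_e(I_{\mathrm{fib}}),
\]
where $I_{\mathrm{fib}}$ is a regular presentation ideal of the closed fiber, because any monomial of the product that survives modulo $\m^{[p^e]}$ in the two factors separately survives modulo $\m^{[p^e]}$ in $T$, giving a degree bound; conversely one may even get equality, but only the inequality above is needed. Dividing by $p^e$, letting $e \to \infty$, and invoking Theorem~\ref{thm: main formula} three times converts this into
\[
\dim(T) - \fpt(\widehat R) \ge \bigl(\dim(S) - \fpt(\widehat A)\bigr) + \bigl(\dim(S') - \dim(S) - \fpt(\widehat R/\m_A\widehat R)\bigr),
\]
and since $\dim(T) = \dim(S')$ and $\dim(\widehat R) = \dim(\widehat A) + \dim(\widehat R/\m_A\widehat R)$ by faithful flatness, subtracting dimensions gives precisely $\dfpt(\widehat R) \le \dfpt(\widehat A) + \dfpt(\widehat R/\m_A\widehat R)$.

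The main obstacle is the Gorenstein-flat base change for the colon ideals: one needs to check carefully that a resolution of $\widehat R$ over $T$ can be obtained as a ``tensor product'' of a base-changed resolution of $\widehat A$ with a resolution of the closed fiber over $S'$ (this is where flatness of $\widehat R$ over $\widehat A$ and the identification $\widehat R/\m_A\widehat R \cong S'/I_{\mathrm{fib}}$ enter), and that the last map in the comparison $T/K^{[p^e]}\to T/K$ factors as the product of the two lower comparison maps, so that Vraciu's description \cite{VraciuGorTightClosure} of $K^{[p^e]}:_T K$ as $(g_e)+K^{[p^e]}$ with $g_e$ that product applies. Here one should also be mindful of the subtlety, flagged around Proposition~\ref{prop A1}, that $R/\m_A R$ being F-pure and Gorenstein is genuinely used: it guarantees $\widehat R$ is F-pure (so the $\Theta_e$ are finite and Theorem~\ref{thm: main formula} applies) and makes the relevant modules cyclic. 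Once the base-change lemma is in hand, the rest is bookkeeping with dimensions and the limit formula, exactly as in the proofs of Proposition~\ref{prop hyperplane} and Theorem~\ref{thm associated graded}.
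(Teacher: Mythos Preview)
Your approach has two genuine problems, and they are linked.

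First, the direction of your key inequality is wrong. You claim $\Theta_e(K) \ge \Theta_e(I) + \Theta_e(I_{\mathrm{fib}})$ and then assert that this yields $\dfpt(R) \le \dfpt(A) + \dfpt(R/\m_A R)$. But run the arithmetic: by Theorem~\ref{thm: main formula}, $\Theta_e(K)/p^e \to \dim(T) - \fpt(R)$, and similarly for the other two; since $\dim(T)=\dim(S)+t$, your inequality gives $\fpt(R) \le \fpt(A)+\fpt(R/\m_A R)$, i.e.\ $\dfpt(R) \ge \dfpt(A)+\dfpt(R/\m_A R)$, the opposite of what you want. What the theorem actually needs is $\Theta_e(K)\le \Theta_e(I)+\Theta_e(I_{\mathrm{fib}})$ (equivalently $b_e(R)\ge b_e(A)+b_e(R/\m_A R)$).

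Second, the ``product formula'' you are relying on---that a generator of $K^{[p^e]}:_T K$ is, modulo $K^{[p^e]}$, the product of lifts of generators for $A$ and for the closed fiber---is simply false for a general faithfully flat map with Gorenstein fiber. Take the paper's own example: $A=\kk\ps{t}\to R=\kk\ps{x,y}$ with $t\mapsto xy$, so $T=\kk\ps{t,x,y}$, $K=(t-xy)$, and the closed fiber is $\kk\ps{x,y}/(xy)$. Here $K^{[p^e]}:K$ is generated by $(t-xy)^{p^e-1}$, which has $\m_T$-adic order $p^e-1$, while the ``product'' would be $1\cdot(xy)^{p^e-1}$ of order $2(p^e-1)$. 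So $\Theta_e(K)=p^e-1$ while $\Theta_e(I)+\Theta_e(I_{\mathrm{fib}})=0+2(p^e-1)$: your inequality fails, and in fact if your product formula held one would get \emph{equality} in the theorem, which this example refutes. The tensor-product-of-resolutions picture you sketch only works when $R$ is literally a (completed) tensor product over the residue field, as in Corollary~\ref{cor: thanks Karen}; flatness with Gorenstein fiber does not give that.

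The paper's proof avoids presentations and $\Theta_e$ entirely. It works directly with the splitting ideals: one takes $f\in\m_A^{b_e(A)}\setminus I_e(A)$ and $g\in\m_R^{b_e(S)}$ whose image avoids $I_e(S)$ (with $S=R/\m_A R$), and shows $fg\notin I_e(R)$, yielding $b_e(R)\ge b_e(A)+b_e(S)$. The argument that $fg\notin I_e(R)$ uses that $A$ is approximately Gorenstein: one chooses an approximating sequence $\{J_k\}$ for $A$ and parameters $x_1,\dots,x_t$ for $S$, so that $\a_k=J_kR+(x_1^k,\dots,x_t^k)$ approximates $R$; assuming $fgv_k^{p^e}w_k^{p^e}\in\a_k^{[p^e]}$ for socle representatives $v_k,w_k$ leads, via flatness and a colon computation modulo the $x_i^{kp^e}$, to $gw_k^{p^e}\in(\m_A,x_1^{kp^e},\dots,x_t^{kp^e})$, contradicting the choice of $g$. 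The Gorenstein hypothesis on the fiber is used to have a single socle representative $w_k$, not to factor any colon ideal.
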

\begin{proof}
Set $S \coloneqq R/\m_A R$. 
Take elements $f\in \m^{b_e(A)}\setminus I_e(A)$ and $g\in \m_R^{b_e(S)}$ such that its class belongs to $\m_R^{b_e(S)}S\smallsetminus I_e(S)$. 
It suffices to show that 
$fg \notin I_e(R)$.
Namely, because
\[
fg \in \m_A^{b_e(A)} \m_R^{b_e(S)}
\subseteq \m_R^{b_e (A) + b_e(S)}, 
\]
we would have $b_e(R)\geq b_e(A) + b_e(S)$, so 
by dividing by $p^e$ and taking limits as $e \to \infty$, 
it follows that $\fpt(R)\geq \fpt(A) +\fpt(R/\m_A R)$.
The assertion then holds by the dimension formula. 

Since $A$ is excellent and reduced, it is approximately Gorenstein  \cite{Hochster}, so 
let $	\{J_k\}_{k = 1}^\infty$ be an approximating sequence of ideals. Take $x_1,\ldots,x_t 	\in R$ be elements that give a system of parameters in $S$. 
We have that  $\a_k=J_k R+(x^k_1,\ldots, x^k_t)$ is an approximating sequence of ideals in $R$ \cite[proof of Theorem 7.24]{HoHu2}.
Let $v_k\in A$ be socle representatives of $A/J_k$ and $w_k\in R$ be a socle representative of  $R/(\m_A,x^k_1,\ldots, x^k_t)R$.
Now, recall that the injective hull of the residue field can be written as the direct limit $E_A = \varinjlim A/J_k$. Thus, via the injective hull criterion for splitting of maps (\emph{e.g.,} \cite[Lemma~1.6]{TakagiWatanabe}), we may  
restate the needed claim
using the approximating sequences: 
given that 
$fv_k^{p^e} \notin J_k^{[p^e]}, gw_k \notin (\m_A, x_1^{kp^e}, \ldots, x_t^{kp^e})$ for all $k \in \NN$ we want to show that 
$fgv^{p^e}_k w^{p^e}_k\not\in \a^{[p^e]}_k$ for every $k\in\NN$.

By contradiction, suppose that $fgv^{p^e}_k w^{p^e}_k \in \a^{[p^e]}_k$. Let $B=R/(x^{k p^e}_1,\ldots, x^{k p^e}_t)$. We have that 
$fgv^{p^e}_k w^{p^e}_k\in J^{[p^e]}_k B$. Then,
$gw^{p^e}_k\in (J^{[p^e]}_k B: fv^{p^e}_k)=\frac{(J^{[p^e]}_k : fv^{p^e}_k)}{(x^{k p^e}_1,\ldots, x^{k p^e}_t)}$,
where the equality follows from the fact that $B$ is faithfully flat over $A$.
It follows that $gw^{p^e}_k\in (J^{[p^e]}_k : fv^{p^e}_k)+(x^{kp^e}_1,\ldots, x^{kp^e}_t)$ and, by our choice of $f$, we have that $(J^{[p^e]}_k:fv^{p^e}_k)\subseteq \m_A$. Finally, we have that
$\overline{g}\overline{w}^{p^e}_k\in (x^{kp^e}_1,\ldots, x^{kp^e}_t)S$.
This contradicts the choice of $g$.
\end{proof}

We do not know whether the inequality $\dfpt (R) \leq \dfpt (A) + \dfpt (R/\m_A R)$ holds in Theorem \ref{gorenstein flat} without the assumption that $R/\m_A R$ is Gorenstein. 

We conclude the section with an example which shows that we cannot expect the equality $\dfpt(R) = \dfpt(A) + \dfpt(R/\m_A R)$ to hold, even in the assumptions of Theorem \ref{gorenstein flat}. In particular, the closed fiber being regular is crucial in Proposition \ref{prop A1}.

\begin{example} Let $\kk$ be an F-finite field. Let $A=\kk\ps{t}$,  $R=\kk\ps{x,y}$, and consider the map $f\colon A \to R$ such that $t \mapsto xy$. Since $xy$ is a regular element in $R$ we have that $f$ is faithfully flat, and the closed fiber $R/\m_A R \cong R/(xy)$ is F-pure and a hypersurface, hence Gorenstein. However, $\dfpt(A)=\dfpt(R) = 0$ and $\dfpt(R/(xy)) = 1$.
\end{example}

\subsection{Singularities in characteristic zero}\label{ssect: modp}
There is a (in part conjectural) correspondence between 
log-canonicity of a singularity of characteristic $0$ and 
F-purity of its \emph{reductions mod} $p > 0$. 
We now give a brief review of the construction of reductions mod $p > 0$. 
We refer to the unpublished manuscript of Hochster and Huneke on tight closure in equal characteristic zero \cite[Section 2.1]{HHChar0} for more background information.

We assume that $R$ is of finite type over $\CC$ and $\p$ is a prime ideal of $R$.
We  work with \emph{models} $(\cR, \p_{\cR})$ of $(R, \p)$:
\begin{enumerate}
\item $A$ is a $\ZZ$-subalgebra of $\CC$,
\item $\cR$ is finitely generated $A$-algebra,
\item $R = \cR \otimes_A \CC$, 
\item $\p_{\cR}$ is a prime ideal and $\p_{\cR} R = \p$. 
\end{enumerate}
For any maximal ideal $\a$ of $A$,  
the ring $R (\a) \coloneqq \cR \otimes_A A/\a$
has positive characteristic and is called a \emph{reduction modulo $p$}
of $R$. 
Many properties of $R$ can be preserved in $\cR$ and $R(\a)$ by appropriately choosing $A$ and $\a$. 
In particular, we have  that 
$\p R(a)$ is a prime ideal for $a$ in a Zariski dense open subset 
of $\Spec A$ \cite[Theorem~2.3.6]{HHChar0}.

\begin{definition}
We say that a pair $(R_\p, \p^t)$ is of dense F-pure type\footnote{One can also define dense F-pure type using ultra-Frobenii, see \cite{Yamaguchi}.} if 
there exists a dense subset $U$ of $\Spec (A)$ consisting of maximal ideals 
such that for any $\a \in U$ the pair 
$(R(\a)_{\p_\cR R(\a)}, \p_{\cR} R(\a)^t)$ is F-pure in the sense of Definition~\ref{DefFPure}. 
\end{definition}

A $\QQ$-Gorenstein normal ring 
of dense F-pure type has log canonical singularities \cite{HaraWatanabe}
and the converse is conjectured to be true. 
It is known \cite{TakagiAdjoint} that the converse follows from the \emph{weak ordinarity conjecture}  \cite{MustataSrinivas}.
The conjecture would imply that the log canonical threshold $\lct (\p R_\p)$ 
is equal to the dense F-pure threshold and thus can by approached
using F-pure thresholds in the reductions mod $p$. 
The two notions are known to coincide if $R_\p$ has a log terminal singularity  \cite[Proposition~3.2]{TakagiWatanabe},
due to the correspondence between multiplier and test ideals.
The results of this paper imply several properties of log canonical thresholds.

\begin{theorem}
Let $R$ be a $\QQ$-Gorenstein log terminal ring of 
finite type over $\CC$ and $S$ be a finite type $R$-algebra 
which is also $\QQ$-Gorenstein and log terminal.
Then:
\begin{enumerate}
\item for prime ideals $\p \subseteq \m$ of $R$ 
one has the inequality
\[
\lct (\p R_\p) + \dim (R_\m/\p R_\m) \geq \lct (\m R_\m);
\]
\item 
if $\m \subseteq R$ and $\n \subseteq S$ are prime ideals such that 
$R_\m \to S_\n$ is a flat local map and $S_\n/\m S_\n$ is  regular,
then $\lct (\m R_\m) + \dim (S_\n/\m S_\n) = \lct (\n S_\n)$;
\item 
if $\m \subseteq R$ and $\n \subset S$ are prime ideals such that 
$R_\m \to S_\n$ is a flat local map and $S_\n/\m S_\n$ is reduced, 
then $\lct (\m R_\m) + \dim (S_\n/\m S_\n) \geq \lct (\n S_\n)$.
\end{enumerate}
\end{theorem}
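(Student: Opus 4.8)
The plan is to reduce each statement about log canonical thresholds in characteristic $0$ to the corresponding statement about the defect of the F-pure threshold in the reductions mod $p > 0$, and then invoke the characteristic-$p$ results proved earlier in the paper. The key identity that makes this work is the equality of the log canonical threshold with the dense F-pure threshold in the log terminal $\QQ$-Gorenstein case, namely $\lct(\p R_\p) = \lim_{p \to \infty}\fpt(R(\a)_{\p_\cR R(\a)})$ along a dense set of maximal ideals $\a$; this is exactly \cite[Proposition~3.2]{TakagiWatanabe} together with the standard spreading-out machinery recalled in (\ref{ssect: modp}). Equivalently, writing $\dlct(\p R_\p) \coloneqq \dim(R_\p) - \lct(\p R_\p)$, this "defect of the log canonical threshold" is the limit of $\dfpt$ of the reductions. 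So each inequality we want follows by spreading out, passing to a common dense open set of $\Spec(A)$ on which all the relevant hypotheses (flatness, reducedness/regularity/Gorensteinness of the fiber, F-purity, primality of the relevant ideals) are preserved, applying the characteristic-$p$ statement fiberwise, and taking the limit as $p \to \infty$.

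First I would set up the models: choose a single finitely generated $\ZZ$-subalgebra $A \subseteq \CC$ and finitely generated $A$-algebras $\cR$, $\cS$ with $\cR \otimes_A \CC = R$, $\cS \otimes_A \CC = S$, carrying along the prime ideals $\p_\cR \subseteq \m_\cR \subseteq \cR$ and $\n_\cS \subseteq \cS$, together with the map $\cR \to \cS$. After inverting finitely many elements of $A$ I may assume (using \cite[Theorem~2.3.6]{HHChar0} and generic flatness) that over every closed point $\a$ of $\Spec(A)$: the induced maps are flat, the ideals $\p_\cR R(\a)$, $\m_\cR R(\a)$, $\n_\cS S(\a)$ are prime, the dimensions $\dim(R(\a)_{\m_\cR R(\a)})$, $\dim(S(\a)_{\n_\cS S(\a)})$ and the relative dimensions $\dim(R(\a)_{\m_\cR R(\a)}/\p_\cR R(\a)_{\m_\cR R(\a)})$, $\dim(S(\a)_{\n_\cS S(\a)}/\m_\cR S(\a)_{\n_\cS S(\a)})$ equal their characteristic-$0$ counterparts, and the fiber $S(\a)_{\n}/\m_\cR S(\a)_{\n}$ is reduced (respectively regular, respectively Gorenstein) whenever the corresponding characteristic-$0$ fiber is. Since $R$, $S$ are $\QQ$-Gorenstein and log terminal, after further shrinking $\Spec(A)$ the reductions $R(\a)$, $S(\a)$ are strongly F-regular (hence F-pure) at the relevant primes for $\a$ in a dense set, by the correspondence between log terminal and (dense) strongly F-regular type; in particular all the F-purity hypotheses needed in the earlier theorems are met fiberwise. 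For part (1) I would then apply Theorem~\ref{thm semicont} (upper semi-continuity of $\dfpt$ on $\Spec$, so $\dfpt(R(\a)_{\p_\cR R(\a)}) \le \dfpt(R(\a)_{\m_\cR R(\a)})$) — actually this does not directly give the stated inequality; instead I would apply Theorem~\ref{MainThmLocal}(3) / Proposition~\ref{prop: ineq flat 1} is not it either, rather the localization-then-dimension bookkeeping: combine $\fpt(R(\a)_{\p}) \le \Ft$-type localization with the dimension count, which is precisely what $\dfpt$ being semi-continuous encodes, to get $\lct(\p R_\p) + \dim(R_\m/\p R_\m) \ge \lct(\m R_\m)$ in the limit. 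For part (2) I would apply Corollary~\ref{coroll A1}: the reduction map $R(\a)_\m \to S(\a)_\n$ is flat local with regular closed fiber, so $\dfpt$ is preserved, i.e. $\dim(S(\a)_\n) - \fpt(S(\a)_\n) = \dim(R(\a)_\m) - \fpt(R(\a)_\m)$; rearranging and using $\dim(S_\n) = \dim(R_\m) + \dim(S_\n/\m S_\n)$ and taking the limit gives the claimed equality. For part (3) I would instead invoke Proposition~\ref{prop: ineq flat 1}, which handles the reduced-fiber case and yields $\dfpt(A) \le \dfpt(R)$ in the notation there, i.e. $\dim(R(\a)_\m) - \fpt(R(\a)_\m) \le \dim(S(\a)_\n) - \fpt(S(\a)_\n)$; rearranging and passing to the limit gives $\lct(\m R_\m) + \dim(S_\n/\m S_\n) \ge \lct(\n S_\n)$.

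The main obstacle I anticipate is not any single one of these applications but rather the bookkeeping needed to guarantee that a \emph{single} dense set of maximal ideals $\a \subseteq A$ works simultaneously: one must ensure that along this dense set (i) the characteristic-$p$ rings are actually F-pure (so that $\fpt$, not $-\infty$, is what appears), which requires the log terminal hypothesis to descend to dense strongly F-regular type — this is the most delicate input and relies on known results of Hara–Watanabe/Takagi and the test ideal–multiplier ideal correspondence; (ii) the fiber-regularity/reducedness/Gorenstein hypotheses descend, which is a standard but careful application of generic flatness and openness of the relevant loci; and (iii) the dense F-pure threshold equals the log canonical threshold, which needs the log terminal hypothesis (so that test ideals match multiplier ideals), exactly \cite[Proposition~3.2]{TakagiWatanabe}. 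Once all three hold on a common dense set, each of (1), (2), (3) is a one-line consequence of the corresponding earlier theorem applied fiberwise followed by $\lim_{p\to\infty}$. A minor additional point: in (2) and (3) one should also check that the relative dimension $\dim(S_\n/\m S_\n)$ is computed correctly after reduction, which again follows from generic flatness since fiber dimension is constructible.
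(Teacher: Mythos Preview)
Your proposal is correct and follows essentially the same approach as the paper: spread out to a model over a finitely generated $\ZZ$-algebra $A$, pass to reductions mod $p$ on a dense set of closed points where all the needed hypotheses persist, apply the corresponding characteristic-$p$ result (semi-continuity of $\dfpt$ for (1), Corollary~\ref{coroll A1} for (2), Proposition~\ref{prop: ineq flat 1} for (3)), and take the limit using \cite[Proposition~3.2]{TakagiWatanabe}. Your treatment of the bookkeeping (preservation of flatness, fiber regularity/reducedness, primality, dimensions, and F-purity via dense strongly F-regular type) is in fact more explicit than the paper's, which cites \cite[(2.1.18), Theorem~2.3.6, Theorem~2.3.16(c)]{HHChar0} and the openness of the flat locus for these points. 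One small remark: your discussion of part (1) wanders unnecessarily---the inequality $\dfpt(R(\a)_{\p}) \leq \dfpt(R(\a)_{\m})$ from Theorem~\ref{thm semicont} \emph{does} directly give the stated inequality after rewriting $\dim(R(\a)_\m) - \dim(R(\a)_\p) = \dim(R(\a)_\m/\p R(\a)_\m)$ and passing to the limit; there is no need to look elsewhere.
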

\begin{proof}
For the first property, we choose a model for both $\p$ and $\m$ (see \cite[(2.1.10) and (2.1.14)]{HHChar0})
and use Theorem~\ref{thm semicont} to get an inequality 
$\dfpt (R(\a)_\p) \leq \dfpt (R(\a)_\m)$ for all maximal ideals $\a \subseteq A$ for which  $R(\a)_\m$ is F-pure. 

Now we prove the remaining assertions.
As explained in by Hochster and Huneke \cite[(2.1.18)]{HHChar0}, we can find compatible models  $(\cR, \m_{\cR})$, $(\cS, \n_{\cS})$ for 
$R$ and $S$:
there is a finitely generated $\ZZ$-algebra $A$ and 
a finite type homomorphism $f \colon \cR \to \cS$ of finitely generated 
$A$-algebras, such that $R \to S$ is the localization of $f$ at the generic point of $A$.
By the assumption $\m = R \cap \n$, so $S_\n$ is a flat $R$-module. 
Since $\n_{\cS} \cap A = 0$ by the construction, 
we may restate this as flatness of $\cS_{\n_\cS} = S_\n$ over $\cR$.  
By the openness of the $\cR$-flat locus  \cite[00RC]{stacks-project}, 
we can find an element $g \notin \n$ 
so that $\cS_g$ is a flat $\cR$-module. By base change, we then get that 
$S(\a)_g$ is a flat $R(\a)$-module, and it follows that $R(\a)_\m \to S(\a)_\n$ is a flat map.

It remains to show that we may preserve the special properties of the fibers. 
First, if $S_\n/\m S_\n$ is regular, then, by the openness of the regular locus, 
there is $g \notin \n$ such that $S_g/\m S_g$ is regular. 
Since $\cS_g$ is a finitely generated $A$-algebra,  
there is an open set of maximal ideals $U$ of $A$ such that 
$S_g(\a)/\m S_g(\a)$ is geometrically regular over the residue field of $\m R(\a)$ for all $\a \in U$ \cite[Theorem 2.3.6]{HHChar0}.
Thus, we may apply Corollary~\ref{coroll A1} in these fibers,
and the second assertion follows. 
The proof of the third assertion is similar. We utilize reduction to prime characteristic \cite[Theorem 2.3.16(c)]{HHChar0}
to verify that there is an open set of maximal ideals $V$ of $A$ 
such that $\m S_g(\a)$ is radical for all $\a \in V$. 
\end{proof}

Another potential approach for converting our results to characteristic $0$ 
is by developing an analogue of F-pure thresholds using big Cohen--Macaulay algebras, as an extension of the theory of BCM-thresholds developed by Schwede and Rodr{\' i}guez Villalobos \cite{KarlSandra}. Such theory may provide a natural characteristic-free invariant for which our results may remain true. 

\section{Examples, questions and further remarks}\label{sect: last}

\subsection{Examples}
We now give an example of a quotient by a monomial ideal where $\p \to \dfpt(\p)$ and $\p \to \mfpt(\p)$ are maximized at different maximal ideals.

\begin{example} Let $S=\kk[x_1,\ldots,x_5,y]$ and $I = (y) \cap (x_1,x_2,x_3) \cap (x_3,x_4,x_5)$. Let $W=S \smallsetminus \left(\p_1 \cup \p_2\right)$, where $\p_1 = (x_1,x_2,x_3,x_4,y)$ and $\p_2 = (x_1,x_2,x_3,x_4,x_5)$. Let $R=(S/I)_W$. Note that 
\[
R_{\p_1} \cong \left(\frac{S}{(y) \cap (x_1,x_2,x_3)}\right)_{\p_1} \text{ and } R_{\p_2} \cong \left(\frac{S}{(x_1,x_2,x_3) \cap (x_3,x_4,x_5)}\right)_{\p_2},
\]
so that $\fpt(R_{\p_1}) = 1$ and $\fpt(R_{\p_2}) = 0$. It follows that
\[
\mfpt(\p_1) = 5-1 = 4 \ \ \text{ and } \ \ \dfpt(R_{\p_1}) = 4-1 = 3,
\]
while
\[
\mfpt(\p_2) = 5-0 = 5 \ \ \text{ and } \ \ \dfpt(R_{\p_2}) = 2-0 = 2.
\]
\end{example}

Second, we observe that the function 
$\Spec(R)  \ni \p \mapsto \dim(R_\p) - c^{\p R_\p}(\p R_\p)$ may not be upper semi-continuous. 

\begin{example}
Let $\kk$ be a field of characteristic $p > 2$, $S = \kk\ps{x,y,z,w}$, $f = x^2 - w^2(y^2 + z^2)$, and $R = S/(f)$. 
For 
$f^{p-1}$ contains the monomial $x^{p-1} w^{p-1} y^{p-1}$ in its support,
$R$ is F-pure by Fedder's criterion. 
Furthermore, $y,z,w$ is a system of parameters and $\m$ is its tight closure 
since $x x^{p^e} \in (w^{p^e})$. On the other hand, 
 the localization at $\q = (x,y,z)$ is F-rational as it is an $(A_1)$-singularity.

Recall that $c^\m (\m) \leq \dim (R)$ \cite[Proposition~2.2 and Corollary~3.2]{HMTW}, and that equality holds if and only if there is a minimal reduction $J$ of $\m$ such that $J^* = \m$. Therefore in this case $c^\m (\m) = 3$ while $c^{\q R_\q} (\q R_\q) < 2$, 
showing that the function $\p \in \Spec(R) \mapsto \dim(R_\p) - c^{\p R_\p} (\p R_\p)$ is not upper semi-continuous. 

On the other hand, we have that 
$\dfpt (R_\q) = 1$ \cite[Example~2.5]{TakagiWatanabe}. A direct calculation shows that  
\[f^{p^e - 1} \cong  x^{p^e - 1}w^{p^e-1} (y^2, z^2)^{\frac{p^e-1}2} \pmod {\m^{[p^e]}}, 
\]
and since 
\[
\binom{p^{e}-1}{\frac{p^e-1}{2}} \cong \binom {p-1}{\frac{p-1}{2}}^{e} \neq 0 \pmod p
\]
by Lucas' Theorem, we conclude that $\Theta_e(fS) = \max\{n \mid f^{p^e-1} \in \m^n+\m^{[p^e]}\} = 3(p^e-1)$. It follows that $\dfpt (R_\m) = 2$.  
\end{example}

\subsection{Behavior in families}\label{sub: thanks Karen}
Given a flat morphism $B \to R$ of F-finite rings of prime characteristic $p>0$
we may define a function on $\Spec(B)$ 
by sending $\p \mapsto \dfpt (R \otimes_B k(\p))$ 
provided that the assumptions of Theorem~\ref{thm: global (new)} are satisfied.
It is natural to ask whether this function is also semi-continuous. 

A particular case is when there is an ideal $I \subseteq R$ such that $R/I \cong B$.  Then the image of $I$ in each $R(\p) \coloneqq R \otimes_B k(\p)$ is a maximal ideal, 
so $\dfpt (R(\p))$ can be defined as the defect of the F-pure threshold at such maximal ideal. In this way, we may treat it as a function $\p \in \Spec(B) \mapsto \dfpt(R(\p))$.
Note that this function can be recovered from the 
F-pure threshold of pairs $\Spec(B) \ni \p \mapsto \fpt (IR(\p), R(\p))$. 
In an unpublished manuscript \cite{Liu}, Yuchen Liu gives several results 
on lower semi-continuity of F-pure threshold of triples
that should provide  some results on upper semi-continuity of the defect of F-pure threshold in families of local rings. 

A particular type of families 
is the moduli of homogeneous hypersurfaces of fixed degree. 
A recent work of Smith and Vraciu \cite{SmithVraciu}
studies the stratification of this moduli by the F-pure threshold of the principal ideal defining the hypersurface. The also find a formula for the generic value. On the contrary, for our invariant the stratification is trivial. Namely, the F-pure threshold of a Gorenstein standard graded algebra over a field $\kk$ is equal to the opposite of its $a$-invariant \cite[Theorem B (3)]{DSNBFpurity}. In particular, the value $\dfpt (\kk[x_1, \ldots, x_d]/(f))$ only depends on whether $f$ defines an F-pure ring or not.

\subsection{Discreteness for non-Gorenstein rings}
Due to a result of Sato \cite{SatoACC}, we were able to obtain stronger results for Gorenstein rings in Theorem~\ref{thm constructible} and Corollary~\ref{coroll Bertini Gor}. A natural question is to ask whether some form of the Gorenstein property is truly necessary. Note that our results do not require the full power of Sato's ACC condition, since the local rings we deal with come from ``one source''.  

In the same spirit, we expect that Theorem~\ref{thm m-adic} 
can be improved, either by removing the Gorenstein assumption or by getting a better control on the F-pure threshold of the perturbations. 
For example, if we were able to control normality in perturbations or 
reprove \cite{SatoACC} without this assumption, then 
we would immediately get that $\dfpt (R)/(f_1, \ldots, f_\ell)
\geq \dfpt (R/(f_1+h_1, \ldots, f_\ell + h_\ell))$ for $h_i$ chosen sufficiently deep inside the maximal ideal of $R$.

\subsection{Finiteness for the defect of the F-pure threshold}

 If $R$ is an algebra essentially of finite type over the complex numbers with at worst KLT singularities, then the anticanonical cover is finitely generated \cite{BCHM}. Since strongly F-regular singularities are the prime charactersitic counterpart to KLT singularities \cite{HaraWatanabe,Takagi04}, it is expected that strongly F-regular rings have a finitely generated anticanonical cover as well.
For these reasons, one would expect that the claims of Theorem \ref{thm constructible} hold for strongly F-regular rings. These considerations motivate the following questions:
\begin{question}
Let $R$ be an F-finite strongly F-regular ring. 
\begin{enumerate}
\item Is the set $\{\fpt(R_\q) \mid \q \in \Spec (R)\}$ finite? 
\item Are the functions 
\[
\q \mapsto \dfpt (R_\q) \ \text { and } \ \q \mapsto \mfpt(R_\q) \coloneqq \edim (R_\q) - \fpt (R_\q),
\]
 strongly upper semi-continuous? 
 \end{enumerate} 
\end{question}

\bibliographystyle{alpha}
\bibliography{References}
\end{document}